\documentclass[a4paper,reqno,10pt]{amsart}

\textheight 220mm
\textwidth 150mm
\hoffset -16mm
\usepackage{amssymb}
\usepackage{amstext}
\usepackage{amsmath}
\usepackage{amscd}
\usepackage{amsthm}
\usepackage{amsfonts}
\usepackage{enumerate}
\usepackage{graphicx}
\usepackage{latexsym}
\usepackage{mathrsfs}
\usepackage{mathtools}
\usepackage[all,poly,necula]{xy}
\usepackage{ifthen}
\xyoption{all}

\usepackage{lscape}
\usepackage{ stmaryrd }
\usepackage{multirow}

\newtheorem{theorem}{Theorem}[section]
\newtheorem{theoremi}{Theorem}
\newtheorem{corollaryi}[theoremi]{Corollary}

\newtheorem{corollary}[theorem]{Corollary}
\newtheorem{lemma}[theorem]{Lemma}
\newtheorem{proposition}[theorem]{Proposition}
\newtheorem{definition-proposition}[theorem]{Definition-Proposition}

\theoremstyle{definition}
\newtheorem{definition}[theorem]{Definition}

\newtheorem{remark}[theorem]{Remark}
\newtheorem{example}[theorem]{Example}
\newtheorem*{examplei}{Example}

\renewcommand{\AA}{\mathcal{A}}
\newcommand{\CC}{\mathcal{C}}

\newcommand{\DD}{\mathcal{D}}

\newcommand{\FF}{\mathcal{F}}
\newcommand{\PP}{\mathcal{P}}
\newcommand{\II}{\mathcal{I}}
\newcommand{\OO}{\mathcal{O}}

\renewcommand{\SS}{\mathcal{S}}

\newcommand{\UU}{\mathcal{U}}
\newcommand{\XX}{\mathcal{X}}
\newcommand{\YY}{\mathcal{Y}}

\newcommand{\ig}{\mathbf{i}}
\newcommand{\kg}{\mathbf{k}}
\newcommand{\tg}{\mathbf{t}}

\renewcommand{\P}{\mathbb{P}}

\newcommand{\N}{\mathbb{N}}

\newcommand{\length}{\operatorname{length}\nolimits}

\newcommand{\injdim}{\operatorname{inj.\!dim}\nolimits}
\renewcommand{\top}{\operatorname{top}\nolimits}
\newcommand{\cotop}{\operatorname{cotop}\nolimits}
\newcommand{\Bcotop}{B\!\operatorname{-cotop}\nolimits}
\newcommand{\Bcorad}{B\!\operatorname{-corad}\nolimits}
\newcommand{\Bpcotop}{B'\!\!\operatorname{-cotop}\nolimits}

\newcommand{\soc}{\operatorname{soc}\nolimits}
\newcommand{\rad}{\operatorname{rad}\nolimits}
\newcommand{\corad}{\operatorname{corad}\nolimits}
\newcommand{\Ext}{\operatorname{Ext}\nolimits}

\newcommand{\Tor}{\operatorname{Tor}\nolimits}
\newcommand{\Hom}{\operatorname{Hom}\nolimits}

\newcommand{\End}{\operatorname{End}\nolimits}

\newcommand{\gl}{\operatorname{gl.\!dim}\nolimits}
\newcommand{\op}{\operatorname{op}\nolimits}
\newcommand{\RHom}{\mathbf{R}\strut\kern-.2em\operatorname{Hom}\nolimits}

\newcommand{\Kernel}{\operatorname{Ker}\nolimits}
\newcommand{\Cokernel}{\operatorname{Coker}\nolimits}

\newcommand{\SL}{\operatorname{SL}\nolimits}
\newcommand{\Ab}{\mathcal{A}b}
\newcommand{\coker}{\Cokernel}

\renewcommand{\ker}{\Kernel}
\def\pup*{\textsuperscript{+}}

\DeclareMathOperator{\moduleCategory}{\mathsf{mod}} \renewcommand{\mod}{\moduleCategory}

\DeclareMathOperator{\proj}{\mathsf{proj}}

\DeclareMathOperator{\ind}{\mathsf{ind}}
\DeclareMathOperator{\Sub}{\mathsf{Sub}}

\DeclareMathOperator{\Gr}{\mathsf{Gr}}

\DeclareMathOperator{\CM}{\mathsf{CM}}

\DeclareMathOperator{\add}{\mathsf{add}}
\DeclareMathOperator{\fl}{\mathsf{f.\!l.}}

\def\innew{0}
\newcommand{\new}[1]{\if\innew1{\cyan {#1}}\else{\def\innew{1}\blue {#1}\def\innew{0}}\fi}

\newcommand{\cut}{\ar@{-}@[|(5)]}

\newcommand{\rs}[1]{\setcounter{enumi}{#1}}



\newenvironment{sbmatrix}{\left[\begin{smallmatrix}}{\end{smallmatrix}\right]}

\newsavebox\locboxinminipage
\newlength\locboxinminipagel
\newcommand{\boxinminipage}[1]
{%
 \sbox\locboxinminipage{#1}%
 \settowidth\locboxinminipagel{\usebox{\locboxinminipage}}%
 \begin{minipage}{\locboxinminipagel}\usebox{\locboxinminipage}\end{minipage}%
}

\def\newboxedcommand#1#2 
{%
 \def\newboxedcommandlocala##1##2.{##2}%
 \edef\newboxedcommandlocalb{\expandafter\newboxedcommandlocala\string#1.}%
 \expandafter\newsavebox\csname\newboxedcommandlocalb savebox\endcsname%
 \expandafter\sbox\csname\newboxedcommandlocalb savebox\endcsname{#2}%
 \expandafter\newlength\csname\newboxedcommandlocalb largeurbox\endcsname%
 \expandafter\settowidth\csname\newboxedcommandlocalb largeurbox\endcsname{\usebox{\csname\newboxedcommandlocalb savebox\endcsname}}%
 \edef#1{\noexpand\begin{minipage}{\csname\newboxedcommandlocalb largeurbox\endcsname}\usebox{\csname\newboxedcommandlocalb savebox\endcsname}\noexpand\end{minipage}}%
}

\newcommand{\Cf}{\mathbb{C}}
\renewcommand{\AA}{\mathcal{A}}
\newcommand{\BB}{\mathcal{B}}
\newcommand{\EE}{\mathcal{E}}
\DeclareMathOperator{\id}{id}

\numberwithin{equation}{section}

\xymatrixrowsep{1.5em}

\begin{document}

\title[Lifting algebras to orders and categorification]{Lifting preprojective algebras to orders and categorifying partial flag varieties}

\author[Laurent Demonet]{Laurent Demonet}
\address{L. Demonet: Graduate School of Mathematics, Nagoya University, Furocho, Chikusaku, Nagoya 464-8602, Japan}
\email{Laurent.Demonet@normalesup.org}
\urladdr{http://www.math.nagoya-u.ac.jp/~demonet/}
\thanks{The first named author was partially supported by JSPS Grant-in-Aid for Young Scientist (B) 26800008.}

\author[Osamu Iyama]{Osamu Iyama}
\address{O. Iyama: Graduate School of Mathematics, Nagoya University, Furocho, Chikusaku, Nagoya 464-8602, Japan}
\email{iyama@math.nagoya-u.ac.jp}
\urladdr{http://www.math.nagoya-u.ac.jp/~iyama/}
\thanks{The second named author was partially supported by JSPS Grant-in-Aid for Scientific Research (B) 24340004, (B) 16H03923, (C) 23540045, and (S) 15H05738.}


\begin{abstract}
 We describe a categorification of the cluster algebra structure of multi-homogeneous  coordinate rings
 of partial flag varieties of arbitrary Dynkin type using Cohen-Macaulay modules over orders. This completes the categorification of Geiss-Leclerc-Schr\"oer by adding the missing coefficients. To achieve this, for an order $A$ and an idempotent $e \in A$, we introduce a subcategory $\CM_e A$ of $\CM A$ and study its properties. In particular, under some mild assumptions, we construct an equivalence of exact categories $(\CM_e A)/[Ae] \cong \Sub Q$ for an injective $B$-module $Q$ where $B := A/(e)$. These results generalize work by Jensen-King-Su concerning the cluster algebra structure of the Grassmannian $\Gr_m(\Cf^n)$.
\end{abstract}

\maketitle

\tableofcontents

\section{Introduction}

In \cite{GeLeSc08}, Geiss-Leclerc-Schr\"oer introduced a cluster algebra structure on some subalgebra $\tilde \AA$ of the multi-homogeneous coordinate ring $\Cf[\FF]$ of the partial flag variety $\FF=\FF(\Delta,J)$ corresponding to a Dynkin diagram $\Delta$ and a set $J$ of vertices of $\Delta$. They prove that $\tilde \AA = \Cf[\FF]$ in type $A$, and conjecture that the equality holds after an appropriate localization for any Dynkin type (see Section \ref{parflag} for more details). This structure generalizes previously known cases of Grassmannians, introduced from the beginning for $\Gr_2(\Cf^n)$ by Fomin and Zelevinsky \cite{FoZe03} (see also \cite{BeFoZe05}) and generalized by Scott for $\Gr_k(\Cf^n)$ \cite{Sc06}.

In the same paper, Geiss-Leclerc-Schr\"oer introduce a partial categorification of this cluster algebra structure on $\tilde \AA$. A crucial role is played by the preprojective algebra $\Pi$ of type $\Delta$ and a certain full subcategory $\Sub Q_J$ of $\mod \Pi$ which is Frobenius and stably $2$-Calabi-Yau. More precisely, they introduce a cluster character $\tilde \varphi: \Sub Q_J \rightarrow \tilde \AA$ which gives a bijection
 \begin{align*}& \{\text{reachable indecomposable rigid objects in $\Sub Q_J$} \}/\cong \\ \xleftrightarrow{1-1}\, & \{\text{cluster variables and coefficients of } \tilde \AA \} \setminus \{\Delta_j \mid j \in J\},\end{align*}
where, for $j \in J$, $\Delta_j$ is the corresponding principal generalized minor.

One of the aim of this paper is to look for a stably $2$-Calabi-Yau category extending $\Sub Q_J$ whose reachable indecomposable rigid objects correspond to cluster variables and all coefficients of $\tilde \AA$.
In \cite{JeKiSu}, Jensen-King-Su achieved this in the case of classical Grassmannians (\emph{i.e.} $\Delta = A_n$ for $n \geq 0$ and $\# J = 1$) by using orders (see also \cite{BaKiMa} for an interpretation in terms of dimer models). In this article, we extend their method to any arbitrary Dynkin diagram $\Delta$ and arbitrary set of vertices $J$.

Throughout the introduction, for simplicity, let $R := k\llbracket t \rrbracket$ be the formal power series ring over an arbitrary field $k$. For an $R$-order $A$ (\emph{i.e.} an $R$-algebra that is free of finite rank as an $R$-module), we denote by $\CM A$ the category of Cohen-Macaulay modules over $A$ (\emph{i.e.} $A$-modules that are free of finite rank over $R$). For an idempotent $e \in A$, we define
 $$\CM_e A := \{X \in \CM A\mid eX \in \proj (eAe)\}.$$
We prove the following result:
\begin{theoremi}[Theorems \ref{modelAD} and \ref{categG}] \label{thparflag}
 Let $\Delta$ be a Dynkin diagram, and $J$ be a set of vertices of $\Delta$. Then, there exist a $\Cf\llbracket t \rrbracket$-order $A$, an idempotent $e \in A$ such that $\CM_e A$ is Frobenius and stably $2$-Calabi-Yau, and a cluster character $\psi: \CM_e A \rightarrow \tilde \AA$ such that
 \begin{enumerate}[\rm (a)]
   \item $\psi$ induces a bijection between
    \begin{itemize}
     \item isomorphism classes of reachable indecomposable rigid objects of $\CM_e A$;
     \item cluster variables and coefficients of $\tilde \AA$.
    \end{itemize}
   \item $\psi$ induces a bijection between
    \begin{itemize}
     \item isomorphism classes of reachable basic cluster tilting objects of $\CM_e A$;
     \item clusters of $\tilde \AA$.
    \end{itemize}
   Moreover, it commutes with mutation of cluster tilting objects and mutation of clusters.  
  \end{enumerate} 
\end{theoremi}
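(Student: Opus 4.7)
The plan is to decompose the statement along the two theorems cited, \ref{modelAD} (existence of the order) and \ref{categG} (categorification), and then glue. The central technical tool, already announced in the abstract, is the equivalence $(\CM_e A)/[Ae]\cong \Sub Q$ for $B:=A/(e)$ and $Q$ an injective $B$-module; the whole strategy is to design $A$ and $e$ so that this equivalence identifies the stable category of $\CM_e A$ with the Geiss--Leclerc--Schr\"oer category $\Sub Q_J\subseteq \mod\Pi$, and then to upgrade the GLS cluster character $\tilde\varphi$ across this equivalence by promoting the projective--injective objects $\add(Ae)$ to the missing frozen coefficients.

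First (Theorem \ref{modelAD}), I construct $A$ as a $\Cf\llbracket t\rrbracket$-order lifting the preprojective algebra $\Pi$ of type $\Delta$: concretely, choose $e\in A$ so that $B=A/(e)\cong \Pi$, so that $eAe$ is a product of local $\Cf\llbracket t\rrbracket$-orders (one per vertex in $J$), and so that, up to the standard duality, $Q=Q_J$ is obtained as $\Hom_A(Ae,A)/(\text{something})$ — in other words so that the injective $B$-module featuring in the equivalence is precisely $Q_J$. The defining constraints are: $A$ must be $R$-free of finite rank; $e$ must be chosen to recover $\Sub Q_J$ after quotienting; and the frozen summands $Ae$ must be in bijection with $J$. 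This design step is the actual content of Theorem \ref{modelAD} and is the main obstacle in the whole argument: everything else flows from it.

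Second (Theorem \ref{categG}), apply the general equivalence $(\CM_e A)/[Ae]\cong \Sub Q_J$. Since $\Sub Q_J$ is known by GLS to be Frobenius and stably $2$-Calabi--Yau, and since the projective--injective objects on the left are exactly $\add(Ae)$ (this being the abstract content of $[Ae]$ in the equivalence), $\CM_e A$ inherits the Frobenius and stably $2$-Calabi--Yau structure. Now define
\[
  \psi(X)\;:=\;\tilde\varphi(\overline X)\cdot\prod_{j\in J}\Delta_j^{m_j(X)},
\]
where $\overline X$ is the image of $X$ in $\CM_e A/[Ae]\cong \Sub Q_J$ and $m_j(X)$ is the multiplicity of the $j$-th indecomposable summand of $Ae$ in a minimal $\add(Ae)$-cover of $X$; thus $\psi$ extends $\tilde\varphi$ by sending the frozen summands of $Ae$ to the missing principal minors $\Delta_j$. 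The cluster character axioms on $\psi$ reduce to those of $\tilde\varphi$, since the new factors are multiplicative on summands and neutral under exchange triangles between non-frozen objects.

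Finally, I verify (a) and (b). For (a): reachable indecomposable rigid objects of $\CM_e A$ are of two kinds, those lying in $\add(Ae)$ (the frozen ones, sent bijectively by $\psi$ to the $\Delta_j$) and those non-projective (corresponding via the equivalence to reachable indecomposable rigid objects of $\Sub Q_J$, handled by the GLS bijection); together they exhaust the cluster variables and coefficients of $\tilde\AA$. For (b): basic cluster tilting objects of $\CM_e A$ are, by the equivalence, exactly basic cluster tilting objects of $\Sub Q_J$ augmented by $Ae$, and these correspond under $\psi$ to clusters of $\tilde\AA$ with full coefficient data; mutation compatibility follows because mutation in $\CM_e A$ avoids $\add(Ae)$ and therefore descends to mutation in $\Sub Q_J$, where GLS already proved compatibility with cluster mutation. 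The hard part is genuinely Step 1: once $A$ and $e$ are built correctly and the abstract equivalence $(\CM_e A)/[Ae]\cong \Sub Q_J$ is in place, Step 2 is essentially a lifting exercise.
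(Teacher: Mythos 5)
Your high-level architecture matches the paper's (build an order $(A,e)$ lifting the preprojective algebra, use the equivalence $(\CM_e A)/[Ae]\cong\Sub Q_J$, extend $\tilde\varphi$ by sending the summands of $Ae$ to the $\Delta_j$), but there are two genuine gaps. First, Step 1 is not a construction: you list the desired properties of $(A,e)$ and then concede that producing such a pair ``is the main obstacle in the whole argument.'' The paper does this in two stages: for $J=\Delta_0$ it takes $A=\Pi\otimes_k\begin{sbmatrix}R&R\\ tR&R\end{sbmatrix}$ with $e=\begin{sbmatrix}1&0\\0&0\end{sbmatrix}$ (Corollary \ref{thselfinj}, using that $\Pi$ is selfinjective), and for a proper subset $J$ it applies the change-of-orders machinery of Theorem \ref{thmchangidem2} (via Proposition \ref{changeJ}) to replace $A$ by $A'=\End_A(Ae'\oplus\widetilde{B'})$. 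Note in particular that for proper $J$ the quotient is $B\cong\Pi/I_J$, \emph{not} $\Pi$ as you assert: by Lemma \ref{lemmagorchangeorders}, condition (C2) forces $(1-g)\soc B=0$, which fails for $B=\Pi$ unless $J=\Delta_0$. So your target for the construction is not even the right one.

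Second, your definition of $\psi$ is wrong. You set $\psi(X)=\tilde\varphi(\overline X)\prod_j\Delta_j^{m_j(X)}$ with $m_j(X)$ the multiplicity of $Ae_j$ in a minimal $\add(Ae)$-cover of $X$; but for a non-projective indecomposable $X\in\CM_e A$ this cover (essentially $TX=Ae\otimes_{eAe}eX$) is nonzero, so your $\psi(X)$ is $\tilde\varphi_{FX}$ times a nontrivial monomial in the $\Delta_j$ and is therefore \emph{not} a cluster variable of $\tilde\AA$ --- the bijection in (a) fails. The paper instead sets $\psi_X:=\tilde\varphi_{FX}$ when $X$ has no summand in $\add Ae$, $\psi_{Ae_j}:=\Delta_j$, and extends multiplicatively. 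Relatedly, your claim that the extra factors are ``neutral under exchange triangles'' dismisses exactly the hard point: by Theorem \ref{mutGLS} the exchange relation for $\tilde\varphi$ in $\Sub Q_J$ carries coefficient monomials $\prod_j\Delta_j^{\alpha_j}$ and $\prod_j\Delta_j^{\beta_j}$, and the content of Proposition \ref{mutG} is that these exponents coincide with the multiplicities of the $Ae_j$ in the projective-injective summands of the middle terms $U,U'$ of the exchange sequences in $\CM_e A$; establishing this requires the socle bookkeeping of Proposition \ref{descexseq}(b) together with Lemma \ref{abc} (exactness of at least one of $\Hom_\Pi(S_j,F\xi_1)$, $\Hom_\Pi(S_j,F\xi_2)$, which uses the $2$-Calabi-Yau property). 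Without that computation the cluster-character property of $\psi$, and hence both (a) and (b), is unproven.
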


To prove Theorem \ref{thparflag}, we generalize techniques introduced by Jensen-King-Su \cite{JeKiSu} for Grassmannians in type $A$ (see also \cite{DeLu16} for Grassmannians of $2$-dimensional planes in type $A$). Meanwhile, we need to prove general results on orders.

The study of Cohen-Macaulay modules (also known as lattices) over orders is a classical subject in representation theory. We refer to \cite{Au78,CuRe81,LeWi12,Si92,Yo90} for a general background on this subject. We also refer to \cite{AmIyRe15,Ar99,DeLu16,DeLu16-2,HeIyMiOp,ThVa10,IyTa13,KaSaTa07,KaSaTa09,KeRe08} for recent results about connections with tilting theory and cluster categories.

We consider an $R$-order $A$ and an idempotent $e \in A$ such that $B := A/(e)$ is finite dimensional over $k$. Let $K := k(\!(t)\!)$ be the fraction field of $R$, $U := \Hom_A(B, Ae \otimes_R (K/R))$ and $\Sub U$ be the category of $B$-submodules of objects $U^n$ for $n \geq 0$. We consider the exact full subcategory $\mod_e A := \{X \in \mod A\mid eX \in \proj (eAe)\}$ of $\mod A$. Under this setting, we prove the following generalization of a result of \cite{JeKiSu}. 
\begin{theoremi}[Theorem \ref{mainB}] \label{tho}
 Assume that $Ae$ is injective in $\CM_e A$ and has injective dimension at most $1$ in $\mod_e A$. Then $U$ is injective in $\mod B$ and there is an equivalence of exact categories
 $$B \otimes_A -:(\CM_e A)/[Ae] \xrightarrow{\sim} \Sub U.$$
\end{theoremi}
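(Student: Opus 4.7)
The plan is to work in four stages, adapting the Jensen-King-Su template to the order-theoretic setting.

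First, I would prove that $U$ is injective in $\mod B$. The key input is that $Ae \otimes_R (K/R)$ is injective as an $A$-module. Since $K/R$ is the injective envelope of the residue field $k$ over the DVR $R$ and $Ae$ is $R$-free of finite rank, a standard Matlis-type argument yields this: one writes $Ae \otimes_R (K/R) \cong \Hom_R(\Hom_R(Ae, R), K/R)$ and uses that $\Hom_R(-, K/R)$ sends flat $R$-modules to injective ones, with the $A$-module structure coming from the right $A$-action on $\Hom_R(Ae, R)$. Then $U = \Hom_A(B, -)$ applied to an injective $A$-module is injective in $\mod B$ via the standard adjunction $\Hom_B(M, U) \cong \Hom_A(M, Ae \otimes_R (K/R))$ for $M \in \mod B$.

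Second, I would verify that $F := B \otimes_A - : \CM_e A \to \mod B$ is a well-defined exact functor vanishing on $\add Ae$. Since $e \in (e) = AeA$, we have $Be = 0$ in $B$; hence $F(Ae) = Be = 0$ and $F$ factors through $(\CM_e A)/[Ae]$. For exactness it suffices to show $\Tor_1^A(B, X) = 0$ for $X \in \CM_e A$: using $B = A/AeA$, this reduces to the canonical surjection $AeA \otimes_A X \to AeAX \subseteq X$ being an isomorphism, which in turn follows from the identification $AeA \otimes_A X \cong Ae \otimes_{eAe} eX$ combined with $eX \in \proj eAe$.

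Third, I would construct the candidate inverse $G : \Sub U \to (\CM_e A)/[Ae]$ by a pullback. Given an embedding $Y \hookrightarrow U^n$, adjunction yields an $A$-linear embedding $Y \hookrightarrow (Ae)^n \otimes_R (K/R)$, and I define
\[
G(Y) \;:=\; \bigl((Ae)^n \otimes_R K\bigr) \times_{(Ae)^n \otimes_R (K/R)} Y,
\]
sitting in a short exact sequence $0 \to (Ae)^n \to G(Y) \to Y \to 0$ of $A$-modules. Since $G(Y)$ is a finitely generated $R$-submodule of the $R$-torsion-free module $(Ae)^n \otimes_R K$, it is $R$-free; and $eG(Y) \cong (eAe)^n$ is projective over $eAe$, so $G(Y) \in \CM_e A$. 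A direct computation gives $AeA \cdot G(Y) = (Ae)^n$, hence $F(G(Y)) \cong Y$. Well-definedness of $G$ modulo $\add Ae$ follows from the injectivity of $Ae$ in $\CM_e A$: different choices of embedding yield pullbacks that differ by a summand in $\add Ae$, which splits off by the injective property.

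The main obstacle is the fourth stage: proving $F(X) \in \Sub U$ for every $X \in \CM_e A$ together with the isomorphism $G \circ F \cong \id$ on $(\CM_e A)/[Ae]$. Here the hypothesis $\injdim_{\mod_e A}(Ae) \le 1$ is crucial. From the isomorphism $\Hom_A(B, X \otimes_R (K/R)) \cong \Ext^1_A(B, X)$, valid for any $X \in \CM A$ (since $B$ is $R$-torsion while $X \otimes_R K$ is $R$-divisible, so both $\Hom_A(B, X \otimes_R K)$ and $\Ext^1_A(B, X \otimes_R K)$ vanish), the injective-dimension assumption supplies enough $A$-linear maps $X \to (Ae)^n \otimes_R (K/R)$ to realize a $B$-linear embedding $F(X) \hookrightarrow U^n$. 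Then $G(F(X))$ is an extension of $F(X)$ by $(Ae)^n$ in $\CM_e A$; comparing with the natural exact sequence $0 \to AeAX \to X \to F(X) \to 0$ and invoking injectivity of $Ae$ in $\CM_e A$ yields $G(F(X)) \cong X$ in $(\CM_e A)/[Ae]$. Finally, exactness of the resulting equivalence of exact categories follows from the exactness of $F$ established in the second stage.
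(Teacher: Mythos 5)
Your overall architecture --- the decomposition $0 \to TX \to X \to FX \to 0$ with $TX = Ae\otimes_{eAe}eX \in \add Ae$, the pullback of $(Ae)^n\otimes_R K \to (Ae)^n\otimes_R(K/R) \leftarrow Y$ as quasi-inverse, and (E1) for its well-definedness modulo $[Ae]$ --- matches the paper's, which packages these steps as two torsion pairs in $\mod_e A$ plus abstract results on exact categories. But two steps have genuine gaps. First, your proof that $U$ is injective in $\mod B$ rests on the claim that $Ae\otimes_R(K/R)\cong\Hom_R(\Hom_R(Ae,R),K/R)$ is an injective $A$-module. The adjunction $\Hom_A(-,\Hom_R(N,K/R))\cong\Hom_R(N\otimes_A-,K/R)$ yields an injective $A$-module only when $N=\Hom_R(Ae,R)$ is flat (equivalently, projective) as a right $A$-module; that is precisely the extra hypothesis $\add Ae=\add\Hom_R(gA,R)$ of Theorem \ref{mainA}, not the hypotheses of the present statement. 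Indeed, for $Y$ of finite length one computes $\Ext^1_A(Y,Ae\otimes_R(K/R))\cong\Ext^2_A(Y,Ae)$, and only the \emph{relative} $\Ext^2$ computed in the exact category $\mod_e A$ is assumed to vanish. The correct route is via the pullback $C$ of $Ae\otimes_RK\to Ae\otimes_R(K/R)\leftarrow U$: one shows $\Ext^1_A(\mod B,C)=0$ directly, and then the sequence $0\to Ae\to C\to U\to 0$, which lies in $\mod_e A$, bounds $\Ext^1_B(\mod B,U)$ by $\Ext^2_{\mod_e A}(\mod B,Ae)=0$.

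Second, the closing sentence ``exactness of the resulting equivalence of exact categories follows from the exactness of $F$'' skips the substantive content. One must (i) check that $(\CM_e A)/[Ae]$ inherits an exact structure at all (this needs an argument, e.g.\ that every object admits a right $\add Ae$-approximation that is a categorical monomorphism, namely $TX\hookrightarrow X$), and (ii) show that $F$ induces a \emph{bijection} on $\Ext^1$, i.e.\ that the quasi-inverse is exact; this is exactly where the injective-dimension hypothesis does its real work, since $\Ext^1(X,Y)\cong\Ext^1(X,FY)$ requires the vanishing of $\Ext^2(X,TY)$. By contrast, the place where you invoke that hypothesis --- producing the embedding $FX\hookrightarrow U^n$ --- needs no hypothesis at all: comparing $0\to TX\to X\to FX\to 0$ with $0\to TX\to TX\otimes_RK\to TX\otimes_R(K/R)\to 0$ gives $FX\subseteq\Hom_A(B,TX\otimes_R(K/R))\in\add U$ for every $X\in\CM_e A$.
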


In particular, if $e$ and $g$ are idempotents of an $R$-order $A$ such that $Ae \cong \Hom_R(gA, R)$ as left $A$-modules and $B = A/(e)$ is finite dimensional, then the hypotheses of Theorem \ref{tho} are satisfied and $U$ is the injective $B$-module corresponding to the idempotent $g$ (see Theorem \ref{mainA}). Let us give a motivating example:
\begin{examplei}
 For $n \geq 1$, we consider the pair $(A,e)$ defined as follows:
 $$A := \begin{bmatrix} R & R \\ (t^n) & R \end{bmatrix} \quad \text{and} \quad e := \begin{bmatrix} 1 & 0 \\ 0 & 0 \end{bmatrix}$$
 We have $Ae \cong \Hom_R((1-e) A, R)$ and $B = A / (e) \cong k[t]/(t^n)$. So according to Theorem \ref{tho}, $(\CM_e A)/[Ae] \cong \Sub U = \mod B$. Notice that here $\CM (e A e) = \proj (e A e)$ so $\CM_e A = \CM A$. We can illustrate this fact by drawing the Auslander-Reiten quivers of $\CM A$ and $\mod B$:
 $$
\xymatrix@R=1.5cm@C=2.2cm@!=0cm{
  \CM_e A : \ar[d]_{B \otimes_A -}   \\
  \mod B : 
 }
 \xymatrix@R=1.5cm@C=2.2cm@!=0cm{
   \boxinminipage{$\begin{bmatrix} R \\ (t^n) \end{bmatrix}$} \ar@<3pt>[r] & \boxinminipage{$\begin{bmatrix} R \\ (t^{n-1}) \end{bmatrix}$} \ar@<3pt>[l]^-{t} \ar@<3pt>[r] & \boxinminipage{$\begin{bmatrix} R \\ (t^{n-2}) \end{bmatrix}$} \ar@<3pt>[l]^-{t} \ar@<3pt>[r] & \cdots \ar@<3pt>[l]^-{t} \ar@<3pt>[r] & \boxinminipage{$\begin{bmatrix} R \\ (t) \end{bmatrix}$} \ar@<3pt>[l]^-{t} \ar@<3pt>[r] & \boxinminipage{$\begin{bmatrix} R \\ R \end{bmatrix}$} \ar@<3pt>[l]^-{t}  \\
   & k[t]/(t) \ar@<3pt>[r]^-{t} & k[t]/(t^2) \ar@<3pt>[l] \ar@<3pt>[r]^-{t} & \cdots \ar@<3pt>[l] \ar@<3pt>[r]^-{t} & k[t]/(t^{n-1}) \ar@<3pt>[l] \ar@<3pt>[r]^-{t} & k[t]/(t^n) \ar@<3pt>[l]
 }$$
 where projective-injective objects are leftmost and rightmost in the first row and only rightmost in the second row. On the other objects, the Auslander-Reiten translation acts as the identity.
\end{examplei}

 As an application of Theorem \ref{tho}, we get the following, which is fundamental for Theorem \ref{thparflag}:

 \begin{corollaryi}[Corollary of Theorem \ref{mainA}] \label{thselfinj}
  Let $B$ be a finite dimensional selfinjective $k$-algebra. We define a Gorenstein order $A$ over $R = k\llbracket t \rrbracket$ and an idempotent $e$ of $A$ by
  $$A := B \otimes_{k} \begin{bmatrix} R & R \\ tR & R\end{bmatrix} \quad \text{and} \quad e := \begin{bmatrix} 1 & 0 \\ 0 & 0 \end{bmatrix}.$$
  Then we have an equivalence of exact categories $(\CM_e A)/[Ae] \cong \mod B$ which induces a triangle equivalence $\underline{\CM}_e A \cong \underline{\mod} B$ between stable categories.
 \end{corollaryi}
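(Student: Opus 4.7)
The plan is to reduce the statement to Theorem \ref{mainA} applied with the complementary idempotent $g := 1 \otimes \begin{bmatrix} 0 & 0 \\ 0 & 1 \end{bmatrix}$, so that $e$ and $g$ are orthogonal with $e + g = 1_A$. Write $M := \begin{bmatrix} R & R \\ tR & R \end{bmatrix}$ and let $e_1, e_2$ denote its diagonal primitive idempotents, so that $e = 1 \otimes e_1$ and $g = 1 \otimes e_2$.

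First I would verify the finite-dimensionality hypothesis $B = A/(e)$ of Theorem \ref{mainA}. A direct matrix computation gives $M e_1 M = \begin{bmatrix} R & R \\ tR & tR \end{bmatrix}$ and hence $M/Me_1M \cong k$, so $(e) = AeA = B \otimes_k Me_1 M$ and $A/(e) \cong B \otimes_k k = B$, which is finite dimensional by assumption.

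The main step is to establish the isomorphism $Ae \cong \Hom_R(gA, R)$ of left $A$-modules, which is the hypothesis of Theorem \ref{mainA}. We have $Ae = B \otimes_k M e_1$ and $gA = B \otimes_k e_2 M$. Since $B$ is finite dimensional over $k$ and $e_2 M$ is free of finite rank over $R$, there is a natural $A$-equivariant isomorphism
$$\Hom_R(B \otimes_k e_2 M, R) \cong \Hom_k(B, k) \otimes_k \Hom_R(e_2 M, R).$$
A direct calculation on the hereditary matrix order $M$ with explicit bases shows $\Hom_R(e_2 M, R) \cong M e_1$ as left $M$-modules, and the selfinjectivity of $B$ yields $\Hom_k(B, k) \cong B$ as left $B$-modules. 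Tensoring these two identifications produces the desired isomorphism. The same reasoning applied to all of $A$ gives $\Hom_R(A, R) \cong A$ as left (and symmetrically right) $A$-modules, so $A$ is selfdual over $R$ and therefore Gorenstein.

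Theorem \ref{mainA} then yields an equivalence of exact categories $(\CM_e A)/[Ae] \cong \Sub U$, where $U$ is the injective $B$-module corresponding to the image $\bar g$ of $g$ in $B$. Since $e + g = 1_A$, we have $\bar g = 1_B$, and hence $U \cong \Hom_k(B, k)$, which is an injective cogenerator of $\mod B$; therefore $\Sub U = \mod B$. For the induced triangle equivalence, $\mod B$ is Frobenius with projective-injectives $\add B$, and the equivalence sends the summand $Ag$ of $A$ to $B\bar g = B$. Combined with $[Ae]$ being already quotiented out on the source, this identifies the projective-injectives of $\CM_e A$ as $\add(Ae \oplus Ag) = \add A$, so $\CM_e A$ is Frobenius and the exact equivalence descends to a triangle equivalence $\underline{\CM}_e A \cong \underline{\mod} B$. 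The principal obstacle is the $A$-equivariance check in the isomorphism $Ae \cong \Hom_R(gA, R)$, which requires tracking how the $k$-duality on the $B$-factor and the $R$-duality on the matrix factor combine under the tensor product to produce the correct left $A = B \otimes_k M$-module structure.
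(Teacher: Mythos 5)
Your proposal is correct and follows exactly the route the paper indicates: the paper's own justification is the one-line remark after Theorem \ref{mainA} that the corollary is immediate once one checks $Ae \cong \Hom_R(gA,R)$ for $g := 1-e$, and your verification of that isomorphism (via $\Hom_R(e_2M,R)\cong Me_1$ and $D(B)\cong B$ from selfinjectivity) together with the identification $Q_g = D(B)$, $\Sub Q_g = \mod B$, is precisely the intended argument. The concluding Frobenius/triangle-equivalence step you give directly is what the paper packages as Theorem \ref{mainA} (e)--(f), so there is no substantive difference.
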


We also prove a categorical version of Theorem \ref{tho} in the context of exact categories:

\begin{theoremi}[Theorem \ref{abelian}] \label{thc}
 Let $\EE$ be an exact category which is $\Hom$-finite over a field $k$. We suppose that
 \begin{itemize}
  \item $(\AA, \BB)$ and $(\BB, \CC)$ are torsion pairs in $\EE$;
  \item $\EE$ has enough projective objects, which belong to $\CC$;
  \item There exists a projective object $P$ in $\EE$ which is injective in $\CC$ and satisfies $\AA=\add P$;
  \item $\BB$ is an abelian category whose exact structure is compatible with that of $\EE$.
 \end{itemize}
 Then, there is an equivalence of exact categories
 $$\CC/[\AA] \xrightarrow{\sim} \Sub U$$
 where $U$ is an (explicitly constructed) injective object of $\BB$.
\end{theoremi}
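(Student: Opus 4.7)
My plan is to produce the equivalence via the torsion-free reflector of the torsion pair $(\AA, \BB)$. I define $\Phi: \CC \to \BB$ by $\Phi(C) = b(C)$, where the canonical short exact sequence
$$0 \to C_\AA \to C \to b(C) \to 0 \quad \text{in } \EE$$
has $C_\AA \in \AA = \add P$ and $b(C) \in \BB$. The functor $b: \EE \to \BB$ is the left adjoint of the inclusion $\BB \hookrightarrow \EE$, hence right exact, and $b(P) = 0$ shows that $\Phi$ descends to $\bar\Phi: \CC/[\AA] \to \BB$. I then aim to exhibit an injective $U \in \BB$ such that $\bar\Phi$ factors through $\Sub U$ and is an equivalence onto it.

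For full faithfulness of $\bar\Phi$, I apply $\Hom_\EE(C,-)$ to the canonical sequence for $C'$. By adjunction, $\Hom_\EE(C, b(C')) = \Hom_\BB(b(C), b(C'))$, and the image of $\Hom_\EE(C, C'_\AA) \to \Hom_\EE(C, C')$ is exactly the ideal $[\AA](C, C')$, since $\AA = \add P$ is closed under images in $\EE$ (as a torsion class). The key vanishing $\Ext^1_\EE(C, C'_\AA) = 0$ follows because any extension of $C \in \CC$ by $C'_\AA \in \add P$ lies in $\CC$ (closed under extensions, as the torsion-free class of $(\BB,\CC)$) and splits by injectivity of $P$ in $\CC$. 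Exactness of $\bar\Phi$ will then follow from a $3 \times 3$-argument applied to a short exact sequence in $\CC$ together with the three canonical $(\AA,\BB)$-sequences, using projectivity of $(C_3)_\AA \in \add P$ in $\EE$ to lift the relevant maps.

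To construct $U$, I consider the contravariant functor $F = \Ext^1_\EE(-, P)|_\BB : \BB^{\op} \to \mod k$: it is left exact since $\Hom_\EE(\BB, P) = 0$ by $(\BB,\CC)$-orthogonality, and finite-valued by Hom-finiteness. I will obtain $U \in \BB$ representing $F$; concretely, one picks an admissible extension $0 \to P \to E \to T \to 0$ in $\EE$ (or in a mild enlargement in which $P$ admits an injective hull) and sets $U := b(T)$, verifying $\Hom_\BB(-, U) \cong \Ext^1_\EE(-, P)$ on $\BB$. The injectivity of $U$ in $\BB$ amounts to the vanishing of $\Ext^2_\EE(-, P)$ on $\BB$, which is forced by the interaction of the two torsion pairs with $P$ being injective in $\CC$.

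The forward inclusion $\Image \bar\Phi \subseteq \Sub U$ follows because $U$ acts as an injective cogenerator on the image: every $b(C)$ embeds into some $U^n$ via its $\BB$-injective envelope. For the reverse inclusion, given an admissible mono $X \hookrightarrow U^n$ in $\BB$, I pull back along the defining admissible epi $T^n \twoheadrightarrow U^n$ to obtain $C \hookrightarrow T^n$ in $\EE$; compatibility of the exact structures on $\BB$ and $\EE$ ensures this pullback exists, closure of $\CC$ under admissible subobjects puts $C$ in $\CC$, and exactness of $\bar\Phi$ yields $\bar\Phi(C) \cong X$. The main obstacle will be this last step: the construction of $U$ and the proof that $\Image \bar\Phi = \Sub U$. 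The reverse inclusion is particularly delicate, relying essentially on $\BB$ being abelian (not merely exact) so that the subobject pullback is well-behaved, and on the hypothesis that projectives of $\EE$ lie in $\CC$ to keep the lift inside $\CC$.
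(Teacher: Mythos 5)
Your strategy is the paper's own: the functor is the torsion reflector $F=b$ of $(\AA,\BB)$; full faithfulness on $\CC/[\AA]$ follows from $\Ext^1_\EE(\CC,\AA)=0$ (extensions of $\CC$ by $\AA\subset\CC$ stay in $\CC$ and split by injectivity of $P$ in $\CC$) together with the fact that $C'_\AA\to C'$ is a right $\AA$-approximation; and density onto $\Sub U$ is obtained by pulling an embedding $X\hookrightarrow U^n$ back along the universal extension. The architecture is right, but the load-bearing step is missing.

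The gap is the construction of $U$ together with the middle term you call $T$. You propose to ``pick an admissible extension $0\to P\to E\to T\to 0$ in $\EE$ (or in a mild enlargement in which $P$ admits an injective hull)'' and set $U:=b(T)$. An arbitrary extension will not represent $\Ext^1_\EE(-,P)|_\BB$, the hypotheses provide no injective hull of $P$ in $\EE$, and no enlargement is available. What is needed is a specific universal extension $0\to P\to C^P\to U^P\to 0$ with $U^P\in\BB$, $C^P\in\CC$ and $\Ext^1_\EE(\BB,C^P)=0$. The paper obtains it in Theorem \ref{abelian} (a) as the extension classified by a projective cover of the finitely generated functor $\Ext^1_\EE(-,P)|_\BB$ over the Krull--Schmidt category $\BB$ (the dual of Lemma \ref{AR generalization}, an Auslander--Reiten-type argument), and then proves $C^P\in\CC$ by a separate minimality argument that genuinely uses the torsion pair $(\BB,\CC)$ and the abelian structure of $\BB$. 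Only once $C^P\in\CC$ is known does $\Hom_\EE(\BB,C^P)=0$ hold, and that is what upgrades the projective cover to the isomorphism $\Hom_\BB(-,U)\cong\Ext^1_\EE(-,P)|_\BB$ you assert; left exactness and Hom-finiteness alone do not yield representability. Two further consequences of the missing construction: your forward inclusion ``$b(C)$ embeds into $U^n$ via its $\BB$-injective envelope'' is circular, since injectivity of $U$ does not make it a cogenerator --- the correct argument maps the canonical sequence $0\to TC\to C\to FC\to 0$ to the universal extension of $TC\in\add P$ and checks that the induced map $FC\to U^{TC}$ is a monomorphism using $\Hom_\EE(\BB,\CC)=0$; and the injectivity of $U$ in $\BB$ must be deduced from $\Ext^1_\EE(\BB,C^P)=0$ together with $\Ext^2_\EE(\BB,P)=0$, the latter holding because $\Omega_\EE(\EE)\subset\CC$ and $P$ is injective in $\CC$ --- a vanishing you invoke but do not prove.
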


Notice that we need and we prove more general versions of Theorems \ref{tho} and \ref{thc}, with more technical hypotheses and more precise conclusions.

The structure of this paper is as follows. In Section \ref{s:orders}, we explain main results about orders over an arbitrary complete discrete valuation ring $R$, and provide more general and more detailed versions of Theorem \ref{tho}. We also give a systematic way to construct pairs $(A, e)$ satisfying the hypotheses of Theorem \ref{tho} for a prescribed algebra $B$. The results of Section \ref{s:orders} are proven in Section \ref{ss:pforders}. In Section \ref{exactstr}, we recall the basics of exact categories and we give sufficient conditions for an ideal quotient category $\EE/[\FF]$ of an exact category $\EE$ by a subcategory $\FF$ of projective-injective objects to inherit the exact structure of $\EE$. In Section \ref{categ}, we give extended versions of Theorem \ref{thc}. Finally, in Section \ref{parflag}, we prove Theorem \ref{thparflag}.

\subsection*{Acknowledgement}
 We would like to thank Alastair King and Bernard Leclerc for valuable discussion about this topic.

\section{Main results} \label{s:orders}

\subsection{Orders} \label{ss:orders}
 
Let $R$ be a complete discrete valuation ring and $K$ be its field of fractions. Let $A$ be an \emph{$R$-order}, \emph{i.e.} an $R$-algebra which is free of finite rank as an $R$-module. We denote by $\fl A$ the full subcategory of $\mod A$ consisting of finite length $A$-modules, or equivalently $A$-modules which are finite length over $R$. Recall that, in this context, a finitely generated $A$-module $X$ is \emph{(maximal) Cohen-Macaulay} if the following equivalent conditions are satisfied:
 \begin{enumerate}[\rm (i)]
  \item $X$ is free (of finite rank) as an $R$-module;
  \item $\Hom_A(\fl A, X) = 0$, or equivalently $\soc X = 0$;
  \item $\Ext^1_A(X, \Hom_R(A, R)) = 0$, or equivalently, for any $i > 0$, $\Ext^i_A(X, \Hom_R(A, R)) = 0$.
 \end{enumerate}
We denote by $\CM A$ the exact full subcategory of $\mod A$ consisting of Cohen-Macaulay $A$-modules. Since $A$ is an $R$-order, both $A$ and $\Hom_R(A, R)$ are in $\CM A$. It is clear from (ii) that $(\fl A, \CM A)$ is a torsion pair in $\mod A$, which can be seen as coming from the co-tilting $A$-module $\Hom_R(A, R)$.

For an idempotent $e$ of $A$, we consider a full subcategory of $\CM A$:
\[ \CM_e A := \{X \in \CM A \mid e X \in \proj(e A e)\}.\]
This is clearly closed under extensions, and hence forms an exact category naturally. If $eAe$ is a hereditary order (\emph{i.e.} $\gl eAe = 1$), then $\CM_eA=\CM A$ holds because $\CM(eAe)=\proj(eAe)$.

Our first main Theorem, generalizing \cite{JeKiSu}, is the following one:
\begin{theorem}\label{mainA}
 Let $A$ be an $R$-order, and $e$ be an idempotent of $A$.
Assume that the following conditions are satisfied:
\begin{itemize}
\item $B:=A/(e)$ satisfies $\length_RB<\infty$.
\item There is an idempotent $g \in A$ such that $\add A e = \add \Hom_R(g A, R)$ as $A$-modules.
\end{itemize}
Then the following assertions hold.
\begin{enumerate}[\rm (a)]
  \item We have an equivalence of exact categories
\[F= B \otimes_A -:  (\CM_e A)/[A e] \xrightarrow{\sim} \Sub Q_g\]
 where $Q_g$ is the injective $B$-module associated with the image of the idempotent $g$ in $B$. 
 \item A quasi-inverse of $F$ is $\Hom_R(\Omega_A \Hom_R(-, K/R), R)$
  where $\Omega_A$ is the syzygy over $A$.
\end{enumerate}
We assume in addition that the following hypotheses hold:
\begin{itemize}
 \item There is an idempotent $f \in A$ such that $\add A f = \add \Hom_R(e A, R)$ as $A$-modules.
 \item $e A e$ is a Gorenstein order.
\end{itemize}
Then the following conclusions hold:
 \begin{enumerate}[\rm (a)] \rs{2}
  \item The module $Q_g$ is a projective $B$-module satisfying $\add Q_g  = \add Bf $.
  \item If $A \in \CM_e A$, then $\Sub Q_g = \Sub B$.
 \end{enumerate}
We suppose in addition that $A$ and $\Hom_R(A, R)$ are in $\CM_e A$.
 \begin{enumerate}[\rm (a)] \rs{4}
   \item  The order $A$ is Gorenstein if and only if $B$ is Iwanaga-Gorenstein of dimension at most one, \emph{i.e.} $\injdim {}_B B \leq 1$ and $\injdim B_B \leq 1$.
   \item If the conditions in (e) are satisfied, then we have triangle equivalences
\[\underline{\CM}_e A\cong\underline{\Sub} Q_g = \underline{\Sub} B.\]
 where $\underline{\CM}_e A := (\CM_e A)/[A]$ and $\underline{\Sub} B = (\Sub B)/[B]$.
 \end{enumerate}
\end{theorem}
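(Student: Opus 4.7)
The plan is to obtain parts (a) and (b) as immediate consequences of Theorem~\ref{mainB} once its hypotheses are verified and $U$ is identified with $Q_g$. First, $Ae \cong \Hom_R(gA, R)$ realizes $Ae$ as the $R$-dual of a projective right $A$-module; since the standard duality $\Hom_R(-, R) \colon \CM A^{\op} \to \CM A$ exchanges projectives and injectives, $Ae$ is injective in $\CM A$ and hence in $\CM_e A$. For any $M \in \mod A$, the syzygy $\Omega_A M$ embeds in a projective module and is therefore $R$-free, so it lies in $\CM A$; the injectivity of $Ae$ in $\CM A$ then gives $\Ext^2_A(M, Ae) = \Ext^1_A(\Omega_A M, Ae) = 0$, proving $\injdim Ae \leq 1$ in $\mod A$ and a fortiori in $\mod_e A$. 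Using that $gA$ is $R$-free of finite rank together with Hom-tensor adjunction,
\[
U = \Hom_A(B, Ae \otimes_R K/R) \cong \Hom_A(B, \Hom_R(gA, K/R)) \cong \Hom_R(gB, K/R),
\]
and this Matlis dual of the finite-length $R$-module $gB$ is the injective $B$-module $Q_g$ attached to the image of $g$ in $B$. Theorem~\ref{mainB} then delivers (a) and (b).

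For (c), the symmetric hypothesis $Af \cong \Hom_R(eA, R)$ makes $Af$ injective in $\CM A$, and extracting the $e$-part from both symmetric isomorphisms yields $eAf \cong \Hom_R(eAe, R)$ and $gAe \cong \Hom_R(eAe, R)$ as left and right $eAe$-modules respectively. By Gorensteinness of $eAe$, both lie in $\add eAe$ on the appropriate side; in particular $Af \in \CM_e A$ and is projective-injective there. Applying $F$, $Bf = F(Af)$ is projective-injective in $\Sub Q_g$, so $Bf$ is a projective $B$-module and lies in $\add Q_g$. For the converse inclusion $Q_g \in \add Bf$, apply $\Hom_R(-, R)$ to the exact sequence $0 \to gAeA \to gA \to gB \to 0$ in $\mod A^{\op}$; using $\Ext^1_R(gB, R) \cong \Hom_R(gB, K/R) = Q_g$ for the finite-length module $gB$, one obtains an exact sequence $0 \to Ae \to \Hom_R(gAeA, R) \to Q_g \to 0$ in $\mod A$, so modulo $\add Ae$ we have $F^{-1}(Q_g) = \Hom_R(gAeA, R)$. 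The multiplication map $gAe \otimes_{eAe} eA \to gA$ has image $gAeA$ and is injective (the projectivity of $gAe$ as a right $eAe$-module, via the Gorenstein hypothesis, ensures exactness and control of the kernel), so $gAeA \in \add eA$ as right $A$-module. Dualizing, $\Hom_R(gAeA, R) \in \add \Hom_R(eA, R) = \add Af$, and applying $F$ yields $Q_g \in \add Bf$, completing (c). For (d), if $A \in \CM_e A$ then $B = F(A) \in \Sub Q_g$, so $\Sub B \subseteq \Sub Q_g$; conversely (c) gives $Q_g \in \add Bf \subseteq \add B$, forcing $\Sub Q_g \subseteq \Sub B$, whence equality.

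For (e), $A$ is a Gorenstein order iff $\add_A A = \add_A \Hom_R(A, R)$ on both sides (equivalently $\CM A$ is Frobenius). Under the assumption $A, \Hom_R(A, R) \in \CM_e A$, the equivalence $F$ carries $A$ to $B$, and a calculation parallel to that in (c) identifies $F(\Hom_R(A, R))$ with the Matlis dual $\Hom_R(B, K/R)$ of $B$ viewed as a $B$-module. The Gorenstein condition on $A$ then translates into $\add_B B = \add_B \Hom_R(B, K/R)$ on both sides, which is precisely the Iwanaga-Gorenstein condition of dimension $\leq 1$ for $B$. For (f), under (e), $\CM A$ is Frobenius and the extension-closed subcategory $\CM_e A$ inherits the Frobenius structure with projective-injectives $\add A \cap \CM_e A$; the stable category $\underline{\CM}_e A$ is then obtained from $(\CM_e A)/[Ae]$ by quotienting further by the class $[A]/[Ae]$ of remaining projective-injectives. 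Under $F$ this class corresponds to $[\add Bf] = [\add Q_g]$ in $\Sub Q_g$, producing the triangle equivalence $\underline{\CM}_e A \cong \underline{\Sub} Q_g$, and combined with (d) we conclude $\underline{\Sub} Q_g = \underline{\Sub} B$. The main technical obstacle is in (c), specifically in the rigorous justification that $gAeA \in \add eA$ as right $A$-modules using the Gorenstein hypothesis on $eAe$ and the structure of the multiplication map, together with the careful bookkeeping of projective-injective structures through the equivalence $F$ in parts (e) and (f).
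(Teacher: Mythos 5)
Your parts (a), (d) and (f) are essentially sound, and your identification of $U=\Hom_A(B,Ae\otimes_R(K/R))$ with $Q_g$ via the Matlis dual $\Hom_R(gB,K/R)$ is a clean alternative to the paper's route (which computes $\soc U\cong\Bcotop Ae$ and invokes injectivity of $U$). However, there are three real problems. First, part (b) is not proved at all: Theorem \ref{mainB} gives \emph{an} equivalence but says nothing about the specific formula $\Hom_R(\Omega_A\Hom_R(-,K/R),R)$; one still has to check (as the paper does) that for $M\in\Sub Q_g$, dualizing a projective cover $0\to\Omega_A \Hom_R(M,K/R)\to P\to \Hom_R(M,K/R)\to 0$ in $\mod A^{\op}$ yields $0\to\Hom_R(P,R)\to\Hom_R(\Omega_A\Hom_R(M,K/R),R)\to M\to 0$ with $\Hom_R(P,R)\in\add Ae$ (here $P\in\add gA$ is where $\add Ae=\add\Hom_R(gA,R)$ enters), so that the middle term lies in $\CM_eA$ and is sent to $M$ by $F$. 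Second, in (c) the injectivity of the multiplication map $gAe\otimes_{eAe}eA\to gA$ does not follow from projectivity of $gAe$ over $eAe$ alone; you need that $\length_RB<\infty$ makes $e$ a full idempotent of $A\otimes_RK$, so the map becomes an isomorphism after $-\otimes_RK$, and then the kernel vanishes because $gAe\otimes_{eAe}eA\in\add eA$ is $R$-torsion-free. This is fixable, but as written (and as you acknowledge) it is a gap; the paper sidesteps it entirely by proving $Bf\cong\Hom_R(gB,K/R)$ directly, showing $\soc Bf\hookrightarrow\soc Q_g$ (hence $Bf\hookrightarrow Q_g$ by injectivity), dually $gB\hookrightarrow\Hom_R(Bf,K/R)$, and comparing lengths.

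The most serious error is in (e). The condition $\add_BB=\add_B\Hom_R(B,K/R)$ on both sides is \emph{self-injectivity} of $B$, i.e.\ Iwanaga--Gorenstein of dimension $0$, not dimension $\leq 1$: a hereditary non-semisimple $B$ has $\injdim{}_BB\leq 1$ and $\injdim B_B\leq 1$ but $\add B\neq\add \Hom_R(B,K/R)$. So even granting your unproved identification $F(\Hom_R(A,R))\cong\Hom_R(B,K/R)$, your translation establishes the wrong equivalence. The correct argument (the paper's) does not translate $\add A=\add\Hom_R(A,R)$ through $F$ object by object; it passes through the Frobenius property: using $A,\Hom_R(A,R)\in\CM_eA$ one shows $A$ is Gorenstein iff $\CM_eA$ is Frobenius, which transfers to $\Sub Q_g$ being Frobenius via the exact bijective functor $F$, and Proposition \ref{frobprop} / Theorem \ref{frob} characterize this by: every object of $\add Q_g=\add Bf$ is projective-injective in $\mod B$, projectives of $B$ have injective dimension $\leq 1$, and injectives have projective dimension $\leq 1$ --- which is exactly Iwanaga--Gorensteinness of dimension $\leq 1$. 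You should rework (e) along these lines.
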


Corollary \ref{thselfinj} presented in the introduction is an immediate consequence of Theorem \ref{mainA} as it is immediate that $Ae \cong \Hom_R(g A, R)$ for $g := 1-e$ in that case. In this paper, a more general version of Theorem \ref{mainA} plays an important role. Again let $A$ be an $R$-order and $e$ an idempotent of $A$. Let
\[\mod_e A := \{X \in \mod A \mid e X \in \proj(eAe) \}.\]
We consider the following conditions:
\begin{itemize}
 \item[(E1)\hphantom{\pup*}] $Ae$ is injective in $\CM_e A$, or equivalently, $\Ext^1_A(\CM_e A, Ae) = 0$;
 \item[(E2)\hphantom{\pup*}] $\Ext^2_{\mod_e A}(\mod_e A, Ae) = 0$;
 \item[(E2)\pup*] $\Ext^2_A(\mod_e A, Ae) = 0$.
\end{itemize}

We recall the definition of $\Ext^i_\EE$'s in Section \ref{exactstr} for exact categories $\EE$. Notice that, for a subcategory $\EE$ of $\mod A$, $\Ext^i_{\EE}$ is not necessarily the restriction of $\Ext^i_{A}$, except for $i = 1$.
In Lemma \ref{puprem}, we prove the following implications:
\begin{itemize}
 \item We have (E2)\pup* $\Rightarrow$ (E2).
 \item If $Ae=\Hom_R(gA,R)$ for some idempotent $g \in A$, then (E1) and (E2)\pup* are satisfied.
 \item If (E1) is satisfied and $A \in \CM_e A$, then (E2)\pup* is satisfied.
\end{itemize} 

Theorem \ref{mainA} follows from the following result:

\begin{theorem}\label{mainB}
Let $A$ be an $R$-order and $e$ an idempotent of $A$ such that $B:=A/(e)$ satisfies $\length_RB<\infty$.
\begin{enumerate}[\rm (a)]
 \item $(\add A e, \mod B)$ and $(\mod B, \CM_e A)$ are torsion pairs in $\mod_e A$.
 \item Let $\EE_1 := \{X\in\mod_e A\mid \Ext^1_A(X,Ae)=0\}$. We have an equivalence
\begin{equation}\label{functor F}
B \otimes_A -:  \EE_1/[Ae] \xrightarrow{\sim} \mod B.
\end{equation}
\end{enumerate}
\noindent If (E1) is satisfied, then the following assertion holds.
\begin{enumerate}[\rm (a)]  \rs{2}
 \item Let $U:=\Hom_A(B,Ae\otimes_R(K/R))\in\mod B$ where $K$ is the fraction field of $R$. The equivalence \eqref{functor F} restricts to an equivalence
\begin{equation}\label{functor F2}
B \otimes_A -:  (\CM_e A)/[A e] \xrightarrow{\sim} \Sub U.
\end{equation}
\end{enumerate}
If (E1) and (E2) are satisfied, then the following assertions hold.
 \begin{enumerate}[\rm (a)] \rs{3}
  \item $U$ is an injective $B$-module.
  \item \eqref{functor F} and \eqref{functor F2} are equivalences of exact categories, where $\EE_1/[Ae]$ and $(\CM_e A)/ [Ae]$ inherit canonically the exact structure of $\EE_1$ and $\CM_e A$ (see Section \ref{exactstr}).
  \item The exact categories $\EE_1$, $\CM_e A$, $\mod_e A$ and $\Sub U$ have enough projective objects and enough injective objects.
  \item Let $P$ be a projective cover of $\soc U$ as a $B$-module. Then, we have the equality $\EE_1 = \{X \in \mod_e A \mid \Hom_A(P, X) = 0\}$.
 \end{enumerate}
\end{theorem}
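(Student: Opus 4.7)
The plan is to prove the assertions in order, with the torsion pair of~(a) providing the scaffolding and the homological identification $\Ext^1_A(N, X) \cong \Hom_A(N, X \otimes_R (K/R))$ for $N \in \mod B$ and $X \in \CM A$ serving as the main technical tool. For~(a), I verify each torsion pair directly. The vanishings $\Hom_A(Ae, \mod B) = 0$ and $\Hom_A(\mod B, \CM_e A) = 0$ are immediate: $\Hom_A(Ae, Y) = eY = 0$ for $Y \in \mod B$, while the maximal $B$-submodule of any $X \in \CM_e A$ is both $R$-free and of finite $R$-length, hence zero. The short exact sequence for the first torsion pair is given by the multiplication map $\mu \colon Ae \otimes_{eAe} eX \to X$, whose source lies in $\add Ae$ (as $eX \in \proj(eAe)$) and whose kernel is simultaneously $R$-free and a $B$-module, hence zero; its cokernel $X/AeX = B \otimes_A X$ lies in $\mod B$. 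For the second torsion pair, the $R$-torsion submodule $T \subseteq X$ is automatically annihilated by $e$ (since $eT \subseteq eX$ is $R$-torsion and $eX$ is $R$-free), so $T \in \mod B$ and $X/T \in \CM_e A$.

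For~(b), since $B \otimes_A Ae = 0$ the functor factors through $\EE_1/[Ae]$. Essential surjectivity: for $M \in \mod B$, take generators $\xi_1, \ldots, \xi_n$ of $\Ext^1_A(M, Ae)$ as an $R$-module and form the corresponding extension $0 \to Ae^n \to X \to M \to 0$; the long exact sequence of $\Hom_A(-, Ae)$, together with $\Ext^1_A(Ae^n, Ae) = 0$, yields $X \in \EE_1$, while right exactness of $B \otimes_A -$ gives $B \otimes_A X \cong M$. Fullness and faithfulness modulo $[Ae]$ follow from applying $\Hom_A(X, -)$ to the first torsion pair sequence $0 \to AeY \to Y \to B \otimes_A Y \to 0$: the hypothesis $X \in \EE_1$ yields a surjection $\Hom_A(X, Y) \twoheadrightarrow \Hom_B(B \otimes_A X, B \otimes_A Y)$, while any $\phi \colon X \to Y$ killed by $B \otimes_A -$ factors through $AeY \in \add Ae$.

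For~(c), under~(E1) we have $\CM_e A \subseteq \EE_1$. Applying $\Hom_A(N, -)$ to $0 \to X \to X \otimes_R K \to X \otimes_R (K/R) \to 0$ for $X \in \CM A$ and $N \in \mod B$, all terms involving $X \otimes_R K$ vanish because $\Ext^i_A(N, -)$ commutes with $- \otimes_R K$ on finitely presented $N$ and $\Ext^i_A(N, X)$ is $R$-torsion (as $N$ is annihilated by some power of the uniformizer). This produces the isomorphism $\Ext^1_A(N, X) \cong \Hom_A(N, X \otimes_R (K/R))$, which specialises to $U \cong \Ext^1_A(B, Ae)$. Applying $\Hom_A(B, -)$ to $0 \to AeX \to X \to B \otimes_A X \to 0$ then yields $B \otimes_A X \hookrightarrow \Ext^1_A(B, AeX) \cong U^n$ (with $n$ chosen so that $AeX$ is a summand of $Ae^n$), showing $B \otimes_A X \in \Sub U$. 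Conversely, given an inclusion $M \hookrightarrow U^n$, the Grothendieck spectral sequence $\Ext^p_B(M, \Ext^q_A(B, Ae^n)) \Rightarrow \Ext^{p+q}_A(M, Ae^n)$ (whose $E_2^{p, 0}$ row vanishes since $\Hom_A(B, Ae^n) = 0$) collapses to identify $\Ext^1_A(M, Ae^n) \cong \Hom_B(M, U^n)$; the extension $X$ classified by the inclusion is realised as the pullback of $0 \to Ae^n \to Ae^n \otimes_R K \to Ae^n \otimes_R (K/R) \to 0$ along $M \hookrightarrow U^n \hookrightarrow Ae^n \otimes_R (K/R)$, producing $X \hookrightarrow Ae^n \otimes_R K$ which is $R$-free, hence $X \in \CM_e A$ with $B \otimes_A X = M$.

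For~(d), take a projective resolution $0 \to K \to Q \to M \to 0$ of $M$ in $\mod B$; since $\mod_e A$ is extension-closed in $\mod A$, $\Ext^1_{\mod_e A}$ agrees with $\Ext^1_A$ on $\mod_e A$, and the long exact sequence of $\Ext^\bullet_{\mod_e A}(-, Ae)$ combined with the identification $\Ext^1_A(-, Ae) = \Hom_B(-, U)$ on $\mod B$ produces an injection $\Ext^1_B(M, U) \hookrightarrow \Ext^2_{\mod_e A}(M, Ae)$, which vanishes by~(E2). Parts~(e)--(g) then follow: exactness of $B \otimes_A -$ reduces to the vanishing of $\Tor_1^A(B, X)$ on $\CM_e A$ (the dual of the $\Ext^1$-vanishing already established), enough projectives and injectives in each category are obtained by transferring those of $\Sub U$ across the equivalence, and~(g) follows by tracing $\Ext^1_A(X, Ae)$ through the socle of $B \otimes_A X$ inside $U$ via the projective cover $P \twoheadrightarrow \soc U$. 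The main obstacle I anticipate is the converse direction of~(c): matching the pullback construction with the Grothendieck spectral sequence identification so that a prescribed inclusion $M \hookrightarrow U^n$ is indeed realised as the extension class of the resulting $X \in \CM_e A$, and ensuring that the Ext-in-subcategory formulation of~(E2) is strong enough to drive the injectivity argument of~(d) without upgrading to~(E2)\pup*.
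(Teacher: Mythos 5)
Your treatment of (a)--(d) is correct and, apart from working directly with the order instead of routing through the paper's abstract torsion-pair machinery (Theorems \ref{categ1} and \ref{abelian}), it follows the same path: the same two torsion pairs, the same finitely-generated-$\Ext^1$ device for density of \eqref{functor F} (a simplified form of Lemma \ref{AR generalization}), the identification $\Ext^1_A(N,X)\cong\Hom_A(N,X\otimes_R(K/R))$ for $N\in\fl A$ and $X\in\CM A$ coming from Lemma \ref{injfl}, the pullback of $0\to Ae^n\to Ae^n\otimes_RK\to Ae^n\otimes_R(K/R)\to 0$ for essential surjectivity onto $\Sub U$ (this is exactly Lemma \ref{U is universal}), and the use of (E2) via the Yoneda long exact sequence in $\mod_eA$ to kill $\Ext^1_B(-,U)$. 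One remark: the ``obstacle'' you anticipate in (c) is not one. For essential surjectivity you only need $B\otimes_AX\cong M$, which is automatic from right exactness of $B\otimes_A-$ and $B\otimes_AAe^n=0$; you never have to match the extension class of the constructed $X$ with the prescribed inclusion $M\hookrightarrow U^n$, so the Grothendieck spectral sequence is not needed.

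The genuine gap is in (e). You reduce ``equivalences of exact categories'' to the vanishing of $\Tor_1^A(B,-)$ on $\CM_eA$; but that only yields exactness of the functor, and in fact $B\otimes_A-$ is exact on all of $\mod_eA$ unconditionally (apply the snake lemma to the functorial sequence $0\to Ae\otimes_{eAe}e(-)\to(-)\to B\otimes_A(-)\to 0$, the left-hand functor being exact on $\mod_eA$ because $e(-)$ lands in $\proj(eAe)$). Since (e) genuinely consumes (E1) and (E2), your reduction cannot be the right one. Two things are actually at stake. First, the ideal quotients $\EE_1/[Ae]$ and $(\CM_eA)/[Ae]$ must be shown to carry a well-defined inherited exact structure at all; this is Theorem \ref{eq1}/Corollary \ref{cdexactm}, using that $TX\to X$ is a right $\add Ae$-approximation which is a categorical monomorphism, together with $\Ext^1_A(\EE_1,Ae)=0$ and the projectivity of $Ae$. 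Second, $B\otimes_A-$ must induce isomorphisms on $\Ext^1$, which is the two-step comparison $\Ext^1_{\mod_eA}(X,Y)\cong\Ext^1_{\mod_eA}(X,FY)\cong\Ext^1_{\mod_eA}(FX,FY)$ obtained by applying $\Hom(X,-)$ to $0\to TY\to Y\to FY\to 0$ and $\Hom(-,FY)$ to $0\to TX\to X\to FX\to 0$; it is exactly the first step that needs (E2) in the form $\Ext^2_{\mod_eA}(X,TY)=0$ (the Yoneda $\Ext^2$ of $\mod_eA$, not of $\mod A$ --- the same distinction you correctly make in (d)). A smaller gap of the same kind occurs in (f): transferring enough projectives and injectives across the equivalences handles $\CM_eA$, $\EE_1$ and $\Sub U$, but $\mod_eA$ is not equivalent to any of these, and a separate construction is required there (projective covers of the form $P_0\oplus TX$, and injective hulls built from $C^{Ae}$ together with the horseshoe lemma applied to the torsion pair $(\mod B,\CM_eA)$).
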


\subsection{Change of orders} \label{chgord}
We give a systematic method to construct pairs of orders and their idempotents which satisfy the conditions (E1) and (E2).

Let $A$ be an $R$-order, $e$ an idempotent of $A$ and $B$ a factor algebra of $A/(e)$. We suppose that the following two conditions are satisfied.
\begin{enumerate}[\rm (C1)]
\item $\length_RB<\infty$;
\item $B\in\Sub(Ae\otimes_R(K/R))$.
\end{enumerate}

Let $\mod_e^BA$ be the category of all $X\in\mod A$ such that there exists an exact sequence
\[0\rightarrow P\rightarrow X\rightarrow Y\rightarrow 0\]
with $P\in\add Ae$ and $Y\in\mod B$. Let $\CM_e^BA:=\CM A\cap\mod_e^BA$ and consider the condition:
\begin{enumerate}[\rm (C1)] \rs{2}
 \item $\Ext^1_{A}(\CM_{e}^{B}A,Ae)=0$.
\end{enumerate}

We will construct a new order $A'$ under this setting. Thanks to (C2), there is a monomorphism $\iota: B \hookrightarrow (Ae \otimes_R (K/R))^{\oplus \ell}$.  Applying $Ae^{\oplus \ell} \otimes_R -$ to the exact sequence $0 \rightarrow R \rightarrow K \rightarrow K/R \rightarrow 0$ and taking a pullback via $\iota$, we get a short exact sequence
\[0\rightarrow P\rightarrow \widetilde{B}\rightarrow B\rightarrow 0\]
with $P\in\add Ae$ and $\widetilde{B}\in\CM A$. We clearly have $\widetilde{B} \in \CM_e^B A$. Using (C3), one can check $\tilde B$ is independent of the choice of $\iota$ up to a direct summand in $\add Ae$ (see Theorem \ref{categ1} (a)).
\begin{itemize}
\item Let $W:=Ae\oplus\widetilde{B}$ and $A':=\End_A(W)$.
\end{itemize}
We can regard naturally $e$ as an idempotent of $A'$. Notice that $A'$ is uniquely defined up to Morita equivalence.

\begin{theorem} \label{thmchangidem2}
We assume that (C1), (C2) and (C3) hold. Then the following assertions hold.
\begin{enumerate}[\rm (a)]
\item We have a canonical isomorphism $B\cong A'/(e)$ of $R$-algebras.
\item We have (E1) $\Ext^1_{A'}(\CM_{e}A',A'e)=0$ and 
(E2)\pup* $\Ext^2_{A'}(\mod_{e}A',A'e)=0$.
\item Let $U:=\Hom_{A'}(B,A'e\otimes_R(K/R))\in\mod B$.
Then $U$ is an injective $B$-module and we have an equivalence of exact categories
\[B\otimes_{A'}-:(\CM_{e}A')/[A'e]\xrightarrow{\sim}\Sub U.\]
\item The functors $\Hom_A(W, -) : \mod A \to \mod A'$ and $W \otimes_{A'} - : \mod A' \to \mod A$ induce quasi-inverse equivalences of exact categories between $\mod_e^B A$ and $\mod_e A'$ on the one hand, and between $\CM_e^B A$ and $\CM_e A'$ on the other hand.
\item We have a commutative diagram
\[\xymatrix{
\CM_eA'\ar[rr]^{B\otimes_{A'}-}\ar[d]_{W\otimes_{A'}-}^\wr
&&\Sub U\ar@{^{(}->}[d]\\
\CM_e^BA\ar[rr]_{B\otimes_A-}&&\mod B
}\]
where all functors induce isomorphisms of $\Ext^1$ and the left side is an equivalence of exact categories.
\end{enumerate}
\end{theorem}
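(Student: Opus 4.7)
The overall strategy is to first establish the structural identifications (a) and (d), then transport condition (C3) to obtain the homological vanishings in (b), apply Theorem \ref{mainB} to deduce (c), and finally combine the previous parts for (e).

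For (a), I would read off $A'/(e)$ directly from $A'=\End_A(Ae\oplus\widetilde{B})$: the idempotent $e\in A'$ is the projection onto the $Ae$ summand, so $A'/(e)\cong\End_A(\widetilde{B})/[\add Ae]$. Applying $\Hom_A(\widetilde{B},-)$ to $0\to P\to\widetilde{B}\to B\to 0$ yields an exact sequence whose $\Ext^1_A(\widetilde{B},P)$ term vanishes by (C3) (since $P\in\add Ae$ and $\widetilde{B}\in\CM_e^BA$); applying $\Hom_A(-,B)$ to the same sequence together with $\Hom_A(P,B)=0$ (as $eB=0$) identifies $\Hom_A(\widetilde{B},B)$ with $\End_A(B)=B$. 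Combining these two computations gives the ring isomorphism $\End_A(\widetilde{B})/[\add Ae]\cong B$.

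For (d), the adjoint pair $(\Hom_A(W,-),W\otimes_{A'}-)$ already restricts to a quasi-inverse equivalence between $\add W$ and $\proj A'$, and I would extend it to $\mod_e^BA\cong\mod_eA'$ (and then to the CM subcategories). The key construction is: for $X\in\mod_e^BA$ with filtration $0\to P\to X\to Y\to 0$ ($P\in\add Ae$, $Y\in\mod B$), build an $\add W$-cover $W_0:=\widetilde{B}^n\oplus P\twoheadrightarrow X$ by lifting a projective cover $B^n\twoheadrightarrow Y$ along $\widetilde{B}^n\twoheadrightarrow B^n$ (possible thanks to $\Ext^1_A(\widetilde{B},P)=0$ from (C3)) and pairing it with the inclusion $P\hookrightarrow X$. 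A direct chase using the filtration of $\widetilde{B}$ shows the kernel of this cover again lies in $\CM_e^BA$. Fullness, faithfulness and essential surjectivity of $\Hom_A(W,-)$ then follow by comparing such $\add W$-presentations on both sides of the adjunction; the exact structure is preserved because $\mod_e^BA$ and $\mod_eA'$ both inherit exact structures from the ambient abelian module categories.

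For (b), condition (E1) for $A'$ is the image of (C3) under the equivalence in (d), since $A'e$ corresponds to $Ae$ and $\CM_eA'$ to $\CM_e^BA$, and $\Ext^1$ is transported by the equivalence. For (E2)\pup*, writing $X'=\Hom_A(W,X)$ with $X\in\mod_e^BA$, the resolution $0\to K_1\to W_0\to X\to 0$ built above has $K_1\in\CM_e^BA$; applying $\Hom_A(W,-)$ one checks (using $\Ext^1_A(W,Ae)=0$, which holds by (C3) for the $\widetilde{B}$-summand and trivially for the $Ae$-summand) that it becomes a short exact sequence $0\to\Hom_A(W,K_1)\to\Hom_A(W,W_0)\to X'\to 0$ in $\mod A'$ with middle term projective. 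Hence $\Ext^2_{A'}(X',A'e)\cong\Ext^1_{A'}(\Hom_A(W,K_1),A'e)$, which vanishes by (E1) applied to $K_1\in\CM_eA'$. Part (c) then follows immediately from Theorem \ref{mainB}.

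Finally, for (e), the diagram commutes because $B\otimes_{A'}\Hom_A(W,-)\cong B\otimes_A-$ on $\mod_e^BA$, which reduces to $B\otimes_AW\cong B$ (from $B\otimes_AAe=0$ and $B\otimes_A\widetilde{B}\cong B$ via $0\to P\to\widetilde{B}\to B\to 0$). The claim that all four functors in the diagram induce isomorphisms on $\Ext^1$ follows from the exact equivalence in (d), combined with the injectivity of $U$ obtained in (c) and the standard description of $\Ext^1$ in $\Sub U\subseteq\mod B$. The main obstacle I foresee is verifying (d) rigorously — in particular that the counit $W\otimes_{A'}\Hom_A(W,X)\to X$ is an isomorphism for every $X\in\mod_e^BA$, which requires a careful analysis of how $W\otimes_{A'}-$ interacts with the filtration on $X$ and with the $\add W$-cover constructed above.
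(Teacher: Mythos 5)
Your overall architecture matches the paper's: (a) by applying $\Hom_A(W,-)$ to $0\to P\to \widetilde B\to B\to 0$ and using (C3) to kill $\Ext^1_A(\widetilde B,P)$; (d) via the adjoint pair $(\Hom_A(W,-),\,W\otimes_{A'}-)$; (b) by transporting (C3) through the equivalence and dimension-shifting for (E2)\pup*; (c) from Theorem \ref{mainB}; and (e) from the bimodule isomorphism $B\otimes_AW\cong B$. However, there is a genuine gap at the technical heart of (d), and it propagates to (b) and (e). You never establish that $W\otimes_{A'}-$ is exact on $\mod_eA'$, i.e.\ that $\Tor^{A'}_1(W,X)=0$ for all $X\in\mod_eA'$. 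This is the paper's Lemma \ref{tor=0}, proved by a nontrivial comparison of projective resolutions over $A'$ and over $B$ using $eA'\in\proj(eA'e)$, and it is indispensable: without it you cannot apply $W\otimes_{A'}-$ to the filtration $0\to P'\to X'\to Y'\to 0$ of an object of $\mod_eA'$ and keep it exact, hence you cannot run the five-lemma argument showing the unit $X'\to\Hom_A(W,W\otimes_{A'}X')$ is an isomorphism, you cannot conclude essential surjectivity of $\Hom_A(W,-)$ onto $\mod_eA'$, the left vertical functor in (e) is not known to be exact, and the transport argument you use for (E1) (pull an extension $0\to A'e\to X\to Y\to 0$ back to $A$ and split it by (C3)) breaks down. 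You even flag the wrong direction as the obstacle: the counit $W\otimes_{A'}\Hom_A(W,X)\to X$ for $X\in\mod_e^BA$ is the easy direction (it follows from your $\add W$-presentation construction); the unit and the $\Tor$-vanishing are where the real work lies. Your remark that "comparing $\add W$-presentations on both sides of the adjunction" yields the equivalence gives full faithfulness of $\Hom_A(W,-)$ on objects admitting $\add W$-presentations, but not surjectivity onto all of $\mod_eA'$.

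A secondary, smaller gap: for $\Hom_A(W,-)$ to be an \emph{exact} functor on $\mod_e^BA$ you need $\Ext^1_A(W,\mod_e^BA)=0$, which splits into $\Ext^1_A(W,Ae)=0$ (which you verify from (C3)) \emph{and} $\Ext^1_A(W,\mod B)=0$; the paper deduces the latter from the defining sequence of $W$. Your assertion that "the exact structure is preserved because both categories inherit exact structures from the ambient abelian module categories" is not an argument — an additive equivalence between two extension-closed subcategories need not respect the induced exact structures unless both functors are shown to carry admissible short exact sequences to admissible short exact sequences, which is exactly the two vanishing statements above.
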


  Let us finally introduce a simple criterion for (C1), (C2) and (C3) to be satisfied:
 \begin{lemma} \label{lemmagorchangeorders}
  Let $A$ be an $R$-order, $e$ an idempotent of $A$ and $B$ a factor algebra of $A/(e)$. Let us assume that there exists an idempotent $g \in A$ such that $Ae \cong \Hom_R(g A, R)$. Then (C3) holds. Moreover, if (C1) holds, then (C2) holds if and only if $(1-g) \soc B = 0$.
 \end{lemma}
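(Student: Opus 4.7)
I plan to prove the lemma in three stages: (C3) follows quickly from standard Ext-vanishing for the cotilting bimodule, while the equivalence for (C2) proceeds by identifying an explicit injective module and tracking embeddings on socles.

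\smallskip

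For (C3): the hypothesis realizes $Ae \cong \Hom_R(gA,R)$ as a direct summand of the bimodule $\omega_A := \Hom_R(A,R)$, via the right-$A$-module decomposition $A = gA \oplus (1-g)A$. The tensor--Hom adjunction gives $\Ext^i_A(X, \omega_A) \cong \Ext^i_R(X,R)$, which vanishes for $i \geq 1$ whenever $X$ is $R$-projective. Hence $\Ext^1_A(X, Ae) = 0$ for every $X \in \CM A$, and in particular for every $X \in \CM_e^B A \subseteq \CM A$, proving (C3).

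\smallskip

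For the equivalence, assume (C1). The first step is to identify the module one embeds $B$ into:
\[
Ae \otimes_R (K/R) \;\cong\; \Hom_R(gA, R) \otimes_R (K/R) \;\cong\; \Hom_R(gA, K/R) \;=:\; I,
\]
where the second isomorphism uses that $gA$ is finitely generated $R$-projective. The module $I$ is injective in $\Mod A$, because the tensor--Hom adjunction yields $\Hom_A(-, I) \cong \Hom_R(g \cdot -, K/R)$, a composition of two exact functors (multiplication by the idempotent $g$ and Matlis-like dualization against the injective $R$-module $K/R$). Since $B$ is of finite length by (C1), the injectivity of $I$ together with the essential-extension property of injective hulls reduces (C2) to: each simple summand $S$ of $\soc B$ embeds into $I$. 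Via the same adjunction, $\Hom_A(S, I) \cong \Hom_R(gS, K/R)$, which is nonzero iff $gS \neq 0$. Hence (C2) is equivalent to: $gS \neq 0$ for every simple summand $S$ of $\soc B$.

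\smallskip

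The last step is to match this with $(1-g)\soc B = 0$. Since $\soc B$ is semisimple, $(1-g)\soc B = 0$ is equivalent to $g$ acting as the identity on every simple summand $S$, i.e.\ $gS = S$. The main obstacle is thus to establish, for the simple $A$-modules $S$ with $eS = 0$ that arise as summands of $\soc B$, the dichotomy
\[
gS \neq 0 \iff gS = S.
\]
This is automatic when $A$ is basic, because then $g$ (the image of $g$ in $A/J_A$, a product of division rings) acts on each simple as either $0$ or $\id$. In the general case, one reduces to the basic version $e_bAe_b$ of $A$ by Morita equivalence, which preserves the hypothesis $Ae \cong \Hom_R(gA,R)$ after translating through the Morita correspondence, together with the socle of $B$ and the action of idempotents on simples, thereby allowing the dichotomy to be transferred and the equivalence concluded.
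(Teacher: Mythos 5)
Your argument for (C3) and your reduction of (C2) to the condition ``$gS\neq 0$ for every simple summand $S$ of $\soc B$'' are both correct, and the route you take for the second part is a little slicker than the paper's. The paper does not prove that $Ae\otimes_R(K/R)$ is injective in $\Mod A$; it only establishes $\Ext^1_A(\fl A, Ae\otimes_R(K/R))=0$ by combining $\Ext^2_A(\mod A, Ae)=0$ with Lemma \ref{injfl} and the sequence $0\to Ae\to Ae\otimes_RK\to Ae\otimes_R(K/R)\to 0$, and it identifies $\soc(Ae\otimes_R(K/R))$ via the $\cotop$ machinery of Lemma \ref{generalitiescotop}. Your adjunction $\Hom_A(-,\Hom_R(gA,K/R))\cong\Hom_R(g(-),K/R)$ gives genuine injectivity in one line and computes $\Hom_A(S,I)$ directly; both implementations then run the same essential-socle argument to pass from $\soc B$ to $B$.

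The one place where your proposal wobbles is the final dichotomy $gS\neq 0\iff gS=S$. Your observation that this holds when $A$ is basic is right (for basic $A$ one has $\add Ag\cap\add A(1-g)=0$, so each simple is killed by exactly one of $g$, $1-g$). But the proposed Morita reduction does not transfer the statement faithfully: passing to the basic algebra replaces $g$ by an idempotent whose associated projective has collapsed multiplicities, so the condition $(1-g)\soc B=0$ is \emph{not} preserved when $\add Ag\cap\add A(1-g)\neq 0$ (e.g.\ if $gA$ picks out one copy of an indecomposable projective occurring with multiplicity $\geq 2$ in $A$, a simple $S$ with $gS\neq 0$ can still satisfy $(1-g)S\neq 0$). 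In that degenerate situation the literal equivalence ``(C2) $\iff(1-g)\soc B=0$'' fails. You should be aware, however, that the paper's own proof silently makes the same identification (it proves (C2) is equivalent to ``$\soc B$ is supported by $g$'', i.e.\ $gS\neq 0$ for all summands, and then restates this as $(1-g)\soc B=0$ without comment); in all applications in the paper $g$ is a sum of distinct members of a complete set of primitive orthogonal idempotents, where the dichotomy is automatic. So your proof matches the paper's level of rigor on every point except that you explicitly surface this gap and then propose a repair that, as stated, does not close it; the honest fix is to add the hypothesis $\add Ag\cap\add A(1-g)=0$ (or to read $(1-g)\soc B=0$ as ``$\soc B$ is supported on $\add\top Ag$'').
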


 We will prove Lemma \ref{lemmagorchangeorders} at the end of Subsection \ref{proofofchangidem}.

  \newcommand{\smar}[1]%
  {%
   \left[\begin{array}{r@{/}l} #1 \end{array}\right]%
  }
  \newcommand{\smarb}[1]%
  {%
   \scalebox{.7}{\renewcommand{\arraystretch}{.8} $\left[\begin{array}{r@{/}lcr@{/}l} #1 \end{array}\right]$}%
  }

\setlength{\fboxsep}{1pt}

  \newboxedcommand{\Sun}{\xymatrix@R=.2cm@C=.2cm@!=0cm{1}}
  \newboxedcommand{\Sdeux}{\xymatrix@R=.2cm@C=.2cm@!=0cm{2}}
  \newboxedcommand{\Strois}{\xymatrix@R=.2cm@C=.2cm@!=0cm{3}}
  \newboxedcommand{\Pun}{\xymatrix@R=.2cm@C=.2cm@!=0cm{1 & & \\ & 2 & \\ & & 3}}
  \newboxedcommand{\Pdeux}{\xymatrix@R=.2cm@C=.2cm@!=0cm{ & 2 & \\ 1 &  & 3 \\ & 2 & }}
  \newboxedcommand{\Ptrois}{\xymatrix@R=.2cm@C=.2cm@!=0cm{  &  & 3\\  & 2 &  \\ 1 & & }}
  \newboxedcommand{\dut}{\xymatrix@R=.2cm@C=.2cm@!=0cm{ & 2 & \\ 1 &  & 3 }}
  \newboxedcommand{\du}{\xymatrix@R=.2cm@C=.2cm@!=0cm{ & 2 \\ 1 &  }}
  \newboxedcommand{\dt}{\xymatrix@R=.2cm@C=.2cm@!=0cm{ 2 & \\  & 3 }}
  \newboxedcommand{\ud}{\xymatrix@R=.2cm@C=.2cm@!=0cm{ 1 & \\  & 2 }}
  \newboxedcommand{\td}{\xymatrix@R=.2cm@C=.2cm@!=0cm{ & 3 \\ 2 & }}
  \newboxedcommand{\utd}{\xymatrix@R=.2cm@C=.2cm@!=0cm{ 1 &  & 3 \\ & 2 & }}
 
  \newboxedcommand{\lSun}{\xymatrix@R=.3cm@C=.3cm@!=0cm{01 \\ 03 \\ 03 \\ 03 \\ 11 \\ 11}}
  \newboxedcommand{\lStrois}{\xymatrix@R=.3cm@C=.3cm@!=0cm{02 \\ 02 \\ 12 \\ 12 \\ 02 \\ 02}}
  \newboxedcommand{\lpSun}{\xymatrix@R=.3cm@C=.3cm@!=0cm{00 \\ 02 \\ 02 \\ 02 \\ 10 \\ 10}}
  \newboxedcommand{\lpStrois}{\xymatrix@R=.3cm@C=.3cm@!=0cm{01 \\ 01 \\ 11 \\ 11 \\ 01 \\ 01}}
  \newboxedcommand{\lPun}{\xymatrix@R=.3cm@C=.3cm@!=0cm{02 \\ 02 \\ 02 \\ 02 \\ 02 \\ 02}}
  \newboxedcommand{\lPtrois}{\xymatrix@R=.3cm@C=.3cm@!=0cm{01 \\ 03 \\ 03 \\ 03 \\ 01 \\ 01}}
  \newboxedcommand{\ldut}{\xymatrix@R=.3cm@C=.5cm@!=0cm@M=0cm{02 & 01 \\ 02 & 03 \\ 12 & 03 \\ 02 \ar@{-}[dr] & 03 \\ 02 & 01 \\ 02 & 11}}
  \newboxedcommand{\ldu}{\xymatrix@R=.3cm@C=.3cm@!=0cm{01 \\ 03 \\ 03 \\ 03 \\ 01 \\ 11}}
  \newboxedcommand{\ldt}{\xymatrix@R=.3cm@C=.3cm@!=0cm{02 \\ 02 \\ 12 \\ 02 \\ 02 \\ 02}}
  \newboxedcommand{\lPunp}{\xymatrix@R=.3cm@C=.3cm@!=0cm{00 \\ 02 \\ 12 \\ 02 \\ 10 \\ 10}}
  \newboxedcommand{\lPtroisp}{\xymatrix@R=.3cm@C=.3cm@!=0cm{01 \\ 01 \\ 11 \\ 11 \\ 01 \\ 11}}

  \newboxedcommand{\kSun}{\xymatrix@R=.3cm@C=.3cm@!=0cm{02 \\ 02 \\ 02 \\ 02 \\ 12 \\ 12}}
  \newboxedcommand{\kSdeux}{\xymatrix@R=.3cm@C=.3cm@!=0cm{01 \\ \fbox{01} \\ 02 \\ 11 \\ \fbox{11}+02 \\ 12}}
  \newboxedcommand{\kStrois}{\xymatrix@R=.3cm@C=.3cm@!=0cm{01 \\ 02 \\ 03 \\ 11 \\ 12 \\ 03}}
  \newboxedcommand{\kpSun}{\xymatrix@R=.3cm@C=.3cm@!=0cm{01 \\ 01 \\ 01 \\ 01 \\ 11 \\ 11}}
  \newboxedcommand{\kpStrois}{\xymatrix@R=.3cm@C=.3cm@!=0cm{00 \\ 01 \\ 02 \\ 10 \\ 11 \\ 02}}
  \newboxedcommand{\kPun}{\xymatrix@R=.3cm@C=.3cm@!=0cm{01 \\ 02 \\ 03 \\ 01 \\ 02 \\ 03}}
  \newboxedcommand{\kPdeux}{\xymatrix@R=.3cm@C=.3cm@!=0cm{02 \\ \fbox{02} \\ 03 \\ 02 \\ \fbox{02} \\ 03}}
  \newboxedcommand{\kpPdeux}{\xymatrix@R=.3cm@C=.3cm@!=0cm{01 \\ \fbox{01} \\ 02 \\ 01 \\ \fbox{01} \\ 02}}
  \newboxedcommand{\kPtrois}{\xymatrix@R=.3cm@C=.3cm@!=0cm{02 \\ 02 \\ 02 \\ 02 \\ 02 \\ 02}}
  \newboxedcommand{\kdut}{\xymatrix@R=.3cm@C=.5cm@!=0cm@M=0cm{02 & 01 \\ 02 & 02 \\ 02 & 03 \\ 02 & 11 \\ 02 \ar@{-}[r] & 02 \\ 12 & 03}}
  \newboxedcommand{\kdu}{\xymatrix@R=.3cm@C=.3cm@!=0cm{02 \\ 02 \\ 02 \\ 02 \\ 02 \\ 12}}
  \newboxedcommand{\kdt}{\xymatrix@R=.3cm@C=.3cm@!=0cm{01 \\ 02 \\ 03 \\ 11 \\ 02 \\ 03}}
  \newboxedcommand{\kud}{\xymatrix@R=.3cm@C=.3cm@!=0cm{01 \\ \fbox{01} \\ 02 \\ 01 \\ \fbox{11} + 02 \\ 12}}
  \newboxedcommand{\ktd}{\xymatrix@R=.3cm@C=.3cm@!=0cm{01 \\ \fbox{01} \\ 02 \\ 11 \\ \fbox{11} + 02 \\ 02}}
  \newboxedcommand{\kutd}{\xymatrix@R=.3cm@C=.3cm@!=0cm{01 \\ \fbox{01} \\ 02 \\ 01 \\ \fbox{11} + 02 \\ 02}}

  \newboxedcommand{\kPunp}{\xymatrix@R=.3cm@C=.3cm@!=0cm{01 \\ 01 \\ 01 \\ 11 \\ 11 \\ 11}}
  \newboxedcommand{\kPdeuxp}{\xymatrix@R=.3cm@C=.3cm@!=0cm{01 \\ \fbox{01} \\ 02 \\ 11 \\ \fbox{11} \\ 12}}
  \newboxedcommand{\kPtroisp}{\xymatrix@R=.3cm@C=.3cm@!=0cm{00 \\ 01 \\ 02 \\ 10 \\ 11 \\ 12}}

In the rest of this subsection we give an example illustrating Theorem \ref{thmchangidem2}. Let $B = \Pi$ be the preprojective algebra of type $A_3$ over a field $k$. In other terms
 $$\Pi = k \left.\left(\xymatrix{1 \ar@/^/[r]^{\alpha_1} & 2 \ar@/^/[r]^{\alpha_2} \ar@/^/[l]^{\beta_1} & 3 \ar@/^/[l]^{\beta_2}}\right) \right/ (\alpha_1 \beta_1, \alpha_2 \beta_2 - \beta_1 \alpha_1, \beta_2 \alpha_2).$$
 This algebra can also be realized as the following subquotient of the matrix algebra $M_3(k[\epsilon])$:
$$\Pi = \left[\begin{array}{r@{/}lr@{/}lr@{/}l}
       k[\varepsilon]&(\varepsilon) & k[\varepsilon]&(\varepsilon) & k[\varepsilon]&(\varepsilon) \\
       (\varepsilon)&(\varepsilon^2) & k[\varepsilon]&(\varepsilon^2) & k[\varepsilon]&(\varepsilon) \\
       (\varepsilon^2)&(\varepsilon^3) & (\varepsilon)&(\varepsilon^2) & k[\varepsilon]&(\varepsilon)
      \end{array}\right].$$
Let us denote $R := k\llbracket t \rrbracket$ and $S := R [\varepsilon] $. The $R$-order considered in Corollary \ref{thselfinj} is
 $$A := \left[\begin{array}{r@{/}lr@{/}lr@{/}lr@{/}lr@{/}lr@{/}l}
         S&(\varepsilon) & S&(\varepsilon) & S&(\varepsilon) & S&(\varepsilon) & S&(\varepsilon) & S&(\varepsilon) \\
         (\varepsilon)&(\varepsilon^2) & S&(\varepsilon^2) & S&(\varepsilon) & (\varepsilon)&(\varepsilon^2) & S&(\varepsilon^2) & S&(\varepsilon) \\  
         (\varepsilon^2)&(\varepsilon^3) & (\varepsilon)&(\varepsilon^2) & S&(\varepsilon) & (\varepsilon^2)&(\varepsilon^3) & (\varepsilon)&(\varepsilon^2) & S&(\varepsilon) \\  
         (t)&(t\varepsilon) & (t)&(t\varepsilon) & (t)&(t\varepsilon) & S&(\varepsilon) & S&(\varepsilon) & S&(\varepsilon) \\
         (t \varepsilon)&(t\varepsilon^2) & (t)&(t\varepsilon^2) & (t)&(t\varepsilon) & (\varepsilon)&(\varepsilon^2) & S&(\varepsilon^2) & S&(\varepsilon) \\  
         (t \varepsilon^2)&(t\varepsilon^3) & (t \varepsilon)&(t\varepsilon^2) & (t)&(t\varepsilon) & (\varepsilon^2)&(\varepsilon^3) & (\varepsilon)&(\varepsilon^2) & S&(\varepsilon)  
        \end{array}\right].$$
\begin{figure}
 $$\xymatrix@R=2cm@C=2cm@!=0cm{
   \ar@{--}[d] & & & \kPtrois \ar[dr] \ar[rr]^{t\varepsilon^{-1}} & & \kPunp \ar[dr] & \ar@{--}[d] \\
   \kStrois \ar[dr] \ar@{--}[d]^{}="m1" & & \kdu \ar[ur] \ar[dr] \ar@{..>}[ll] & & \ktd \ar[dr] \ar[ur] \ar@{..>}[ll] & & \kpSun \ar@{..>}[ll] \ar@{--}[d] \\
   \kPdeux \ar[r] \ar@{--}[d] \ar@/_1cm/[rr]_{\varepsilon^{-1}} & \kdut \ar[ur] \ar[dr] \ar@{..} ;"m1" \ar[r] & \kPdeuxp \ar[r] & \kSdeux \ar[ur] \ar[dr] \ar@/_1cm/@{..>}[ll] & & \kutd \ar[ur] \ar[dr] \ar[r] \ar@{..>}[ll]   & \kpPdeux \ar@{--}[d]^{}="m2" \ar@{..>} "m2";[l] \\
   \kSun \ar[ur] \ar@{--}[d] & & \kdt \ar[ur] \ar[dr] \ar@{..>}[ll] & & \kud \ar[ur] \ar@{..>}[ll] \ar[dr] & & \kpStrois \ar@{..>}[ll] \ar@{--}[d] \\
   & & & \kPun \ar[ur] \ar[rr]_{t\varepsilon^{-1}} & & \kPtroisp \ar[ur] & 
  }$$
  \caption{Auslander-Reiten quiver of $\CM_e A$.} \label{AR1a}
 \end{figure}
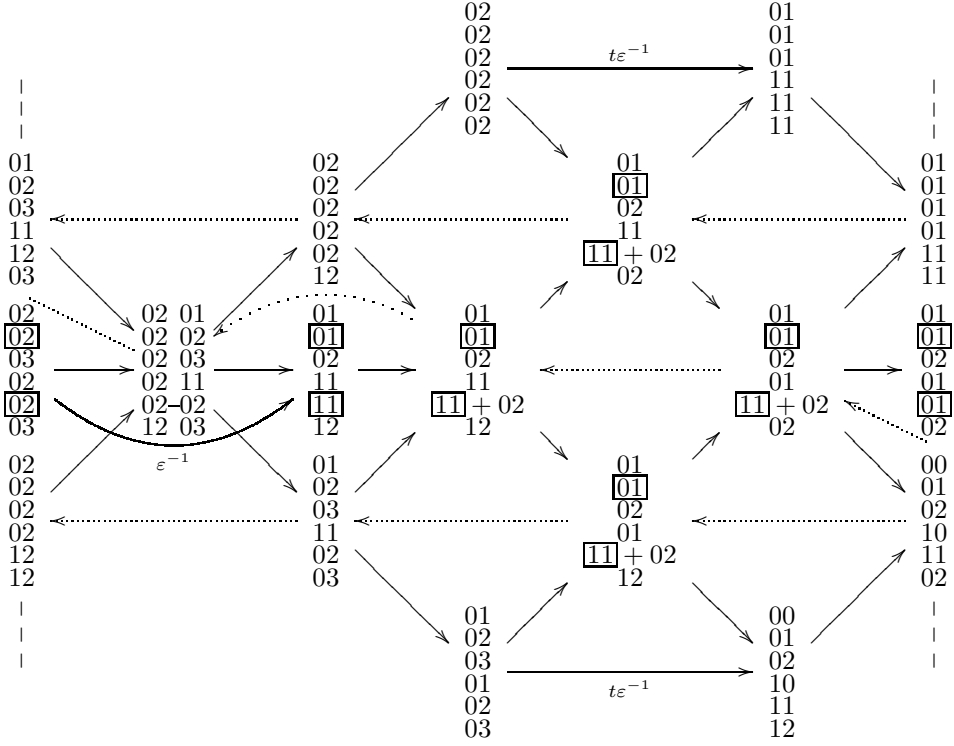

On Figure \ref{AR1a}, we draw the Auslander-Reiten quiver of $\CM_e A$, with notations $ij := (t^i \varepsilon^j)/(t^i \varepsilon^{j+1})$, $\fbox{$ij$} := (t^i \varepsilon^j)/(t^i \varepsilon^{j+2})$, and $ij \text{---} ij := \{(\varepsilon^j p, \varepsilon^j q) \in ij \times ij \mid p - q \in (t,\varepsilon)/(\varepsilon) \}$. Thus, the identity of $S$ induces a map $ij \to i'j'$ if and only if $(j, i) \geq (j', i')$ for the lexicographic order and analogous rules can be computed for $\fbox{$ij$}$. All arrows are induced by multiplications by an element of $S$, which is $\pm 1$ when it is not specified.

 Let $e_3$, $e_2$, $e_1$, $g_1$, $g_2$ and $g_3$ be the idempotents corresponding, in this order, to the rows of the matrix. They satisfy $A e_i \cong \Hom_R(g_i A, R)$ and $A g_i \cong \Hom_R(e_i A, R)$ as $A$-modules. We fix the idempotent $e = e_1 + e_2 + e_3$. According to Corollary \ref{thselfinj}, we have an equivalence of exact categories $$(\CM_e A)/[Ae] \cong \mod \Pi.$$ On Figure \ref{AR1}, we draw the Auslander-Reiten quiver of $\CM_e A$, replacing 
 objects which are not in $\add Ae$ by their image by $F$ in $\Sub U = \mod \Pi$ (here $U = \Pi$). We obtain the Auslander-Reiten quiver of $\mod \Pi$ by removing framed objects. The general relation between Auslander-Reiten quivers of $\CM_e A$ and $\Sub U$ will be discussed in \cite{DeIy-2}. 
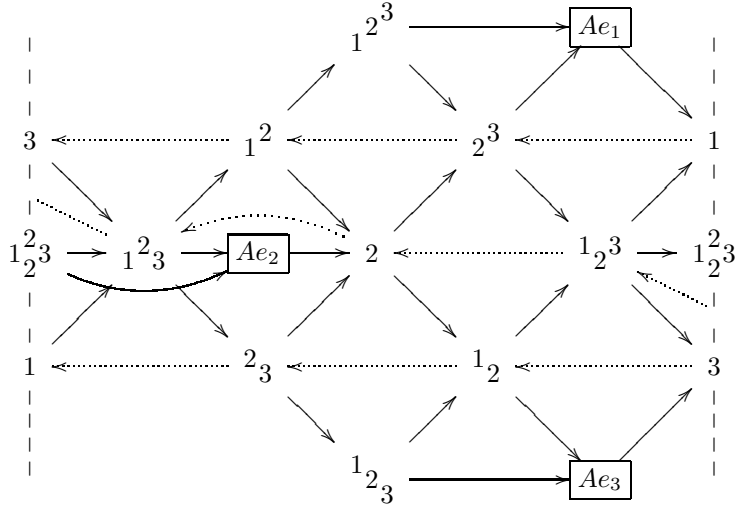
\begin{figure}
   $$\xymatrix@R=1.5cm@C=1.5cm@!=0cm{
   \ar@{--}[d] & & & \Ptrois \ar[dr] \ar[rr] & & *+[F]{Ae_1} \ar[dr] & \ar@{--}[d] \\
   \Strois \ar[dr] \ar@{--}[d]^{}="m1" & & \du \ar[ur] \ar[dr] \ar@{..>}[ll] & & \td \ar[dr] \ar[ur] \ar@{..>}[ll] & & \Sun \ar@{..>}[ll] \ar@{--}[d] \\
   \Pdeux \ar[r] \ar@{--}[d] \ar@/_.5cm/[rr] & \dut \ar[ur] \ar[dr] \ar@{..} ;"m1" \ar[r] & *+[F]{Ae_2} \ar[r] & \Sdeux \ar[ur] \ar[dr] \ar@/_.5cm/@{..>}[ll] & & \utd \ar[ur] \ar[dr] \ar[r] \ar@{..>}[ll]   & \Pdeux \ar@{--}[d]^{}="m2" \ar@{..>} "m2";[l] \\
   \Sun \ar[ur] \ar@{--}[d] & & \dt \ar[ur] \ar[dr] \ar@{..>}[ll] & & \ud \ar[ur] \ar@{..>}[ll] \ar[dr] & & \Strois \ar@{..>}[ll] \ar@{--}[d] \\
   & & & \Pun \ar[ur] \ar[rr] & & *+[F]{Ae_3} \ar[ur] & 
  }$$
  \caption{Auslander-Reiten quiver of $\CM_e A$. Objects are represented by their image by $F$ except objects of $\add Ae$.} \label{AR1}
 \end{figure}

 We explain the way to compute the minimal preimage of an object of $\Sub U$ by $F$ in this example. First, we know that preimages of simple modules are coradicals of indecomposable direct summands of $Ae$. Thus, we find $F(S^\circ_1) \cong S_1$, $F(S^\circ_2) \cong S_2$ and $F(S^\circ_3) \cong S_3$ where
 $$S^\circ_1 = \smar{S&(\varepsilon) \\ S&(\varepsilon) \\ S&(\varepsilon) \\   S&(\varepsilon) \\ (t)&(t\varepsilon) \\ (t)&(t\varepsilon)} \quad  \quad S^\circ_2 = \smar{S&(\varepsilon) \\ S&(\varepsilon^2) \\ (\varepsilon)&(\varepsilon^2) \\   (t)&(t\varepsilon) \\ (t, \varepsilon)&(\varepsilon^2) \\ (t\varepsilon)&(t\varepsilon^2)} \quad  \quad S^\circ_3 = \smar{S&(\varepsilon) \\ (\varepsilon)&(\varepsilon^2) \\ (\varepsilon^2)&(\varepsilon^3) \\   (t)&(t\varepsilon) \\ (t\varepsilon)&(t\varepsilon^2) \\ (\varepsilon^2)&(\varepsilon^3)}.$$ 
 Let us calculate the preimage $X^\circ$ of $\dut$ by $F$. There exists a pullback diagram
 $$\xymatrix{
  0 \ar[r] & A e_1 \oplus A e_3 \ar[r] \ar@{=}[d] & S^\circ_1 \oplus S^\circ_3 \ar@{^{(}->}[d] \ar[r] & S_1 \oplus S_3 \ar@{^{(}->}[d] \ar[r] & 0 \\
  0 \ar[r] & A e_1 \oplus A e_3 \ar[r] & X^\circ \ar@{->>}[d]  \ar[r] & \dut \ar[r] \ar@{->>}[d] & 0 \\
  & & S_2 \ar@{=}[r] & S_2
 } $$
 which permits us to get
 \newlength{\numerat} \settowidth{\numerat}{$(t\varepsilon^2)$}
 \newlength{\denomin} \settowidth{\denomin}{$(t\varepsilon^3)$}
 \newcommand{\cnstquot}[2]{\makebox[\numerat][r]{$#1$}/\makebox[\denomin][l]{$#2$}}
 $$X^\circ = \left[\vphantom{\begin{bmatrix}1\\1\\1\\1\\1\\1\\1\end{bmatrix}}\boxinminipage{\xymatrix@R=.5cm@C=1.7cm@!=0cm{
        \cnstquot{S}{(\varepsilon)} & \cnstquot{S}{(\varepsilon)} \\
        \cnstquot{S}{(\varepsilon)} & \cnstquot{(\varepsilon)}{(\varepsilon^2)} \\
        \cnstquot{S}{(\varepsilon)} & \cnstquot{(\varepsilon^2)}{(\varepsilon^3)} \\
        \cnstquot{S}{(\varepsilon)} & \cnstquot{(t)}{(t\varepsilon)} \\
        \cnstquot{S}{(\varepsilon)} \ar@{-} +<.6cm,0cm>;[r]+<-.6cm,0cm>  & \cnstquot{(\varepsilon)}{(\varepsilon^2)} \\
        \cnstquot{(t)}{(t\varepsilon)} & \cnstquot{(\varepsilon^2)}{(\varepsilon^3)} \\
        }}\right]$$
 where $[S/ (\varepsilon) \text{ --- } (\varepsilon)/ (\varepsilon^2)] := \{( x, \varepsilon y) \in S/(\varepsilon) \times (\varepsilon)/(\varepsilon^2) \mid x-y \in (t, \varepsilon)/(\varepsilon)\}$.

\begin{figure}
  $$\xymatrix@R=1.4cm@C=1.4cm@!=0cm{
   \ar@{--}[d] & & & \Ptrois \ar[dr] \ar@(dl,ul)[ddd] & \ar@{--}[d] \\
   \Strois \ar[dr] \ar@{--}[dd]^{} & & \du \ar[ur] \ar[r] \ar@{..>}[ll] & *+[F]{A'e'_3} \ar[r] & \Strois \ar@/^.5cm/@{..>}[ll] \ar@{--}[dd] \\
     & \dut \ar[ur] \ar[dr] & &  & \\
   \Sun \ar[ur] \ar@{--}[d] & & \dt \ar[dr] \ar@{..>}[ll] \ar[r] & *+[F]{A'e'_1} \ar[r] &  \Sun \ar@/_.5cm/@{..>}[ll] \ar@{--}[d] \\
   & & & \Pun \ar[ur] \ar@(ur,dr)[uuu] & 
  }
  \quad
\xymatrix@R=0.8cm@C=1.4cm@!=0cm{
   \ar@{--}[dd] & & & \lPtrois \ar[ddrr] \ar[dddd]_(.8){t \epsilon^{-1}} & & \ar@{--}[dd] \\
   \\
   \lStrois \ar[ddr] \ar@{--}[dddd]^{} & & \ldu \ar[uur] \ar[drr] \ar@{..>}[ll] & &  & \lpStrois \ar@/_/@{..>}[lll] \ar@{--}[dddd] \\
   & & &  & \lPtroisp \ar[ur] & \\
     & \ldut \ar[uur] \ar[ddr] & &   &  & \\
   & & &  \lPunp \ar[drr] &  & \\
   \lSun \ar[uur] \ar@{--}[dd] & & \ldt \ar[ddrr] \ar@{..>}[ll] \ar[ur] & & &  \lpSun \ar@/^/@{..>}[lll] \ar@{--}[dd] \\
   \\
   & & & & \lPun \ar[uur] \ar[uuuu]_(.8){ t \epsilon^{-1}} & 
  }
$$
  \caption{Auslander-Reiten quiver of $\CM_{e'} A'$. On the left diagram, objects are represented by their image by $F$ except objects of $\add A'e'$.} \label{AR2}
 \end{figure}
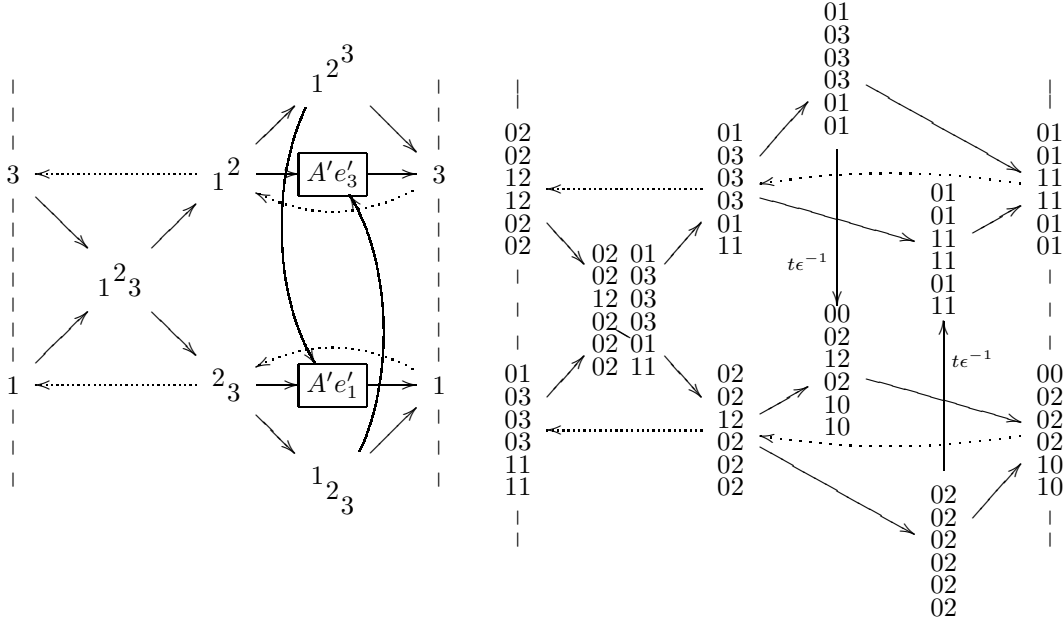

 Now, we apply Theorem \ref{thmchangidem2}. Let $e' := e_1 + e_3$ and $B' := \Pi/(\beta_1 \alpha_1)$. As a $B$-module,
  $$B' \cong \Pun \oplus \dut \oplus \Ptrois.$$
 Thanks to Lemma \ref{lemmagorchangeorders}, $B'$ and $e'$ satisfy the hypotheses of Theorem \ref{thmchangidem2}. Then, keeping notations of this subsection, we have $W = A e_1 \oplus A e_3 \oplus A g_1 \oplus X^\circ \oplus A g_3$. Then, $A' := \End_A (W)$ is easy to compute:

  $$A' = \left[\vphantom{\begin{bmatrix}1\\1\\1\\1\\1\\1\\1\end{bmatrix}}\boxinminipage{\xymatrix@R=.5cm@C=1.7cm@!=0cm{
        \cnstquot{S}{(\varepsilon)} & \cnstquot{S}{(\varepsilon)} & \cnstquot{S}{(\varepsilon)} & \cnstquot{S}{(\varepsilon)} & \cnstquot{S}{(\varepsilon)} & \cnstquot{S}{(\varepsilon)} \\
         \cnstquot{(\varepsilon^2)}{(\varepsilon^3)} & \cnstquot{S}{(\varepsilon)} & \cnstquot{S}{(\varepsilon)} & \cnstquot{S}{(\varepsilon)} & \cnstquot{(\varepsilon^2)}{(\varepsilon^3)} & \cnstquot{(\varepsilon^2)}{(\varepsilon^3)} \\  
         \cnstquot{(t\varepsilon^2)}{(t\varepsilon^3)} & \cnstquot{(t)}{(t\varepsilon)} & \cnstquot{S}{(\varepsilon)} & \cnstquot{(t)}{(t\varepsilon)} & \cnstquot{(\varepsilon^2)}{(\varepsilon^3)} & \cnstquot{(\varepsilon^2)}{(\varepsilon^3)} \\  
         \cnstquot{(\varepsilon^2)}{(\varepsilon^3)} & \cnstquot{(t)}{(t\varepsilon)} & \cnstquot{S}{(\varepsilon)} & \cnstquot{S}{(\varepsilon)} \ar@{-} +<.6cm,0cm>;[dr]+<-.4cm,.1cm> & \cnstquot{(\varepsilon^2)}{(\varepsilon^3)} & \cnstquot{(\varepsilon^2)}{(\varepsilon^3)} \\
         \cnstquot{(t)}{(t\varepsilon)} & \cnstquot{S}{(\varepsilon)} & \cnstquot{S}{(\varepsilon)} & \cnstquot{S}{(\varepsilon)} & \cnstquot{S}{(\varepsilon)} & \cnstquot{S}{(\varepsilon)} \\  
         \cnstquot{(t)}{(t\varepsilon)} & \cnstquot{(t)}{(t\varepsilon)} & \cnstquot{S}{(\varepsilon)} & \cnstquot{S}{(\varepsilon)} & \cnstquot{(t)}{(t\varepsilon)} & \cnstquot{S}{(\varepsilon)}
        }}\right]$$
  where $[S/(\varepsilon) \text{ --- } S/(\varepsilon)] := \{(x, y) \in S/(\varepsilon) \times S/(\varepsilon) \mid x-t \in (t)/(t\varepsilon)\}$.

  Thanks to Theorem \ref{thmchangidem2}, we have $(\CM_{e'} A')/[Ae']$ is equivalent to the subcategory of $\mod \Pi$ consisting of modules whose socle is supported at vertices $1$ and $3$. To illustrate this fact, we give two representations of the Auslander-Reiten quiver of $\CM_{e'} A'$ on Figure \ref{AR2}.

\subsection{Notations}
In this paper, if $f: X \rightarrow Y$ and $g: Y \rightarrow Z$ are two morphisms in a category, we write $fg: X \rightarrow Z$ for the composed morphism. 

Let $\Ab$ be the category of abelian groups.
For an additive category $\AA$, an \emph{$\AA$-module} is a contravariant additive functor $F:\AA \rightarrow \Ab$.
We say that $\AA$-module $F$ is \emph{finitely generated} if there exists an epimorphism of functor $\Hom_\AA(\AA,X) \rightarrow F$ for some $X\in \AA$.

\section{Results on exact categories} \label{exactstr}

The aim of this section is to study ideal quotient categories $\EE/[\FF]$ of an exact category $\EE$ by a full subcategory $\FF$ consisting of projective-injective objects. More precisely, we study conditions for $\EE/[\FF]$ to inherit the exact structure of $\EE$. In particular, we prove that it is the case if and only if admissible monomorphisms and epimorphisms are mapped to categorical monomorphisms and epimorphisms by the canonical projection $\EE \rightarrow \EE/[\FF]$. This is a particular case of Theorem \ref{eq1}.

\subsection{Preliminaries about exact categories}

We recall here main definitions and elementary results about exact categories. We consider an additive category $\EE$ endowed with a family $\SS$ of pairs of morphisms $(f, g)$ of $\EE$ where $f$ is a kernel of $g$ and $g$ is a cokernel of $f$. We denote such a pair 
 $$0 \rightarrow X \xrightarrow{f} Y \xrightarrow{g} Z \rightarrow 0$$
and for $(f,g) \in \SS$, we call $(f,g)$ an \emph{admissible short exact sequence}, $f$ an \emph{admissible monomorphism} and $g$ an \emph{admissible epimorphism}. We call $(\EE, \SS)$ an \emph{exact category} if it satisfies the following axioms due to Quillen \cite{Qu73} modified by Keller \cite[Appendix A]{Ke90}:
\begin{itemize}
 \item[(Ex0)] $\SS$ is stable under isomorphisms and contains \emph{split short exact sequences} of the form
   $$0 \rightarrow X \xrightarrow{\begin{sbmatrix} \id_X & 0 \end{sbmatrix}} X \oplus Z \xrightarrow{\begin{sbmatrix} 0 \\ \id_Z \end{sbmatrix}} Z \rightarrow 0;$$
 \item[(Ex1)] The composition of two admissible epimorphisms is an admissible epimorphism;
 \item[(Ex1)${}^{\operatorname {op}}$] The composition of two admissible monomorphisms is an admissible monomorphism;
 \item[(Ex2)] For any admissible short exact sequence $$0 \rightarrow X \xrightarrow{f} Y \xrightarrow{g} Z \rightarrow 0$$ and morphism $v: Z' \rightarrow Z$, we can form a \emph{pullback diagram}, \emph{i.e.} a commutative diagram of the form:
 $$\xymatrix{ 
  0 \ar[r] & X \ar[r]^{f'} \ar@{=}[d] & Y' \ar[d]^{v'} \ar[r]^{g'} & Z' \ar[d]^v \ar[r] & 0 \\
  0 \ar[r] & X \ar[r]_f & Y \ar[r]_g & Z \ar[r] & 0
 }$$
 where the first row is an admissible short exact sequence;
 \item[(Ex2)${}^{\operatorname {op}}$] For any admissible short exact sequence $$0 \rightarrow X \xrightarrow{f} Y \xrightarrow{g} Z \rightarrow 0$$ and morphism $u: X \rightarrow X'$, we can form a \emph{pushout diagram}, \emph{i.e.} a commutative diagram of the form:
 $$\xymatrix{
  0 \ar[r] & X \ar[r]^f \ar[d]_u & Y \ar[d]_{u'} \ar[r]^g & Z' \ar@{=}[d] \ar[r] & 0 \\
  0 \ar[r] & X' \ar[r]_{f'} & Y' \ar[r]_{g'} & Z \ar[r] & 0. 
 }$$
where the second row is an admissible short exact sequence.
\end{itemize}

We often write $\EE$ instead of $(\EE, \SS)$ when we consider only one exact structure on $\EE$. When not specified, we use the term \emph{short exact sequence} (respectively, \emph{monomorphism}, \emph{epimorphism}) for \emph{admissible short exact sequence} (respectively, admissible monomorphism, admissible epimorphism). In contrast, we use \emph{categorical monomorphism} (respectively, \emph{categorical epimorphism}) for a monomorphism (respectively, epimorphism) which is not necessarily admissible.

We will use freely the following easy facts about exact categories:
   \begin{itemize}
    \item In (Ex2), we have the following admissible short exact sequence:
     $$0 \rightarrow Y' \xrightarrow{\begin{sbmatrix} v' & g' \end{sbmatrix}} Y \oplus Z' \xrightarrow{\begin{sbmatrix} g \\ -v \end{sbmatrix}} Z \rightarrow 0;$$
    \item In (Ex2), if $v$ is an admissible epimorphism, then $v'$ is also one and $\ker v = (\ker v') g'$;
    \item In (Ex2), if $v$ is an admissible monomorphism, then $v'$ is also one and $\coker v' = g(\coker v)$;
    \item In (Ex2)${}^{\operatorname {op}}$, we have the following admissible short exact sequence:
     $$0 \rightarrow X \xrightarrow{\begin{sbmatrix} u & f \end{sbmatrix}} X' \oplus Y \xrightarrow{\begin{sbmatrix} f' \\ -u' \end{sbmatrix}} Y' \rightarrow 0;$$
    \item In (Ex2)${}^{\operatorname {op}}$, if $u$ is an admissible epimorphism, then $u'$ is also one and $\ker u' = (\ker u) f$;
    \item In (Ex2)${}^{\operatorname {op}}$, if $u$ is an admissible monomorphism, then so is $u'$ and $\coker u = f'(\coker u')$;
    \item If a morphism is an admissible monomorphism and an admissible epimorphism, then it is an isomorphism;
    \item If, in a morphism of short exact sequences, the left and right components are admissible monomorphisms (respectively, epimorphisms), then the middle one is;
    \item In (Ex2) and (Ex2)${}^{\operatorname {op}}$, the diagrams are uniquely determined up to unique isomorphisms.
   \end{itemize}

Let us recall the following definition:
\begin{definition}
 A functor $F$ between exact categories $(\EE, \SS)$ and $(\EE', \SS')$ is \emph{exact} if $F(\SS) \subset \SS'$. An object $X \in \EE$ is \emph{projective} (respectively, \emph{injective}) if $\Hom_\EE(X, -)$ (respectively, $\Hom_\EE(-, X)$) is exact. We say that $\EE$ has \emph{enough injective objects} (respectively, \emph{enough projective objects}) if for any $X \in \EE$ there exists a short exact sequence $0 \to X \to I \to Y \to 0$ (respectively, $0 \to Y \to P \to X \to 0$) in $\SS$ such that $I$ is injective (respectively, $P$ is projective).
\end{definition}

Recall that these notions permit to define extension functors $\Ext^i_\EE$ which satisfy the expected properties, either from Yoneda's structure of long exact sequences, or using projective resolutions if $\EE$ has enough projective objects, or using injective resolutions if $\EE$ has enough injective objects, or more generally using the derived category of $\EE$. 

Throughout this paper, we will use the following definition:
\begin{definition}
Let $\EE$ and $\EE'$ be exact categories and $F: \EE \rightarrow \EE'$ an exact functor. We say that $F$ is \emph{exact bijective} if the induced morphism $\Ext^1_\EE(-, -) \rightarrow \Ext^1_{\EE'}(F-, F-)$ is an isomorphism. We say that $F$ is an \emph{equivalence of exact categories} if it is an exact bijective equivalence of categories (or, equivalently, an exact equivalence of categories with an exact quasi-inverse).
\end{definition}

A typical example of exact bijective functor arises when $\EE$ is a full exact subcategory of $\EE'$ (\emph{i.e.} a full subcategory which is closed under extensions).

\begin{remark} \label{remfrob}
 Assume $F: \EE \rightarrow \EE'$ is a dense and exact bijective functor.
 \begin{enumerate}[\rm (a)]
  \item For any $X \in \EE$, $X$ is projective (respectively, injective) if and only if $FX$ is projective (respectively, injective).
  \item $\EE$ has enough projective (respectively, injective) objects if and only if $\EE'$ has enough projective (respectively, injective) objects. 
  \item $\EE$ is Frobenius if and only if $\EE'$ is Frobenius.
 \end{enumerate}
\end{remark}

We give an elementary result about second extension groups:

\begin{proposition} \label{ext2inj}
 Let $F : \EE \to \EE'$ be an exact bijective functor. Then, it induces a canonical natural monomorphism $\Ext^2_\EE(-,-) \hookrightarrow \Ext^2_\EE(F-,F-)$.
\end{proposition}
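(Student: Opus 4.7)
The plan is to construct the induced map on $\Ext^2$ via Yoneda 2-extensions and then verify its injectivity through a diagram chase in the long exact sequence of $\Ext$ associated to an admissible short exact sequence. Recall that any class in $\Ext^2_\EE(Y,X)$ is represented by an admissible 2-extension $\xi : 0 \to X \to M \to N \to Y \to 0$ whose middle morphism factors as an admissible epimorphism $M \twoheadrightarrow K$ followed by an admissible monomorphism $K \hookrightarrow N$; equivalently, $\xi$ is obtained by splicing two admissible short exact sequences $\alpha : 0 \to X \to M \to K \to 0$ and $\beta : 0 \to K \to N \to Y \to 0$. Since $F$ is exact, applying $F$ termwise sends any such 2-extension to a 2-extension in $\EE'$, and a routine check using only that $F$ preserves commutative diagrams of admissible short exact sequences shows that the rule $[\xi] \mapsto [F(\xi)]$ descends to Yoneda equivalence classes and is natural in $X$ and $Y$.

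To prove injectivity I apply $\Hom_\EE(-, X)$ to $\beta$ and $\Hom_{\EE'}(-, FX)$ to $F(\beta)$ and connect the resulting long exact sequences by $F$ to obtain a commutative diagram
$$\xymatrix{
\Ext^1_\EE(N,X) \ar[r] \ar[d]^\wr & \Ext^1_\EE(K,X) \ar[r]^-{\delta_\beta} \ar[d]^\wr & \Ext^2_\EE(Y,X) \ar[d] \\
\Ext^1_{\EE'}(FN,FX) \ar[r] & \Ext^1_{\EE'}(FK,FX) \ar[r]_-{\delta_{F(\beta)}} & \Ext^2_{\EE'}(FY,FX)
}$$
whose two leftmost vertical maps are isomorphisms by hypothesis and in which $[\xi] = \delta_\beta([\alpha])$. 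Assuming $F([\xi]) = 0$, we have $F([\alpha]) \in \ker \delta_{F(\beta)}$, so by exactness of the lower row it lifts to some $\alpha'' \in \Ext^1_{\EE'}(FN, FX)$. Surjectivity of the left vertical isomorphism yields $\alpha' \in \Ext^1_\EE(N, X)$ with $F(\alpha') = \alpha''$; naturality of restriction along $K \hookrightarrow N$ then gives $F(\alpha'|_K) = F([\alpha])$, and injectivity of the middle vertical isomorphism forces $\alpha'|_K = [\alpha]$. Hence $[\alpha] \in \ker \delta_\beta$, and therefore $[\xi] = 0$.

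The only slightly delicate point is the bookkeeping in the first paragraph, namely checking that $[\xi] \mapsto [F(\xi)]$ really respects the Yoneda equivalence relation and matches whichever presentation of $\Ext^2_\EE$ one uses; this is a standard verification that relies on nothing beyond exactness of $F$. Once this compatibility is in place the diagram chase above is immediate, and in particular no hypothesis of enough projective or enough injective objects in $\EE$ or $\EE'$ is required.
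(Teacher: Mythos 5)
Your proof is correct and follows essentially the same route as the paper: both arguments split the $2$-extension into two admissible short exact sequences, apply $\Hom(-,X)$ to the second one in $\EE$ and in $\EE'$, and run the same diagram chase using that the two $\Ext^1$ comparison maps are isomorphisms. The only cosmetic difference is that the paper phrases the identity $[\xi]=\delta_\beta([\alpha])$ as "$\xi\in\ker\Ext^2_\EE(u,X)$" and invokes exactness of the four-term row, which is the same fact.
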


\begin{proof}
 The existence of a map $\varphi: \Ext^2_\EE(-,-) \rightarrow \Ext^2_\EE(F-,F-)$ is immediate. We consider an admissible $4$-terms exact sequence
  $\xi: 0 \to X \to Y_1 \to Y_2 \to Z \to 0$
 which, by definition, comes from two short exact sequences $$\xi_1: 0 \to X \to Y_1 \to Y \to 0 \quad \text{and} \quad \xi_2: 0 \to Y \to Y_2 \xrightarrow{u} Z \to 0.$$ Suppose that $\xi \in \ker \varphi_{Z, X}$. Applying $\Hom_\EE(-, X)$ and $\Hom_{\EE'}(-, X)$ to $\xi_2$ gives a commutative diagram of exact sequences:
 $$\xymatrix@C=1.2cm{
  \Ext^1_\EE(Y_2, X) \ar@{=}[d] \ar[r] &\Ext^1_\EE(Y, X) \ar@{=}[d] \ar[r] & \Ext^2_\EE(Z, X) \ar[d]^{\varphi_{Z, X}} \ar[r]^-{\Ext^2_\EE(u, X)} & \Ext^2_\EE(Y_2, X) \ar[d] \\
  \Ext^1_{\EE'}(Y_2, X) \ar[r] & \Ext^1_{\EE'}(Y, X) \ar[r] & \Ext^2_{\EE'}(Z, X) \ar[r] & \Ext^2_{\EE'}(Y_2, X).
 }$$
 By Definition of Yoneda product, $\xi \in \ker \Ext^2_\EE(u, X)$, so an easy diagram chasing gives $\xi = 0$.
\end{proof}

Let us define important concepts:

\begin{definition}
 Let $\EE$ be a Krull-Schmidt additive category and $\EE' \subset \EE$ an additive subcategory. 
 \begin{enumerate}[\rm (a)]
  \item We say that $f: X \to Y$ in $\EE$ is \emph{left minimal} if for any $g \in \End_\EE(Y)$ such that $fg = f$, $g$ is invertible, or equivalently if for any idempotent $e \in \End_\EE(Y)$, $fe = f$ implies $e = \id_Y$.
  \item We say that $g: Y \to X$ in $\EE$ is \emph{right minimal} if for any $f \in \End_\EE(Y)$ such that $fg = g$, $f$ is invertible, or equivalently if for any idempotent $e \in \End_\EE(Y)$, $eg = g$ implies $e = \id_Y$.
  \item We say that $f: X \to X'$ in $\EE$ is a \emph{left $\EE'$-approximation} (of $X$) if $X' \in \EE'$ and any morphism from $X$ to any object of $\EE'$ factors through $f$.
  \item We say that $g: X' \to X$ in $\EE$ is a \emph{right $\EE'$-approximation} (of $X$) if $X' \in \EE'$ and any morphism from any object of $\EE'$ to $X$  factors through $g$.
 \end{enumerate}
\end{definition}

Notice that, in the situation of the previous definition, if an object $X \in \EE$ admits a left (respectively right) $\EE'$-approximation, then it admits a left (respectively right) minimal $\EE'$-approximation which is unique up to isomorphism.

\subsection{Exact ideal quotients of an exact category}

Let $(\EE, \SS)$ be an exact category and $\EE'$ a full subcategory of $\EE$ which is closed under extensions.
Then $(\EE',\SS')$ forms an exact category for the family $\SS'$ of all admissible exact sequences in $\SS$ whose terms belong to $\EE'$.

We denote by $\FF$ a subcategory of $\EE$ satisfying $\Ext^1_\EE(\FF, \EE') = \Ext^1_\EE(\EE', \FF) = 0$. Let $\SS'_\FF$ be the class of pairs of morphisms in $\EE'/[\FF]$ which are isomorphic to a pair in $\pi(\SS')$ where $\pi: \EE' \rightarrow \EE'/[\FF]$ is the canonical functor.

\begin{theorem} \label{eq1}
 The following are equivalent:
 \begin{enumerate}[\rm (i)]
  \item $(\EE'/[\FF], \SS'_\FF)$ is exact; \label{eq1a}
  \item For any admissible monomorphism (respectively, epimorphism) $f$ of $(\EE', \SS')$, $\pi(f)$ is a categorical monomorphism (respectively, epimorphism) in $\EE'/[\FF]$. \label{eq1b}
 \end{enumerate}
 In this case, $\pi: \EE' \rightarrow \EE'/[\FF]$ is automatically exact bijective. 
\end{theorem}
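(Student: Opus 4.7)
The plan is to handle each direction and then the exact bijectivity claim. The direction (\ref{eq1a})$\Rightarrow$(\ref{eq1b}) is essentially immediate: in any exact category, admissible monomorphisms are categorical monomorphisms (they are kernels of admissible epimorphisms) and dually for epimorphisms; applied to the pairs $\pi(f,g)\in\SS'_\FF$ coming from $(f,g)\in\SS'$, this yields the conclusion.

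For (\ref{eq1b})$\Rightarrow$(\ref{eq1a}), I will verify the Quillen--Keller axioms for $\SS'_\FF$. Closure under isomorphism and the presence of split sequences are immediate from the definition, so the content of (Ex0) reduces to showing that each $(\pi f,\pi g)$ with $(f,g)\in\SS'$ is a genuine kernel-cokernel pair in $\EE'/[\FF]$. Given $0\to X \xrightarrow{f} Y \xrightarrow{g} Z \to 0$ in $\SS'$, I would prove $\pi(f) = \ker\pi(g)$ by the following lifting argument: if $h:W\to Y$ satisfies $\pi(h)\pi(g)=0$, then $hg$ factors through some $F\in\FF$ as $h_1 h_2$, and the vanishing $\Ext^1_\EE(F,X)=0$ lifts $h_2$ along $g$ to $\tilde h_2:F\to Y$ with $\tilde h_2 g = h_2$. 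Then $(h-h_1\tilde h_2)g = 0$ in $\EE$ factors uniquely through $f$, giving the desired factorization in $\EE'/[\FF]$; its uniqueness is provided by (\ref{eq1b}), since $\pi(f)$ is a categorical monomorphism. The cokernel claim is dual.

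Axiom (Ex2) then proceeds by lifting. Starting with $(\bar f,\bar g)\in \SS'_\FF$, represented up to isomorphism in $\EE'/[\FF]$ by some $(f,g)\in\SS'$, and a morphism $\bar v$ into the endpoint, I would use fullness of $\pi$ to choose a representative $v$ in $\EE'$, form the pullback of $(f,g)$ along $v$ in $\EE'$, and transport the resulting diagram back through the ambient isomorphisms; (Ex2)$^{\op}$ is dual. The principal obstacle is (Ex1) (and dually (Ex1)$^{\op}$): two composable admissible epimorphisms $\bar g_1,\bar g_2$ in $\EE'/[\FF]$ may have representatives in $\EE'$ whose middle objects merely become isomorphic after passing to $\EE'/[\FF]$. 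To handle this I would establish a stable isomorphism lemma: if $W\cong W'$ in $\EE'/[\FF]$, then $W\oplus F\cong W'\oplus F'$ in $\EE'$ for some $F,F'\in\add\FF$. This follows by writing $\phi\psi-\id_W = h_1 h_2$ through some $F\in\FF$ and observing that $[\phi\ h_1]:W\to W'\oplus F$ is a split monomorphism whose complement is annihilated by $\pi$, hence lies in $\add\FF$. Using this lemma one can enlarge both representatives by identity sequences on objects of $\FF$ so that their middle objects agree in $\EE'$, compose the resulting admissible epimorphisms there, and take $\pi$ to conclude.

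Finally, $\pi$ is exact by construction of $\SS'_\FF$, and to see it is exact bijective I would only need injectivity of the induced map on $\Ext^1$ (surjectivity being built into the definition of $\SS'_\FF$ once endpoints are normalised via the stable isomorphism lemma): a sequence in $\SS'$ whose $\pi$-image splits in $\EE'/[\FF]$ admits a section modulo $\FF$, and exactly the same lifting used for (Ex0) promotes this to a genuine section in $\EE'$.
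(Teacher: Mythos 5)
Your proposal is correct and follows essentially the same route as the paper: the forward direction via kernel--cokernel pairs, the converse by first showing pairs in $\pi(\SS')$ are kernel--cokernel pairs using the lifting along $\Ext^1_\EE(\FF,\EE')=0$, then verifying the axioms by lifting representatives (your ``stable isomorphism lemma'' is exactly the step the paper dismisses as ``easy to check'' in (Ex1)), and finally the same splitting argument for injectivity on $\Ext^1$. The only cosmetic difference is that you verify the kernel property where the paper verifies the dual cokernel property.
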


\begin{proof}
 $\eqref{eq1a} \Rightarrow \eqref{eq1b}$ is trivial. Let us prove the converse. Let us first check that any $(\bar f, \bar g) \in \SS'_\FF$ is a kernel-cokernel pair. By \eqref{eq1b}, $\bar f$ is a monomorphism and $\bar g$ is an epimorphism. By definition, we can lift $(\bar f, \bar g)$ to $(f,g) \in \SS'$. Suppose that $\bar f \bar h = 0$ for some morphism $\bar h$ of $\EE' / [\FF]$. By definition, it means that there is a commutative diagram in $\EE$ of the form
 $$\xymatrix{
  0 \ar[r] & X \ar[r]^{f} \ar[d]_{h'} & Y \ar[r]^g \ar[d]^h & Z \ar[r] & 0 \\
  & F \ar[r]_{f'} & Z' 
 }$$
 with $F \in \FF$. As $\Ext^1_\EE(Z, F) = 0$, there exists $u: Y \rightarrow F$ such that $h' = fu$. Thus, $h = uf' + gv$ for some $v: Z \rightarrow Z'$ and $\bar h = \bar g \bar v$ holds.  It proves that $\bar g$ is a cokernel of $\bar f$. Dually, we prove that $\bar f$ is a kernel of $\bar g$.

 Let us check axioms of exact categories one by one:
 \begin{itemize}
  \item[(Ex0)] Obvious.
  \item[(Ex1)] Suppose that $\bar g: X \rightarrow X'$ and $\bar g': X' \rightarrow X''$ are epimorphisms in $\SS'_\FF$. It is easy to check that we can lift them to admissible epimorphisms $g: X \oplus F_1 \rightarrow X' \oplus F_2$ and $g': X' \oplus F_3 \rightarrow X'' \oplus F_4$ of $\EE'$. Thus $\bar g \bar g'$ can be lifted to an admissible epimorphism $X \oplus F_1 \oplus F_3 \rightarrow X'' \oplus F_2 \oplus F_4$ in $\EE'$ using (Ex1) in $(\EE', \SS')$. By definition, $\bar g \bar g'$ is then an epimorphism in $\SS'_\FF$.
  \item[(Ex1)${}^{\operatorname {op}}$] Dual of the previous.
  \item[(Ex2)] Let $\bar g: Y \rightarrow Z$ be an epimorphism in $\SS'_\FF$ and $\bar v: Z' \rightarrow Z$ be a morphism in $\EE'/[\FF]$. Without loss of generality, we can suppose that they come from lifts $g: Y \rightarrow Z$ and $v: Z' \rightarrow Z$ in $\EE'$ where $g$ is an admissible epimorphism. Thus, we can complete the pair to a pullback diagram
   $$\xymatrix{
    Y' \ar[r]^{g'} \ar[d]_{v'} & Z' \ar[d]^v\\
    Y \ar[r]_g & Z
   }$$
   in $\EE'$ where $g'$ is an admissible epimorphism. Then $0 \rightarrow Y' \rightarrow Z' \oplus Y \rightarrow Z \rightarrow 0$ is in $\SS'$, and its projection to $\EE'/[\FF]$ is in $\SS'_\FF$. Thus the diagram is also a pullback diagram in $\EE'/[\FF]$, and $\bar g'$ is an epimorphism in $\SS'_\FF$. 
  \item[(Ex2)${}^{\operatorname {op}}$] Dual of the previous. 
 \end{itemize} 

 We have finished proving the equivalence. Let us check that the projection $\pi: \EE' \rightarrow \EE'/[\FF]$ is exact bijective. First of all, for $X, Z \in \EE'$, the induced map $\Ext_{\EE'}^1(Z, X) \rightarrow \Ext_{\EE'/[\FF]}^1(\pi Z, \pi X)$ is clearly surjective. To prove that it is injective, let us consider a short exact sequence
 \begin{equation}\label{to split}
0 \rightarrow X \xrightarrow{f} Y \xrightarrow{g} Z \rightarrow 0
 \end{equation}
 which splits in $\EE'/[\FF]$. By definition, it means that there is $g': Z \rightarrow Y$ and two morphisms $u: Z \rightarrow F$ and $v: F \rightarrow Z$ with $F \in \FF$ such that $\id_Z = g' g + uv$. As $\Ext^1_\EE(F, X) = 0$, there exists $v': F \rightarrow Y$ such that $v = v'g$.
Thus $\id_Z = (g'+uv') g$ holds, and \eqref{to split} splits in $\EE'$. Therefore $\Ext_{\EE'}^1(Z, X) \rightarrow \Ext_{\EE/[\FF]}^1(\pi Z, \pi X)$ is injective. 
\end{proof}

In the rest of this section we give sufficient conditions for Theorem \ref{eq1} \eqref{eq1b} to hold. For two subcategories $\BB$ and $\CC$ of $\EE$, we denote by $\CC \searrow \BB$ the full subcategory of $\EE$ consisting of $X$ such that for any complex $Y \xrightarrow{g} B \xrightarrow{f} X$ with $B \in \BB$ and $Y \in \EE'$, there exists a morphism of complexes
$$\xymatrix{ 
 Y \ar[r]^g \ar[d] & B \ar[r]^f \ar[d] & X \ar@{=}[d] \\
 C \ar[r]_{g'} & B' \ar[r]_{f'} & X
}$$
with $B' \in \BB$ and $C \in \CC$. Notice that $[\EE' \searrow \BB] = \EE$ holds since we can choose $f'=f$ and $g'=g$.
Dually, we denote by $\BB \nearrow \CC$ the full subcategory of $\EE$ consisting of $X$ such that for any complex $X \xrightarrow{f} B \xrightarrow{g} Y$ with $B \in \BB$ and $Y \in \EE'$, there exists a morphism of complexes
$$\xymatrix{ 
 X \ar[r] \ar@{=}[d] & B' \ar[r] \ar[d] & C \ar[d] \\
 X \ar[r]_f & B \ar[r]_g & Y
}$$
with $B' \in \BB$ and $C \in \CC$. As before, we get $[\BB \nearrow \EE'] = \EE$. We get the following corollary:

\begin{corollary}
 Let $\PP$ (respectively, $\II$) be the full subcategory of $\EE$ consisting of objects $X$ satisfying $\Ext^1_\EE(X, \EE') = 0$ (respectively, $\Ext^1_\EE(\EE', X) = 0$). If $\EE' \subset (\FF \nearrow [\II \searrow \FF]) \cap ([\FF \nearrow \PP] \searrow \FF)$ then $(\EE'/[\FF], \SS'_\FF)$ is an exact category. 
\end{corollary}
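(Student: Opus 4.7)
By Theorem~\ref{eq1}, it suffices to verify condition~(ii), that the projection $\pi \colon \EE' \to \EE'/[\FF]$ sends admissible epimorphisms (respectively monomorphisms) of $(\EE',\SS')$ to categorical epimorphisms (respectively monomorphisms). The two cases are formally dual, so I will treat the epimorphism case using the hypothesis $\EE' \subset \FF \nearrow [\II \searrow \FF]$; the monomorphism case should then go through in parallel using $\EE' \subset [\FF \nearrow \PP] \searrow \FF$.

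Fix an admissible short exact sequence $0 \to X \xrightarrow{j} Y \xrightarrow{g} Z \to 0$ in $\SS'$ and a morphism $h \colon Z \to W$ in $\EE'$ with $\pi(g)\pi(h) = 0$, so that $gh = ab$ for some $a \colon Y \to F$, $b \colon F \to W$ with $F \in \FF$. Since $jab = jgh = 0$, the triple $X \xrightarrow{ja} F \xrightarrow{b} W$ is a complex, and my plan is to apply the hypothesis twice. First, $X \in \FF \nearrow [\II \searrow \FF]$ produces a morphism of complexes
\[
\xymatrix{
 X \ar[r]^{\alpha_1} \ar@{=}[d] & F_1 \ar[r]^{\beta_1} \ar[d]^{\delta_1} & D \ar[d]^{\eta_1} \\
 X \ar[r]^{ja} & F \ar[r]^{b} & W
}
\]
with $F_1 \in \FF$, $D \in [\II \searrow \FF]$, $\alpha_1\beta_1 = 0$, $\alpha_1\delta_1 = ja$ and $\beta_1\eta_1 = \delta_1 b$. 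Second, since $X \xrightarrow{\alpha_1} F_1 \xrightarrow{\beta_1} D$ is a complex with $X \in \EE'$ and $F_1 \in \FF$, applying $D \in \II \searrow \FF$ yields
\[
\xymatrix{
 X \ar[r]^{\alpha_1} \ar[d]^{\gamma_2} & F_1 \ar[r]^{\beta_1} \ar[d]^{\delta_2} & D \ar@{=}[d] \\
 I \ar[r]^{\alpha_2} & F_2 \ar[r]^{\beta_2} & D
}
\]
with $I \in \II$, $F_2 \in \FF$, $\alpha_2\beta_2 = 0$, $\alpha_1\delta_2 = \gamma_2\alpha_2$ and $\delta_2\beta_2 = \beta_1$.

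Next comes the lifting from $X$ to $Y$. Using $\Ext^1_\EE(\EE',\FF) = 0$ I extend $\alpha_1$ to some $\tilde\alpha_1 \colon Y \to F_1$ with $j\tilde\alpha_1 = \alpha_1$, and using $I \in \II$ I extend $\gamma_2$ to some $\tilde\gamma_2 \colon Y \to I$ with $j\tilde\gamma_2 = \gamma_2$. A diagram chase then gives, successively: $\tilde\alpha_1\beta_1$ vanishes on $X$ (since $\alpha_1\beta_1 = 0$) and so equals $g\mu$ for a unique $\mu \colon Z \to D$; the difference $\tilde\alpha_1\delta_2 - \tilde\gamma_2\alpha_2$ vanishes on $X$ (since $\alpha_1\delta_2 = \gamma_2\alpha_2$) and so equals $g\sigma$ for some $\sigma \colon Z \to F_2$; postcomposing with $\beta_2$ and using $\alpha_2\beta_2=0$ and $\delta_2\beta_2=\beta_1$ gives $g\mu = g\sigma\beta_2$, whence $\mu = \sigma\beta_2$; finally $a - \tilde\alpha_1\delta_1$ vanishes on $X$ (since $\alpha_1\delta_1 = ja$), so $a = \tilde\alpha_1\delta_1 + g\nu$ for some $\nu \colon Z \to F$. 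Substituting back into $gh = ab$, using $\delta_1 b = \beta_1\eta_1$ and $\tilde\alpha_1\beta_1 = g\mu$, and cancelling $g$ yields $h = \mu\eta_1 + \nu b = \sigma(\beta_2\eta_1) + \nu b$, a sum of two morphisms factoring through $F_2, F \in \FF$. Hence $\pi(h) = 0$, as required.

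The main obstacle I anticipate is the coordinated bookkeeping in this last chase: the nested condition $\FF \nearrow [\II \searrow \FF]$ is engineered so that the outer approximation replaces the target $W$ by an auxiliary object $D$, while the inner approximation then replaces the source $X$ of the resulting complex by an $\II$-object $I$; the extensions $\tilde\alpha_1$ and $\tilde\gamma_2$ to $Y$, combined with the vanishing $\alpha_2\beta_2=0$, are precisely what collapses the residual piece of $h$ into an $\FF$-factor. Running only one layer of the approximation would show $h$ factoring through a $\PP$- or $\II$-object, not through $\FF$, so iterating the conditions is the essential point. The monomorphism case is formally dual: one applies $Z \in [\FF \nearrow \PP] \searrow \FF$ to the complex $W \to F \to Z$, then $C \in \FF \nearrow \PP$ to the resulting complex $C \to F_1 \to Z$, and uses $\Ext^1_\EE(\FF,\EE') = 0$ together with $P \in \PP$ to lift morphisms into $Z$ to morphisms into $Y$.
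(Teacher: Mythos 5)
Your proof is correct and follows essentially the same strategy as the paper's: both apply the two nested approximation conditions to the complex arising from the factorization through $\FF$, lift the resulting maps along the admissible short exact sequence using the standing hypotheses $\Ext^1_\EE(\EE',\FF)=0$, $P\in\PP$, $I\in\II$, and conclude with a diagram chase. The only organizational difference is that you chain the two approximations directly and write out the epimorphism half, whereas the paper treats the dual monomorphism half by first isolating the special case where the test object lies in $[\FF \nearrow \PP]$ and then reducing the general case to it.
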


\begin{proof}
We need to prove Theorem \ref{eq1} \eqref{eq1b}. We do it for admissible monomorphisms. Let $0 \rightarrow X \xrightarrow{f} Y \xrightarrow{g} Z \rightarrow 0$ be a short exact sequence in $\SS'$,
and let $u: X' \rightarrow X$ be a morphism such that $\bar u \bar f = 0$ in $\EE/[\FF]$.
Then $uf = f'u'$ holds for some $f': X' \rightarrow F'$ and $u': F' \rightarrow Y$ with $F' \in \FF$.

Suppose first that $X' \in [\FF \nearrow \PP]$. By definition, we can complete a commutative diagram 
 $$\xymatrix{
  & X' \ar[r]^\alpha \ar@{=}[d] & F \ar[r]^\beta \ar[d]^{v'} & P \ar[d]^{v''} \\
  & X' \ar[d]_u \ar[r]^{f'} & F' \ar[d]^{u'} \ar[r]^{u'g} & Z \ar@{=}[d] \\
  0 \ar[r] & X \ar[r]_f & Y \ar[r]_g & Z \ar[r] & 0
 }$$
with $\alpha \beta = 0$ and $F \in \FF$ and $P \in \PP$. As $\Ext^1_\EE(P, X) = 0$, $v'' = g''g$ for some $g'': P \rightarrow Y$ and we get easily $v'u' =  \beta g'' + f'' f$ for some $f'': F \rightarrow X$. 
We deduce that $\alpha f'' f = \alpha v'u' - \alpha \beta g'' = uf$. As $f$ is a monomorphism, $\alpha f'' = u$ and therefore $\bar u = 0$. 

Let us now suppose that $X' \in \EE'$. As $Z \in ([\FF \nearrow \PP] \searrow \FF)$, we can complete the following commutative diagram
$$\xymatrix{
  & A \ar[r]^\alpha & F \ar[r]^\beta & Z \\
  & X' \ar[d]_u \ar[r]^{f'} \ar[u]^v & F' \ar[d]^{u'} \ar[r]^{u'g} \ar[u]_{v'} & Z \ar@{=}[d] \ar@{=}[u]\\
  0 \ar[r] & X \ar[r]_f & Y \ar[r]_g & Z \ar[r] & 0
 }$$
with $\alpha \beta = 0$ and $F \in \FF$ and $A \in [\FF \nearrow \PP]$. Then, as $\Ext^1_\EE(F, X) = 0$, we get $\beta = \beta' g$ with $\beta': F \rightarrow Y$ and, as $f$ is the kernel of $g$, there exist $\alpha': A \rightarrow X$ such that $\alpha \beta' = \alpha' f$. As $A \in [\FF \nearrow \PP]$ and $\bar \alpha' \bar f =0$, by the first part of the argument, $\bar \alpha' = 0$.

Finally, by an easy diagram chasing, there exists $w: F' \rightarrow X$ such that $u' = v' \beta' + w f$. So we get $uf = f' u' = f' v' \beta' + f' w f = v \alpha \beta' + f' w f = v \alpha' f + f' w f$. As $f$ is a monomorphism, we deduce that $u = v \alpha' + f' w$. Thus $\bar u = 0$ holds since $\bar \alpha'=0$.
\end{proof}

In the rest of this section, we give three special cases as an application. Notice that the first case recovers Chen's result \cite[Theorem 3.1]{Ch12} for $\EE' = \EE$.

\begin{corollary} \label{cdexactm}
\begin{enumerate}[\rm (a)]
\item If, for any $X \in \EE'$, there exist left and right $\FF$-approximations and pseudo-cokernel and pseudo-kernel
  $$X \rightarrow F^X \rightarrow P^X \quad \text{and} \quad I_X \rightarrow F_X \rightarrow X$$
 such that $P^X \in \PP$ and $I_X \in \II$ then $(\EE'/[\FF], \SS'_\FF)$ is an exact category.
\item If, for any $X \in \EE' $ there exists a left $\FF$-approximation $X \rightarrow F^X$ which is a categorical epimorphism, then $(\EE'/[\FF], \SS'_\FF)$ is an exact category.
\item \label{cdexact} If, for any $X \in \EE'$ there exists a right $\FF$-approximation $F_X \rightarrow X$ which is a categorical monomorphism, then $(\EE'/[\FF], \SS'_\FF)$ is an exact category.
\end{enumerate}
\end{corollary}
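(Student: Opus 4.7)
The plan is to derive all three assertions from the general corollary immediately preceding the statement, which reduces each case to checking $\EE' \subset (\FF \nearrow [\II \searrow \FF]) \cap ([\FF \nearrow \PP] \searrow \FF)$. The central idea is that the hypotheses of (a), (b), (c) supply precisely the data needed to witness membership in the subcategories $[\II \searrow \FF]$, $[\FF \nearrow \PP]$ and their iterations.

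For (a), I would first show $\PP \subset [\II \searrow \FF]$: given $P \in \PP$ and a complex $W \xrightarrow{g} F \xrightarrow{f} P$ with $F \in \FF$ and $W \in \EE'$, factor $f$ through the right $\FF$-approximation $F_P \to P$ (possible since $F \in \FF$), and then use the weak-kernel property of $I_P \to F_P \to P$ to lift the resulting map $W \to F_P$ to $W \to I_P$. This produces the desired morphism of complexes with $I_P \in \II$ and $F_P \in \FF$; dually $\II \subset [\FF \nearrow \PP]$. Then, for $X \in \EE'$ and a complex $X \xrightarrow{f} F \xrightarrow{g} Y$ with $F \in \FF$, $Y \in \EE'$, factor $f$ through the left $\FF$-approximation $X \to F^X$; the induced map $F^X \to F \xrightarrow{g} Y$ vanishes after pre-composition with $X \to F^X$, so the weak-cokernel property of $F^X \to P^X$ yields a factorization through $P^X \in \PP \subset [\II \searrow \FF]$. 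This gives $\EE' \subset \FF \nearrow [\II \searrow \FF]$, and the inclusion $\EE' \subset [\FF \nearrow \PP] \searrow \FF$ is entirely dual.

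For (b), the key observation is that a left $\FF$-approximation $X \to F^X$ is a categorical epimorphism exactly when $0$ serves as a weak cokernel of $X \to F^X$. Thus the pseudo-cokernel data of (a) becomes automatic with $P^X = 0 \in \PP$, and the argument of (a) for $\EE' \subset \FF \nearrow [\II \searrow \FF]$ applies verbatim, using $0 \in \II \subset [\II \searrow \FF]$ in place of $P^X$. For the other inclusion $\EE' \subset [\FF \nearrow \PP] \searrow \FF$, I would show by the same trick that every $W \in \EE'$ belongs to $[\FF \nearrow \PP]$ (a complex $W \to F \to Y$ with $Y \in \EE'$ is dominated by $W \to F^W \to 0$ thanks to the categorical epimorphism property), and then take the top complex in the defining morphism to coincide with the bottom. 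Case (c) is obtained by the exact dual argument, using right $\FF$-approximations that are categorical monomorphisms and taking $I_X = 0$.

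The main bookkeeping obstacle is tracking the asymmetric nature of the constructions $\CC \searrow \BB$ and $\BB \nearrow \CC$: the fixed object $X$ sits in different columns in the two cases, and the morphism of complexes is oriented accordingly, so one must be careful about which arrow between $F$-objects is being produced and in which direction. Once this orientation is clear, every verification reduces to a short diagram chase combining the approximation data with the weak-(co)kernel property or its categorical-(epi/mono) substitute.
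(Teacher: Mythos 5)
Your arguments for (b) and (c) are correct and essentially the paper's: once $P^X=0$ (resp.\ $I_X=0$), the only facts needed are $0\in[\II\searrow\FF]$ (resp.\ $0\in[\FF\nearrow\PP]$), which hold trivially, and the remaining inclusion comes from $[\EE'\searrow\FF]=\EE$ (resp.\ $[\FF\nearrow\EE']=\EE$). (Your aside ``$\II\subset[\II\searrow\FF]$'' is itself unjustified, but you only ever use $0\in[\II\searrow\FF]$, so no harm done.)

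Part (a) has a genuine gap. Your auxiliary lemmas $\PP\subset[\II\searrow\FF]$ and $\II\subset[\FF\nearrow\PP]$ are proved by taking a right $\FF$-approximation $F_P\to P$ with pseudo-kernel $I_P\in\II$ for an arbitrary $P\in\PP$ (and dually for objects of $\II$). But the hypothesis of (a) supplies approximations and pseudo-(co)kernels only for objects of $\EE'$, whereas $\PP$ and $\II$ are full subcategories of $\EE$, and the objects $P^X$, $I_X$ it produces need not lie in $\EE'$. So the data you invoke for $F_P$, $I_P$ simply does not exist under the stated hypotheses, and I see no other route to those two inclusions. The repair is to pair the approximation data with the two target conditions the other way round, as the paper does. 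Your own computation already shows $\EE'\subset[\FF\nearrow\PP]$ (left approximation plus pseudo-cokernel); now feed this into the \emph{first} slot of $\searrow$: since $[\EE'\searrow\FF]=\EE$ (the given test complex $Y\to B\to X$, with $Y\in\EE'$, is its own witness) and $\searrow$ is monotone in its first argument, one gets $\EE=[\EE'\searrow\FF]\subset([\FF\nearrow\PP]\searrow\FF)$ -- here one only needs the \emph{source} $Y$ of the test complex to lie in $[\FF\nearrow\PP]$, and $Y\in\EE'$ by definition of the test complexes. Dually, the right-approximation/pseudo-kernel data gives $\EE'\subset[\II\searrow\FF]$ directly, whence $\EE=[\FF\nearrow\EE']\subset(\FF\nearrow[\II\searrow\FF])$. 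This crosswise assignment never requires approximating $P^X$ or $I_X$.
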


\begin{proof} 
 \begin{enumerate}[\rm (a)]
  \item Let $X \rightarrow F \rightarrow Y$ be a complex where $X, Y \in \EE'$ and $F \in \FF$. It is easy to complete the following commutative diagram
  $$\xymatrix{
   X \ar@{=}[d] \ar[r] & F^X \ar[r] \ar[d] & P^X \ar[d] \\
   X \ar[r] & F \ar[r] & Y 
  }$$
  so $\EE' \subset [\FF \nearrow \PP]$. Thus we have $\EE=[\EE' \searrow \FF]\subset([\FF \nearrow \PP]\searrow\FF)$. Dually we have $\EE=(\FF \nearrow [\II\searrow\FF])$.
  \item By the same argument as the beginning of (a), we get $\EE' \subset [\FF \nearrow 0]$. So 
  $$\EE' \subset [\FF \nearrow 0] \subset (\FF \nearrow [\II \searrow \FF]) \quad \text{and} \quad \EE = [\EE' \searrow \FF] \subset ([\FF \nearrow 0] \searrow \FF) \subset ([\FF \nearrow \PP] \searrow \FF). $$
  \item Dual of (b). \qedhere
 \end{enumerate}
\end{proof}

\subsection{On some Frobenius subcategories of exact categories}

When we have an admissible monomorphism $f:X\to Y$ in an exact category, we say that $X$ is an \emph{admissible subobject} of $Y$. Dually we define an \emph{admissible factor object}.
For a full subcategory $\EE'$ of an exact category $\EE$, we denote by $\Sub\EE'$ the smallest full subcategory of $\EE$ which is closed under direct sums and admissible subobjects and contains $\EE'$.

We recall that an exact category is \emph{Frobenius} if it has enough injective objects, enough projective objects and they coincide. This subsection is devoted to prove the following result.

\begin{proposition} \label{frobprop}
Let $\EE$ be an exact category which has enough projective objects and enough injective objects.
Let $\UU$ be a subcategory of injective objects in $\EE$ satisfying $\UU=\add\UU$, and $\DD := \Sub \UU$.
Assume that projective objects of $\EE$ and those of $\DD$ coincide.
Then the following assertions hold.
\begin{enumerate}[\rm (a)]
 \item $\DD$ is closed under extensions.
 \item $\DD$ is Frobenius if and only if the following conditions are satisfied:
 \begin{itemize}
  \item $U$ is projective-injective in $\EE$ for any $U\in\UU$.
  \item Each projective object of $\EE$ has injective dimension at most $1$ and each injective object of $\EE$ has projective dimension at most $1$.
 \end{itemize}
 \item If the conditions in (b) are satisfied, then $\UU$ is the category of projective-injective objects in $\EE$.
 \end{enumerate}
\end{proposition}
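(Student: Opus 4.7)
I start with (a). Given $0 \to X \to Y \xrightarrow{p} Z \to 0$ admissible in $\EE$ with $X, Z \in \DD$, I choose admissible monomorphisms $\iota_X \colon X \hookrightarrow U_X$ and $\iota_Z \colon Z \hookrightarrow U_Z$ into $\UU$. Since $U_X$ is injective in $\EE$, $\iota_X$ extends to some $g_X \colon Y \to U_X$. The pair $(g_X, p) \colon Y \to U_X \oplus Z$ fits in a morphism of admissible short exact sequences with left arrow $\iota_X$ and right arrow $\id_Z$, so is itself an admissible monomorphism by the standard exact-category analogue of the five-lemma (two admissible monos on the outside force the middle to be admissible mono). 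Composing with the admissible monomorphism $\id_{U_X} \oplus \iota_Z \colon U_X \oplus Z \hookrightarrow U_X \oplus U_Z$ places $Y$ admissibly inside $U_X \oplus U_Z \in \UU$, so $Y \in \DD$.

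For (b) I collect three facts: every $U \in \UU$ is injective in $\DD$, since $\DD$ inherits its exact structure from $\EE$; the projective objects of $\DD$ coincide with those of $\EE$ by the Proposition's hypothesis; and $\DD$ has enough projectives since admissible subobjects of projectives of $\EE$ remain in $\DD$. Hence $\DD$ is Frobenius if and only if it has enough injectives and $\text{proj}(\EE) = \text{inj}(\DD)$. A useful observation, applicable in both directions of (b) whenever one has $\UU \subseteq \text{proj}(\EE)$: for $P \in \text{proj}(\EE)$ and $Z \in \DD$, any extension $0 \to P \to E \to Z \to 0$ pushes out along an admissible embedding $Z \hookrightarrow U_Z \in \UU$ to an extension by $U_Z$; since $U_Z$ is projective the pushout splits, and the resulting retraction restricts to split the original. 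Therefore $\text{proj}(\EE) \subseteq \text{inj}(\DD)$ in that situation.

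For the $(\Rightarrow)$ direction and for (c), assuming $\DD$ is Frobenius gives $\UU \subseteq \text{inj}(\DD) = \text{proj}(\DD) = \text{proj}(\EE)$, so each $U \in \UU$ is projective-injective. For $\injdim_\EE P \leq 1$ with $P$ projective: every $Y \in \EE$ has syzygy $\Omega Y \in \DD$ (as an admissible subobject of a projective cover), so $\Ext^2_\EE(Y, P) \cong \Ext^1_\EE(\Omega Y, P) = \Ext^1_\DD(\Omega Y, P) = 0$, the last equality because $P \in \text{inj}(\DD)$ by Frobenius. For $\pd_\EE I \leq 1$ with $I$ injective: given the projective cover $0 \to K \to Q \to I \to 0$, for each $Z \in \DD$ and $f \colon Z \to I$ I would extend $f$ to $\tilde f \colon U_Z \to I$ via an admissible embedding $Z \hookrightarrow U_Z \in \UU$ and injectivity of $I$, then lift $\tilde f$ through $Q \to I$ to $\tilde g \colon U_Z \to Q$ using projectivity of $U_Z$, and restrict to obtain a lift of $f$; this shows $\Ext^1_\EE(Z, K) = 0$ for all $Z \in \DD$, so $K \in \text{inj}(\DD) = \text{proj}(\EE)$ by Frobenius, whence $\pd I \leq 1$. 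Part (c) follows from the splitting argument: any projective-injective of $\EE$ is projective so lies in $\DD$, and splits off its admissible $\UU$-embedding by injectivity, landing in $\UU = \add \UU$.

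For the $(\Leftarrow)$ direction, (c) is immediate from the splitting argument above, and the observation in the second paragraph gives $\text{proj}(\EE) \subseteq \text{inj}(\DD)$. The main technical obstacle is to verify the reverse inclusion $\text{inj}(\DD) \subseteq \text{proj}(\EE)$ and that $\DD$ has enough injectives. My plan is to combine (c) (which identifies $\UU$ with the projective-injectives of $\EE$) with the conditions $\injdim P \leq 1$ and $\pd I \leq 1$: the former supplies each projective of $\EE$ with an injective coresolution of length at most one, and the latter supplies each injective of $\EE$ with a projective resolution of length at most one whose terms are in $\UU$ by (c). Schanuel-type comparisons across these resolutions, carried out in pushout/pullback diagrams involving the admissible $\UU$-embeddings afforded by $\DD = \Sub \UU$, identify every injective object of $\DD$ as a summand of a projective-injective of $\EE$, and exhibit cokernels of admissible monomorphisms $X \hookrightarrow U \in \UU$ (for $X \in \DD$) as admissible subobjects of $\UU$-objects, placing them in $\DD$.
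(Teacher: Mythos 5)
Parts (a), the forward direction of (b), and (c) of your proposal are correct and essentially follow the paper's route: your argument for (a) is the horseshoe lemma, and your lifting trick for $\pd_\EE I\le 1$ (embed $Z\hookrightarrow U_Z$, extend by injectivity of $I$, lift by projectivity of $U_Z$) is precisely the paper's Lemma \ref{BBI sequence}. The problem is the converse direction of (b), where your ``useful observation'' is false as stated. You claim that $\UU\subseteq\proj(\EE)$ alone forces $\proj(\EE)\subseteq\inj(\DD)$, via ``pushing out'' an extension $0\to P\to E\to Z\to 0$ along an admissible monomorphism $\iota\colon Z\hookrightarrow U_Z$. No such operation exists: changing the quotient term covariantly along a monomorphism requires the class of the extension to lie in the image of $\iota^*\colon\Ext^1_\EE(U_Z,P)\to\Ext^1_\EE(Z,P)$, and the obstruction is the Yoneda product with $0\to Z\to U_Z\to U_Z/Z\to 0$, living in $\Ext^2_\EE(U_Z/Z,P)$. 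Concretely, take $\Lambda=k(1\xrightarrow{a}2\xrightarrow{b}3)/(ab)$ and $\UU=\add(P_1\oplus P_2)$, the projective-injectives; then $\DD=\Sub\UU=\add(P_1\oplus P_2\oplus S_2\oplus S_3)$, the hypothesis $\proj(\EE)=\proj(\DD)$ holds and $\UU\subseteq\proj(\EE)$, yet $S_3=P_3$ is projective and not injective in $\DD$ because $0\to S_3\to P_2\to S_2\to 0$ does not split; indeed $\Ext^1_\Lambda(P_1,S_3)=0\to\Ext^1_\Lambda(S_2,S_3)\ne 0$ is not surjective. The correct argument must use the hypothesis $\injdim_\EE P\le 1$, which you never invoke here: take $0\to P\to I^0\to I^1\to 0$ and show $\Hom_\EE(\DD,I^0)\to\Hom_\EE(\DD,I^1)$ is surjective by exactly the same lifting trick you used in the forward direction; then $\Ext^1_\EE(\DD,P)=0$ follows from the long exact sequence.

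The remainder of your converse direction is a plan rather than a proof, and it contains a doubtful step. To get $\inj(\DD)\subseteq\proj(\EE)$ one first needs $\injdim_\EE I\le 1$ for $I$ injective in $\DD$, which comes from $\Ext^2_\EE(\EE,I)=\Ext^1_\EE(\Omega_\EE(\EE),I)=0$ using $\Omega_\EE(\EE)\subseteq\DD$; only then is the cokernel $E$ of $I\hookrightarrow U$ injective in $\EE$, so that $\pd_\EE E\le 1$ and Schanuel make $I$ projective. You do not state this intermediate step. Finally, for enough injectives you propose to show that the cokernel of $X\hookrightarrow U$ lies in $\DD$; this is not justified and the paper avoids it, instead replacing $U$ by $U\oplus P_1$ (with $P_1$ a first syzygy of an injective of $\EE$, which is projective-injective in $\DD$) so that the modified cokernel visibly lies in $\DD$ and the middle term is still injective in $\DD$.
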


Part (a) is an easy consequence of horseshoe lemma. Let us start by the following Lemma:

\begin{lemma}\label{BBI sequence}
Assume that any object in $\UU$ is projective in $\EE$. Let $0\rightarrow E\rightarrow E'\xrightarrow{f} I\rightarrow 0$
be an exact sequence in $\EE$ with $I$ injective.
Then $\Hom_\EE(\DD, f):\Hom_{\EE}(\DD,E')\rightarrow \Hom_{\EE}(\DD,I)$ is an epimorphism.
\end{lemma}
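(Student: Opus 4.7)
The plan is to fix $D\in\DD$ and a morphism $g\colon D\to I$, and to produce $h\colon D\to E'$ with $hf=g$ directly from the two hypotheses: injectivity of $I$ and projectivity of every object of $\UU$.

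First, I would unpack the definition $\DD=\Sub\UU$. Since $\UU=\add\UU$ is already closed under finite direct sums, and admissible monomorphisms compose by (Ex1)${}^{\operatorname{op}}$, the subcategory $\Sub\UU$ coincides with the full subcategory of $\EE$ consisting of admissible subobjects of objects of $\UU$. Hence for our $D$ there exists an admissible monomorphism $\iota\colon D\hookrightarrow U$ with $U\in\UU$, which fits into an admissible short exact sequence $0\to D\xrightarrow{\iota}U\to U/D\to 0$.

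Next, I would apply injectivity of $I$ to this sequence: exactness of $\Hom_\EE(-,I)$ yields a surjection $\Hom_\EE(U,I)\twoheadrightarrow\Hom_\EE(D,I)$, so one can pick $\tilde g\colon U\to I$ with $\iota\tilde g=g$. Then I would apply projectivity of $U$ (which holds by the standing hypothesis $\UU\subseteq\operatorname{proj}\EE$) to the given admissible epimorphism $f\colon E'\twoheadrightarrow I$: exactness of $\Hom_\EE(U,-)$ gives $h'\colon U\to E'$ with $h'f=\tilde g$. Setting $h:=\iota h'\colon D\to E'$, one checks immediately that
\[
 hf \;=\; \iota h' f \;=\; \iota\tilde g \;=\; g,
\]
which is exactly the required lift.

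There is no substantial obstacle in this proof; it is a two-step diagram chase (inject, then lift). The only point that deserves explicit mention is the identification of $\Sub\UU$ with admissible subobjects of objects of $\UU$, which relies on $\UU=\add\UU$ together with the fact that the composition of two admissible monomorphisms is again admissible. After that, the argument uses injectivity of $I$ and projectivity of $U\in\UU$ in succession, and nothing else.
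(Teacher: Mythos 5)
Your proof is correct and is essentially identical to the paper's: embed $D$ into some $U\in\UU$ via an admissible monomorphism, lift $g$ to $U$ using injectivity of $I$, then lift the result through $f$ using projectivity of $U$, and compose. The only extra content is your explicit justification that every object of $\Sub\UU$ admits an admissible monomorphism into an object of $\UU$, which the paper uses implicitly.
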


\begin{proof}
Take a morphism $g:D\rightarrow I$ with $D\in\DD$. Then there exists an admissible monomorphism $i:D\rightarrow U$ with $U\in\UU$.
Since $I$ is injective in $\EE$, there exists $s:U\rightarrow I$ such that $g=is$.
Since $U$ is projective in $\EE$, there exists $t:U\rightarrow E'$ such that $s=tf$.
\[\xymatrix{
0\ar[r]&E\ar[r]&E'\ar[r]^f&I\ar[r]&0\\
&&D\ar[ur]|(.3)g\ar[r]_i&U\ar[u]_{s}\ar[ul]|(.3)t
}\]
Since $g=itf$, we have the assertion.
\end{proof}

Let us now prove the proposition.

\begin{proof}[Proof of Proposition \ref{frobprop} (b) $\Rightarrow$]
Suppose that $\DD$ is Frobenius.
Note that our assumptions imply that projective objects in $\EE$, projective objects in $\DD$ and injective objects in $\DD$ coincide.

Fix any $U\in\UU$. Then $U$ is injective in $\EE$ by our assumption, and hence $U$ is injective also in $\DD$. Therefore $U$ is projective in $\EE$ by the remark above.

Let $P$ be a projective object in $\EE$. Then $P$ is projective-injective in $\DD$.
Since our assumptions imply $\Omega_{\EE}(\EE)\subset\DD$, we have
$\Ext^2_\EE(\EE, P) = \Ext^1_\EE(\Omega_\EE(\EE), P) = 0$.
Thus $P$ has injective dimension at most one in $\EE$.

 Let $I$ be an injective object in $\EE$. We take an exact sequence
\begin{equation}\label{Omega_BI}
0 \rightarrow \Omega_\EE(I) \rightarrow P \xrightarrow{f} I \rightarrow 0
\end{equation}
with a projective object $P$ in $\EE$. Our assumptions imply $P\in\DD$ and $\Omega_{\EE}(I)\in\DD$.
We apply $\Hom_\EE(\DD, -)$ to \eqref{Omega_BI} to get the exact sequence
 $$\Hom_\EE(\DD, P) \rightarrow \Hom_\EE(\DD, I) \rightarrow \Ext^1_\EE(\DD, \Omega_\EE(I)) \rightarrow \Ext^1_\EE(\DD, P) = 0.$$
By Lemma \ref{BBI sequence}, we have $\Ext^1_\EE(\DD, \Omega_\EE(I)) = 0$. Thus $\Omega_\EE (I)$ is projective-injective in $\DD$ so projective in $\EE$, and the assertion follows. 
\end{proof}

\begin{proof}[Proof of Proposition \ref{frobprop} (b) $\Leftarrow$]
 Let $P$ be a projective object in $\DD$. By our assumptions, $P$ is projective in $\EE$, and there exists an exact sequence
$0 \rightarrow P \rightarrow I^0 \rightarrow I^1 \rightarrow 0$
with injective objects $I^0$, $I^1$ in $\EE$.
Applying $\Hom_{\EE}(\DD,-)$, we have an exact sequence
$$\Hom_{\EE}(\DD,I^0)\rightarrow\Hom_{\EE}(\DD,I^1)\rightarrow \Ext^1_{\EE}(\DD,P)\rightarrow \Ext^1_{\EE}(\DD,I^0)=0.$$
By Lemma \ref{BBI sequence}, we have $\Ext^1_\EE(\DD, P) = 0$. Thus $P$ is injective in $\DD$. 

Let $I$ be an injective object in $\DD$. 
Since $\Omega_\EE(\EE)\subset\DD$, we have $\Ext^2_\EE(\EE, I) = \Ext^1_\EE(\Omega_\EE(\EE),I) = 0$.
Thus $I$ has injective dimension at most one in $\EE$.
Now we take an exact sequence
 \begin{equation}\label{IIB sequence}
0 \rightarrow I \rightarrow U \rightarrow E \rightarrow 0
 \end{equation}
 with $U\in\UU$ and $E \in \EE$. Since $U$ is injective in $\EE$, so is $E$.
Thus $E$ has projective dimension at most one in $\EE$. Since $U$ is projective in $\EE$, so is $I$.
Thus $I$ is projective in $\DD$.

 Since $\EE$ has enough projective objects and $\Omega_{\EE}(\EE)\subset\DD$ holds, $\DD$ also has enough projective objects.
It remains to prove that $\DD$ has enough injective objects.
Fix $D \in \DD$ and take an exact sequence
$0 \rightarrow D \rightarrow U \rightarrow E \rightarrow 0$ with $U\in\UU$ and $E\in\EE$.
Since $\EE$ has enough injective objects by our assumption, there exists an exact sequence $0\rightarrow E\rightarrow I\rightarrow E'\rightarrow 0$ with an injective object $I$ in $\EE$ and $E'\in\EE$.
Let $0 \rightarrow P_1 \rightarrow P_0 \rightarrow I \rightarrow 0$ be a projective resolution of $I$ in $\EE$. We have a commutative diagram of exact sequences:
 $$\xymatrix{
&&0\ar[d]&0\ar[d]\\
  0 \ar[r] & P_1 \ar[r] \ar@{=}[d] & X \ar[r] \ar[d] & E \ar[d] \ar[r] & 0 \\
  0 \ar[r] & P_1 \ar[r] & P_0\ar[d] \ar[r] & I \ar[r]\ar[d] & 0\\
&&E'\ar@{=}[r]\ar[d]&E'\ar[d]\\
&&0&0.
 }$$
Since $P_0\in\DD$, the middle column shows $X\in\DD$. On the other hand, we have the following commutative diagram of exact sequences:
 $$\xymatrix{
  & & 0 \ar[d] & 0 \ar[d] \\
  & & P_1 \ar[d] \ar@{=}[r]& P_1 \ar[d] \\
  0 \ar[r] & D \ar[r] \ar@{=}[d] & Y \ar[r] \ar[d] & X \ar[d] \ar[r] & 0 \\
  0 \ar[r] & D \ar[r] & U \ar[r] \ar[d]  & E \ar[r] \ar[d]& 0 \\
   & & 0 & 0.
 }$$
 As $P_1$ is projective-injective in $\DD$, the middle column splits and $Y \cong U\oplus P_1$ is injective in $\DD$.
The middle row gives an injective hull of $D$ in $\DD$. 
\end{proof}

\begin{proof}[Proof of Proposition \ref{frobprop} (c)]
Let $P$ be a projective-injective object in $\EE$.
Then it belongs to $\DD$, and there is a short exact sequence
 $0 \rightarrow P \rightarrow U \rightarrow E \rightarrow 0$
with $U\in\UU$ and $E\in\EE$. Since $P$ is injective in $\EE$, this sequence splits. Thus $P$ belongs to $\UU$.
\end{proof}

\section{Equivalences arising from torsion pairs on exact categories} \label{categ}

Throughout this section, we assume the following.
\begin{itemize}
\item $\EE$ is an exact category which is Krull-Schmidt.
\item $(\AA, \BB)$ is a \emph{torsion pair} of $\EE$, that is, the following conditions are satisfied:
\begin{itemize}
\item $\AA$ and $\BB$ are full subcategories of $\EE$ such that $\Hom_\EE(\AA, \BB) = 0$.
\item For any $E\in\EE$, there exists an exact sequence $0 \rightarrow A \rightarrow E \rightarrow B \rightarrow 0$ with $A \in \AA$ and $B \in \BB$.
\end{itemize}
\end{itemize}
Then $\AA$ is closed under taking extensions and admissible factor objects, and $\BB$ is closed under taking extensions and admissible subobjects.
On the other hand, the natural inclusion functor $\BB\rightarrow \EE$ has a left adjoint functor $F:\EE\rightarrow \BB$. This is dense and induces a dense functor
\[F:\EE/[\AA]\rightarrow \BB.\]

\subsection{Basic properties of $F:\EE/[\AA]\rightarrow \BB$}
We consider the full subcategories of $\EE$ defined by 
\begin{align*}
 \EE_1 &= \{X \in \EE \mid \Ext^1_\EE(X, \AA) = 0\}; \\ 
 \EE_2 &= \{X \in \EE \mid \Ext^1_\EE(X, \AA) = 0,\ \Ext^2_\EE(X, \AA) = 0\}.
\end{align*}
The subsection is devoted to prove the following result:
\begin{theorem} \label{categ1}
We have the following assertions.
  \begin{enumerate}[\rm (a)]
  \item The functor $F: \EE_1/[\AA] \rightarrow \BB$ is fully faithful.
  \item The essential image of $F: \EE_1/[\AA] \rightarrow \BB$ is the subcategory consisting of $B \in \BB$ such that $\Ext^1_\EE(B, \AA)$ is a finitely generated $\AA^{\op}$-module.
  \item If $\Ext^1_\EE(\AA, \BB) = 0$, then $F: \EE_2 \rightarrow \BB$ is exact bijective. 
  \item If any object in $\AA$ is projective in $\EE$, then $\EE_2/[\AA]$ inherits canonically the exact structure of $\EE_2$ and $F: \EE_2/[\AA] \rightarrow \BB$ is exact bijective. 
 \end{enumerate}
\end{theorem}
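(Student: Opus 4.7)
For each $Y \in \EE$ I write $0 \to a(Y) \to Y \to FY \to 0$ for the canonical torsion sequence supplied by the torsion pair $(\AA,\BB)$, with $a(Y) \in \AA$.

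Part (a) is a direct $\Hom$--calculation. Applying $\Hom_\EE(X,-)$ to the torsion sequence of $Y$ and using $\Ext^1_\EE(X,a(Y))=0$ (since $X\in\EE_1$) yields a surjection $\Hom_\EE(X,Y)\twoheadrightarrow\Hom_\EE(X,FY)$ whose kernel is the image of $\Hom_\EE(X,a(Y))$, i.e.\ the morphisms $X \to Y$ that factor through $a(Y) \in \AA$. Any morphism $X \to Y$ factoring through some $A \in \AA$ automatically factors through $a(Y)$, because $A \to Y \to FY$ vanishes by $\Hom(\AA,\BB)=0$; hence the kernel is exactly the ideal $[\AA](X,Y)$. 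Dually, applying $\Hom_\EE(-,FY)$ to the torsion sequence of $X$ and using $\Hom(\AA,\BB)=0$ identifies $\Hom_\EE(X,FY) \cong \Hom_\BB(FX,FY)$, proving (a). The forward direction of (b) is the analogous $\Ext^1$ computation: $\Hom_\EE(-,A')$ applied to the torsion sequence of $X \in \EE_1$, in view of $\Ext^1_\EE(X,A')=0$, produces a surjection $\Hom_\AA(a(X),-)|_\AA \twoheadrightarrow \Ext^1_\EE(FX,-)|_\AA$, so the latter is finitely generated. Conversely, given $B \in \BB$ with $\Ext^1_\EE(B,-)|_\AA$ finitely generated, Yoneda furnishes $A \in \AA$ and $\xi \in \Ext^1_\EE(B,A)$ whose pushforward $\Hom_\AA(A,-) \twoheadrightarrow \Ext^1_\EE(B,-)|_\AA$ is surjective; the extension $0 \to A \to X \to B \to 0$ representing $\xi$ satisfies $FX \cong B$, and a Krull--Schmidt minimality argument on the chosen generator is used to force $X \in \EE_1$.

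For part (c), assuming $\Ext^1_\EE(\AA,\BB)=0$, the decisive preliminary observation is that every admissible subobject $K$ of an object $A \in \AA$ itself lies in $\AA$. Indeed, applying $\Hom_\EE(-,FK)$ to $0 \to K \to A \to A/K \to 0$ and using $\Ext^1_\EE(A/K,FK)=0$ (since $A/K \in \AA$, $FK \in \BB$) lifts the torsion epimorphism $K \twoheadrightarrow FK$ to a morphism $A \to FK$, which must vanish by $\Hom(\AA,\BB)=0$; hence $FK = 0$ and $K \in \AA$. Given now a short exact sequence $0 \to X \to Y \to Z \to 0$ in $\EE_2$, this identifies the pullback $a(Y) \times_Y X$ with $a(X)$ and assembles the three torsion sequences into a $3 \times 3$ diagram with admissible exact rows and columns; the bottom row is precisely $0 \to FX \to FY \to FZ \to 0$, so $F$ is exact. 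Bijectivity of $\Ext^1$ is then obtained by comparing the long exact sequences associated with the torsion decompositions, where the hypothesis $\Ext^2_\EE(-,\AA)=0$ on $\EE_2$ makes the comparison an isomorphism.

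Part (d) is an immediate application of Corollary~\ref{cdexactm}(c): when every object of $\AA$ is projective in $\EE$, the admissible monomorphism $a(X) \hookrightarrow X$ is simultaneously a right $\AA$-approximation (any $A \to X$ with $A \in \AA$ factors through $a(X)$ because $\Hom(\AA,FX)=0$), so $\EE_2/[\AA]$ inherits an exact structure and the quotient functor $\EE_2 \to \EE_2/[\AA]$ is exact bijective by Theorem~\ref{eq1}; composed with (c) this exhibits $F \colon \EE_2/[\AA] \to \BB$ as exact bijective. I expect the principal obstacles to be the Krull--Schmidt minimality step in (b), which must certify $X \in \EE_1$ rather than a weaker statement, and the execution of the $3 \times 3$ lemma of (c) within the exact-category setting, where the hypothesis $\Ext^1_\EE(\AA,\BB)=0$ must be invoked precisely to promote categorical monomorphisms into admissible ones.
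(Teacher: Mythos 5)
Your proposal is correct and follows essentially the same route as the paper: (a) via the $\Hom$-computation on the torsion sequence together with the adjunction, (b) via the Auslander--Reiten-type argument producing an extension from a Krull--Schmidt projective cover of the finitely generated functor $\Ext^1_\EE(B,-)|_{\AA}$ (the paper's Lemma \ref{AR generalization}), (c) via the two $\Ext^1$-comparisons along the torsion sequences of $X$ and $Y$ using $\Ext^{1,2}_\EE(\EE_2,\AA)=0$ and $\Hom_\EE(\AA,\BB)=\Ext^1_\EE(\AA,\BB)=0$, and (d) via the right $\AA$-approximation $TX\to X$ being a categorical monomorphism and Corollary \ref{cdexactm}. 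The only point where you go beyond the paper is in explicitly verifying the exactness of $F$ on $\EE_2$ in part (c) (closure of $\AA$ under admissible subobjects under $\Ext^1_\EE(\AA,\BB)=0$, then a $3\times3$ argument), a step the paper's Lemma \ref{DDpcase} leaves implicit; your outline of it is sound.
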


We denote by $T:\EE\rightarrow \AA$ the right adjoint functor of the inclusion functor $\AA\rightarrow \EE$. Then for any $E\in\EE$, there exists a short exact sequence 
 $$0 \rightarrow TE \xrightarrow{f} E \xrightarrow{g} FE \rightarrow 0$$
in $\EE$ with $TE\in\AA$ and $FE\in\BB$. Clearly $f$ is a right $\AA$-approximation and $g$ is a left $\BB$-approximation.

The proof of Theorem \ref{categ1} is divided into Lemmas \ref{DDcase}, \ref{fgn}, \ref{DDpcase} and \ref{DDpcasep}.

\begin{lemma} \label{DDcase}
 The functor $F: \EE_1 \rightarrow \BB$ induces a fully faithful functor $F: \EE_1/[\AA] \rightarrow \BB$. 
\end{lemma}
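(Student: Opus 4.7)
The plan is to exploit the adjunction $\Hom_\BB(FX, -) \cong \Hom_\EE(X, -)$ on $\BB$, combined with the vanishing $\Ext^1_\EE(X, \AA) = 0$ that defines $\EE_1$, to handle both fullness and faithfulness directly.

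For fullness, I would fix $X, Y \in \EE_1$ and a morphism $\varphi: FX \to FY$ in $\BB$. Since $F$ is left adjoint to the inclusion $\BB \hookrightarrow \EE$ and $FY \in \BB$, there is a natural isomorphism $\Hom_\BB(FX, FY) \cong \Hom_\EE(X, FY)$, so $\varphi$ corresponds to some $\tilde\varphi: X \to FY$ in $\EE$. Applying $\Hom_\EE(X, -)$ to the short exact sequence
\[
0 \to TY \to Y \xrightarrow{g} FY \to 0
\]
gives an exact sequence
\[
\Hom_\EE(X, Y) \to \Hom_\EE(X, FY) \to \Ext^1_\EE(X, TY).
\]
Since $TY \in \AA$ and $X \in \EE_1$, the rightmost term vanishes, so $\tilde\varphi$ lifts to some $\psi: X \to Y$. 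Applying $F$ and using that $F$ is compatible with $g$ (the unit of the adjunction), one checks $F\psi = \varphi$.

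For faithfulness, I would suppose $\psi: X \to Y$ satisfies $F\psi = 0$ in $\BB$ and show $\psi$ factors through $\AA$. The composition $X \xrightarrow{\psi} Y \xrightarrow{g} FY$ agrees with the image of $F\psi$ under the unit $X \to FX$, hence equals zero. Since $g$ is a kernel of the cokernel map and $TY \to Y$ is the kernel of $g$, there exists a factorization $\psi = h \circ (TY \to Y)$ with $h : X \to TY$. As $TY \in \AA$, this shows $\psi \in [\AA](X, Y)$.

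There is no real obstacle here: both halves are essentially formal consequences of the adjunction plus the defining $\Ext^1$-vanishing of $\EE_1$. The only minor care needed is to confirm that the natural map $\tilde\varphi = (\text{unit}_X) \cdot F\psi$ identification is compatible with the lifting, but this is immediate from naturality of the adjunction applied to $g : Y \to FY$.
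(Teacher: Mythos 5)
Your proof is correct and follows essentially the same route as the paper's: both rest on the adjunction isomorphism $\Hom_\BB(FX,FY)\cong\Hom_\EE(X,FY)$ together with the exact sequence obtained by applying $\Hom_\EE(X,-)$ to $0\to TY\to Y\to FY\to 0$ and the vanishing $\Ext^1_\EE(X,TY)=0$ for $X\in\EE_1$. The paper packages fullness and faithfulness into the single identification $\Hom_\EE(X,FY)\cong\Hom_\EE(X,Y)/\Hom_\EE(X,TY)=\Hom_{\EE/[\AA]}(X,Y)$ (using that $TY\to Y$ is a right $\AA$-approximation), whereas you treat the two halves separately, but the content is the same.
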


\begin{proof}
Fix $X, Y \in \EE_1$. Applying $\Hom_\EE(X, -)$ to the short exact sequence $0\rightarrow TY\rightarrow Y\rightarrow FY\rightarrow 0$, we obtain the short exact sequence
 $$0 \rightarrow \Hom_\EE(X, TY) \rightarrow \Hom_\EE(X, Y) \rightarrow \Hom_\EE(X, FY) \rightarrow \Ext^1_\EE(X,TY)=0$$
 the last equality follows from $X \in \EE_1$. So
 \begin{align*}
  \Hom_\EE(X, FY) & \cong \frac{\Hom_\EE(X, Y)}{ \Hom_\EE(X, TY) } = \Hom_{\EE/[\AA]}(X, Y)
 \end{align*}
 where we use the fact that the first arrow of $TY\rightarrow Y$ is a right $\AA$-approximation. 
 On the other hand, using adjunction we have an isomorphim 
 $\Hom_\EE(FX, FY) \cong \Hom_\EE(X, FY)$. Thus the assertion follows.
\end{proof}

Next we prove the following observation.

\begin{proposition} \label{fgn}
The following conditions are equivalent:
\begin{enumerate}[\rm (i)]
\item $F:\EE_1\rightarrow \BB$ is dense.
\item $\Ext^1_\EE(B, \AA)$ is a finitely generated $\AA^{\op}$-module for any $B\in\BB$.
\end{enumerate}
\end{proposition}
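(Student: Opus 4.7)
The plan is to prove the two implications separately. For (i) $\Rightarrow$ (ii), fix $B \in \BB$ and write $B \cong FX$ for some $X \in \EE_1$. Applying $\Hom_\EE(-, A)$ to the torsion-pair sequence $0 \to TX \to X \to FX \to 0$ and using $\Ext^1_\EE(X, A) = 0$ immediately yields an epimorphism $\Hom_\EE(TX, -)|_\AA \twoheadrightarrow \Ext^1_\EE(B, -)|_\AA$, exhibiting $\Ext^1_\EE(B, -)|_\AA$ as generated by $TX \in \AA$.

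For (ii) $\Rightarrow$ (i), fix $B \in \BB$. Using the Krull--Schmidt property on $\AA$ (inherited from $\EE$), we pick a class $\xi \in \Ext^1_\EE(B, A_0)$ realising a \emph{minimal} surjection $p : \Hom_\AA(A_0, -) \twoheadrightarrow \Ext^1_\EE(B, -)|_\AA$, i.e.\ one for which no indecomposable summand of $A_0$ can be dropped while preserving surjectivity. Realise $\xi$ as an admissible short exact sequence $0 \to A_0 \to X \to B \to 0$. Since $\Hom_\EE(\AA, \BB) = 0$, the compositions $A_0 \to X \to FX$ and $TX \to X \to B$ both vanish, inducing canonical morphisms $A_0 \to TX$ and $TX \to A_0$ whose composites with the admissible monomorphisms $TX, A_0 \hookrightarrow X$ reproduce those inclusions; by left-cancellability of monomorphisms these two morphisms are mutually inverse, so $TX \cong A_0$ and $FX \cong B$.

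The central step is to show $X \in \EE_1$. Given $\eta: 0 \to A \to Y \xrightarrow{\rho} X \to 0$ with $A \in \AA$, composing $\rho$ with $X \to B$ produces an admissible epimorphism $Y \to B$ whose kernel $A'$ fits into an admissible sequence $0 \to A \to A' \xrightarrow{\pi} A_0 \to 0$ in $\AA$, giving $\eta': 0 \to A' \to Y \to B \to 0$ in $\Ext^1_\EE(B, A')$. A direct pushout computation shows $\pi_*\eta' = \xi$. Surjectivity of the cover at $A'$ then supplies $f: A_0 \to A'$ with $f_*\xi = \eta'$, and hence a morphism of extensions $g: X \to Y$ covering $\id_B$. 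Setting $h := \pi \circ f \in \End(A_0)$, we obtain $h_*\xi = \pi_*(f_*\xi) = \pi_*\eta' = \xi$. Minimality of the cover forces $h$ to be an automorphism of $A_0$ (else a Fitting-type decomposition would exhibit a droppable summand). Applying the $5$-lemma to the endomorphism $(h, \rho g, \id_B)$ of $\xi$ shows $\rho g$ is an automorphism of $X$, so $g \circ (\rho g)^{-1}$ is a section of $\rho$ and $\eta$ splits.

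The main obstacle is the minimality argument: without it $h$ might fail to be invertible and the splitting would collapse. The Krull--Schmidt hypothesis is used precisely to pass from a surjective presentation to a minimal one and to exploit the local structure of $\End(A_0)$, so that any endomorphism fixing $\xi$ under pushforward must be an automorphism.
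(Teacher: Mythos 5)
Your proof is correct and follows essentially the same route as the paper's: the paper isolates the argument as a separate Krull--Schmidt lemma (a generalization of Auslander--Reiten), takes a minimal projective presentation (projective cover) of $\Ext^1_\EE(B,-)|_\AA$, realizes it by an extension $0\to A_0\to X\to B\to 0$, and shows every extension of $X$ by an object of $\AA$ splits using that minimality. The only cosmetic difference is in how the splitting is concluded --- the paper deduces it from right minimality of the epimorphism $X\to B$, while you trace the induced endomorphism back to $\End(A_0)$, use minimality there, and finish with the five lemma --- but both arguments hinge on exactly the same point.
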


This follows immediately from the following result for Krull-Schmidt exact categories, which is a generalization of \cite[Proposition 1.4]{AuRe91}.

\begin{lemma}\label{AR generalization}
Let $\XX$ be a Krull-Schmidt exact category, and $\YY$ a subcategory of $\XX$ which is closed under extensions and direct summands. For $X \in \XX$, the following conditions are equivalent: 
 \begin{enumerate}[\rm (i)]
  \item There exists an exact sequence $0\rightarrow Y\rightarrow Z \rightarrow X\rightarrow 0$ with $Y\in\YY$ and $\Ext^1_{\XX}(Z,\YY)=0$.
  \item $\Ext^1_\XX(X, \YY)$ is finitely generated $\YY^{\op}$-module.
 \end{enumerate}
\end{lemma}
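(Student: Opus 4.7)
The direction (i)$\Rightarrow$(ii) is routine: applying $\Hom_\XX(-, Y')$ to $0 \to Y \to Z \to X \to 0$ and using the assumed vanishing $\Ext^1_\XX(Z, Y') = 0$, the connecting homomorphism $\Hom_\XX(Y, Y') \to \Ext^1_\XX(X, Y')$ becomes surjective for every $Y' \in \YY$. Letting $Y'$ vary, this produces an epimorphism $\Hom_\YY(Y, -) \twoheadrightarrow \Ext^1_\XX(X, -)|_\YY$ of $\YY^{\op}$-modules, proving (ii).

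For (ii)$\Rightarrow$(i), my plan is to construct $Z$ from a minimal generator. Since $\XX$ is Krull-Schmidt and $\YY$ is closed under direct summands, $\YY$ is Krull-Schmidt as well, so endomorphism rings of its objects are semiperfect and any finitely generated $\YY^{\op}$-module admits a projective cover. Pick one,
\[\alpha : \Hom_\YY(Y_0, -) \twoheadrightarrow F := \Ext^1_\XX(X, -)|_\YY,\]
characterized by $\ker \alpha_{Y_0} \subseteq \rad \End_\YY(Y_0)$. By Yoneda, $\alpha$ is determined by the element $\xi := \alpha_{Y_0}(\id_{Y_0}) \in \Ext^1_\XX(X, Y_0)$, which I realize as a short exact sequence $0 \to Y_0 \xrightarrow{i} Z \xrightarrow{\pi} X \to 0$. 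This is my candidate for (i).

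To verify $\Ext^1_\XX(Z, \YY) = 0$, fix $Y' \in \YY$ and $\eta \in \Ext^1_\XX(Z, Y')$, represented by $0 \to Y' \to W \to Z \to 0$. Forming the pullback $V := Y_0 \times_Z W$ yields a sequence $0 \to Y' \to V \xrightarrow{p} Y_0 \to 0$; since $\YY$ is closed under extensions, $V \in \YY$. The kernel of the composite $W \to Z \to X$ is exactly $V$, so I also have a sequence $0 \to V \to W \to X \to 0$ representing a class in $\Ext^1_\XX(X, V)$. By surjectivity of $\alpha_V$ there is $g: Y_0 \to V$ with $g_* \xi = [0 \to V \to W \to X \to 0]$. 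The natural commutative diagram with vertical arrows $p: V \to Y_0$ on the left and $W \to Z$ in the middle exhibits $\xi$ as the pushout of this last sequence along $p$, so $(pg)_* \xi = p_*(g_* \xi) = \xi$. Therefore $\id_{Y_0} - pg \in \ker \alpha_{Y_0} \subseteq \rad \End_\YY(Y_0)$, and semiperfectness forces $pg$ to be a unit. Hence $g \cdot (pg)^{-1}$ is a section of $p$, so the sequence defining $V$ splits. The long exact sequence attached to $\xi$, combined with surjectivity of $\alpha_{Y'}$, gives an injection $\Ext^1_\XX(Z, Y') \hookrightarrow \Ext^1_\XX(Y_0, Y')$ sending $\eta$ to the class of the sequence defining $V$, which is now zero; hence $\eta = 0$.

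The main obstacle is the cancellation identity $p_*[0 \to V \to W \to X \to 0] = \xi$, which requires carefully setting up the commutative diagram identifying the given surjection $W \to Z$ with the pushout of $W$ (viewed as an extension of $X$ by $V$) along $p$. Once this naturality is nailed down, the remainder is a short invocation of semiperfectness that converts membership in the radical into invertibility—the step where the Krull-Schmidt hypothesis on $\XX$ is genuinely used.
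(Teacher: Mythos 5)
Your proof is correct and takes essentially the same approach as the paper's: both pass to a projective cover of $\Ext^1_\XX(X,-)|_{\YY}$ (using that $\YY$ is Krull--Schmidt), realize its generator as an extension $0\to Y_0\to Z\to X\to 0$, and use the pullback over $Z$ (which lies in $\YY$ by extension-closure) together with minimality to show $\Ext^1_\XX(Z,\YY)=0$. The only cosmetic difference is the last step: the paper splits $0\to Y'\to W\to Z\to 0$ directly via right minimality of $Z\to X$, whereas you split the pulled-back sequence $0\to Y'\to V\to Y_0\to 0$ via the radical-kernel property of the cover and then conclude from the long exact sequence --- these are equivalent.
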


We include a proof for the convenience of the reader.
\begin{proof}
(i)$\Rightarrow$(ii) Applying $\Hom_\XX(-, \YY)$ to a short exact sequence $0 \rightarrow Y \rightarrow Z \rightarrow X \rightarrow 0$, we obtain the exact sequence
 $$\Hom_\XX(Y, \YY) \rightarrow \Ext^1_\XX(X, \YY) \rightarrow \Ext^1_\XX(Z, \YY) = 0.$$
Thus $\Ext^1_\XX(X, \YY)$ is a finitely generated $\YY^{\op}$-module.

(ii)$\Rightarrow$(i) Since $\YY$ is Krull-Schmidt, there exists a projective cover $\varphi: \Hom_\XX(Y, \YY) \rightarrow \Ext^1_\XX(X, \YY)$. Let 
  $$0 \rightarrow Y \xrightarrow{f} Z \xrightarrow{g} X \rightarrow 0$$
be a short exact sequence represented by $\varphi(\id_Y)\in\Ext^1_\XX(X, Y)$. 
Since $\varphi$ is right minimal, $f$ belongs to ${\rm rad} \XX$, and hence $g$ is right minimal.
To prove $\Ext^1_{\XX}(Z ,\YY)=0$, it suffices to show that any exact sequence
\begin{equation}\label{Ext(X,A')}
0 \rightarrow Y' \rightarrow W \xrightarrow{s} Z \rightarrow 0
\end{equation}
with $Y' \in \YY$ splits. We have the following commutative diagram of exact sequences:
  $$\xymatrix{
  & 0 \ar[d] & 0 \ar[d] \\
  & Y' \ar@{=}[r] \ar[d] & Y' \ar[d] \\
 0 \ar[r] & Y'' \ar[r] \ar[d] & W  \ar[r] \ar[d]^s & X  \ar[r] \ar@{=}[d] & 0 \\
 0 \ar[r] & Y \ar[r]^f \ar[d] & Z \ar[r]^g \ar[d] & X \ar[r] & 0 \\
 & 0 & 0
 }$$
 where $Y'' \in \YY$ because $\YY$ is extension-closed. 
 As $\varphi$ is an epimorphism, we have the following commutative diagram of exact sequences:
 $$\xymatrix{
 0 \ar[r] & Y \ar[r]^f \ar[d] & Z \ar[r]^g \ar[d]^t & X \ar[r] \ar@{=}[d] & 0 \\
 0 \ar[r] & Y'' \ar[r] & W \ar[r]  & X \ar[r] & 0 \\
 }$$
 As $g$ is right minimal, $ts : Z \rightarrow Z$ is invertible. Therefore the sequence \eqref{Ext(X,A')} splits.
\end{proof}

\begin{lemma} \label{DDpcase}
 Suppose that $\Ext^1_\EE(\AA, \BB) = 0$. Then the functor $F: \EE_2 \rightarrow \BB$ is exact bijective.
\end{lemma}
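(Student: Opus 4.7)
My plan has two parts: first, show that $F$ preserves admissible short exact sequences on $\EE_2$; second, derive the $\Ext^1$-bijectivity from long exact sequences.

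For exactness, given an admissible short exact sequence $\xi: 0 \to X \to Y \to Z \to 0$ with $X, Y, Z \in \EE_2$, I would construct the expected sequence $0 \to FX \to FY \to FZ \to 0$ explicitly via push-pull. First, push out $\xi$ along the admissible epimorphism $X \to FX$ to obtain $0 \to FX \to Y_0 \to Z \to 0$. Then pull back along the admissible monomorphism $TZ \hookrightarrow Z$ to obtain $0 \to FX \to Y'_0 \to TZ \to 0$. Since $TZ \in \AA$, $FX \in \BB$, and $\Ext^1_\EE(\AA, \BB) = 0$, this extension splits, providing a section $\sigma : TZ \to Y_0$ of the composition with $TZ \to Z$. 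Taking the admissible cokernel of $\sigma$ yields an admissible sequence $0 \to FX \to Y_0/\sigma(TZ) \to FZ \to 0$ with middle term in $\BB$ (since $\BB$ is extension-closed). The key identification $Y_0/\sigma(TZ) \cong FY$ would then be proved by computing the kernel $K$ of the natural admissible epimorphism $Y \to Y_0/\sigma(TZ)$: this kernel fits in an admissible exact sequence $0 \to TX \to K \to TZ \to 0$, hence $K \in \AA$. By maximality of the torsion subobject, $K$ is contained in $TY$; the reverse containment is immediate since $TY \to Y_0/\sigma(TZ)$ is a morphism from $\AA$ to $\BB$ and therefore vanishes. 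Thus $K = TY$ and $Y_0/\sigma(TZ) = Y/TY = FY$.

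For the $\Ext^1$-bijectivity, I would chain two long exact sequences. Applying $\Hom_\EE(X,-)$ to $0 \to TY \to Y \to FY \to 0$ and using $\Ext^i_\EE(X, TY) = 0$ for $i = 1,2$ (valid since $TY \in \AA$ and $X \in \EE_2$) gives $\Ext^1_\EE(X, Y) \cong \Ext^1_\EE(X, FY)$. Applying $\Hom_\EE(-, FY)$ to $0 \to TX \to X \to FX \to 0$ and using $\Hom_\EE(TX, FY) = 0$ together with $\Ext^1_\EE(TX, FY) = 0$ (the latter by the hypothesis $\Ext^1_\EE(\AA, \BB) = 0$) gives $\Ext^1_\EE(FX, FY) \cong \Ext^1_\EE(X, FY)$. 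Composing, and using that $\BB$ is extension-closed in $\EE$, yields an abstract isomorphism $\Ext^1_\EE(X, Y) \cong \Ext^1_\BB(FX, FY)$. To finish, I would check that this composite agrees with the map induced by $F$, which reduces to the standard fact that, for an extension $\xi$ in $\EE_2$, the pushout of $\xi$ along $Y \to FY$ and the pullback of $F\xi$ along $X \to FX$ represent the same class in $\Ext^1_\EE(X, FY)$.

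The main obstacle I anticipate is the identification $Y_0/\sigma(TZ) \cong FY$ in the exactness part. Each individual verification is routine, but combining them requires carefully tracking an admissible subobject of $Y$ through a pushout-pullback diagram and invoking the maximality of $TY$ in a slightly indirect form. A secondary subtlety is the compatibility check between the abstract $\Ext$-isomorphism and the functorial map induced by $F$; in principle this can be done by a direct comparison of extension representatives, but the bookkeeping is nontrivial.
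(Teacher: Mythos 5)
Your proof is correct, and its core --- chaining the two long exact sequences obtained from $0\to TY\to Y\to FY\to 0$ and $0\to TX\to X\to FX\to 0$, using $\Ext^{i}_\EE(X,TY)=0$ for $i=1,2$ and $\Ext^{i}_\EE(TX,FY)=0$ for $i=0,1$ --- is exactly the paper's argument. The push--pull verification that $F$ preserves admissible short exact sequences and the compatibility of the abstract isomorphism with the map induced by $F$ are details the paper leaves implicit; your treatment of them is sound.
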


\begin{proof}
 Let $X, Y \in \EE_2$. Applying $\Hom_\EE(X, -)$ to the short exact sequence $0\rightarrow TY\rightarrow Y\rightarrow FY\rightarrow 0$, we have the isomorphism
 $$\Ext^1_\EE(X, Y) \cong \Ext^1_\EE(X, FY) $$
 as $\Ext^i_\EE(X,TY) = 0$ holds for $i=1,2$. Applying $\Hom_\EE(-, FY)$ to  the short exact sequence $0\rightarrow TX\rightarrow X\rightarrow FX\rightarrow 0$, we have an isomorphism
 $$\Ext^1_\EE(FX, FY) \cong \Ext^1_\EE(X, FY)$$
 as $\Ext^i_\EE(TX, FY) = 0$ holds for $i=0,1$. Thus we have
 \begin{align*} &\Ext^1_\BB(FX, FY) =\Ext^1_\EE(FX,FY) \cong \Ext^1_\EE(X,Y). \qedhere \end{align*}
\end{proof}

\begin{lemma} \label{DDpcasep}
 Suppose that any object in $\AA$ is projective in $\EE$. Then $\EE_2/[\AA]$ inherits canonically the exact structure of $\EE_2$, and the functor $F: \EE_2/[\AA] \rightarrow \BB$ is exact bijective.
\end{lemma}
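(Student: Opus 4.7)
The plan is to combine three ingredients already available in the paper: Corollary \ref{cdexactm}(c) to handle the inherited exact structure, Theorem \ref{eq1} to get exact bijectivity of the projection $\pi:\EE_2\rightarrow\EE_2/[\AA]$, and Lemma \ref{DDpcase} to get exact bijectivity of $F:\EE_2\rightarrow\BB$. Putting these together, the induced functor $\bar F:\EE_2/[\AA]\rightarrow\BB$ will automatically be exact bijective.

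First I would make the standing observations. Since every object in $\AA$ is projective in $\EE$, we have $\Ext^1_\EE(\AA,-)=0$; in particular both $\Ext^1_\EE(\AA,\BB)=0$ (which is the hypothesis of Lemma \ref{DDpcase}) and $\Ext^1_\EE(\AA,\EE_2)=0$ hold. Moreover $\Ext^1_\EE(\EE_2,\AA)=0$ by the very definition of $\EE_2$, so the pair $(\FF,\EE')=(\AA,\EE_2)$ satisfies the hypotheses of Theorem \ref{eq1}. I would also record that $\EE_2$ is closed under extensions in $\EE$: given a short exact sequence $0\to X\to Y\to Z\to 0$ with $X,Z\in\EE_2$, the long exact sequence of $\Ext_\EE^\bullet(-,A)$ for $A\in\AA$ forces $\Ext^i_\EE(Y,A)=0$ for $i=1,2$.

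Next I would verify the hypothesis of Corollary \ref{cdexactm}(c) for $\EE'=\EE_2$ and $\FF=\AA$. For any $X\in\EE_2$, the canonical short exact sequence
\[0\rightarrow TX\rightarrow X\rightarrow FX\rightarrow 0\]
exhibits $TX\to X$ as a right $\AA$-approximation (since $T$ is right adjoint to the inclusion $\AA\hookrightarrow\EE$), and it is an admissible monomorphism in $\EE$, hence in particular a categorical monomorphism. Therefore Corollary \ref{cdexactm}(c) applies, and $\EE_2/[\AA]$ inherits canonically the exact structure of $\EE_2$. Moreover, the second half of Theorem \ref{eq1} then tells us that $\pi:\EE_2\rightarrow\EE_2/[\AA]$ is exact bijective.

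Finally, since $FA=0$ for any $A\in\AA$, the functor $F:\EE_2\rightarrow\BB$ kills morphisms factoring through $\AA$ and descends to a unique functor $\bar F:\EE_2/[\AA]\rightarrow\BB$ with $F=\bar F\circ\pi$. Every admissible short exact sequence in $\EE_2/[\AA]$ is, by construction of $\SS'_\AA$, isomorphic to $\pi(s)$ for some admissible short exact sequence $s$ in $\EE_2$, so $\bar F(\pi(s))=F(s)$ is admissible in $\BB$ by Lemma \ref{DDpcase}; thus $\bar F$ is exact. For the bijectivity on $\Ext^1$, the composition
\[\Ext^1_{\EE_2}(X,Y)\xrightarrow{\pi}\Ext^1_{\EE_2/[\AA]}(\pi X,\pi Y)\xrightarrow{\bar F}\Ext^1_\BB(FX,FY)\]
equals the map induced by $F$, which is an isomorphism by Lemma \ref{DDpcase}; since $\pi$ is an isomorphism by the previous paragraph, so is $\bar F$. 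Hence $\bar F$ is exact bijective. The only subtle point in this plan is checking that all ambient hypotheses (extension closure of $\EE_2$, the two vanishing conditions on $\Ext^1$, and the categorical-mono property of the approximation) genuinely line up with the quoted results; but each follows directly from $\AA\subset\proj\EE$ and the definition of $\EE_2$.
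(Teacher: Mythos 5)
Your proof is correct and follows essentially the same route as the paper's: verify the hypotheses of Corollary \ref{cdexactm}(c) via the right $\AA$-approximation $TX\to X$ coming from the torsion pair, invoke Theorem \ref{eq1} for the exact bijectivity of $\pi$, and combine with Lemma \ref{DDpcase}. The extra checks you spell out (extension-closure of $\EE_2$, the descent of $F$ through $[\AA]$) are left implicit in the paper but are exactly the right verifications.
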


\begin{proof}
 Any object $X\in\EE$ has a right $\AA$-approximation $TX\to X$ which is a categorical monomorphism, and we have $\Ext^1_{\EE}(\AA,\EE_2)=\Ext^1_{\EE}(\EE_2,\AA)=0$ by our assumptions. Therefore Corollary \ref{cdexactm} \eqref{cdexact} gives an exact structure on $\EE_2/[\AA]$.
Applying Lemma \ref{DDpcase}, we have $\Ext^1_\BB(FX, FY) \cong \Ext^1_{\EE_2}(X,Y)=\Ext^1_{\EE_2/[\AA]}(X, Y) $, which shows the assertion.
\end{proof}

\subsection{When there is a torsion pair $(\BB, \CC)$} \label{torsBC}

In this subsection, we assume the following.
\begin{itemize}
\item $(\BB, \CC)$ is a torsion pair in $\EE$ for
\[\CC  := \{X \in \EE \mid\Hom_\EE(\BB, X) = 0\}.\]
\end{itemize}
The following result gives a description of the image of the functor $F:\CC\rightarrow \BB$.

\begin{theorem} \label{abelian}
Assume that the following conditions are satisfied.
\begin{itemize}
\item $\BB$ is an abelian category whose exact structure is compatible with that of $\EE$.
\item $\Ext^1_\EE(\BB, A)$ is a finitely generated $\BB$-module for any $A\in\AA\cap\CC$.
\end{itemize}
Then we have the following assertions.
\begin{enumerate}[\rm (a)]
\item For any $A\in\AA\cap\CC$, there exists a short exact sequence
 $$0 \rightarrow A \rightarrow C^A \rightarrow U^A \rightarrow 0$$
 with $U^A\in\BB$, $C^A \in \CC$ and $\Ext^1_\EE(\BB, C^A) = 0$. Moreover, it is unique up to isomorphism.
\item Let $\DD:=\Sub\{U^A\mid A\in\AA\cap\CC\}$. Then $F:\EE\rightarrow \BB$ induces a dense functor $F: \CC\rightarrow\DD$.
\end{enumerate}
Assume $\Ext^2_\EE(\BB, \AA\cap\CC)=0$.
\begin{enumerate}[\rm (a)] \rs{2}
\item $U^A$ is an injective object in $\BB$ for any $A\in\AA\cap\CC$.
\item $\DD$ is closed under taking extensions in $\EE$, and therefore forms an exact category.
\item Assume $\CC\subset\EE_2$ and that any object in $\AA$ is projective in $\EE$. Then $\CC/[\AA]$ inherits canonically the exact structure of $\CC$ and
$F: \CC/[\AA] \rightarrow \DD$ is an equivalence of exact categories.
\end{enumerate}
\end{theorem}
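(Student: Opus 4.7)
The plan is to first show that the $\BB$-module $M := \Ext^1_\EE(-, A)|_\BB$ is representable by some $U^A \in \BB$, from which all five parts follow. I observe $M$ is left exact: applying the long exact sequence for $\Hom_\EE(-, A)$ to a short exact sequence $0 \to B' \to B \to B'' \to 0$ in $\BB$ (admissible in $\EE$ by compatibility), the vanishing $\Hom_\EE(B', A) = 0$ (since $B' \in \BB$, $A \in \CC$) yields exactness of $0 \to \Ext^1(B'', A) \to \Ext^1(B, A) \to \Ext^1(B', A)$. Combining left-exactness with finite generation and the Krull-Schmidt structure of $\BB$, I upgrade $M$ to a representable: take a projective cover $\varphi: \Hom_\BB(-, U_0) \twoheadrightarrow M$, identify its kernel as a left-exact subfunctor of a representable and hence itself representable by $\Hom_\BB(-, V)$ for a subobject $V \hookrightarrow U_0$, and set $U^A := U_0/V$, giving $M \cong \Hom_\BB(-, U^A)$ with universal element $\xi \in \Ext^1_\EE(U^A, A)$. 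Define $C^A$ as the middle term of the short exact sequence $0 \to A \to C^A \to U^A \to 0$ representing $\xi$.

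Part (a) then reduces to long-exact-sequence bookkeeping: applying $\Hom_\EE(B, -)$ gives $\Hom_\EE(\BB, C^A) = 0$ using the representability isomorphism, so $C^A \in \CC$; for $\Ext^1_\EE(\BB, C^A) = 0$ I adapt the argument from the dual of Lemma \ref{AR generalization}, namely a putative non-split extension $0 \to C^A \to W \to Y' \to 0$ with $Y' \in \BB$ yields a composite $0 \to A \to W \to \tilde Y \to 0$ with $\tilde Y \in \BB$ extending $Y'$ by $U^A$; representability produces $t: \tilde Y \to U^A$ pulling $\xi$ back to this extension, and the left-minimality of $A \hookrightarrow C^A$ inherited from the minimality of the representing pair forces the splitting. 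Uniqueness follows from universality. For (b), decompose $C \in \CC$ as $0 \to A \to C \to FC \to 0$ with $A = TC \in \AA \cap \CC$; representability converts this extension class to a map $f: FC \to U^A$, and comparing long exact sequences using $\Hom_\EE(\BB, C) = 0$ shows $f$ is mono in $\BB$, so $FC \in \Sub U^A \subseteq \DD$. Density for $U \in \DD$ with $U \hookrightarrow \bigoplus_k U^{A_k}$ follows by pulling back $0 \to \bigoplus A_k \to \bigoplus C^{A_k} \to \bigoplus U^{A_k} \to 0$ along this mono; the middle term lies in $\CC$ by the same argument and $T_\AA$ of it is $\bigoplus A_k$, giving $F$-image $U$.

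Under the additional hypothesis $\Ext^2_\EE(\BB, \AA \cap \CC) = 0$: for (c), the vanishing $\Ext^2(B'', A) = 0$ extends left-exactness of $M$ to full exactness, so $\Hom_\BB(-, U^A)$ is exact and $U^A$ is injective in $\BB$. For (d), an extension $D$ of $D_2$ by $D_1$ with $D_1, D_2 \in \DD$ lies in $\BB$ by extension-closure; embedding each $D_i \hookrightarrow \bigoplus_k U^{A_{i,k}}$ and using injectivity from (c) to extend the $D_1$-embedding to $D \to \bigoplus_k U^{A_{1,k}}$, I combine it with $D \twoheadrightarrow D_2 \hookrightarrow \bigoplus_k U^{A_{2,k}}$ to obtain a mono $D \hookrightarrow \bigoplus_k U^{A_{1,k}} \oplus \bigoplus_k U^{A_{2,k}}$, so $D \in \DD$. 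For (e), the torsion inclusions $T_\AA X \hookrightarrow X$ from $(\AA, \BB)$ are admissible right $\AA$-approximations, so Corollary \ref{cdexactm}(c) applies; combined with Theorem \ref{categ1}(d) this gives an exact structure on $\CC/[\AA]$ and exact bijectivity of $F$, and with (b) identifying the essential image as $\DD$ (with its inherited exact structure from (d)), we obtain the equivalence of exact categories $\CC/[\AA] \xrightarrow{\sim} \DD$. The main obstacle is the representability step in (a): the paper's notion of "finitely generated" is strictly weaker than "representable", and the delicate point is to use left-exactness of both $M$ and $\ker \varphi$ together with the Krull-Schmidt and compatibility structure of $\BB$ to identify $\ker \varphi$ as a representable subfunctor.
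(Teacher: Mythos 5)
Your parts (b)--(e) track the paper's own argument closely, and your deduction of $C^A\in\CC$ from representability (namely $\Hom_\EE(\BB,C^A)=\ker\delta=0$, where $\delta\colon\Hom_\EE(-,U^A)|_\BB\to\Ext^1_\EE(-,A)|_\BB$ is the connecting map) is a genuinely cleaner route than the paper's, which proves $C^A\in\CC$ by a separate torsion-theoretic projective-cover argument. The problem is exactly the step you flag as delicate. From a projective cover $\varphi\colon\Hom_\BB(-,U_0)\twoheadrightarrow M$ with kernel $K\cong\Hom_\BB(-,V)$ for a subobject $V\subseteq U_0$, you conclude $M\cong\Hom_\BB(-,U_0/V)$. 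This does not follow: the quotient functor $\Hom_\BB(-,U_0)/\Hom_\BB(-,V)$ is only the subfunctor of $\Hom_\BB(-,U_0/V)$ consisting of maps that lift to $U_0$, and the two differ whenever some map $B\to U_0/V$ fails to lift (already for $\BB=\mod k[x]/(x^2)$, $U_0=\Lambda$, $V=\soc\Lambda$, the quotient functor vanishes on the simple module while $\Hom_\BB(-,U_0/V)$ does not). So as written the representability of $M$ is not established, and everything downstream of it -- the universal element $\xi$, the definition of $C^A$, the monomorphism criterion in (b), the injectivity in (c) -- rests on an unproved claim. A second, smaller, gap is that the identification $K\cong\Hom_\BB(-,V)$ itself needs $K$ to be finitely generated (or the objects of $\BB$ to be noetherian), which you do not address.

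The representability claim is nevertheless true and your approach can be repaired: once $K=\Hom_\BB(-,V)$ via $\iota\colon V\hookrightarrow U_0$, apply the left-exactness of $M$ to the presentation $V\to U_0\to U_0/V\to 0$. Since the class of $\iota$ vanishes in $M(V)$, the class of $\id_{U_0}$ in $M(U_0)$ comes from $M(U_0/V)$, which yields a morphism $U_0/V\to U_0$ splitting the projection $U_0\to U_0/V$; thus $V$ is a direct summand of $U_0$, and right-minimality of $\varphi$ forces $V=0$, so $M\cong\Hom_\BB(-,U_0)$ with no quotient needed. Note that the paper sidesteps representability entirely in (a): the dual of Lemma \ref{AR generalization} produces the universal extension $0\to A\to C^A\to U^A\to 0$ with $\Ext^1_\EE(\BB,C^A)=0$ directly from a projective cover of the functor $\Ext^1_\EE(-,A)|_\BB$, the hard point being instead to show $C^A\in\CC$; representability then falls out a posteriori from $\Hom_\EE(\BB,C^A)=0=\Ext^1_\EE(\BB,C^A)$ and the long exact sequence.
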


\begin{proof}
(a) By dual of Lemma \ref{AR generalization}, we get a short exact sequence
 $$0 \rightarrow A \xrightarrow{f} X \xrightarrow{g} U^A \rightarrow 0$$
 for some $U^A \in \BB$ such that $\Ext^1_\EE(\BB, X) = 0$ and with $f$ left minimal. We only have to prove $X\in\CC$. Since $(\BB, \CC)$ is a torsion pair, there exists an exact sequence $$0\rightarrow B\xrightarrow{i} X\rightarrow C\rightarrow 0$$ with $B\in\BB$ and $C\in\CC$.
Now we consider the following commutative diagram, where $\Kernel ig$ exists in $\BB$ by our assumption.
$$\xymatrix{
0\ar[r]&A\ar[r]&X\ar[r]^g&U^A\ar[r]&0\\
0\ar[r]&\Kernel ig\ar[r]\ar[u]&B\ar[r]^{ig}\ar[u]^i&U^A\ar@{=}[u]
}$$
Since $A\in\CC$, we have $\Kernel ig=0$. Thus $ig$ is a monomorphism, and we can form the following commutative diagram with $\Cokernel ig \in \BB$
 by our assumption: 
 $$\xymatrix{
 & & 0 \ar[d] & 0 \ar[d] \\
 & & B \ar@{=}[r] \ar[d]^i & B \ar[d]^{ig} \\
 0 \ar[r] & A \ar[r]^f \ar@{=}[d] & X \ar[r]^g \ar[d] & U^A \ar[r] \ar[d]^p & 0 \\
 0 \ar[r] & A \ar[r] & C \ar[r] \ar[d] & \Cokernel ig \ar[r] \ar[d] & 0 \\
  & & 0 & 0 
 }$$
The upper horizontal sequence gives a projective cover $\varphi: \Hom_{\EE}(\BB,U^A)\rightarrow \Ext^1_{\EE}(\BB,A)$ (see Proof of Lemma \ref{AR generalization}). The lower horizontal sequence gives a morphism $\psi:\Hom_{\EE}(\BB,\Cokernel ig)\rightarrow \Ext^1_{\EE}(\BB,A)$, which is an epimorphism since $\varphi = \Hom_\EE(\BB, p) \psi$. Since $\Cokernel ig\in\BB$ and $\varphi$ is a projective cover, $p$ has to be an isomorphism. Thus we have $B=0$ and $X\cong C\in\CC$. 

As $\BB \cap \CC = 0$, the morphism $A \to C^A$ is left minimal and it implies easily the uniqueness.

(b) First we prove $F(\CC)\subset\DD$.
For any $C\in\CC$, there exists an exact sequence $0 \rightarrow A \rightarrow C \rightarrow B \rightarrow 0$
 with $B=FC\in\BB$ and $A=TC \in \AA$.
Clearly we have $A\in\AA\cap\CC$. Let $0\rightarrow A\rightarrow C^A\rightarrow U^A\rightarrow 0$ be the exact sequence in (a). Then we have a commutative diagram
 $$\xymatrix{
  0 \ar[r] & A \ar[r] \ar@{=}[d] & C \ar[r] \ar[d] & B \ar[r] \ar[d]^f & 0 \\
  0 \ar[r] & A \ar[r] & C^A \ar[r] & U^A \ar[r] & 0.
 }$$
By our assumption, $f$ has a kernel $g:\Kernel f\rightarrow B$ in $\EE$ with $\Kernel f\in\BB$.
Since the above diagram is pullback, $g$ factors through $C\in\CC$. Thus $g=0$ holds, and hence $f$ is a monomorphism. 
Therefore $0\rightarrow B\xrightarrow{f}U^A\rightarrow \Cokernel f\rightarrow 0$ is a short exact sequence in $\EE$ by our assumption, and $B\in\DD$ holds. 

Next we prove that the functor $F:\BB\to\DD$ is dense.
For any $D \in \DD$, there exist exact sequences
\[0\rightarrow D\rightarrow U^A\rightarrow X\rightarrow 0\ \mbox{ and }\ 0\rightarrow A\rightarrow C^A\rightarrow U^A\rightarrow 0\]
with $A\in\AA\cap\CC$, $U^A\in\BB$,  $C^A\in\CC$ and $\Ext^1_{\EE}(\BB,C^A)=0$. Then we have a commutative diagram
 $$\xymatrix{
&&0\ar[d]&0\ar[d]\\
  0 \ar[r] & A \ar[r] \ar@{=}[d] & Y \ar[r] \ar[d] & D \ar[r] \ar[d] & 0 \\
  0 \ar[r] & A \ar[r] & C^A \ar[r]\ar[d] & U^A \ar[r]\ar[d] & 0 \\
&&X\ar@{=}[r]\ar[d]&X\ar[d]\\
&&0&0
 }$$
of exact sequences. Since $C^A\in\CC$, we have $Y\in\CC$ by the middle vertical sequence. Therefore $D=FY$ belongs to $F(\CC)$.

(c) Applying $\Hom_\EE(\BB, -)$ to the short exact sequence $0\rightarrow A\rightarrow C^A\rightarrow U^A\rightarrow 0$, we have an exact sequence
\[0=\Ext^1_{\EE}(\BB,C^A)\rightarrow \Ext^1_{\EE}(\BB,U^A)\rightarrow \Ext^2_{\EE}(\BB,A)=0.\]
Therefore $\Ext^1_{\EE}(\BB,U^A)=0$, that is, $U^A$ is injective in $\BB$.

(d) This is an immediate consequence of (c) and horseshoe lemma.

(e) By Theorem \ref{categ1} (a) and (d), the functor $F: \EE_2/[\AA] \rightarrow \BB$ is fully faithful and exact bijective.
By $\CC\subset\EE_2$, using (b) and (d), we have an equivalence $F: \CC/[\AA] \rightarrow\DD$ of exact categories.
\end{proof}

\subsection{Frobenius properties} 

As in Subsection \ref{torsBC}, we suppose that $(\BB, \CC)$ is a torsion pair. We denote $\UU := \add \{U^A\mid A \in \AA \cap \CC\}$ and as in Theorem \ref{abelian}, $\DD := \Sub \UU$. The following result gives a sufficient condition for the categories $\CC$ and $\DD$ to be Frobenius.

\begin{theorem} \label{frob}
Assume that the following conditions are satisfied.
 \begin{itemize}
\item $\BB$ is an abelian category whose exact structure is compatible with that of $\EE$, and has enough projective objects and enough injective objects.
 \item $\AA\subset\CC$ holds, and any object in $\AA$ is projective in $\EE$ and injective in $\CC$.
\item $\Ext^1_\EE(\BB, A)$ is a finitely generated $\BB$-module for any $A\in\AA$.
   \item $\Ext^1_\EE(P, \AA)$ is a finitely generated $\AA^{\op}$-module for any projective object $P$ in $\BB$.
 \end{itemize}
Then we have the following assertions.
 \begin{enumerate}[\rm (a)] 
  \item The following conditions are equivalent:
\begin{enumerate}[\rm (i)]
\item Projective objects of $\BB$ and $\DD$ coincide.
\item Projective objects of $\CC$ and $\EE$ coincide.
\end{enumerate}
 \end{enumerate}
 Suppose that the equivalent conditions in (a) are satisfied. Then the following assertions hold.
 \begin{enumerate}[\rm (a)] \rs{1}
  \item $\EE$ and $\CC$ have enough projective objects.
  \item Any object in $\AA$ has injective dimension at most $1$ in $\EE$. Therefore all assertions in Theorem \ref{abelian} hold.
  \item The following conditions are equivalent:
 \begin{enumerate}[\rm (i)]
  \item $\CC$ is a Frobenius category whose exact structure is compatible with that of $\EE$.
  \item $\DD$ is a Frobenius category whose exact structure is compatible with that of $\EE$.
  \item Any object in $\UU$ is projective-injective in $\BB$. Moreover each projective object of $\BB$ has injective dimension at most $1$ and each injective object of $\BB$ has projective dimension at most $1$.
 \end{enumerate}
 \item If the conditions in (d) are satisfied, then the category of projective-injective objects in $\BB$ is $\UU$.
 \end{enumerate}
\end{theorem}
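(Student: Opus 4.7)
The plan is to combine two main ingredients: a projective-lifting construction suggested by the finiteness hypothesis on $\Ext^1_\EE(P, \AA)$, and the application of Proposition \ref{frobprop} to $(\BB, \UU, \DD)$ together with Theorem \ref{abelian} once its hypotheses are in place. The key technical tool, used throughout, is the following projective lifting: for each projective $P$ of $\BB$, finite generation of $\Ext^1_\EE(P, \AA)$ as an $\AA^{\op}$-module combined with Lemma \ref{AR generalization} yields a short exact sequence $0 \to A' \to \tilde{P} \to P \to 0$ in $\EE$ with $A' \in \AA$ and $\tilde{P} \in \EE_1$. I verify that $\tilde{P}$ is projective in $\EE$ by decomposing test objects via the torsion pair $(\AA, \BB)$: the $\AA$-part of $\Ext^1_\EE(\tilde P, -)$ vanishes since $\tilde P \in \EE_1$, and for the $\BB$-part one uses that $\Ext^1_\EE(P, -)|_\BB = \Ext^1_\BB(P, -)$, because middle terms of extensions in $\EE$ between objects of $\BB$ remain in $\BB$ (thanks to $\Hom_\EE(\AA, \BB) = 0$).

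For part (a), I would prove the equivalence via the identification ``$\tilde{P} \in \CC$ iff $P \in \DD$''. The universal property of the sequence $0 \to A' \to C^{A'} \to U^{A'} \to 0$ from Theorem \ref{abelian}(a) realizes $\Hom_\BB(-, U^{A'})$ as a projective cover of $\Ext^1_\EE(-, A')|_\BB$, hence there is a canonical morphism $\pi\colon P \to U^{A'}$ with $\Hom_\EE(B, \tilde{P}) \cong \ker(\Hom_\BB(B, \pi))$ for every $B \in \BB$; thus $\tilde{P} \in \CC$ iff $\pi$ is monic, iff $P \in \Sub\UU = \DD$. Applied to a general projective $Q$ of $\EE$ in place of $\tilde{P}$, together with the elementary observation that the $\BB$-quotient $P_Q$ in the $(\AA,\BB)$-decomposition of any projective $Q$ of $\EE$ is itself projective in $\BB$ (from $\Ext^1_\EE(Q, \BB) = 0$ and $\Hom_\EE(\AA, \BB) = 0$), this yields the equivalence of (i) and (ii). The reverse step --- lifting a projective of $\DD$ back to one in $\CC$ --- is handled by pulling back $C^A \to U^A$ along an embedding $P \hookrightarrow U^A$, producing $0 \to A \to X \to P \to 0$ with $X \in \CC$, after which projectivity is transferred by Step 1.

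Assuming the equivalent conditions in (a), part (b) follows from the lifting together with enough projectives in $\EE$ (for $\CC$, one lifts a $\BB$-projective cover of $FC$ and uses that lifts are automatically in $\CC$ under (a)(ii); the lifted map factors through $C$ because $\tilde P \in \EE_1$). For (c), I would construct a two-term resolution $0 \to K' \to \tilde{P} \to B \to 0$ in $\EE$ of any $B \in \BB$ by composing the lift $\tilde{P} \to P$ of a projective cover $P \twoheadrightarrow B$ in $\BB$ with $P \to B$, and express $\Ext^2_\EE(B, A)$ as a cokernel arising from $K'$; iterating the lifting once more on a $\BB$-projective cover of $\ker(P \to B)$ will show the vanishing, hence $\injdim_\EE A \leq 1$. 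Finally, once (c) is established, Theorem \ref{abelian}(c), (d), (e) apply in full, giving $\UU \subset \inj\BB$ and the exact equivalence $F\colon \CC/[\AA] \xrightarrow{\sim} \DD$; Proposition \ref{frobprop} applied to $(\BB, \UU, \DD)$ then yields parts (d) and (e) directly, with $F$ transferring the Frobenius structure between $\CC$ and $\DD$ (using that $\AA$ consists of projective-injectives in $\CC$ by hypothesis). The main obstacle is (c); the vanishing of $\Ext^2_\EE(\BB, \AA)$ is a delicate interplay between the two universal constructions ($\tilde P$ and $C^A$) and requires careful bookkeeping of projective covers both in $\BB$ and in the category of $\AA^{\op}$-modules.
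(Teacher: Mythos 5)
Your overall strategy for (a), (b), (d) and (e) is essentially the paper's: the lifting $0\to A'\to\tilde P\to P\to 0$ obtained from Lemma \ref{AR generalization} and the hypothesis on $\Ext^1_\EE(P,\AA)$ is exactly the paper's Lemma \ref{cover of P} (and your verification that $\tilde P$ is projective in $\EE$, splitting a test object by the torsion pair $(\AA,\BB)$, is the same computation), and (d), (e) are obtained in both cases by feeding Theorem \ref{abelian}(e) and Remark \ref{remfrob} into Proposition \ref{frobprop}. Your mechanism for (a) is a genuine variant and it works: since $\Hom_\EE(\BB,C^{A'})=0$, the projective cover $\Hom_\EE(\BB,U^{A'})\to\Ext^1_\EE(\BB,A')$ is in fact an isomorphism of functors on $\BB$, so $\Hom_\EE(-,\tilde P)|_\BB\cong\Kernel\Hom_\BB(-,\pi)$ and ``$\tilde P\in\CC\Leftrightarrow P\in\DD$'' is correct; the paper instead runs a direct diagram chase through $0\to A\to C\to P'\to 0$. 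Your criterion is arguably cleaner, though you should make explicit the two remaining inclusions (that $FQ$ is projective in $\BB$ for $Q$ projective in $\EE$, which you note, and that projectives of $\CC$ are projective in $\EE$, which your sketch of the ``reverse step'' does not really address -- the paper handles it by taking a projective cover in $\EE$ and showing the syzygy lies in $\CC$).

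The genuine gap is (c). You declare it the main obstacle and propose to compute $\Ext^2_\EE(B,A)$ by iterating the lifting on a projective cover of $\Kernel(P\to B)$ in $\BB$, but that computation never closes: writing $K'=\Omega_\EE B$ one gets $0\to A'\to K'\to \Kernel(P\to B)\to 0$, and the long exact sequence leaves you with the image of $\Ext^1_\EE(\Kernel(P\to B),A)\to\Ext^1_\EE(K',A)$, which is exactly the kind of group ($\Ext^1$ from $\BB$ to $\AA$) that is \emph{not} zero in this setup -- it is what $U^A$ measures. No amount of further lifting inside $\BB$ kills it. The statement $\Ext^2_\EE(\EE,\AA)=0$ is not a formal consequence of the lifting; it forces you to use the hypothesis that objects of $\AA$ are injective in $\CC$, which your sketch of (c) never invokes. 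The actual argument is two lines: under (a)(ii) every projective of $\EE$ lies in $\CC$, hence $\Omega_\EE(\EE)\subset\CC$ (a subobject of an object of $\CC$ is in $\CC$ since $\CC=\{X\mid\Hom_\EE(\BB,X)=0\}$), and therefore $\Ext^2_\EE(\EE,\AA)=\Ext^1_\EE(\Omega_\EE(\EE),\AA)=0$ because $\AA$ is injective in $\CC$ and $\CC$ is extension-closed in $\EE$. With (c) repaired this way, the hypothesis $\Ext^2_\EE(\BB,\AA\cap\CC)=0$ of the later parts of Theorem \ref{abelian} becomes available and the rest of your plan goes through.
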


We start with preparing the following:

\begin{lemma}\label{cover of P}
For any projective object $P$ in $\BB$, there exists a projective object $X$ in $\EE$ such that $P=FX$.
\end{lemma}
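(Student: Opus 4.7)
My plan is to produce the desired projective object $X$ by a direct application of Lemma \ref{AR generalization} to the subcategory $\AA\subset\EE$, and then to verify both required properties ($FX\cong P$ and $X$ projective in $\EE$) using the torsion pair axioms.

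First I apply Lemma \ref{AR generalization} with $\XX=\EE$ and $\YY=\AA$. Here $\AA$ is closed under extensions (stated in the paper right after the definition of torsion pair) and closed under direct summands (as one side of a torsion pair in a Krull--Schmidt exact category). The fourth bullet in the hypotheses of Theorem \ref{frob} is exactly condition (ii) of Lemma \ref{AR generalization} for $X:=P$, so the lemma produces a short exact sequence
\[
0\longrightarrow A\longrightarrow X\longrightarrow P\longrightarrow 0
\]
in $\EE$ with $A\in\AA$ and $\Ext^1_{\EE}(X,\AA)=0$.

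Next, to see that $FX\cong P$, I will apply $\Hom_{\EE}(-,Y)$ for an arbitrary $Y\in\BB$. Since $\Hom_{\EE}(A,Y)=0$ by the torsion pair $(\AA,\BB)$, the sequence above induces an isomorphism $\Hom_{\EE}(X,Y)\cong\Hom_{\EE}(P,Y)$. Using the adjunction defining $F$ on the left side and the fullness of $\BB\subset\EE$ on the right side, this reads $\Hom_{\BB}(FX,Y)\cong\Hom_{\BB}(P,Y)$ naturally in $Y\in\BB$, and Yoneda gives $FX\cong P$.

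Finally I will show that $X$ is projective in $\EE$, i.e.\ $\Ext^1_{\EE}(X,E)=0$ for every $E\in\EE$. Decomposing $E$ via $(\AA,\BB)$ as $0\to TE\to E\to FE\to 0$ with $TE\in\AA$ and $FE\in\BB$, and using $\Ext^1_{\EE}(X,\AA)=0$, the task reduces to showing $\Ext^1_{\EE}(X,FE)=0$. Applying $\Hom_{\EE}(-,FE)$ to $0\to A\to X\to P\to 0$, and using that $A\in\AA$ is projective in $\EE$ by hypothesis (so $\Ext^1_{\EE}(A,FE)=0$), reduces the question further to $\Ext^1_{\EE}(P,FE)=0$. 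Since $\BB$ is closed under extensions in $\EE$, any $\EE$-extension of $P$ by $FE$ has middle term in $\BB$, and by the compatibility of the exact structures this extension is already an extension in $\BB$; thus $\Ext^1_{\EE}(P,FE)=\Ext^1_{\BB}(P,FE)=0$ because $P$ is projective in $\BB$. The only genuinely delicate point in the whole argument is this last identification $\Ext^1_{\EE}(P,FE)=\Ext^1_{\BB}(P,FE)$, which depends essentially on the standing hypothesis that $\BB$'s exact structure is compatible with that of $\EE$; the remainder is a clean assembly of Lemma \ref{AR generalization} and the torsion pair formalism.
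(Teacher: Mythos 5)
Your proof is correct and follows essentially the same route as the paper: both apply Lemma \ref{AR generalization} to $\AA$ using the finite-generation hypothesis on $\Ext^1_\EE(P,\AA)$ to get $0\to A\to X\to P\to 0$ with $\Ext^1_\EE(X,\AA)=0$, and both deduce projectivity of $X$ from $\Ext^1_\EE(A,\BB)=0$ (projectivity of $\AA$ in $\EE$) together with $\Ext^1_\EE(P,\BB)=0$ (projectivity of $P$ in $\BB$ plus extension-closedness and compatibility of the exact structures). Your explicit justification of the identification $\Ext^1_\EE(P,\BB)=\Ext^1_\BB(P,\BB)$ and of $FX\cong P$ only spells out steps the paper leaves implicit.
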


\begin{proof}
Thanks to Lemma \ref{AR generalization}, there exists a short exact sequence
 $0 \rightarrow A \rightarrow X \rightarrow P \rightarrow 0$
with $A \in \AA$ and $\Ext^1_{\EE}(X,\AA)=0$.
Applying $\Hom_\EE(-, \BB)$, we have an exact sequence
\[0=\Ext^1_{\EE}(P,\BB)\rightarrow \Ext^1_{\EE}(X,\BB)\rightarrow \Ext^1_{\EE}(A,\BB)=0.\]
Thus $\Ext^1_\EE(X, \BB) = 0$ holds. Since $\Ext^1_{\EE}(X,\AA)=0$, we have $\Ext^1_{\EE}(X,\EE)=0$.
Thus $X$ is a projective object in $\EE$ satisfying $P=FX$.
\end{proof}

Now we are ready to prove Theorem \ref{frob}.

\begin{proof}[Proof of Theorem \ref{frob}]
(a) (ii)$\Rightarrow$(i) Suppose that projective objects of $\CC$ and $\EE$ coincide. 

Let $P$ be a projective object in $\BB$. By Lemma \ref{cover of P}, there exists a projective object $X$ in $\EE$ such that $P=FX$.
Since $X$ belongs to $\CC$ by our assumption, we have $P\in F(\CC)\subset\DD$.

Let $P$ be a projective object in $\DD$. Since $\BB$ has enough projective objects by our assumption, there exists a projective cover $f:X\rightarrow P$ in $\BB$. Since $X$ belongs to $\DD$ by the above argument, $f$ splits. Thus $P$ is projective in $\BB$.

(i)$\Rightarrow$(ii) Suppose that projective objects of $\BB$ and $\DD$ coincide. 

Let $P$ be a projective object in $\EE$. 
Let $0\rightarrow X\rightarrow P'\xrightarrow{f} FP\rightarrow 0$ be an exact sequence with a projective object $P'$ in $\BB$. Then $P'\in\DD$ by our assumption. By Theorem \ref{abelian} (b), there exists an exact sequence $0 \rightarrow A \xrightarrow{i} C \xrightarrow{p} P' \rightarrow 0$
 with $A \in \AA$ and $C \in \CC$. Since $\Ext^1_{\EE}(C,\AA)=0$ holds by our assumption, we have a commutative diagram:
$$\xymatrix{
  0 \ar[r] & A\oplus TP \ar[d]_{\begin{sbmatrix} \alpha \\ 1_{TP} \end{sbmatrix}} \ar[r]^{\begin{sbmatrix} i & 0 \\ 0 & 1_{TP} \end{sbmatrix}} & C\oplus TP \ar[d]^{\begin{sbmatrix} \beta \\ u \end{sbmatrix}} \ar[r]^-{\begin{sbmatrix} p \\ 0 \end{sbmatrix}} & P' \ar[d]^f \ar[r] & 0 \\
  0 \ar[r] & TP \ar[r]_u & P \ar[r]_v & FP \ar[r] & 0.
 }$$
 As $\begin{sbmatrix} \alpha \\ 1_{TP}  \end{sbmatrix}$ and $f$ are (admissible) epimorphisms, then $\begin{sbmatrix} \beta \\ u  \end{sbmatrix}$ is also one. Since $P$ is projective in $\EE$, $\begin{sbmatrix} \beta \\ u  \end{sbmatrix}$ splits. Thus $P$ is a direct summand of $C\oplus TP$, which belongs to $\CC$ by our assumption $TP\in\AA\subset\CC$.

 Conversely, let $Q$ be a projective object in $\CC$. Let us consider its projective cover $P$ in $\EE$. We get the short exact sequence
 $$0 \rightarrow \Omega_\EE Q \rightarrow P \rightarrow Q \rightarrow 0.$$
 According to the previous discussion, $P \in \CC$. Thus, applying $\Hom_\EE(\BB, -)$ to this short exact sequence, we get that $\Omega_\EE Q \in \CC$. Hence, as $Q$ is projective in $\CC$, the short exact sequence splits and $Q$ is projective in $\EE$.
 
(b) We now suppose that the conditions in (a) are satisfied.

For any $X\in\EE$, there exists a short exact sequence
 $0 \rightarrow A \rightarrow X \rightarrow B \rightarrow 0$
with $A=TX \in \AA$ and $B=FX \in \BB$. Then $A$ is projective in $\EE$ by our assumption.
Thus, to show that $X$ has a projective cover in $\EE$, it suffices to show that any $B\in\BB$ has a projective cover in $\EE$.

By our assumption, there exists a projective cover $f:P\rightarrow B$ in $\BB$.
By Lemma \ref{cover of P}, there exists a projective cover $g:P'\rightarrow P$ in $\EE$. Then the composition $gf:P'\rightarrow B$ gives a projective cover of $B$ in $\EE$.

(c) All projective objects of $\EE$ belong to $\CC$ by our assumption.
Therefore $\Omega_{\EE}(\EE)\subset\CC$ holds. Since any object in $\AA$ is injective in $\CC$ by our assumption, we have
\[\Ext^2_\EE(\EE, \AA) = \Ext^1_\EE(\Omega_\EE(\EE), \AA) = 0.\]
Thus the first assertion follows.
In particular we have $\Ext^2_{\EE}(\BB,\AA\cap\CC)=0$, and the second assertion follows.

(d) and (e) Thanks to Theorems \ref{eq1} and \ref{abelian} (e), $F: \CC \to \CC/[\AA] \to \DD$ is exact bijective. So $\CC$ is Frobenius if and only if $\DD$ is Frobenius by Remark \ref{remfrob}. Hence (i)$\Leftrightarrow$(ii) in (d) is proven. The remaining assertions follow from Proposition \ref{frobprop}.
\end{proof}

\section{Equivalences arising from orders and their idempotents} \label{ss:pforders}

As in Subsection \ref{ss:orders}, let $R$ be a complete discrete valuation ring and $K$ be its field of fractions. Fix an $R$-order $A$.
Consider functors $D_i:=\Ext^{1-i}_R(-,R):\mod A\leftrightarrow\mod A^{\op}$
for $i=0,1$. They restrict to dualities
\[D_1=\Hom_R(-,R):\CM A\stackrel{\sim}{\longleftrightarrow}\CM A^{\op}\ \mbox{ and }\ 
D_0=\Ext^1_R(-,R):\fl A\stackrel{\sim}{\longleftrightarrow}\fl A^{\op}\]
and satisfy $D_0(\CM A) = D_1(\fl A) = 0$. In view of the characterisations of $\CM A$ given at the beginning of Section \ref{s:orders}, it is immediate that $\CM A$ admits the projective generator $A$ and the injective co-generator $D_1 A$. 
Since the injective resolution of the $R$-module $R$ is given by $0 \rightarrow R \rightarrow K \rightarrow K/R \rightarrow 0$, we get an isomorphism $D_0 \cong \Hom_R(-,K/R)$ on $\fl A$.
Recall the following useful lemma:
\begin{lemma} \label{injfl}
 If $X \in \CM A$, we have a monomorphism $X \hookrightarrow X \otimes_R K$ and $\Ext^1_A(\fl A, X \otimes_R K) = 0$.
\end{lemma}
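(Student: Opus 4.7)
My plan is to treat the two assertions independently. For the first, I would simply note that $X\in\CM A$ is by definition free of finite rank as an $R$-module, hence flat, so tensoring the canonical inclusion $R\hookrightarrow K$ with $X$ over $R$ preserves injectivity and yields the desired monomorphism $X\hookrightarrow X\otimes_R K$.

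For the second assertion, set $Y:=X\otimes_R K$. The key observation I would start from is that multiplication by any nonzero element of $R$ acts invertibly on $Y$, so the $R$-module structure on $Y$ extends to a $K$-module structure; equivalently, $Y$ is an $A\otimes_R K$-module. In particular, for a fixed uniformizer $t\in R$, multiplication by $t^n$ is an automorphism of $Y$ for every $n\geq 0$, and this automorphism is $A$-linear because $R$ is central in $A$.

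Next, fix $M\in\fl A$. Since $M$ has finite length over $R$, some power $t^n$ annihilates $M$. Given any extension
\[0\rightarrow Y\xrightarrow{\iota}E\xrightarrow{\pi}M\rightarrow 0\]
in $\mod A$, the composition $t^n\cdot\pi$ vanishes, so the multiplication map $t^n:E\rightarrow E$ factors uniquely through $\iota$ as $t^n=\iota\circ\varphi$ for some $A$-linear $\varphi:E\rightarrow Y$. Composing $\varphi$ with $\iota$ again and using injectivity of $\iota$ shows $\varphi\circ\iota$ equals the automorphism $t^n|_Y$ of $Y$. Hence $(t^n|_Y)^{-1}\circ\varphi$ is an $A$-linear retraction of $\iota$, and the sequence splits. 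This gives $\Ext^1_A(M,Y)=0$ for all $M\in\fl A$, as desired.

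There is no serious obstacle here; the only point to take care of is the $A$-linearity of the inverse of $t^n$ on $Y$, which rests on the centrality of $R$ in the order $A$. Everything else reduces to the basic fact that an $R$-module on which a uniformizer acts invertibly is orthogonal to finite length modules.
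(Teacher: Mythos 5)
Your proof is correct and rests on exactly the same mechanism as the paper's: a power $t^n$ of the uniformizer annihilates the finite-length module while acting invertibly on the $K$-vector space $X\otimes_R K$, and these two facts force the extension group to vanish. The paper phrases this abstractly (the group $\Ext^1_A(Y,X\otimes_R K)$ is simultaneously a $K$-vector space and annihilated by a nonzero element of $R$, hence zero), whereas you unwind it into an explicit splitting; both are fine.
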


\begin{proof}
For $Y\in\fl A$, let $E:=\Ext^1_A(Y, X \otimes_R K)$. Since $X\otimes_RK$ is a $K$-vector space, so is $E$. Since $Y$ is annihilated by some non-zero element in $R$, so is $E$. These imply $E=0$.
\end{proof}

For an object $X \in \CM A$, let $\corad X \in \CM A$ be maximal among $A$-submodules $Y$ of $X \otimes_R K$ such that $X \subset Y$ and $Y/X$ is semisimple. We denote  $\cotop X := (\corad X) / X$. Notice that $X \otimes_R K$ is not finitely generated as an $A$-module (so $X \otimes_R K \notin \CM A$) if $X \in \CM A$ is non-zero. Notice also that $D_1 (X \otimes_R K) = 0$.

We often use the following lemma:
\begin{lemma} Let $X \in \CM A$. The following hold: \label{generalitiescotop}
 \begin{enumerate}[\rm (a)] 
  \item We have $\cotop X = \soc (X \otimes_R (K/R) )$.
  \item The functor $D_1$ induces an order-reversing bijection
   $$\{X \subset Y \subset X \otimes_R K \mid Y/X \in \fl A\} \xleftrightarrow{1 - 1} \{Y' \subset D_1 X \mid (D_1 X) / Y' \in \fl A^{\op} \}.$$
  \item There are isomorphisms $\corad X \cong D_1 \rad D_1 X$ and $\cotop X \cong D_0 \top D_1 X$ of $A$-modules.
  \item If $0 \rightarrow X \rightarrow Y \rightarrow S \rightarrow 0$ is a short exact sequence with $Y \in \CM A$ and a semisimple $A$-module $S$, then there is a unique canonical commutative diagram
  $$\xymatrix{
   0 \ar[r] & X \ar[r] \ar@{=}[d] & Y \ar[r] \ar@{^{(}->}[d] & S \ar[r] \ar@{^{(}->}[d] & 0 \\
   0 \ar[r] & X \ar[r] & \corad X \ar[r] & \cotop X \ar[r] & 0.}$$
  \item For a simple $A$-module $S$, we have $\Ext^1_A(S, X) \neq 0$ if and only if $S$ is a direct summand of $\cotop X$.
 \end{enumerate}
\end{lemma}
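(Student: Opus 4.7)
My plan is to handle the five parts in order, exploiting that each builds on the previous one.

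For (a), I would unfold the definition of $\corad X$: it is maximal among $A$-submodules $Y$ of $X \otimes_R K$ containing $X$ with $Y/X$ semisimple. Since $X$ is $R$-free, tensoring $0 \to R \to K \to K/R \to 0$ by $X$ yields $(X \otimes_R K)/X \cong X \otimes_R (K/R)$. Under this identification, the overmodules $X \subset Y$ with $Y/X$ semisimple correspond bijectively to semisimple $A$-submodules of $X \otimes_R (K/R)$, and taking the maximum on both sides translates to $\cotop X = \soc(X \otimes_R (K/R))$.

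For (b), I would apply the duality $D_1:\CM A\leftrightarrow\CM A^{\op}$ to $0\to X\to Y\to Y/X\to 0$ (noting $Y\in\CM A$ as a finitely generated $R$-torsion-free module), obtaining $0\to D_1 Y\to D_1 X\to D_0(Y/X)\to 0$ by means of $D_0 Y=0=D_1(Y/X)$. Hence $D_1 Y\subset D_1 X$ with cokernel in $\fl A^{\op}$. Using $D_1D_1=\id$ on $\CM$ produces the inverse, and since $Y'\subset D_1 X$ with finite length cokernel has the same $R$-rank as $D_1 X$, the canonical embedding $D_1 Y'\hookrightarrow D_1 Y'\otimes_R K\cong X\otimes_R K$ identifies $D_1 Y'$ with an overmodule of $X$ inside $X\otimes_R K$. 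Part (c) is then the specialization to $Y'=\rad D_1 X$, which by definition is the smallest submodule of $D_1 X$ with semisimple quotient $\top D_1 X$; the order-reversing bijection sends it to the largest $Y$ with semisimple quotient, namely $\corad X$, so $\corad X\cong D_1\rad D_1 X$; passing to cokernels via $D_0$ gives $\cotop X\cong D_0\top D_1 X$.

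For (d), which I expect to be the main technical step, I would invoke Lemma \ref{injfl} to get $\Ext^1_A(S,X\otimes_R K)=0$. This lets the composite $X\hookrightarrow X\otimes_R K$ extend along the inclusion $X\hookrightarrow Y$ to a map $\varphi:Y\to X\otimes_R K$ restricting to the inclusion on $X$. The kernel of $\varphi$ meets $X$ trivially, so embeds into $S$ and has finite length; but as an $A$-submodule of $Y\in\CM A$, it is $R$-free, hence zero. Thus $\varphi$ is a monomorphism extending $X\hookrightarrow X\otimes_R K$ with semisimple cokernel, and maximality of $\corad X$ provides the factorization through $\corad X$. Uniqueness follows from $\Hom_A(S,X\otimes_R K)=0$: $S$ is annihilated by some nonzero element of $R$, whereas $X\otimes_R K$ is a $K$-vector space.

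Finally for (e), given a nonzero class in $\Ext^1_A(S,X)$ with $S$ simple, pick a representative $0\to X\to Y\to S\to 0$ in $\mod A$; the $R$-torsion submodule of $Y$ meets $X$ trivially, so embeds into $S$, making it either zero or isomorphic to $S$, and the latter would split the extension, a contradiction. Hence $Y\in\CM A$ and (d) places $S$ as a submodule of the semisimple module $\cotop X$, where simplicity forces it to be a direct summand. Conversely, given a direct summand $S\subset\cotop X$, take the preimage $Y\subset\corad X$ of $S$; the resulting sequence $0\to X\to Y\to S\to 0$ cannot split, since a splitting would produce a finite length $A$-submodule of $\corad X\subset X\otimes_R K$, which is impossible as $X\otimes_R K$ is a $K$-vector space.
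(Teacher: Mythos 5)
Your proposal is correct and takes essentially the same route as the paper's (much terser) proof: parts (a)--(c) via the identification $(X\otimes_RK)/X\cong X\otimes_R(K/R)$ and the duality $D_1$, part (d) via maximality of $\corad X$, and part (e) by reducing to (d) after showing the middle term of a non-split extension is torsion-free. The only cosmetic difference is in (d), where the paper embeds $Y$ into $X\otimes_RK$ directly from $X\otimes_RK\cong Y\otimes_RK$ rather than extending the map using Lemma \ref{injfl}; both arguments are valid, and you supply the routine verifications (uniqueness in (d), both directions of (e)) that the paper leaves implicit.
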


\begin{proof}
 (a) and (b) are immediate and the first isomorphism of (c) is a consequence of (b). The second isomorphism of (c) is obtained by applying $\Hom_R(-,R)$ to the short exact sequence $0 \to \rad D_1 X \to D_1 X \to \top D_1 X \to 0$. For (d), applying the functor $- \otimes_R K$ to the short exact sequence we get $X \otimes_R K \cong Y \otimes_R K$. Therefore $X \subset Y \subset X \otimes_R K$. By maximality of $\corad X$, we have $Y \subset \corad X$ and the result follows. 

 (e) The implication $\Leftarrow$ is immediate. Let us show $\Rightarrow$. Consider a non-split exact sequence $0 \to X \to Y \to S \to 0$. For any simple module $S'$, applying $\Hom_A(S', -)$, we get an exact sequence $0 \to \Hom_A(S', Y) \to \Hom_A(S', S) \to \Ext^1_A(S',X)$. It is easy to conclude in any case that $\Hom_A(S', Y) = 0$, so $Y \in \CM A$. Therefore, we can apply (d) so $S$ is a summand of $\cotop X$. 
%
%
%
%
\end{proof}

 For logical reasons, we give Proof of Theorem \ref{mainA} after Proof of Theorem \ref{mainB}.

\subsection{Proof of Theorem \ref{mainB}}
 As in Theorem \ref{mainB}, we consider an idempotent $e$ of an $R$-order $A$ such that that $B := A / (e)$ has finite length over $R$. As $\mod B \subset \fl A$, $D_0$ restricts to a duality $\mod B\stackrel{\sim}{\longleftrightarrow}\mod B^{\op}.$
 We will separate the proof in five statements.
 \begin{proposition}\label{pair1}
  We have a torsion pair $(\add Ae, \mod B)$ in $\mod_e A$.
 \end{proposition}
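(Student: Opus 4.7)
The plan is to verify the two defining properties of a torsion pair $(\TT, \FF)$ in $\mod_e A$: both classes must be contained in $\mod_e A$ with $\Hom_A(\TT, \FF) = 0$, and every object $X$ must fit into an admissible short exact sequence $0 \to T \to X \to F \to 0$ with $T \in \TT$, $F \in \FF$. The first condition is immediate: for $P \in \add Ae$ the module $eP$ is a direct summand of $(eAe)^n$ and so lies in $\proj(eAe)$, while for $Y \in \mod B$ one has $eY = 0 \in \proj(eAe)$; the vanishing $\Hom_A(Ae, Y) \cong eY = 0$ then yields $\Hom_A(\add Ae, \mod B) = 0$.

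For the decomposition, the natural candidate comes from the counit of the adjunction $\bigl(Ae \otimes_{eAe} -,\ e(-) = \Hom_A(Ae,-)\bigr)$. Given $X \in \mod_e A$, I set
$$M := Ae \otimes_{eAe} eX, \qquad \epsilon_X : M \longrightarrow X, \quad ae \otimes ex \longmapsto aex.$$
Since $eX \in \proj(eAe)$ by hypothesis on $X$, it is a summand of some $(eAe)^n$, and therefore $M$ is a summand of $(Ae)^n$, so $M \in \add Ae$. Moreover the functor $e(-) : \mod A \to \mod(eAe)$ is exact (because $Ae$ is $A$-projective), and the triangle identity for the adjunction identifies $e\epsilon_X$ with $\id_{eX}$ under the canonical iso $eM = eAe \otimes_{eAe} eX \cong eX$.

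The single step requiring an argument is injectivity of $\epsilon_X$, which I would deduce as follows. Let $K := \ker \epsilon_X$; applying the exact functor $e(-)$ gives $eK = 0$, so via the natural isomorphism $\Hom_A(Ae, K) \cong eK$ one obtains $\Hom_A(Ae, K) = 0$, and hence $\Hom_A(M, K) = 0$ since $M \in \add Ae$. The inclusion $K \hookrightarrow M$ is therefore zero, so $K = 0$. The cokernel $X/M$ is annihilated by $e$ (again by exactness of $e(-)$ together with $e\epsilon_X = \id$), making it an $A$-module on which the ideal $(e)$ acts trivially, i.e.\ a $B$-module, and it is finitely generated over $B$ because $X$ is finitely generated over $A$. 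This produces the required admissible sequence $0 \to M \to X \to X/M \to 0$ in $\mod_e A$. The only mild subtlety is precisely this injectivity check, and the hypothesis $eX \in \proj(eAe)$ is exactly what is needed to place $M$ in $\add Ae$ so that the $\Hom$-vanishing argument applies.
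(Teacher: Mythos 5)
Your setup, the vanishing of $\Hom_A(\add Ae,\mod B)$, the identification of the torsion subobject as $M=Ae\otimes_{eAe}eX$, and the analysis of the cokernel all match the paper's proof. However, the one step you yourself flag as requiring an argument --- injectivity of $\epsilon_X$ --- is where your proof breaks down. From $eK=0$ you correctly get $\Hom_A(Ae,K)\cong eK=0$ and hence $\Hom_A(M,K)=0$, but the inclusion $K\hookrightarrow M$ is an element of $\Hom_A(K,M)$, not of $\Hom_A(M,K)$, so its vanishing does not follow. The implication ``$K\subset M$ and $\Hom_A(M,K)=0$ imply $K=0$'' is false in general (e.g.\ the socle of an indecomposable projective over the path algebra of $A_2$ sits inside that projective while admitting no nonzero map from it), so this is a genuine gap rather than a notational slip.

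The correct argument, which is the one the paper uses, requires the standing hypothesis $\length_R B<\infty$, which you never invoke at this step. Since $eK=0$, the kernel $K$ is a $B$-module, hence of finite length over $R$; on the other hand $K$ is a submodule of $M\in\add Ae\subset\CM A$, hence free of finite rank over $R$. Being simultaneously $R$-torsion and $R$-torsion-free, $K=0$. (Equivalently: $K\in\mod B\subset\fl A$ and $\Hom_A(\fl A,\CM A)=0$ by characterization (ii) of Cohen--Macaulay modules, so the inclusion $K\hookrightarrow M$ --- now with the correct variance --- vanishes.) With this replacement the rest of your proof goes through and coincides with the paper's.
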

 
 \begin{proof}
  Since $B=A/(e)$, we have $\Hom_A(\add Ae, \mod B) = 0$.
For any $X \in \mod_e A$, we have an exact sequence 
\begin{equation}\label{sequence for X}
Ae \otimes_{e A e} e X \xrightarrow{f} X \rightarrow B \otimes_A X \rightarrow 0
\end{equation}
 in $\mod_e A$. Since $e X \in \proj (e A e)$, we have $A e \otimes_{e A e} e X \in \add Ae$.
Multiplying the sequence \eqref{sequence for X} by $e$ on the left, we see that $e \Kernel f = 0$ so $\Kernel f$ is in $\mod B$.
On the other hand, $\Kernel f$ is a submodule of $A e \otimes_{e A e} e X\in\add Ae$, so $\Kernel f \in \CM A$.
Consequently we have $\Kernel f = 0$.
Now the sequence \eqref{sequence for X} shows the desired assertion.
 \end{proof}

Thanks to Proposition \ref{pair1}, we have two functors $T: \mod_e A \to \add Ae$ and $F: \mod_e A \to \mod B$ and a functorial exact sequence
$0 \to TX \to X \to FX \to 0$
for $X \in \mod_e A$. We prove the following easy statement:

\begin{lemma} \label{LemmaFX}
 If $X \in \CM_e A$, then $FX \subset \Hom_A(B, TX \otimes_R (K/R)) \subset TX \otimes_R (K/R)$.
\end{lemma}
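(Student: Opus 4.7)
The plan is to apply the functor $-\otimes_R(K/R)$ to the defining short exact sequence
$$0 \to TX \to X \to FX \to 0$$
coming from the torsion pair $(\add Ae,\mod B)$ of Proposition \ref{pair1}, and read off the desired inclusion from the resulting long exact Tor sequence.

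First I would note that, since $TX\in\add Ae\subset\CM A$ and $X\in\CM A$, both modules are $R$-free and hence $R$-flat, so $\Tor^R_i(TX,-)=0=\Tor^R_i(X,-)$ for $i\geq 1$. The long exact Tor sequence therefore degenerates to
$$0\to\Tor^R_1(FX,K/R)\to TX\otimes_R(K/R)\to X\otimes_R(K/R)\to FX\otimes_R(K/R)\to 0.$$
Next I would compute $\Tor^R_1(FX,K/R)$ and $FX\otimes_R(K/R)$ using the short exact sequence $0\to R\to K\to K/R\to 0$: since $FX\in\mod B\subset\fl A$ has finite length over $R$, it is torsion, so $FX\otimes_RK=0$, and the long exact sequence collapses to give $\Tor^R_1(FX,K/R)\cong FX$ and $FX\otimes_R(K/R)=0$. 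Substituting back produces an $A$-linear short exact sequence
$$0\to FX\to TX\otimes_R(K/R)\to X\otimes_R(K/R)\to 0,$$
which already gives the second inclusion $FX\subset TX\otimes_R(K/R)$ in the statement.

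Finally, to upgrade this to the claimed inclusion into $\Hom_A(B,TX\otimes_R(K/R))$, I would use that $FX$ is a $B$-module: the two-sided ideal $(e)\subset A$ annihilates $FX$, and since the injection $FX\hookrightarrow TX\otimes_R(K/R)$ is $A$-linear, its image lies in
$$\{y\in TX\otimes_R(K/R)\mid (e)y=0\}=\Hom_A(A/(e),TX\otimes_R(K/R))=\Hom_A(B,TX\otimes_R(K/R)),$$
which gives the first inclusion. There is no real obstacle: the only point that requires a moment's care is the identification $\Tor^R_1(FX,K/R)\cong FX$ for finite length $R$-modules, which is a standard computation from the projective resolution of a torsion module over a DVR.
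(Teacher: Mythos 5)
Your proof is correct and is essentially the paper's argument in derived-functor clothing: the paper embeds $X$ into $X\otimes_RK\cong TX\otimes_RK$ and reads off $FX=X/TX\hookrightarrow(TX\otimes_RK)/TX=TX\otimes_R(K/R)$ from a commutative diagram, whereas you obtain the same monomorphism from the $\Tor^R$ long exact sequence together with the identification $\Tor^R_1(FX,K/R)\cong FX$ for the $R$-torsion module $FX$. The final step — using that $FX$ is killed by $(e)$ to land in $\Hom_A(B,TX\otimes_R(K/R))$ — is identical to the paper's.
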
 

\begin{proof}
 The inclusion $\Hom_A (B, TX \otimes_R (K/R)) \subset TX \otimes_R (K/R)$ is obvious. Applying $- \otimes_R K$ on the short exact sequence $0 \to TX \to X \to FX \to 0$, we get that $TX \otimes_R K \cong X \otimes_R K$ so $X \subset TX \otimes_R K$ canonically. Thus we get a commutative diagram of short exact sequences
 $$\xymatrix{
  0 \ar[r] & TX \ar[r] \ar@{=}[d] & X \ar[r] \ar@{^{(}->}[d] & FX \ar[r] \ar@{^{(}->}[d]& 0 \\
  0 \ar[r] & TX \ar[r] & TX \otimes_R K \ar[r] & TX \otimes_R (K/R) \ar[r] & 0
 }$$
 where the second line is obtained by applying $TX \otimes_R -$ to $0 \to R \to K \to K/R \to 0$. Thus $FX \subset TX \otimes_R (K/R)$. As $FX \in \mod B$, we deduce that $FX \subset \Hom_A (B, TX \otimes_R (K/R))$.
\end{proof}

 \begin{proposition}\label{pair2}
  We have a torsion pair $(\mod B, \CM_e A)$ in $\mod_e A$.
 \end{proposition}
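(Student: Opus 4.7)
The proof reduces to verifying the two defining properties of a torsion pair in $\mod_e A$. For the $\Hom$-vanishing, I would invoke characterisation (ii) of Cohen--Macaulay modules recalled at the start of Section \ref{s:orders}: any object of $\CM A$ has zero socle, i.e.\ $\Hom_A(\fl A,-)=0$ on $\CM A$. Since by hypothesis $B=A/(e)$ has finite $R$-length, $\mod B\subseteq\fl A$, and hence $\Hom_A(\mod B,\CM_e A)=0$ automatically.

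The substantive part is producing, for each $X\in\mod_e A$, a short exact sequence $0\to T\to X\to Z\to 0$ with $T\in\mod B$ and $Z\in\CM_e A$. My candidate for $T$ is the $R$-torsion submodule of $X$; it is an $A$-submodule since $R$ is central, and since $X$ is finitely generated over the DVR $R$ it has finite $R$-length, so $T\in\fl A$. The crucial observation is that $X\in\mod_e A$ means $eX\in\proj(eAe)\subseteq\CM(eAe)$, so $eX$ is $R$-free, hence $R$-torsion free; this forces $eT=0$.

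From $eT=0$ one upgrades $T$ to a $B$-module: the two-sided ideal $(e)=AeA$ satisfies $(AeA)\cdot T\subseteq Ae\cdot(AT)\subseteq Ae\cdot T=A(eT)=0$, so $T\in\mod B$. Setting $Z:=X/T$, the module $Z$ is finitely generated over $A$ and $R$-torsion free, therefore $R$-free of finite rank, giving $Z\in\CM A$. Finally $eZ\cong eX/eT=eX\in\proj(eAe)$, so $Z\in\CM_e A$, and the sequence $0\to T\to X\to Z\to 0$ is the required decomposition.

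The main (and essentially only) subtlety is the upgrade from $eT=0$ to the statement that $T$ is actually a $B$-module in the sense that $(e)T=0$, which is why the short computation $AeA\cdot T\subseteq A(eT)=0$ is needed. Beyond this, the argument is forced by the definitions, parallelling the construction in Proposition \ref{pair1} but dualised: there one extracted the $\add Ae$-part via the right adjoint $Ae\otimes_{eAe}e(-)$, and here one extracts the $\mod B$-part as the $R$-torsion submodule, with the CM condition on $eX$ ensuring compatibility with $e$.
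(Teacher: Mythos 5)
Your proof is correct and follows essentially the same route as the paper: the paper invokes the torsion pair $(\fl A,\CM A)$ in $\mod A$ to produce the sequence $0\to T\to X\to Z\to 0$, and the $T$ appearing there is exactly the $R$-torsion (equivalently, maximal finite-length) submodule you construct by hand; the key step $eT=0$ via $R$-freeness of $eX\in\proj(eAe)$ is identical. The upgrade from $eT=0$ to $(e)T=0$, which you spell out, is left implicit in the paper but is the same observation.
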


 \begin{proof}
  Since any $X\in\mod B$ has finite length, we have $\Hom_A(\mod B, \CM_e A) = 0$.
For any $X \in \mod_e A$, there exists an exact sequence
 $$0 \rightarrow T \rightarrow X \rightarrow F \rightarrow 0$$
in $\mod A$ such that $\length_RT<\infty$ and $F\in\CM A$.
Multiplying $e$ from the left, we have an exact sequence
\[0\rightarrow eT\rightarrow eX\rightarrow eF\rightarrow 0\]
with $\length_R(eT)<\infty$ and $eX\in\proj(eAe)$. Thus $eT=0$ holds, and we have $T\in\mod B$.
On the other hand, $eF=eX\in\proj(eAe)$ shows $F\in\CM_eA$.
Thus the assertion follows.
 \end{proof}

Now we can apply Theorems \ref{categ1} and \ref{abelian} to 
\[\EE:=\mod_eA,\ \AA:=\add Ae,\ \BB:=\mod B\ \mbox{ and } \CC:=\CM_eA.\]
In this context, it is possible to compute explicitly the short exact sequence given in Theorem \ref{abelian}~(a). For $P \in \add Ae$, let $$U^P:=\Hom_A(B,P\otimes_R(K/R))\in\mod B$$
and define $U := U^{Ae}$.
For any $X \in \CM A$, we denote $$\Bcotop X := \Hom_A(B, \cotop X).$$ In other terms, $\Bcotop X$ is the biggest $B$-module included in $\cotop X$. We also define $\Bcorad X$ as the $A$-module satisfying 
 $$X \subset \Bcorad X \subset \corad X \quad \text{and} \quad \Bcotop X \cong (\Bcorad X) / X.$$

 \begin{lemma}\label{U is universal} \label{socU}
 Let $P \in \add Ae$. The following hold:
 \begin{enumerate}[\rm (a)]
  \item There is a short exact sequence
$0\rightarrow P\rightarrow C^P\rightarrow U^P\rightarrow 0$
in $\mod A$ with $C^P\in\CM_eA$ and $\Ext^1_A(\mod B,C^P)=0$. 
  
  Conversely, if $0 \to P \to C' \to U' \to 0$ is a short exact sequence with $C' \in \CM_e A$, $U' \in \mod B$ and $\Ext^1_A(\mod B, C') = 0$, then it is isomorphic to the above short exact sequence.
  \item We have an isomorphism $\soc U^P \cong \Bcotop P$ of $B$-modules.
 \end{enumerate}
 \end{lemma}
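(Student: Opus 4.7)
The plan for (a) is to define $C^P$ explicitly as a pullback and then verify the required properties, with the $\Ext^1$-vanishing being the main technical point. Concretely, I would pull back the canonical inclusion $U^P\hookrightarrow P\otimes_R(K/R)$ along the surjection $P\otimes_RK\twoheadrightarrow P\otimes_R(K/R)$ to obtain a short exact sequence
\[0\to P\to C^P\to U^P\to 0\]
with $C^P\subset P\otimes_RK$. Since $B$ has finite length over $R$, so does $U^P$, hence $C^P$ is finitely generated over $A$, and torsion-freeness inherited from the ambient $K$-vector space $P\otimes_RK$ yields $C^P\in\CM A$. Multiplying the sequence by $e$ on the left and using $eU^P=0$ (since $U^P$ is a $B$-module) gives $eC^P=eP\in\proj(eAe)$, so $C^P\in\CM_eA$.

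The main obstacle is showing $\Ext^1_A(\mod B,C^P)=0$. By d\'evissage along composition series and Lemma~\ref{generalitiescotop}(e), it suffices to prove that no simple $B$-module is a direct summand of $\cotop C^P=\soc((P\otimes_R(K/R))/U^P)$, using the identification $C^P\otimes_RK=P\otimes_RK$. If such a summand $S$ existed, its preimage $U'\subset P\otimes_R(K/R)$ would satisfy $U^P\subsetneq U'$, $U'/U^P\cong S$ and $eS=0$, hence $eU'\subset U^P$. For any $u'\in U'$, $eu'\in U^P$ implies $e(eu')=0$ (since $U^P$ is a $B$-module), and the idempotent identity $e^2=e$ then forces $eu'=0$. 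So $eU'=0$ and $AeA\cdot U'\subset Ae\cdot U'=0$, making $U'$ a $B$-submodule strictly larger than $U^P$, contradicting the maximality of $U^P=\Hom_A(B,P\otimes_R(K/R))$.

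For uniqueness, given any sequence $0\to P\to C'\to U'\to 0$ with the prescribed properties, I would invoke Lemma~\ref{injfl} (giving $\Ext^1_A(U',P\otimes_RK)=0$) to lift the inclusion $P\hookrightarrow P\otimes_RK$ to a map $\varphi:C'\to P\otimes_RK$. The induced map on quotients $U'\to P\otimes_R(K/R)$ has image a $B$-submodule, hence contained in $U^P$, so $\varphi$ factors through $C^P$ as some $\tilde\varphi:C'\to C^P$. The snake lemma makes $\tilde\varphi$ injective with cokernel in $\mod B$, and the hypothesis $\Ext^1_A(\mod B,C')=0$ splits the sequence $0\to C'\to C^P\to\coker\tilde\varphi\to 0$. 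As $C^P\in\CM A$ admits no nonzero finite-length summand, $\coker\tilde\varphi=0$, proving the sequences are isomorphic.

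For (b), since $U^P$ itself is a $B$-module, any simple $A$-submodule of $U^P$ is automatically a $B$-submodule, so $\soc_BU^P=\soc_AU^P=U^P\cap\soc(P\otimes_R(K/R))=U^P\cap\cotop P$ by Lemma~\ref{generalitiescotop}(a). Intersecting with $U^P$ picks out exactly the elements of $\cotop P$ annihilated by $AeA$, namely $\Hom_A(B,\cotop P)=\Bcotop P$.
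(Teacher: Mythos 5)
Your proof is correct, and your construction of $C^P$ — the pullback of $U^P\hookrightarrow P\otimes_R(K/R)$ along $P\otimes_RK\twoheadrightarrow P\otimes_R(K/R)$ — is exactly the paper's. The two technical verifications are carried out differently. For $\Ext^1_A(\mod B,C^P)=0$, the paper applies $\Hom_A(\mod B,-)$ to the column $0\to C^P\to P\otimes_RK\to Y\to 0$ and squeezes $\Ext^1_A(\mod B,C^P)$ between $\Hom_A(\mod B,Y)=0$ (maximality of $U^P$ plus extension-closedness of $\mod B$) and $\Ext^1_A(\mod B,P\otimes_RK)=0$ (Lemma~\ref{injfl}); you instead d\'evissage to simple $B$-modules and rule them out of $\cotop C^P=\soc Y$ by an explicit $e^2=e$ computation. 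Both arguments hinge on the same maximality of $U^P$; the paper's is a one-line exact-sequence argument, yours is more elementary and elementwise but routes through Lemma~\ref{generalitiescotop}(e), which requires $C^P\in\CM A$ (which you have already established). For the uniqueness, the paper builds comparison maps in both directions (from the surjection $\Hom_A(U',U^P)\twoheadrightarrow\Ext^1_A(U',P)$ and its counterpart) and concludes by left minimality of $P\to C^P$ and $P\to C'$; you build a single map $C'\to C^P$ by lifting through $P\otimes_RK$ and then kill the cokernel by splitting, which avoids the minimality discussion and uses the hypothesis $\Ext^1_A(\mod B,C')=0$ only in one direction — arguably cleaner here. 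One small point you should make explicit: the snake lemma only gives $\Kernel\tilde\varphi\cong\Kernel(U'\to U^P)$, which is a finite-length submodule of $C'\in\CM A$ and therefore zero; the injectivity of $\tilde\varphi$ is not a formal consequence of the snake lemma alone. Part (b) is the paper's argument rephrased via intersections with the socle of the ambient module rather than via applying the functor $\Hom_A(B,-)$.
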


 \begin{proof}
(a) Applying $P \otimes_R -$ to the short exact sequence $0 \rightarrow R \rightarrow K \rightarrow K/R \rightarrow 0$, we obtain the short exact sequence
$0 \rightarrow P \rightarrow P \otimes_R K \rightarrow P \otimes_R (K/R) \rightarrow 0$
with $\Ext^1_A(\fl A, P \otimes_R K) = 0$ thanks to Lemma \ref{injfl}. Taking pullback by the natural inclusion $U^P \subset P \otimes_R (K/R)$, we get the following commutative diagram of short exact sequences:
$$\xymatrix{
 & & 0 \ar[d] & 0 \ar[d] \\
 0 \ar[r] & P \ar[r] \ar@{=}[d] & C^P \ar[r] \ar[d] & U^P \ar[r] \ar[d] & 0 \\
 0 \ar[r] & P \ar[r]  & P \otimes_R K \ar[r] \ar[d] & P \otimes_R (K/R) \ar[r] \ar[d] & 0 \\
 & & Y \ar@{=}[r] \ar[d] & Y \ar[d] \\
 & & 0 & 0.
}$$
Since $U^P$ is the maximal $B$-module included in $P \otimes_R (K/R)$, and $\mod B$ is closed under extensions in $\mod A$, we get $\Hom_A(\mod B, Y) = 0$. Applying $\Hom_A(\mod B, -)$ to the second column, we find the exact sequence
 \begin{align*} &0 = \Hom_A(\mod B, Y) \rightarrow \Ext^1_A(\mod B, C^P) \rightarrow \Ext^1_A(\mod B, P \otimes_R K) = 0.  \end{align*}

 Now we prove the converse part. Applying $\Hom_A (U', -)$ to the former sequence, we get a surjection $\Hom_A(U', U^P) \twoheadrightarrow \Ext^1_A(U', P)$ so there is a
 commutative diagram
 $$\xymatrix{
  0 \ar[r] & P \ar@{=}[d] \ar[r] & C' \ar[d]^f \ar[r] & U' \ar[d]^g \ar[r] &  0 \\
  0 \ar[r] & P \ar[r] & C^P \ar[r] & U^P  \ar[r] & 0. 
 }$$
 In the same way, there are $f': C^P \to C'$ and $g': U^P \to U'$ making commutative diagram in the converse direction. Thus, by left minimality of $P \to C^P$ and $P \to C'$, $ff'$ and $f'f$ are isomorphisms. Hence, $f$ and $g$ are isomorphisms.

(b) Thanks to Lemma \ref{generalitiescotop}, $\cotop P = \soc (P \otimes_R (K/R))$. Applying $\Hom_A(B,-)$ to both sides, we obtain
 $\Hom_A(B, \cotop P) = \Hom_A(B, \soc (P \otimes_R (K/R)) = \soc U^P$.
 \end{proof}

We are ready to prove Theorem \ref{mainB}.

\begin{proof}[Proof of Theorem \ref{mainB}]
(a) This follows from Propositions \ref{pair1} and \ref{pair2}.

(b) This follows from Theorem \ref{categ1} (a) and (b) as, for $Y \in \mod B$, $\Ext^1_A(Y, Ae)$ is always a finitely generated right $(eAe)$-module.

(c) Our assumption (E1) implies $\CM_eA\subset\EE_1$.
Thus the functor $F:(\CM_eA)/[Ae]\rightarrow \mod B$ is fully faithful by (a).
It gives an equivalence $F:(\CM_eA)/[Ae]\rightarrow \Sub U$ by Theorem \ref{abelian} (b) and Lemma \ref{U is universal}. 

(d) It follows from Theorem \ref{abelian} (c). 

(e) Thanks to (E2), $\EE_1 = \EE_2$ so, using Theorem \ref{categ1} (d), \eqref{functor F} and \eqref{functor F2} are equivalences of exact categories. 

(f) It is classical that $\Sub U$ has enough projective objects and enough injective objects (see \cite{DeIy-2} for a detailed argument). Using (e) and Remark \ref{remfrob} (b), it immediately implies that $\CM_e A$ has enough projective objects and enough injective objects. In the same way, as $\mod B$ has enough injective objects and enough projective objects, $\EE_1$ has the same property. 

 Let us prove that $\mod_e A$ has enough projective objects. Let $X \in \mod_e A$. Thanks to (b) and (e), there exist $P_0 \in \EE_1$ projective in $\EE_1$ such that $FP_0$ is a projective cover of $FX$. Fixing $P := P_0 \oplus TX$, we get a short exact sequence $0 \to K \to P \to X \to 0$ where, multiplying by $e$, we have $K \in \mod_e A$. As $P \in \EE_1$, $\Ext^1_A(P, Ae) = 0$. As $\Ext^1_A(TP, \mod B) = 0 = \Ext^1_A(FP, \mod B)$, we get $\Ext^1_A(P, \mod B) = 0$ and, as $(\add Ae, \mod B)$ is a torsion pair in $\mod_e A$, $\Ext^1_A(P, \mod_e A) = 0$.

 Let us prove that $\mod_e A$ has enough injective objects. For any $I$ injective in $\mod B$, we have $\Ext^1_A(Ae, I) = 0 = \Ext^1_A(\mod B, I)$. So, as $(\add Ae, \mod B)$ is a torsion pair in $\mod_e A$ by Proposition \ref{pair1}, we get $\Ext^1_A(\mod_e A, I) = 0$ so any $B$-module admits an injective hull in $\mod_e A$. As $(\mod B, \CM_e A)$ is a torsion pair in $\mod_e A$ by proposition \ref{pair2}, it is enough, thanks to horseshoe Lemma, to prove that any object in $\CM_e A$ admits an injective hull in $\mod_e A$. 

 Consider $C := C^{Ae}$ as defined in Lemma \ref{socU}. As $\Ext^1_A(Ae, C) = 0 = \Ext^1_A(\mod B, C)$, we get $\Ext^1_A(\mod_e A, C) = 0$. In particular $0 \to Ae \to C \to U \to 0$ gives an injective hull of $Ae$ in $\mod_e A$. Let $X \in \CM_e A$. As $FX \in \Sub U$, consider a short exact sequence $0 \to FX \to U^n \to K_0 \to 0$ in $\mod B$. By (e), it is induced by a short exact sequence $0 \to X \to C^n \oplus P \to K \to 0$ in $\EE_1$ with $P \in \add Ae$. As $P$ admits an injective hull in $\mod_e A$, $X$ also admits an injective hull.

 (g) For any $X \in \mod_e A$, as $(\mod B, \CM_e A)$ is a torsion pair, there is a short exact sequence
 $$0 \rightarrow Z \rightarrow X \rightarrow Y \rightarrow 0$$
 where $Z \in \mod B$ and $Y \in \CM_e A$. Applying $\Hom_A(-, Ae)$ to this sequence, we find the exact sequence
 $$0 = \Ext^1_A(Y, Ae) \rightarrow \Ext^1_A(X, Ae) \rightarrow \Ext^1_A(Z, Ae) \rightarrow \Ext^2_A(Y, Ae) = 0$$
 so $X \in \EE_1$ if and only if $\Ext^1_A(Z, Ae) = 0$. There is a short exact sequence
 $$0 \rightarrow \soc Z \rightarrow Z \rightarrow Z/\soc Z \rightarrow 0$$
 and applying $\Hom_A(-, Ae)$ to it, we find the exact sequence
 $$0 \rightarrow \Ext^1_A(Z/\soc Z, Ae) \rightarrow \Ext^1_A(Z, Ae) \rightarrow \Ext^1_A(\soc Z, Ae) \rightarrow 0$$
 so $\Ext^1_A(Z, Ae) = 0$ if and only if $\Ext^1_A(Z/\soc Z, Ae) = \Ext^1_A(\soc Z, Ae) = 0$. By Lemma \ref{generalitiescotop} (e), for a simple $B$-module $S$, $\Ext^1_A(S, Ae) = 0$ if and only if $S$ is not a direct summand of $\Bcotop Ae$ if and only if $S \notin \Sub U$ if and only if $\Hom_A(P, S) = 0$ where $P$ is the projective cover of $\soc U$ in $\mod B$. As $Z$ is of finite length over $R$, an easy induction gives that $\Ext^1_A(Z, Ae) = 0$ if and only if $\Hom_A(P, Z) = 0$ if and only if $\Hom_A (P, X) = 0$.
\end{proof}

In the following Lemma, we give stronger conditions implying (E1) and (E2):
 \begin{lemma} \label{puprem}
  \begin{enumerate}[\rm (a)]
   \item We have the implications (E2)\pup* $\Rightarrow$ (E2).
   \item If $Ae=\Hom_R(gA,R)$ for some idempotent $g \in A$, then (E1) and (E2)\pup* are satisfied.
   \item If (E1) is satisfied and $A \in \CM_e A$, then (E2)\pup* is satisfied.
  \end{enumerate}
 \end{lemma}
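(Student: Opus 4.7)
My plan is to handle the three parts separately, exploiting the strong structural features: (a) is purely categorical, (b) follows from a Hom-tensor adjunction plus the fact that $R$ is a DVR, and (c) is a classical syzygy argument.

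For part (a), the key observation I would make is that $\mod_e A$ is closed under extensions in $\mod A$. Indeed, given a short exact sequence $0\to X\to Y\to Z\to 0$ with $X,Z\in\mod_e A$, applying $e\cdot$ yields a short exact sequence of $eAe$-modules which splits because $eZ\in\proj(eAe)$, so $eY\cong eX\oplus eZ\in\proj(eAe)$. Hence the inclusion $\mod_e A\hookrightarrow\mod A$ is exact bijective in the sense of Section \ref{exactstr}, and Proposition \ref{ext2inj} produces a natural monomorphism $\Ext^2_{\mod_e A}(-,-)\hookrightarrow\Ext^2_A(-,-)$, yielding (E2)$^{+}\Rightarrow$(E2).

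For part (b), I would use Hom-tensor adjunction: for any $X\in\mod A$,
\[\Hom_A(X,\Hom_R(gA,R))\cong\Hom_R(gA\otimes_AX,R)=\Hom_R(gX,R).\]
Since $gA$ is a projective $A$-module, $\Tor^A_i(gA,-)=0$ for $i>0$, so this isomorphism passes to all derived functors: $\Ext^i_A(X,Ae)\cong\Ext^i_R(gX,R)$ for every $i\geq 0$. Now $R$ is a DVR, hence of selfinjective dimension $1$, so $\Ext^i_R(-,R)=0$ for $i\geq 2$; this gives (E2)$^{+}$ for any $X\in\mod A$ (in particular for $X\in\mod_e A$). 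For (E1), when $X\in\CM_e A\subset\CM A$, the restriction $gX$ is free of finite rank over $R$, so $\Ext^1_R(gX,R)=0$, proving $Ae$ is injective in $\CM_e A$.

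For part (c), assume (E1) and $A\in\CM_e A$. For $X\in\mod_e A$, choose an $A$-projective cover $0\to\Omega X\to P_0\to X\to 0$ with $P_0\in\add A$. I will check $\Omega X\in\CM_e A$: multiplying by $e$ gives a short exact sequence $0\to e\Omega X\to eP_0\to eX\to 0$ in $\mod(eAe)$ which splits because $eX\in\proj(eAe)$, so $e\Omega X\in\proj(eAe)$; and $\Omega X$ is a submodule of $P_0\in\CM A$, which forces $\soc\Omega X=0$ and hence $\Omega X\in\CM A$. Then $\Ext^2_A(X,Ae)\cong\Ext^1_A(\Omega X,Ae)=0$ by (E1).

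None of the three steps looks genuinely hard; the only place requiring care is verifying in (c) that $\Omega X$ inherits both Cohen-Macaulayness and the $e$-projectivity condition, and in (b) that the adjunction isomorphism really survives derivation thanks to $gA$ being projective. I would present all three parts crisply in this order, as no step depends on a later one.
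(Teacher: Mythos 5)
Your proof is correct and follows essentially the same route as the paper's: (a) via Proposition \ref{ext2inj} applied to the extension-closed subcategory $\mod_e A$, (b) via the injectivity of $\Hom_R(gA,R)$ with respect to Cohen--Macaulay modules (which you rederive explicitly through the Hom-tensor adjunction and $\gl R=1$, rather than citing characterization (iii) of $\CM A$), and (c) via the syzygy argument. The only point worth tightening in (c) is that the conclusion $e\Omega X\in\proj(eAe)$ requires $eP_0\in\proj(eAe)$, which is exactly where the hypothesis $A\in\CM_e A$ enters; you assume it but should cite it at that step, as the paper does.
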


 \begin{proof}
  (a) It directly follows from Proposition \ref{ext2inj}.

  (b) In this case, $\Ext^1_A(\CM A, Ae) = 0$ so (E1) is clearly satisfied. If $X \in \mod A$, it is immediate that its syzygy $\Omega X$ is in $\CM A$ so $\Ext^2_A(X, Ae) = \Ext^1_A(\Omega X, Ae) = 0$. Therefore, (E2)\pup* holds.

  (c) For $X \in \mod_e A$, consider the projective cover $0 \to \Omega X \to P \to X \to 0$.  As $e X \in \proj (e A e)$, the short exact sequence $0 \to e \Omega X \to e P \to e X \to 0$ splits. Moreover, as $A \in \CM_e A$, we have $e P \in \proj (e A e)$ so $\Omega X \in \CM_e A$. So, by (E1), $\Ext^2_A(X, Ae) = \Ext^1_A(\Omega X, Ae) = 0$ and (E2)\pup* holds.
 \end{proof}

We complete this subsection by giving basic relations between indecomposable injective objects of $\CM_e A$ and their $B$-cotops. Let $$\OO := \{P \in \ind Ae \,|\, \Bcotop P \neq 0\}.$$ Notice that part (a) of Lemma \ref{Bcotopsimple} is a generalization of a well-known property of cotops in $\CM A$.

\begin{lemma} \label{Bcotopsimple}
 Let $I \in \CM_e A$ satisfying $\Ext^1_A(\CM_e A, I) = 0$. Then the following hold:
 \begin{enumerate}[\rm (a)]
  \item if I is indecomposable, then $\Bcotop I$ is either $0$ or simple;
  \item $\Bcotop I = 0$ if and only if $\Ext^1_A(\mod B, I) = 0$;
  \item for any short exact sequence $0 \to I \xrightarrow{i} X \xrightarrow{p} Y \to 0$ with $i$ radical, $X \in \CM_e A$ and $Y \in \mod_e A$, the left map factors as $I \subset \Bcorad I \hookrightarrow X$ and $\soc Y \cong \Bcotop I$.
  \item if (E1) is satisfied, there are commuting bijections
  \begin{align*}&\xymatrix@R=1cm@C=2.5cm@!=0cm{
    \OO \ar[drr]_{\Bcotop} \ar[rr]^-{\Hom_A(B, - \otimes_R (K/R))} & &  \ind U \ar[d]^{\soc} \\
    & &\ind (\soc U).
  } \end{align*} 
 \end{enumerate}
\end{lemma}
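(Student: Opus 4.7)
For part (a), suppose $I$ is indecomposable with $\Bcotop I \neq 0$ and admits two distinct simple $B$-submodules $S_1, S_2 \subset \Bcotop I$; lifting them to $I \subset Y_i \subset \Bcorad I$ with $Y_i/I = S_i$, each $Y_i$ lies in $\CM_e A$ and $0 \to I \to Y_i \to S_i \to 0$ is non-split (because $Y_i \subset I \otimes_R K$ admits no nonzero finite-length submodule). The pushout $W := Y_1 \cup_I Y_2 \subset \Bcorad I$ also belongs to $\CM_e A$, so the exact sequence $0 \to I \to Y_1 \oplus Y_2 \to W \to 0$ splits thanks to $\Ext^1_A(\CM_e A, I) = 0$. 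Krull--Schmidt then places $I$ as an indecomposable summand of some $Y_i$, and comparing $R$-ranks ($Y_i/I$ is $R$-torsion while $Y_i$ is $R$-free) forces $Y_i \cong I$, contradicting $Y_i/I = S_i \neq 0$. Part (b) is immediate from Lemma \ref{generalitiescotop}(e): for a simple $B$-module $S$, $\Ext^1_A(S, I) \neq 0$ iff $S \hookrightarrow \Bcotop I$, so $\Ext^1_A(\mod B, I) = 0 \iff \Bcotop I = 0$ after extending along composition series.

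For part (c), I use the torsion pair $(\mod B, \CM_e A)$ on $\mod_e A$ (Theorem \ref{mainB}(a)) to write $0 \to TY \to Y \to FY \to 0$; pulling back gives $0 \to I \to X' \to TY \to 0$ with $X' \in \CM_e A$, and $\Ext^1_A(FY, I) = 0$ forces $X \cong X' \oplus FY$ with $i' : I \hookrightarrow X'$ still radical. Realizing $X' \subset I \otimes_R K$ identifies $X'/I = TY$ with a $B$-submodule of $U^I := \Hom_A(B, I \otimes_R (K/R))$ via an injective map $\phi$ (injective because $\ker\phi$ would be an $R$-torsion submodule of the $R$-free module $X'$). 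Both conclusions of (c) reduce to the single inclusion $\Bcotop I = \soc U^I \subset \phi(TY)$: this gives $\Bcorad I \subset X' \subset X$ (the factorization) and $\soc Y = \soc TY = \phi(TY) \cap \soc U^I = \soc U^I$.

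To prove the inclusion, decompose $I = \bigoplus_j I_j$ into indecomposables. Part (b) combined with the radicality of $i$ forces $\Bcotop I_j \neq 0$ for every $j$---otherwise $I_j$ would be injective in $\mod_e A$ and split off from $X$---and hence simple by (a); write $S_j := \Bcotop I_j$. Each restriction $i'_j : I_j \hookrightarrow X'$ remains radical, and $X'/I_j \in \mod_e A$ (since $e(X'/I_j) = \bigoplus_{k \neq j} eI_k \in \proj(eAe)$), so applying the already-proved indecomposable case of (c) to the sequence $0 \to I_j \to X' \to X'/I_j \to 0$ yields $\soc(X'/I_j) \cong S_j$. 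Since this socle is $R$-torsion, under the ambient decomposition $(I \otimes_R K)/I_j = \bigoplus_{k \neq j}(I_k \otimes_R K) \oplus I_j \otimes_R (K/R)$ it lies entirely in $I_j \otimes_R (K/R)$ and hence in $U^{I_j}$; tracking further through the quotient $X'/I_j \twoheadrightarrow X'/I = TY \hookrightarrow U^I$ identifies this copy with $S_j \subset U^{I_j} \subset U^I$. Summing over $j$ gives $\Bcotop I = \bigoplus_j S_j \subset \phi(TY)$. The main obstacle is precisely this positional tracking: the indecomposable case of (c) is straightforward (a nonzero classifying map $\phi$ has image containing the simple essential socle of $U^I$), but verifying that the $S_j$ from different summands land in the correct positions of $U^I$---rather than merely projecting onto them---requires careful bookkeeping of the ambient $\bigoplus$-decompositions.

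For part (d), the commutativity $\soc U^P = \Bcotop P$ is Lemma \ref{socU}(b). By (a) each $U^P$ for $P \in \OO$ has simple socle and is therefore indecomposable (a finite-length module with simple socle is indecomposable). The vertical map $\soc: \ind U \to \ind(\soc U)$ is thus bijective by the classical correspondence between indecomposable modules of finite length with simple socle and their socles in $\mod B$. Surjectivity of $P \mapsto U^P$ on $\OO \to \ind U$ follows from the decomposition $U = \bigoplus_{P \in \ind Ae}(U^P)^{n_P}$ together with $U^P \neq 0 \iff P \in \OO$; injectivity follows, under (E1), from the equivalence $B \otimes_A - : (\CM_e A)/[Ae] \xrightarrow{\sim} \Sub U$ of Theorem \ref{mainB}(c), through which each indecomposable $P \in \OO$ is recovered uniquely from its associated injective $U^P \in \Sub U$ via the defining short exact sequence $0 \to P \to C^P \to U^P \to 0$ of Lemma \ref{socU}.
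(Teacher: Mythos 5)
Your architecture is reasonable and genuinely different from the paper's in places (the paper proves (a) by dualizing to $A^{\op}$ via $D_1$ and proves (c) by working summand-by-summand inside $X$ itself, whereas you reduce to the torsion part $TY$ and track socles inside $I\otimes_RK$), but two steps as written do not hold up. In (a), after splitting the Mayer--Vietoris sequence $0\to I\to Y_1\oplus Y_2\to W\to 0$ you conclude via Krull--Schmidt and a rank count that $Y_i\cong I$, and you declare this to contradict $Y_i/I=S_i\neq 0$. It does not: an abstract isomorphism $Y_i\cong I$ is perfectly compatible with $I\subsetneq Y_i$ (already $tR\subsetneq R$ with $tR\cong R$ over $A=R$). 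The contradiction must be extracted differently: the splitting provides morphisms $r_i:Y_i\to I$ whose restrictions to $I$ sum (up to sign) to $\id_I$; since $\End_A(I)$ is local, one of them restricts to an automorphism of $I$, so the corresponding inclusion $I\hookrightarrow Y_i$ is a split monomorphism and the nonzero torsion module $S_i$ becomes a direct summand of the $R$-free module $Y_i$. With that repair (a) is correct.

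In (c), the assertion that $\Ext^1_A(FY,I)=0$ forces $X\cong X'\oplus FY$ is unjustified and false in general: since $\Ext^1_A(FY,I)=0$, the map $\Ext^1_A(FY,X')\to\Ext^1_A(FY,TY)$ is injective and sends the class of $0\to X'\to X\to FY\to 0$ to the class of $0\to TY\to Y\to FY\to 0$, so your splitting is equivalent to $Y\cong TY\oplus FY$, which the hypotheses do not give. You only use this splitting to transfer radicality from $i$ to $i'_j:I_j\to X'$, and that can be saved without it: if $i'_j$ were a split monomorphism, its retraction $X'\to I_j$ would extend along $X'\subset X$ because $\Hom_A(X,I_j)\to\Hom_A(X',I_j)$ is surjective (again $\Ext^1_A(FY,I_j)=0$), splitting $i_j$ and contradicting radicality of $i$. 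Finally, in (d) the injectivity of $P\mapsto U^P$ is not actually established by ``recovering $P$ from $U^P$ through the equivalence'': the functor $B\otimes_A-$ kills $\add Ae$, so a preimage $C$ of $U^P$ is determined only up to summands in $\add Ae$, and the uniqueness statement of Lemma \ref{socU} (a) is for a fixed $P$, not a reconstruction of $P$ from $U^P$. The paper instead proves injectivity of $\Bcotop$ on $\OO$ directly, comparing the two extensions $0\to P\to\Bcorad P\to S\to 0$ and $0\to P'\to\Bcorad P'\to S\to 0$ by right minimality; some such argument is needed here.
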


\begin{proof} (a) Thanks to Lemma \ref{generalitiescotop} (c), $\cotop I \cong D_0 \top D_1 I$, so we only have to show that the $A^{\op}$-module $\Hom(B, \top D_1 I)$ is $0$ or simple. Suppose that $\Hom(B, \top D_1 I)$ is not $0$ or simple. We have two distinct maximal submodules $X_1, X_2 \subset D_1 I$ such that $S_1 := (D_1 I)/ X_1$ and $S_2 := (D_1 I)/X_2$ are simple $B^{\op}$-modules. By applying $\Hom_R(-,R)$ on the short exact sequence $0 \to X_1 \to D_1 I \to S_1 \to 0$, we get the short exact sequence $$0 \to I \xrightarrow{\iota_1} D_1 X_1 \to D_0 S_1 \to 0$$ and therefore $e \iota_1: eI \to e (D_1 X_1) $ is an isomorphism and $D_1 X_1 \in \CM_e A$. In the same way, $e \iota_2: eI \to  e (D_1 X_2)$ is an isomorphism and $D_1 X_2 \in \CM_e A$. We also get a non-split short exact sequence $0 \to Y \to X_1 \oplus X_2 \to D_1 I \to 0$. Applying $D_1$ to it, we get a short exact sequence $0 \to I \to D_1(X_1 \oplus X_2) \to D_1 Y \to 0$. Multiplying by $e$, we get the short exact sequence
 $$0 \to eI \xrightarrow{\begin{sbmatrix} e \iota_1 & e \iota_2 \end{sbmatrix}} e(D_1 X_1) \oplus e(D_1 X_1) \to e(D_1 Y) \to 0$$
which splits as $e \iota_1$ and $e \iota_2$ are isomorphisms. Thus $0 \to I \to D_1(X_1 \oplus X_2) \to D_1 Y \to 0$ is a non-split short exact sequence in $\CM_e A$. It is a contradiction as $\Ext^1_A(\CM_e A, I) = 0$.

 (b) Thanks to Lemma \ref{generalitiescotop} (e), a simple $B$-module $S$ is a direct summand of $\Bcotop I$ if and only if $\Ext^1_A(S, I) \neq 0$. Thus $\Bcotop I = 0$ if and only if $\Ext^1_A(S, I) = 0$ for any simple $B$-module $S$ if and only if $\Ext^1_A(\mod B, I) = 0$. 

 (c) Thanks to Proposition \ref{pair1}, $\soc Y \in \mod B$. Consider the sequence $0 \to I \to p^{-1}(\soc Y) \to \soc Y \to 0$. Thanks to Lemma \ref{generalitiescotop} (d), we have $\soc Y \hookrightarrow \Bcotop I$. 

 We will prove that for each direct summand $I'$ of $I$, $\Bcorad I'$ ($\subset X \otimes_R K$) is included in $X$. Consider the short exact sequence $0 \to I' \to X \to Y' \to 0$ induced by the inclusion $I' \subset I$.  
 As $i$ is radical, this short exact sequence does not split and we get $Y' \notin \CM_e A$ and $\soc Y' \neq 0$. Pulling back $0 \to I' \to X \to Y' \to 0$ along $\soc Y' \subset Y'$, we get a short exact sequence $0 \to I' \to X' \to \soc Y' \to 0$ with $X' \subset X$ so $X' \in \CM_e A$. Using (a) and Lemma \ref{generalitiescotop} (d), we obtain $\soc Y' \cong \Bcotop I'$ and therefore $X' = \Bcorad I' \subset X$. Finally $\Bcorad I \subset X$ and therefore $\Bcotop I \hookrightarrow Y$. As $\Bcotop I$ is semisimple, $\Bcotop I \hookrightarrow \soc Y$. So $\Bcotop I \cong \soc Y$. 

 (d) First of all, thanks to (a) and Lemma \ref{U is universal} (b), $\Bcotop$ induces a surjection from $\OO$ to $\ind (\soc U)$. Let us prove that it is injective. Suppose that $P, P' \in \OO$ satisfy $S := \Bcotop P = \Bcotop P'$ and consider the short exact sequences
 $$0 \to P \xrightarrow{f} \Bcorad P \xrightarrow{g} S \to 0 \quad \text{and} \quad 0 \to P' \xrightarrow{f'} \Bcorad P' \xrightarrow{g'} S \to 0.$$
 Multiplying them by $e$, we get $\Bcorad P, \Bcorad P' \in \CM_e A$. So, applying $\Hom_A(\Bcorad P, -)$ to the second short exact sequence, we get a morphism $u: \Bcorad P \to \Bcorad P'$ such that $g = ug'$. Symmetrically, we get $u': \Bcorad P' \to \Bcorad P$ such that $g' = u'g$. So $g = uu'g$ and, as $g$ is right minimal, $uu'$ is an isomorphism. Similarly, $u'u$ is an isomorphism so $\Bcorad P \cong \Bcorad P'$ and $P \cong P'$. We proved that $\Bcotop$ is injective on $\OO$. 

 The well definiteness of $\Hom_A(B, - \otimes_R (K/R)) : \OO \to \ind U$ is a direct consequence of the definition of $U$. The commutativity of the diagram is immediate by Lemma \ref{generalitiescotop} (a). As $U$ is injective, $\soc: \ind U \to \ind(\soc U)$ is bijective.
\end{proof}

The following proposition is used to categorify cluster algebras in Section \ref{parflag}.
\begin{proposition} \label{descexseq}
 If (E1) is satisfied, then the following assertions hold.
 \begin{enumerate}[\rm (a)]
  \item If $X \in \CM_e A$ does not have non-zero direct summands in $\add Ae$, then $TX \in \add \OO$. Moreover, $\Bcorad TX \subset X$ and $\Bcotop TX \cong \soc FX$.
  \item Let $0 \rightarrow X \rightarrow Y \rightarrow Z \rightarrow 0$ be a short exact sequence with $X, Z \in \CM_e A$ without non-zero direct summand in $\add Ae$. Then the maximal direct summand $Y_1$ of $Y$ in $\add Ae$ is the module satisfying $Y_1 \in \add \OO$ and $\soc FX \oplus \soc FZ \cong \soc FY \oplus \Bcotop Y_1$.
 \end{enumerate}
\end{proposition}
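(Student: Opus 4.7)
The plan is to analyse the torsion sequence $0 \to TX \to X \to FX \to 0$ from Proposition \ref{pair1} and exploit (E1) together with Lemma \ref{Bcotopsimple} in both parts.

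For (a), I will first observe that $TX \in \add Ae \subset \CM_e A$, so (E1) gives $\Ext^1_A(\CM_e A, TX) = 0$. Since $X$ has no direct summand in $\add Ae$, the inclusion $TX \hookrightarrow X$ must be radical---otherwise an indecomposable summand of $TX$ would split off of $X$ and lie in $\add Ae$. Lemma \ref{Bcotopsimple}(c) applied to this sequence then yields both $\Bcorad TX \subset X$ and $\Bcotop TX \cong \soc FX$ at once. To conclude $TX \in \add \OO$, I will decompose $TX = T_0 \oplus T_1$ with $T_0 \in \add \OO$ and $T_1$ containing no summand from $\OO$, so that $\Bcotop T_1 = 0$. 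Lemma \ref{Bcotopsimple}(b) then gives $\Ext^1_A(\mod B, T_1) = 0$, and applying $\Hom_A(-, T_1)$ to the torsion sequence lifts the projection $TX \to T_1$ to a map $X \to T_1$ retracting $T_1 \hookrightarrow X$. Hence $T_1$ is a summand of $X$ in $\add Ae$, forcing $T_1 = 0$ by hypothesis.

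For (b), first note that $Y \in \CM_e A$ by extension-closedness. Applying $e(-) = \Hom_A(Ae, -)$, which is exact since $Ae$ is projective, to $0 \to X \to Y \to Z \to 0$ produces a short exact sequence of objects in $\proj(eAe)$; applying $Ae \otimes_{eAe} -$ (exact on such sequences as projective $(eAe)$-modules are flat) then delivers an exact sequence $0 \to TX \to TY \to TZ \to 0$. A $3 \times 3$ diagram-chase will also supply an exact sequence $0 \to FX \to FY \to FZ \to 0$. Since $TZ \in \add Ae$ is projective, the first sequence splits, so $TY \cong TX \oplus TZ$; by (a) this lies in $\add \OO$.

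Writing $Y = Y_1 \oplus Y_0$ with $Y_0$ having no summand in $\add Ae$ by maximality of $Y_1$, additivity of $T$ together with $TY_1 = Y_1$ yields $TY = Y_1 \oplus TY_0$, so $Y_1$ is a summand of $TY \in \add \OO$, proving $Y_1 \in \add \OO$. Part (a) applied to $Y_0$ gives $\Bcotop TY_0 \cong \soc FY_0 = \soc FY$, while (a) applied to $X$ and $Z$ gives $\Bcotop TX \cong \soc FX$ and $\Bcotop TZ \cong \soc FZ$. Computing $\Bcotop TY$ in two ways using additivity,
\[\Bcotop TX \oplus \Bcotop TZ \cong \Bcotop TY \cong \Bcotop Y_1 \oplus \Bcotop TY_0,\]
and substituting these isomorphisms will yield the required $\soc FX \oplus \soc FZ \cong \soc FY \oplus \Bcotop Y_1$. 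The only delicate step is the splitting argument in (a) ruling out non-$\OO$ summands of $TX$; everything else is bookkeeping with the torsion pair and additivity of $\Bcotop$.
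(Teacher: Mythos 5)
Your proposal is correct and follows essentially the same route as the paper: apply Lemma \ref{Bcotopsimple}(c) to the radical sequence $0\to TX\to X\to FX\to 0$ for (a), and in (b) use exactness of $T$ on $\mod_e A$, the splitting $TY\cong TX\oplus TZ$, and additivity of $\Bcotop$ to get the displayed isomorphism. The only (harmless) divergence is your retraction argument via Lemma \ref{Bcotopsimple}(b) to rule out summands of $TX$ outside $\OO$, where the paper reads $TX\in\add\OO$ directly off the proof of Lemma \ref{Bcotopsimple}(c), which shows $\Bcotop I'\neq 0$ for each indecomposable summand $I'$.
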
 

\begin{proof}
 (a) Since $TX \to X$ is radical, the result follows from Lemma \ref{Bcotopsimple} (c).

 (b) Decompose $Y = Y_0 \oplus Y_1$. Recall that $T = Ae \otimes_{e A e} e -$ is exact on $\mod_e A$. As $TX$ is projective, we get 
 $$T Y_0  \oplus Y_1 = T Y \cong TX \oplus TZ \in \add \OO$$
 by (a). Again by (a), we get
 \begin{align*}\soc F X \oplus \soc F Z &\cong \Bcotop TX \oplus \Bcotop TZ \cong \Bcotop T Y_0 \oplus \Bcotop Y_1 \\ &\cong \soc F Y_0 \oplus \Bcotop Y_1 \cong \soc F Y \oplus \Bcotop Y_1. \qedhere \end{align*}
\end{proof}

\subsection{Proof of Theorem \ref{mainA}} 

(a) Since $Ae \in \add \Hom_R(gA,R)$, the conditions (E1) and (E2) are satisfied by Lemma \ref{puprem}.
By Theorem \ref{mainB} (c), we have an equivalence of exact categories
$B\otimes_A-:(\CM_eA)/[Ae]\cong\Sub U$ and $U$ is an injective $B$-module.
Thanks to Lemma \ref{U is universal} (b), we have $\soc U\cong\Bcotop Ae\cong \Hom_A(B, S_g)$. Thus $U\cong Q_g$.

(b) For $M \in \Sub Q_g$, let us consider a projective cover of $D_0 M$ in $\mod A^{\op}$:
 $$0 \rightarrow \Omega_A D_0 M \rightarrow P \rightarrow D_0 M \rightarrow 0.$$
 We have $P \in \add gA$. Applying $\Hom_R (-, R)$, we get the short exact sequence
 $$0 \rightarrow D_1 P \rightarrow D_1 \Omega_A D_0 M \rightarrow M \rightarrow 0.$$
 We have $D_1 P \in \add Ae$ so $D_1 \Omega_A D_0 M \in \CM_e A$ and $F(D_1 \Omega_A D_0 M) \cong M$ thanks to this sequence.

(c) Let us assume first that $A e$, $A f$ and $A g$ are basic. In particular $Ae \cong D_1(g A)$, $Af \cong D_1(e A)$ as $A$-modules and $eAe \cong D_1(eAe)$ as left ($eAe$)-modules. We have $e A f \cong e D_1(eA) = D_1(eAe) \cong e A e$ as left ($eAe$)-modules. So
 $Af \in \CM_e A$ and $T(Af) = Ae \otimes_{eAe} eAf \cong Ae$. Moreover, using the short exact sequence $0 \to T(Af) \to Af \to F(Af) \to 0$, we get
  $$\soc Bf = \soc F(Af) \subset \Bcotop T(Af) \cong \Bcotop Ae \cong \top B g$$
  so $Bf \subset D_0(g B)$. Dually, we get an inclusion $g B \subset D_0(Bf)$ by exchanging the role of $f$ and $g$. By comparing lengths over $R$ of $g B$ and $B f$, we deduce that $B f \cong D_0(g B) = Q_g$. 
 
 If $A e$, $A f$ or $A g$ are not basic, we take basic parts $e'$, $f'$ and $g'$ of $e$, $f$ and $g$ and we get $B f' \cong Q_{g'}$. Thus $\add Bf = \add Bf' = \add Q_{g'} = \add Q_g$.

(d) Since $A\in\CM_eA$, we have $B=FA\in\Sub Q_g$. Thus $\Sub Q_g=\Sub B$ holds by (c).

(e) All assumptions in Theorem \ref{frob} are satisfied. Moreover, since $A \in \CC$, the projective objects in $\EE=\mod_eA$ and $\CC=\CM_eA$ are projective $A$-modules, and the equivalent conditions of Theorem \ref{frob} (a) are satisfied. Thus applying Theorem \ref{frob} (d) (i) $\Leftrightarrow$ (iii), $B$ is Iwanaga-Gorenstein of dimension at most one if and only if $\CM_e A$ is Frobenius. As $A$ and $D_1 A$ are in $\CM_e A$, we get that $\CM A$ is Frobenius if and only if $\add A = \add D_1 A$ if and only if $\CM_e A$ is Frobenius, and the result follows. 

(f) In this case, $(\CM_e A)/[Ae] \cong \Sub B$ is an equivalence of Frobenius categories. Thus, since
  $\underline{\CM}_e A$ coincides with the stable category of $(\CM_e A)/[Ae]$, so $\underline{\CM}_e A \cong \underline{\Sub} B$ is a triangle equivalence.
\qed

\subsection{Proof of Theorem \ref{thmchangidem2}} \label{proofofchangidem}

By construction, we have an exact sequence
\begin{equation}\label{PM sequence}
0\rightarrow P_W\rightarrow W\rightarrow B\rightarrow 0
\end{equation}
with $W = Ae \oplus \widetilde{B} \in \CM A$ and $P_W = Ae \oplus P \in\add Ae$. Clearly we have $W\in\CM_{e}^B A$ and $P_W = Ae \otimes_{eAe} eW$. We set $A' := \End_A(W)$ and we identify $e$ with the idempotent of $A'$ which is the projection on the summand $Ae$ of $W$. We shall prove (a) in Proposition \ref{ApeB}, (b) in Proposition \ref{E1E2} and (d) in Proposition \ref{HT}. Then all hypotheses of Theorem \ref{mainB} are satisfied and the assertion (c) follows. Finally, (e) is an easy consequence of Proposition \ref{HT}.

\begin{lemma}\label{basic of M}
\begin{enumerate}[\rm (a)]
\item We have $We=Ae$, $W(1-e)=\widetilde{B}$ as $A$-module.
\item We have $W e A' = P_W$. Thus $P_W$ and $B$ have a structure of $A'^{\op}$-modules such that \eqref{PM sequence} is an exact sequence of $(A,A')$-bimodules.
\item We have $W/WeA'\cong B$ as $(A,A')$-bimodules.
\item We have $eA'=eW$. This is a projective $(eAe)$-module and a projective $A'{}^{\op}$-module.
\item We have $B \otimes_A W \cong B$ as $(B, A')$-bimodules.
\end{enumerate}
\end{lemma}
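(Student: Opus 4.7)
The plan is to take the five parts in order; each follows fairly directly once the idempotent $e \in A'$ is identified as the projection $W = Ae \oplus \widetilde{B} \twoheadrightarrow Ae \hookrightarrow W$. Part (a) is then immediate: the right action of $e$ on $W$ picks out the $Ae$-summand, so $We = Ae$, while $W(1-e) = \widetilde{B}$.

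For (b), the key step is to identify $WeA'$ with the trace of $Ae$ in $W$, namely $\sum_{\varphi \in \Hom_A(Ae, W)} \Image \varphi$, and then to show that this trace equals $P_W$. The inclusion $WeA' \subseteq P_W$ comes from $B = W/P_W$ being annihilated by $e$ on the left (as an $A/(e)$-module), so $eW \subseteq P_W$ and hence $A \cdot eW \subseteq P_W$. The reverse inclusion $P_W \subseteq WeA'$ follows because $P_W \in \add Ae$, so the inclusion $P_W \hookrightarrow W$ is a sum of images of $A$-linear maps $Ae \to W$. Since $WeA'$ is visibly stable under right multiplication by $A'$, the equality $WeA' = P_W$ equips \eqref{PM sequence} with an $(A, A')$-bimodule structure, and (c) follows immediately from $W/WeA' \cong W/P_W \cong B$, with the bimodule structure inherited from $W$.

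For (d), I construct an explicit bijection $\rho \colon eA' \xrightarrow{\sim} eW$ by $\rho(ef) = f(e)$. Using the formula $(ef)(w) = f(e(w))$ together with the facts that $e(w) \in Ae$ and $f$ is left $A$-linear, one checks that $(ef)(w)$ depends only on $f(e) \in eW$, giving injectivity; conversely, any $x \in eW$ is realised by picking any $\widetilde{f} \in A'$ with $\widetilde{f}(e) = x$, giving surjectivity. The decomposition $eW = eAe \oplus e\widetilde{B}$ then exhibits $eW$ as a projective $eAe$-module: the first summand is free of rank one, and $e\widetilde{B} \in \proj(eAe)$ follows by multiplying the defining sequence $0 \to P \to \widetilde{B} \to B \to 0$ by $e$ (noting $eB = 0$ and $eP \in \proj(eAe)$), which incidentally verifies $\widetilde{B} \in \CM_e A$. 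As a right $A'$-module, $eA'$ is projective because it is the direct summand of $A'$ cut out by the idempotent $e$.

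Finally, part (e) is obtained by applying the right exact functor $B \otimes_A -$ to \eqref{PM sequence}: since $P_W \in \add Ae$ and $Be = 0$, we get $B \otimes_A P_W = 0$, hence $B \otimes_A W \cong B \otimes_A B \cong B$ (the last isomorphism because $B$ is an $A/(e)$-module). The $(B, A')$-bilinearity is inherited from the natural $(A, A')$-bilinearity of the surjection $W \to B$ established in (c). The only step that requires genuine care is the identification $WeA' = P_W$ in (b); everything else amounts to unwinding definitions.
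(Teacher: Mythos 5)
Your proof is correct and follows essentially the same route as the paper's: identify $WeA'$ with the trace of $Ae$ in $W$ and match it with $P_W$, identify $eA'=\Hom_A(Ae,W)=eW$, and kill $B\otimes_AP_W$ using $Be=0$. The only cosmetic difference is that the paper gets $WeA'=P_W$ directly from the identity $P_W=Ae\otimes_{eAe}eW$ recorded just before the lemma, whereas you reprove that identity via the two inclusions; the content is the same.
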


\begin{proof}
(a) Clear from definition.

(b) Since $We=Ae$, we have $WeA'=\sum_{f\in\End_A(W)}f(Ae)=Ae \otimes_{eAe} eW = P_W$.

(c) It is a clear consequence of (b).

(d) We have $eA'=\Hom_A(Ae,W)=eW$. Clearly $eA'$ is a projective $A'{}^{\op}$-module.
Moreover $eW=eP_W$ is a projective $(eAe)$-module since $P_W\in\add Ae$.

(e) Applying $B \otimes_A -$ to the short exact sequence \eqref{PM sequence}, we get the following exact sequence of $(B, A')$-bimodules:
 $$B \otimes_A P_W \to B \otimes_A W \to B \otimes_A B \to 0.$$
 Since $B \otimes_A P_W \in \add(B \otimes_A A e) = \add(Be) = \{0\}$ and $B \otimes_A B \cong B$, we get the result.
\end{proof}

\begin{proposition} \label{ApeB}
We have an isomorphism $A'/(e)\cong B$ of $R$-algebras.
\end{proposition}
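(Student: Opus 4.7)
The plan is to apply $\Hom_A(W,-)$ to the short exact sequence of $(A,A')$-bimodules
\[0\to P_W\to W\to B\to 0\]
provided by Lemma \ref{basic of M}(b)(c). Since $W\in\CM_e^B A$ and $P_W\in\add Ae$, condition (C3) gives $\Ext^1_A(W,P_W)=0$, so we obtain the short exact sequence of $R$-modules
\[0\to \Hom_A(W,P_W)\to A'\to \Hom_A(W,B)\to 0.\]

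Next I would identify both extremes. The image of $\Hom_A(W,P_W)\to A'$ consists of endomorphisms of $W$ factoring through $P_W$, which I claim equals the two-sided ideal $(e)=A'eA'$. Indeed, $A'eA'$ is precisely the set of endomorphisms of $W$ factoring through an object of $\add Ae$: any generator $\varphi e\psi$ visibly factors as $W\to Ae\to W$ through the summand $Ae$, and conversely any map through $Ae^n$ can be rewritten as a finite sum of such composites using coordinate projections and inclusions. Since $P_W=Ae\oplus P$ with $P\in\add Ae$, factoring through $P_W$ is equivalent to factoring through an object of $\add Ae$, giving the required equality. For the cokernel, using $W=Ae\oplus\widetilde B$, we have $\Hom_A(W,B)=\Hom_A(Ae,B)\oplus\Hom_A(\widetilde B,B)$; the first summand vanishes since $eB=0$, and applying $\Hom_A(-,B)$ to $0\to P\to\widetilde B\to B\to 0$ together with $\Hom_A(P,B)=0$ identifies the second with $\End_A(B)\cong B$.

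The main obstacle is upgrading the resulting bijection $A'/(e)\cong B$ from an $R$-module isomorphism to an $R$-algebra isomorphism. I would handle this by observing that the $(A,A')$-bimodule structure on $B$ from Lemma \ref{basic of M}(c) produces a right action of $A'$ on $B$, and hence, thanks to the paper's composition convention $fg$ (first $f$, then $g$), a ring homomorphism $A'\to\End_A(B)$ sending $\varphi$ to right multiplication by $\varphi$ on $B$. Composing with the ring isomorphism $\End_A(B)\cong B$, $g\mapsto g(1)$, yields a ring map $A'\to B$ which coincides with our composite from the previous paragraph and vanishes on $(e)$ (since $e\in A'$ projects onto $Ae\subseteq\ker(W\to B)$, so the right action of $e$ on $B$ is zero). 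It therefore descends to the desired $R$-algebra isomorphism $A'/(e)\cong B$.
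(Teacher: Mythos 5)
Your proof is correct and follows essentially the same route as the paper: apply $\Hom_A(W,-)$ to $0\to P_W\to W\to B\to 0$, kill $\Ext^1_A(W,P_W)$ via (C3), and identify the kernel with $(e)$ and the cokernel with $\End_A(B)\cong B$. The only difference is that you spell out two points the paper treats as immediate, namely that $\Hom_A(W,P_W)=A'eA'$ inside $A'$ and that the resulting bijection respects the ring structures; both verifications are sound.
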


\begin{proof}
Applying $\Hom_A(W,-)$ to \eqref{PM sequence}, we have an exact sequence
\[0\rightarrow \Hom_A(W,P_W)\rightarrow A'\rightarrow \Hom_A(W,B)\rightarrow \Ext^1_A(W,P_W),\]
where $\Ext^1_A(W,P_W)=0$ by $P_W\in\add Ae$, $W \in \CM_e^B A$ and our assumption $\Ext^1_A(\CM_e^B, Ae) = 0$.
Since $\Hom_A(Ae,B)=0$, applying $\Hom_A(-, B)$ to \eqref{PM sequence}, we have $\Hom_A(W,B)=\End_A(B)=B$ and $(e)=\Hom_A(W,P_W)$. Thus
$A'/(e)=A'/\Hom_A(W,P_W)=\Hom_A(W,B)=B$.
\end{proof}

In particular, we can regard $\mod B$ as full subcategory of both $\mod A'$ and $\mod A$.
Now we consider the adjoint pair $(G,H)$ given by 
\[H:=\Hom_A(W,-):\mod A\rightarrow \mod A'\mbox{ and }G:=W\otimes_{A'}-:\mod A'\rightarrow \mod A.\]

The main result about these functors is:

\begin{proposition}\label{HT}
 The adjoint pair $(G,H)$ gives quasi-inverse equivalences of exact categories between $\mod_{e}^{B}A$ and $\mod_{e}A'$, which restrict to quasi-inverse equivalences of exact categories between $\CM_{e}^{B}A$ and $\CM_{e}A'$.
\end{proposition}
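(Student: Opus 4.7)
The plan proceeds in four stages: well-definedness of the two functors, verification that the unit and counit of the adjunction are isomorphisms on the relevant subcategories, exactness, and the restriction to Cohen-Macaulay modules. I view the main obstacle as a $\Tor$-vanishing statement that I would derive from the identity $(e)=(e)^2$ inside $A'$.

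For well-definedness, I would use $eA'e=\End_A(Ae)=eAe$ throughout. For $X\in\mod_e^BA$, $eHX=\Hom_A(Ae,X)=eX\in\proj(eAe)$, giving $HX\in\mod_eA'$. Dually, for $Y\in\mod_eA'$, $eGY=eA'\otimes_{A'}Y=eY\in\proj(eAe)$, so $GY\in\mod_eA$. To upgrade $GY$ to $\mod_e^BA$, I would apply $G$ to the torsion-pair sequence $0\to A'e\otimes_{eA'e}eY\to Y\to B\otimes_{A'}Y\to 0$ of Proposition \ref{pair1} applied to $(A',e)$ (valid thanks to Proposition \ref{ApeB}), combined with the identities $W\otimes_{A'}A'e=Ae$ and $W\otimes_{A'}B=B$ (the latter from the right-exactness of $-\otimes_{A'}B$ applied to $0\to P_W\to W\to B\to 0$ together with $P_W\otimes_{A'}B=Ae\otimes_{eAe}eB=0$). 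This produces the desired short exact sequence $0\to Ae\otimes_{eAe}eY\to GY\to B\otimes_{A'}Y\to 0$, provided $\Tor_1^{A'}(W,Y)=0$.

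The crucial $\Tor$-vanishing I would prove as follows. Inside $A'$, the two-sided ideal $(e)=A'eA'$ satisfies $(e)^2=A'eA'eA'=A'(eA'e)A'=A'(eAe)A'$, and since $e=1\cdot e\cdot 1\in A'\cdot eAe\cdot A'=(e)^2$ we get $(e)\subseteq(e)^2$, hence $(e)=(e)^2$. Tensoring $0\to(e)\to A'\to B\to 0$ with $Y\in\mod B$ then forces $\Tor_1^{A'}(B,Y)=(e)\otimes_{A'}Y=(e)/(e)^2\otimes_B Y=0$. Using that $P_W\otimes_{A'}-\cong Ae\otimes_{eAe}\circ e(-)$ together with the exactness of $e(-):\mod A'\to\mod eAe$, we obtain $\Tor_i^{A'}(P_W,-)=\Tor_i^{eAe}(Ae,e(-))$, which vanishes whenever the $e$-component is zero or $eAe$-projective. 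The long exact $\Tor$-sequence associated to $0\to P_W\to W\to B\to 0$ then yields $\Tor_1^{A'}(W,-)=0$ on $\mod B$, and on $\mod_eA'$ by invoking the torsion sequence (whose torsion part lies in $\add A'e\subseteq\proj A'$).

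For the unit and counit, $\epsilon_W:W\otimes_{A'}A'\to W$ is the standard Morita isomorphism, which I would extend to $\add W$. For $Y\in\mod B\subseteq\mod A$, I would compute $HY=\Hom_A(B,Y)=Y$ (via $\Hom_A(P_W,Y)=eY=0$) and $GHY=W\otimes_{A'}Y=Y$ (via $W\otimes_{A'}N=N$ for any $N$ killed by $(e)$), yielding $\epsilon_Y=\id$. For general $X\in\mod_e^BA$, I would apply the five-lemma to the short exact sequence $0\to P\to X\to Y\to 0$ with $P\in\add Ae$, $Y\in\mod B$: the functor $H$ preserves this sequence since $\Ext^1_A(W,P)=0$ by (C3), and $G$ preserves it by the $\Tor$-vanishing. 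A dual argument using the torsion sequence in $\mod_eA'$ gives the unit $\eta_Y$ an iso. Exact bijectivity then follows from the resulting quasi-inverse equivalences together with the $\Ext^1$-isomorphisms extracted from the $\Tor$-computations.

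For the Cohen-Macaulay restriction, given $X\in\CM_e^BA$, $HX$ is an $R$-submodule of $\Hom_R(W,X)$, which is $R$-free since $W$ and $X$ are; as $R$ is a DVR, $HX$ is itself free, so $HX\in\CM_eA'$. Conversely, given $Y\in\CM_eA'$ and $X\cong GY\in\mod_e^BA$, I would argue that every finite-length submodule $S$ of $X$ has its $A$-socle consisting of simple $B$-modules (since the $\add Ae$-part of $X$ is Cohen-Macaulay, so has no finite-length submodule), and every simple $B$-module is a quotient of $\widetilde B\subseteq W$; hence $\Hom_A(W,S)\neq 0$ whenever $S\neq 0$. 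But $HS\subseteq\Hom_R(W,S)$ is of finite length over $R$ while $Y$ is $R$-torsion-free, so $HS=0$, forcing $S=0$. Thus $X$ has no nonzero finite-length submodule, so $X=GY\in\CM A\cap\mod_e^BA=\CM_e^BA$, completing the restriction.
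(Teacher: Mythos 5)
Your overall architecture matches the paper's: well-definedness via multiplication by $e$, a vanishing statement for $\Tor_1^{A'}(W,-)$, unit/counit isomorphisms obtained by comparing the two torsion sequences, and the Cohen--Macaulay restriction by showing $GY$ has no nonzero finite-length submodule. The genuinely different ingredient is your proof of the $\Tor$-vanishing. The paper (Lemma \ref{tor=0}) compares a projective resolution of $Y$ over $A'$ with its image over $B$, using the bimodule sequence $0\to A'e\otimes_{eA'e}eA'\to A'\to B\to 0$ and the projectivity of $eA'$ over $eA'e$. You instead split $0\to P_W\to W\to B\to 0$ into two separately controlled pieces: $\Tor_1^{A'}(B,-)$ vanishes on $\mod B$ because $(e)=(e)^2$ forces $(e)\otimes_{A'}Y\cong\bigl((e)/(e)^2\bigr)\otimes_BY=0$, and $\Tor_i^{A'}(P_W,-)\cong\Tor_i^{eAe}(Ae,e(-))$ because $eA'=eW$ is projective over $eA'e=eAe$ and over $A'{}^{\op}$. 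Both routes are correct; yours is shorter and isolates exactly the two reasons the vanishing holds, whereas the paper's identification $\Tor_1^{A'}(Y,X)\cong\Tor_1^B(Y\otimes_{A'}B,X)$ is more precise than what is actually used. Your direct argument for the converse CM restriction (every simple $B$-module is a quotient of $\widetilde B$, hence of $W$, so a nonzero finite-length submodule $S$ of $GY$ would give $0\neq HS\hookrightarrow Y$ of finite length) is likewise a clean variant of the paper's appeal to faithfulness of $H$ on $\mod B$.

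The one real gap is your final sentence on exact bijectivity. To obtain an equivalence of \emph{exact} categories you must show that $H$ carries \emph{every} admissible short exact sequence of $\mod_e^BA$ to a short exact sequence, not only the torsion sequences $0\to P\to X\to Y\to 0$; since $H$ is only left exact a priori, this amounts to $\Ext^1_A(W,\mod_e^BA)=0$, and the $\mod B$-part of this, namely $\Ext^1_A(W,\mod B)=0$, follows neither from (C3) (which concerns extensions \emph{by} $Ae$) nor from your $\Tor$ computations. The paper supplies it in step (iv) of its proof by applying $\Hom_A(-,\mod B)$ to $0\to P_W\to W\to B\to 0$. Alternatively you can close the gap formally from what you have already established: given an admissible exact sequence $0\to X\to Y\to Z\to 0$, left exactness yields $0\to HX\to HY\to HZ$; setting $C:=\Cokernel(HY\to HZ)$ in $\mod A'$, right exactness of $G$ together with $GH\cong\id$ and the surjectivity of $Y\to Z$ gives $W\otimes_{A'}C=0$; then $eC=(eW)\otimes_{A'}C=eA'\otimes_{A'}C=0$ and $C/(e)C$ is a quotient of $W\otimes_{A'}C=0$, so $C=(e)C=A'(eC)=0$. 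Either way, this step needs to be written out; as it stands, ``exact bijectivity then follows'' is not justified.
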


The first step of the proof consists of the following lemma.

\begin{lemma}\label{HTB}
\begin{enumerate}[\rm (a)]
\item $H$ and $G$ give quasi-inverse equivalences between $\add Ae$ and $\add A'e$.
\item We have commutative diagrams
\[\xymatrix{
\mod A\ar[rr]^H&&\mod A'&&\mod A&&\mod A'\ar[ll]_{G}\\
&\mod B\ar[ul]\ar[ru]&&&&\mod B\ar[ul]\ar[ru]
}\]
\end{enumerate}
\end{lemma}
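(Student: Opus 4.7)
The plan is to treat (a) as an instance of the classical ``projectivization/progenitor'' equivalence attached to the bimodule $W$, and to prove (b) by directly identifying $H(X)$ and $G(X)$ with $X$ for $X \in \mod B$, using both the $A$-module and $A'$-module descriptions of $B$ coming from $B = A/(e) \cong A'/(e)$ (the latter by Proposition \ref{ApeB}).

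For (a), I would first verify the statement on a single object: $H(Ae) = \Hom_A(W, Ae)$ is naturally identified with $A'e$, viewed as the set of endomorphisms of $W$ with image in the summand $Ae$, and $G(A'e) = W \otimes_{A'} A'e = We = Ae$ by Lemma \ref{basic of M}(a). The statement on the full subcategories $\add Ae$ and $\add A'e$ then follows by a standard Yoneda argument: since $A' = \End_A(W)$ and $Ae$ is a direct summand of $W$, the unit and counit of the adjunction $(G, H)$ are isomorphisms on $\add Ae \subseteq \add W$ and $\add A'e \subseteq \proj A'$ respectively, giving quasi-inverse equivalences.

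For (b), the key input is that every $X \in \mod B$ is annihilated by $e$ both as an $A$-module and as an $A'$-module. For $H$, I would use $\Hom_A(Ae, X) = eX = 0$ to reduce to $H(X) = \Hom_A(\widetilde{B}, X)$; then applying $\Hom_A(-, X)$ to the short exact sequence $0 \to P \to \widetilde{B} \to B \to 0$ from Lemma \ref{basic of M}(c) (and using $P \in \add Ae$, so $\Hom_A(P, X) = 0$) yields $H(X) \cong \Hom_A(B, X) = X$ canonically. I then check that the resulting $A'$-module structure agrees with the given $B$-module structure on $X$: any $h \in H(X)$ vanishes on $Ae$, hence is annihilated by $(e) \subset A'$, because every element of the two-sided ideal $(e) = A'eA'$ is (a sum of) a map $W \to W$ factoring through $Ae$. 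For $G$, I would tensor the short exact sequence of $(A, A')$-bimodules $0 \to WeA' \to W \to B \to 0$ (from Lemma \ref{basic of M}(b),(c)) with $X$ over $A'$; since $WeA' \otimes_{A'} X = We \otimes_{eA'e} eA' \otimes_{A'} X = We \otimes_{eA'e} eX = 0$ (using $eX = 0$) and $B \otimes_{A'} X = X$, I obtain $G(X) = W \otimes_{A'} X \cong X$.

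No deep obstacle is expected: the proof is essentially bookkeeping, resting entirely on Lemma \ref{basic of M} and Proposition \ref{ApeB}. The one point that requires care is the compatibility between the $A'$-module structure obtained after applying $H$ or $G$ and the original $B$-module structure on $X$; this reduces to a direct unwinding of the actions through the identifications $B \cong A'/(e)$ and $B \cong W/WeA'$.
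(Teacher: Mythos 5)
Your proposal is correct and follows essentially the same route as the paper: part (a) is checked on the generators $Ae$ and $A'e$ via Lemma \ref{basic of M}, and part (b) is obtained by applying $\Hom_A(-,X)$ to the sequence $0\to P_W\to W\to B\to 0$ (killing the $\add Ae$ terms) and by tensoring $X$ against $W/WeA'\cong B$. Your extra care about matching the induced $A'$-action with the given $B$-action is a point the paper leaves implicit, but it changes nothing of substance.
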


\begin{proof}
(a) This is clear: $H(Ae)=\Hom_A(W,Ae)=A'e$ and $G(A'e)\cong We=Ae$ by Lemma \ref{basic of M}.

(b) Fix $X\in\mod B$. Applying $\Hom_A(-,X)$ to \eqref{PM sequence}, we have an exact sequence
\[0\rightarrow \Hom_A(B,X)\rightarrow HX\rightarrow \Hom_A(P_W,X),\]
where $\Hom_A(P_W,X)=0$ by $P_W\in\add Ae$ and $X\in\mod B$.
Thus we have
\[HX \cong \Hom_A(B,X)\cong X.\]
On the other hand, we have
\[G(X)=W\otimes_{A'}X=W\otimes_{A'}(B\otimes_{A'}X)=(W/WeA')\otimes_{A'}X\stackrel{{\rm Lem. \ref{basic of M}}}{=}B\otimes_{A'}X=X.\qedhere\]
\end{proof}

\begin{lemma}\label{tor=0}
\begin{enumerate}[\rm (a)]
\item We have $\Tor^{A'}_1(Y,X)=\Tor^B_1(Y\otimes_{A'}B,X)$ for any $X\in\mod B$ and $Y\in\CM A'{}^{\op}$.
\item We have $\Tor^{A'}_1(W,X)=0$ for any $X\in\mod_eA'$.
\end{enumerate}
\end{lemma}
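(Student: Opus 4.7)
I would prove (a) first and then deduce (b). For (a), the strategy is to establish the vanishing $\Tor^{A'}_q(Y, B) = 0$ for all $q \geq 1$ when $Y \in \CM A'^{\op}$, and then invoke the Grothendieck change-of-rings spectral sequence
\[ E^2_{p,q} = \Tor^B_p\bigl(\Tor^{A'}_q(Y, B),\, X\bigr) \Rightarrow \Tor^{A'}_{p+q}(Y, X), \]
which, given that vanishing, collapses to the row $q = 0$ and yields $\Tor^{A'}_p(Y, X) \cong \Tor^B_p(Y \otimes_{A'} B, X)$ for every $p$, in particular for $p = 1$. The spectral sequence applies because $- \otimes_{A'} B$ sends projective right $A'$-modules, being summands of $(A')^n$, to summands of $B^n$, which are $\Tor^B_\bullet(-, X)$-acyclic.

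The vanishing of $\Tor^{A'}_q(Y, B)$ proceeds in two steps. First, following the proof of Proposition \ref{ApeB}, one applies the functor $H = \Hom_A(W, -)$ (exact on \eqref{PM sequence} because $\Ext^1_A(W, P_W) = 0$ by (C3) and $P_W \in \add Ae$) to get the short exact sequence $0 \to A'eA' \to A' \to B \to 0$ with $A'eA' = H(P_W) \in \add H(Ae) = \add A'e$. Hence $A'eA'$ is a direct summand of some $(A')^n$ as left $A'$-modules, so this is a projective resolution of $B$ of length one; it immediately gives $\Tor^{A'}_q(Y, B) = 0$ for $q \geq 2$ and $\Tor^{A'}_1(Y, B) = \ker\bigl(Y \otimes_{A'} A'eA' \to Y\bigr)$. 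Second, to kill this kernel when $Y \in \CM A'^{\op}$, I would argue by $R$-localization: since $\length_R B < \infty$ gives $B \otimes_R K = 0$, the inclusion $A'eA' \hookrightarrow A'$ becomes an isomorphism after $-\otimes_R K$, so the same holds for $Y \otimes_{A'} A'eA' \to Y$. On the other hand, $Y$ is $R$-free (as $Y \in \CM A'^{\op}$) and $Y \otimes_{A'} A'eA'$ is an $R$-direct summand of $Y^n$ (coming from the summand decomposition $A'eA' \mid (A')^n$), hence also $R$-free. Any morphism between finitely generated $R$-free modules that is an isomorphism after $-\otimes_R K$ is injective, because both sides embed into their $K$-extensions; therefore the kernel vanishes.

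For (b), I would use the torsion pair $(\add A'e, \mod B)$ in $\mod_e A'$ obtained by applying Proposition \ref{pair1} to the order $A'$ with idempotent $e$ (using $A'/(e) = B$ from Proposition \ref{ApeB}). For $X \in \mod_e A'$, the associated exact sequence $0 \to TX \to X \to FX \to 0$ has $TX \in \add A'e$ projective, hence $\Tor^{A'}_1(W, TX) = 0$, reducing the problem to $\Tor^{A'}_1(W, FX) = 0$. Since $W \in \CM A$ is $R$-free and finitely generated over $R$ (hence over $A'$), we have $W \in \CM A'^{\op}$, so part (a) gives $\Tor^{A'}_1(W, FX) \cong \Tor^B_1(W \otimes_{A'} B, FX)$. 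By Lemma \ref{basic of M}(b)(c), $W \cdot A'eA' = WeA' = P_W$ and $W/P_W \cong B$, so $W \otimes_{A'} B \cong W/WA'eA' \cong B$, whence $\Tor^B_1(B, FX) = 0$. The main obstacle I foresee is the $R$-localization step: one must notice that both the source and the target of $Y \otimes_{A'} A'eA' \to Y$ are $R$-free, so that the isomorphism over $K$ actually forces genuine injectivity.
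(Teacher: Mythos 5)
Your proof is correct and follows the same architecture as the paper's: both arguments rest on the short exact sequence $0\to A'eA'\to A'\to B\to 0$ of left $A'$-modules with $A'eA'\in\add A'e$ projective (so $B$ has projective dimension at most one over $A'$), both must then show that $Y\otimes_{A'}-$ preserves the relevant exactness when $Y\in\CM A'{}^{\op}$, and part (b) is handled identically (torsion pair $(\add A'e,\mod B)$ applied to $(A',e)$, the identification $W\otimes_{A'}B\cong B$ from Lemma \ref{basic of M}, and $\Tor^B_1(B,-)=0$). You differ in two technical steps. For the key injectivity of $Y\otimes_{A'}A'eA'\to Y$, you localize at $K$: the map becomes an isomorphism after $-\otimes_RK$ since $B\otimes_RK=0$, while the source is $R$-free (being a summand of $Y^n$), so the kernel is simultaneously $R$-torsion and torsion-free, hence zero. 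The paper instead truncates by $e$: the kernel of $Ye\otimes_{eA'e}eA'\to Y$ is annihilated by $e$, hence is a finite-length $B^{\op}$-module sitting inside a Cohen-Macaulay module, hence zero. These are parallel devices (idempotent truncation versus localization at the generic point), and yours is self-contained in that it does not need the auxiliary identification of $A'eA'$ with $A'e\otimes_{eA'e}eA'$. For the comparison of Tor groups in (a), you invoke the change-of-rings spectral sequence, which yields $\Tor^{A'}_p(Y,X)\cong\Tor^B_p(Y\otimes_{A'}B,X)$ in every degree, whereas the paper compares the projective resolutions of $Y$ over $A'$ and of $Y\otimes_{A'}B$ over $B$ by hand and extracts only the case $p=1$; your route is marginally stronger at the cost of heavier machinery. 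No gaps.
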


\begin{proof}
 For $Y\in\CM A'{}^{\op}$, take an exact sequence
\begin{equation}\label{A' resolution}
0\rightarrow \Omega Y\xrightarrow{i} P\rightarrow Y\rightarrow 0
\end{equation}
of $A'{}^{\op}$-modules with $P\in\proj A'{}^{\op}$.
We will show that
\begin{equation}\label{B resolution}
0\rightarrow \Omega Y\otimes_{A'}B\xrightarrow{i\otimes 1_B} P\otimes_{A'}B\rightarrow Y\otimes_{A'}B\rightarrow 0
\end{equation}
is exact. Since $eA'\in\proj(eA'e)$, we have an exact sequence $0\rightarrow A'e\otimes_{eA'e}eA'\stackrel{j}{\rightarrow}A'\rightarrow B\rightarrow 0$ of $(A',A')$-bimodules.
Applying $Y\otimes_{A'}-$, we have an exact sequence
\[0\rightarrow K\rightarrow Ye\otimes_{eA'e}eA'\xrightarrow{1_Y\otimes j}Y\rightarrow Y\otimes_{A'}B\rightarrow 0.\]
Since $(1_Y\otimes j)e:(Ye\otimes_{eA'e}eA')e\rightarrow Ye$ is an isomorphism, we have $Ke=0$. Thus $K$ is a $B^{\op}$-module. Thus $K=0$ holds since $Ye\otimes_{eA'e}eA'\in\CM A'{}^{\op}$ holds again by $eA'\in\proj(eA'e)$.

Applying the same argument to $P\in\CM A'{}^{\op}$ and $\Omega Y\in\CM A'{}^{\op}$, we have the following commutative diagram of exact sequences:
\[\xymatrix{
&&0\ar[d]\\
0\ar[r]&\Omega Ye\otimes_{eA'e}eA\ar[r]\ar[d]&\Omega Y\ar[r]\ar[d]&\Omega Y\otimes_{A'}B\ar[r]\ar[d]^{i\otimes 1_B}&0\\
0\ar[r]&Pe\otimes_{eA'e}eA\ar[r]\ar[d]&P\ar[r]\ar[d]&P\otimes_{A'}B\ar[r]\ar[d]&0\\
0\ar[r]&Ye\otimes_{eA'e}eA\ar[r]\ar[d]&Y\ar[r]\ar[d]&Y\otimes_{A'}B\ar[r]\ar[d]&0\\
&0&0&0
}\]
A diagram chase shows that $i\otimes 1_B$ is injective. Thus \eqref{B resolution} is exact.

(a) For $X \in \mod B$, applying $-\otimes_{A'}X$ to \eqref{A' resolution} and $-\otimes_BX$ to \eqref{B resolution} and comparing them, we have a commutative diagram of exact sequences:
\[\xymatrix{
0\ar[r]&\Tor^{A'}_1(Y,X)\ar[r]&\Omega Y\otimes_{A'}X\ar[r]\ar@{=}[d]&P\otimes_{A'}X\ar@{=}[d]\\
0\ar[r]&\Tor^B_1(Y\otimes_{A'}B,X)\ar[r]&(\Omega Y\otimes_{A'}B)\otimes_BX\ar[r]&(P\otimes_{A'}B)\otimes_BX
}\]
Thus the assertion follows.

(b) First, we assume $X\in\mod B$. Since $W\in\CM A'{}^{\op}$, by (a) and Lemma \ref{basic of M} (c), we have
\[\Tor^{A'}_1(W,X)\cong\Tor^B_1(W\otimes_{A'}B,X)\cong\Tor^B_1((W/W e A') \otimes_{A'}B,X)\cong\Tor^B_1(B,X)=0.\]
Now we assume $X\in\mod_eA'$. Then there exists an exact sequence $0\rightarrow P\rightarrow X\rightarrow Y\rightarrow 0$ with $P\in\add A'e$ and $Y\in\mod B$.
Applying $W\otimes_{A'}-$, we have an exact sequence
\[0=\Tor^{A'}_1(W,P)\rightarrow \Tor^{A'}_1(W,X)\rightarrow \Tor^{A'}_1(W,Y)=0.\]
Thus the assertion follows.
\end{proof}

\begin{proof}[Proof of Proposition \ref{HT}]
(i) First we show $H(\mod_{e}A)\subset\mod_{e}A'$.

For $X\in\mod_{e}A$, we have
\[eH(X)=\Hom_A(We,X)\stackrel{}{=}\Hom_A(Ae,X)=eX\in\proj(eAe)=\proj(eA'e).\]

(ii) Next we show $G(\mod_{e}A')\in\mod_{e}^{B}A$.

For $X\in\mod_{e}A'$, take an exact sequence
\begin{equation}\label{sequence1}
0\rightarrow P\rightarrow X\rightarrow Y\rightarrow 0
\end{equation}
with $P\in\add A'e$ and $Y\in\mod B$. Applying $G$, we have a short exact sequence
\begin{equation}\label{sequence2}
0\rightarrow GP\rightarrow GX\rightarrow GY\rightarrow 0.
\end{equation}
by Lemma \ref{tor=0}. Since $GP\in\add Ae$ and $GY=Y\in\mod B$ thanks to Lemma \ref{HTB}, we have $GX\in\mod_{e}^{B}A'$.

(iii) We show $HG\cong{\rm id}_{\mod_{e}A'}$ and $GH\cong{\rm id}_{\mod_{e}^{B}A}$.

Applying $H$ to \eqref{sequence2} and comparing with \eqref{sequence1}, we have a commutative diagram of exact sequences,
\[\xymatrix{
0\ar[r]&P\ar[r]\ar[d]&X\ar[r]\ar[d]&Y\ar[r]\ar[d]&0\\
0\ar[r]&HGP\ar[r]&HGX\ar[r]&HGY
}\]
where vertical arrows are of the form $x \mapsto (w \mapsto w \otimes x)$. Since the left and the right vertical maps are isomorphisms, so is the middle one.

By a similar argument, one can show $GH\cong{\rm id}_{\mod_{e}^{B}A}$.

(iv) We show that $H$ and $G$ are exact functors. The functor $G$ is exact thanks to Lemma \ref{tor=0}. Applying $\Hom_A(-, \mod B)$ to \eqref{PM sequence}, we get an exact sequence
$$0 = \Ext^1_A(B, \mod B) \rightarrow \Ext^1_A(W, \mod B) \rightarrow \Ext^1_A(P_W, \mod B) = 0$$
so $\Ext^1_A(W, \mod B) = 0$. As we also have $\Ext^1_A(W, Ae) = 0$, we get $\Ext^1_A(W, \mod_e^B A) = 0$. So $H$ is exact.

(v) We now show that the equivalences restrict to $\CM_e^B A \cong \CM_e A'$. 

Clearly $H(\CM_{e}^{B}A)\subset\CM_{e}A'$ holds.
It is enough to show that, if $X\in\mod_{e}^{B}A$ satisfies $HX\in\CM_{e}A'$, then $X\in\CM A$.
Let $Y$ be a finite length submodule of $X$.
Then the inclusion $Y\subset X$ gives an injection $HY\subset HX$. Since $HY$ has finite length and $HX\in\CM_{e}A'$, we have $HY=0$.

Let $0\rightarrow P\stackrel{i}{\rightarrow}X\rightarrow Z\rightarrow 0$ be an exact sequence with $P\in\add Ae$ and $Z\in\mod B$. 
Since $Y \cap P = 0$, we have that $Y$ is a submodule of $Z$.
In particular $Y\in\mod B$. Since $HY=0$, we have $Y=0$ by (iii). Thus $X\in\CM A$.
\end{proof}

\begin{proposition} \label{E1E2}
We have (E1) $\Ext^1_{A'}(\CM_{e}A',A'e)=0$ and (E2)\pup* $\Ext^2_{A'}(\mod_{e}A',A'e)=0$.
\end{proposition}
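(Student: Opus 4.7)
The plan is to derive (E1) by transferring the hypothesis (C3) across the exact equivalence $G : \CM_e A' \xrightarrow{\sim} \CM_e^B A$ provided by Proposition \ref{HT}, and then to obtain (E2)\pup* from (E1) by invoking Lemma \ref{puprem} (c).

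For (E1), the first step is to verify that $\CM_e A'$ (respectively, $\CM_e^B A$) is extension-closed in $\mod A'$ (respectively, $\mod A$), so that the intrinsic $\Ext^1$-groups of these exact subcategories agree with the ambient $\Ext^1_{A'}$ and $\Ext^1_A$. This is routine: given $0 \to X \to Y \to Z \to 0$ with $X, Z \in \CM_e A'$, multiplication by $e$ gives a sequence which splits since $eZ$ is projective over $eA'e$, forcing $eY \in \proj(eA'e)$, while $Y$ is free of finite rank over $R$ because $R$ is a PID. A parallel horseshoe-type argument handles $\CM_e^B A$. Since $G$ induces isomorphisms on $\Ext^1$ of exact categories and sends $A'e$ to $We = Ae$ by Lemma \ref{basic of M} (a), for every $X \in \CM_e A'$ I obtain
$$\Ext^1_{A'}(X, A'e) \cong \Ext^1_A(GX, Ae),$$
and the right-hand side vanishes by (C3) since $GX \in \CM_e^B A$.

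For (E2)\pup*, the plan is to apply Lemma \ref{puprem} (c), which requires (E1), now established, together with $A' \in \CM_e A'$. Since $W \in \CM A$, the ring $A' = \End_A(W)$ embeds into the $R$-free module $\End_R(W)$, so $A'$ is an $R$-order and $A' \in \CM A'$. By Lemma \ref{basic of M} (d), $eA' = eW$ is projective over $eAe$, and as $eA'e = \End_A(Ae) = eAe$, this yields $A' \in \CM_e A'$. Lemma \ref{puprem} (c) then gives (E2)\pup*.

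The only delicate point I anticipate is the identification of intrinsic and ambient $\Ext^1$-groups for the exact subcategories in play; however, once extension-closedness is verified this agreement is formal, so the rest of the argument proceeds as a direct application of Proposition \ref{HT} and Lemmas \ref{basic of M} and \ref{puprem}.
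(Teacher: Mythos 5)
Your proof is correct and follows essentially the same route as the paper: (E1) is obtained by transporting extensions of $A'e$ by objects of $\CM_e A'$ across the exact equivalence $G$ of Proposition \ref{HT} and invoking (C3) (the paper does this concretely, applying $G$ to a given extension, using Lemma \ref{tor=0} for exactness and then splitting), and (E2)\pup* follows from $A'\in\CM_e A'$ via Lemma \ref{basic of M} (d), which is exactly the argument of Lemma \ref{puprem} (c). One small caveat: your claim that extension-closedness of $\CM_e^B A$ in $\mod A$ is ``routine'' by a horseshoe argument is too quick, since $\mod B$ need not be closed under extensions in $\mod A/(e)$ when $B$ is a proper factor of $A/(e)$; however, this does not affect the proof, because in the direction you need it suffices that the canonical map $\Ext^1_{\CM_e^B A}(GX,Ae)\to\Ext^1_A(GX,Ae)$ is injective (which is automatic for a full subcategory with the induced exact structure), the target vanishing by (C3).
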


\begin{proof}
(E1) Let $0\rightarrow A'e\rightarrow X\rightarrow Y\rightarrow 0$ be an exact sequence with $Y\in\CM_{e}A'$.
Applying $G$ and using Lemma \ref{tor=0}, we have an exact sequence
$0\rightarrow G(A'e)\rightarrow GX\rightarrow GY\rightarrow 0$.
It splits since $\Ext^1_A(GY,Ae)=0$ by our assumption.
Since $G:\CM_{e}A'\rightarrow \CM_{e}^{B}A$ is an equivalence, the original sequence splits. Thus the assertion follows.

(E2)\pup* Since we have $A'\in\CM_eA'$ by Lemma \ref{basic of M} (d), syzygies of modules in $\mod_eA'$ belongs to $\CM_eA'$. 
Thus the assertion follows from (E1).
\end{proof}

We finish this subsection by proving Lemma \ref{lemmagorchangeorders}.

\begin{proof}[Proof of Lemma \ref{lemmagorchangeorders}]
  As $Ae \cong D_1 (g A)$ is injective in $\CM A$ and $\CM_e^B A \subset \CM A$, we get (C3).

  To prove the second part of the statement, let us prove that if (C1) holds, then for a finite length $A$-module $M$, we have $M \in \Sub(Ae \otimes_R (K/R))$ if and only if the socle of $M$ is supported by $g$. As $Ae$ is injective in $\CM A$ and syzygies of all modules are Cohen-Macaulay, we have $\Ext^2_A(\mod A, Ae) = 0$. By Lemma \ref{injfl}, we have $\Ext^1_A(\fl A, Ae \otimes_R K) = 0$. So applying $\Hom_A(\fl A, -)$ to the short exact sequence $$0 \to Ae \to Ae \otimes_R K \to Ae \otimes_R (K/R) \to 0,$$ we get $\Ext^1_A(\fl A, Ae \otimes_R (K/R)) = 0$. Moreover, by Lemma \ref{generalitiescotop} (a), we get that $\soc(Ae \otimes_R (K/R))$ is the semisimple module corresponding to $g$. 

  It is immediate that if $M \in \Sub(Ae \otimes_R (K/R))$ then $\soc M \in \add \soc (Ae \otimes_R (K/R))$ so the first implication is satisfied. Conversely, if $\soc M$ is supported by $g$, there exists a injection $\soc M \hookrightarrow (Ae \otimes_R (K/R))^{\oplus \ell}$. Then, applying $\Hom_A(-, (Ae \otimes_R (K/R))^{\oplus \ell})$ to the short exact sequence $$0 \to \soc M \to M \to M/\soc M \to 0,$$ there is an injection $M  \hookrightarrow (Ae \otimes_R (K/R))^{\oplus \ell}$. So we proved the converse implication.
 \end{proof}

\section{Cluster algebra structure on coordinate rings of partial flag varieties} \label{parflag}

The aim of this section is to apply results in previous sections to categorify
the cluster algebra structure of the multi-homogeneous coordinate rings $\Cf[\FF]$ of the partial flag variety
$\FF=\FF(\Delta,J)$ corresponding to a Dynkin diagram $\Delta$ and a set $J$ of vertices of $\Delta$ by using the category of Cohen-Macaulay modules.
To be more precise, recall that Geiss-Leclerc-Schr\"oer introduced in \cite{GeLeSc08} a cluster algebra $\tilde \AA \subset \Cf[\FF]$. They proved that $\tilde \AA = \Cf[\FF]$ in type $A_n$. In general, they conjecture that $\tilde \AA[\Sigma_J^{-1}] = \Cf[\FF][\Sigma_J^{-1}]$ where $\Sigma_J$ is the set of principal generalized minors corresponding to non-minuscule weights (see Definition \ref{genmin} of principal generalized minors), and they prove the conjecture in type $D_4$.

The main result of this section (Theorem \ref{categG}) consists in completing Gei\ss-Leclerc-Schr\"oer's partial categorification of $\tilde \AA$. Their categorification (Theorem \ref{categGLS})
uses the preprojective algebra $\Pi=\Pi(\Delta)$ over $\Cf$ and the full subcategory $\Sub Q_J$ of $\mod\Pi$, where $Q_J$ is the direct sum of indecomposable injective $\Pi$-modules corresponding to vertices in $J$. Recall that a Frobenius category $\EE$ is said to be \emph{stably $2$-Calabi-Yau} if there is a bifunctorial isomorphism $\Ext^1_\EE(X, Y) \cong \Ext^1_\EE(Y,X)$ and that $\Sub Q_J$ is stably $2$-Calabi-Yau. Moreover, an object $X$ in $\EE$ is called \emph{rigid} if $\Ext^1_\EE(X, X) = 0$ and it is called \emph{cluster tilting} if $\add X = \{Y \in \EE \mid \Ext^1_\EE(Y, X) = 0\}$. 

\subsection{The categorification of Geiss-Leclerc-Schr\"oer} \label{catGLS}
 In this section, we recall briefly the results of \cite{GeLeSc08} concerning the categorification of cluster algebra structures on multi-homogeneous coordinate rings of partial flag varieties. We start by fixing a simple simply connected complex algebraic group $G$ with Dynkin diagram $\Delta$. We fix a maximal torus $H \subset G$ and two opposite Borel subgroups $B, B^- \subset G$ satisfying $B \cap B^- = H$ (for more details about Lie theoretical background, see \cite{Bo91, LaGo01}). For a vertex $i$ of $\Delta$, we fix $$x_i(t) := \exp(t e_i) \quad \text{and} \quad y_i(t) := \exp(t f_i)$$ the one parameter subgroups of $B$ and $B^-$ corresponding to the Chevaley generators $e_i$ and $f_i$ of the Lie algebra of $G$. Following notations of \cite{GeLeSc08}, we define $K$ to be the complement of $J$. The \emph{parabolic subgroup} $B_K$ (respectively, \emph{opposite parabolic subgroup} $B^-_K$) of $G$ is the subgroup generated by $B$ (respectively, $B^-$) and $y_i$ (respectively, $x_i$) for $i \in K$. The partial flag variety $\FF$ can be realized as $\FF = B^-_K \backslash G$. Let $N_K$ be the unipotent radical of $B_K$, that is the subgroup of unipotent elements of the maximal solvable normal subgroup of $B_K$. Then, it is a classical result that $N_K \subset G$ induces an embedding $N_K \subset \FF$ as a dense affine open subset.
 \begin{example} \label{exflags}
  If $\Delta = A_4$ and $J = \{1,3\}$, we have $K = \{2, 4\}$, $G = \SL_5(\Cf)$ and 
  $$B^-_K = \begin{bmatrix}
             * & 0 & 0 & 0 & 0 \\
             * & * & * & 0 & 0 \\
             * & * & * & 0 & 0 \\
             * & * & * & * & * \\
             * & * & * & * & *
            \end{bmatrix} \subset G \quad \text{and} \quad N_K = \begin{bmatrix}
             1 & * & * & * & * \\
             0 & 1 & 0 & * & * \\
             0 & 0 & 1 & * & * \\
             0 & 0 & 0 & 1 & 0 \\
             0 & 0 & 0 & 0 & 1
            \end{bmatrix} $$
  and it is immediate that $B^-_K \backslash G$ parametrizes naturally flags of $\Cf^5$ of type $1, 3$.
 \end{example}
 
 Let $\ig = (i_1, i_2, \dots, i_\ell)$ be a sequence of vertices of $\Delta$, $\kg = (k_1, k_2, \dots, k_\ell)$ be a sequence of non-negative integers and $\tg = (t_1, t_2, \dots, t_\ell)$ be a sequence of variables. We denote 
 \begin{itemize}
  \item $\ig^\kg$ the sequence of indices obtained from $\ig$ by repeating $k_j$ times $i_j$;
  \item $\tg^\kg := t_1^{k_1} t_2^{k_2} \cdots t_\ell^{k_\ell}$;
  \item $\kg! := k_1! k_2! \cdots k_\ell!$;
  \item $x_\ig(\tg) := x_{i_1}(t_1) x_{i_2}(t_2) \cdots x_{i_\ell}(t_\ell)$.
 \end{itemize}
 We denote by $\Phi_{M, \ig}$ the variety of composition series of $M$ of type $\ig$, that is
  $$\Phi_{M, \ig} := \{0 = M_0 \subset M_1 \subset \cdots \subset M_\ell = M \,|\, \forall j, M_j / M_{j-1} \cong S_{i_j}\}$$
  realized within the appropriate product of Grassmannians. Finally $\chi$ is the Euler characteristic.

 In \cite{GeLeSc05}, using Lusztig's semicanonical basis \cite{Lu00}, Geiss-Leclerc-Schr\"oer define functions in the coordinate ring $\Cf[N] = \Cf[N_\emptyset]$ by the following result:
 \begin{theorem}[\cite{Lu00, GeLeSc05}]
  Let $M \in \mod \Pi$. There exists a unique function $\varphi_M$ in $\Cf[N]$ satisfying 
  $$\varphi_M(x_\ig(\tg)) = \sum_{\kg \in \N^\ell} \chi(\Phi_{M, \ig^\kg}) \frac{\tg^\kg}{\kg!}$$
  for any reduced word $\ig$ of an element of the Weyl group of type $\Delta$. 
 \end{theorem}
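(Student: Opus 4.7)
The plan is to prove the theorem in two steps: uniqueness by a density argument on a single reduced word of the longest element $w_0$ of the Weyl group, and existence by invoking Lusztig's convolution algebra of constructible functions on the nilpotent varieties of $\Pi$.

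For uniqueness, fix a reduced word $\ig_0 = (i_1, \ldots, i_N)$ of $w_0$, where $N$ is the number of positive roots. It is classical that the map $x_{\ig_0} : \Cf^N \to N$, $\tg \mapsto x_{i_1}(t_1) \cdots x_{i_N}(t_N)$, is an open immersion whose image is Zariski dense in $N$. Since $N$ is irreducible, a regular function on $N$ is determined by its restriction to this dense open subset, which is prescribed by the formula in terms of the $\chi(\Phi_{M,\ig_0^\kg})$. This already forces $\varphi_M$ to be unique.

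For existence, one realises $\Cf[N]$ as the graded dual of $U(\nn)$, the pairing being $\xi(x_\ig(\tg)) = \sum_\kg \xi(e_{i_1}^{k_1}\cdots e_{i_\ell}^{k_\ell})\,\tg^\kg/\kg!$ coming from the expansion of each $\exp(t_j e_{i_j})$. For each dimension vector $d\in\N^\Delta$, let $\Lambda_d$ be the nilpotent variety of $\Pi$-modules on $V^d:=\bigoplus_i \Cf^{d_i}$ and let $\MM_d$ be the space of $\Aut(V^d)$-invariant constructible $\Cf$-valued functions on $\Lambda_d$. In \cite{Lu00}, Lusztig equips $\MM:=\bigoplus_d \MM_d$ with an associative convolution product $\star$ (defined via pushforward/pullback along the correspondence parameterising submodule inclusions) and constructs an injective algebra homomorphism $\iota: U(\nn)\hookrightarrow\MM$ sending the Chevalley generator $e_i$ to the characteristic function $\mathbf{1}_{S_i}$ of the point $\{S_i\}\subset\Lambda_{\alpha_i}$. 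Dualising $\iota$ and using the isomorphism $U(\nn)^*\cong\Cf[N]$ above, define
\[
 \varphi_M := \iota^*(\delta_M),
\]
where $\delta_M\in\MM^*$ is evaluation at the isomorphism class of $M$.

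To verify the Euler characteristic formula, one unfolds the definitions: for any sequence $\ig$ and any $\kg$,
\[
 \varphi_M(e_{i_1}^{k_1}\cdots e_{i_\ell}^{k_\ell})
  \;=\; \delta_M\bigl(\iota(e_{i_1}^{k_1}\cdots e_{i_\ell}^{k_\ell})\bigr)
  \;=\; \bigl(\mathbf{1}_{S_{i_1}}^{\star k_1}\star\cdots\star \mathbf{1}_{S_{i_\ell}}^{\star k_\ell}\bigr)(M),
\]
and, by the very definition of $\star$ as a Grassmannian-convolution, the right-hand side equals the Euler characteristic of the variety of composition series of $M$ of type $\ig^\kg$, namely $\chi(\Phi_{M,\ig^\kg})$. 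Substituting into the PBW expansion of $x_\ig(\tg)$ yields the claimed identity. Because $\Phi_{M,\ig^\kg}$ is empty unless $\sum_j k_j\alpha_{i_j}=\underline{\dim}\,M$, only finitely many $\kg$ contribute, so $\varphi_M\circ x_\ig$ is a polynomial and $\varphi_M$ is regular on $N$.

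The main obstacle lies in the existence half: one must know that Lusztig's convolution product on $\MM$ is well-defined and associative, and that the assignment $e_i\mapsto \mathbf{1}_{S_i}$ respects the Serre relations, so that $\iota$ extends from Chevalley generators to an algebra homomorphism on $U(\nn)$. This is the technical core of \cite{Lu00} and relies on a careful analysis of the geometry of $\Lambda_d$ and of the Grassmannians of $\Pi$-submodules; once granted, everything else is a formal computation in the pairing between $U(\nn)$ and its graded dual.
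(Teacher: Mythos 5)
The paper offers no proof of this statement---it is imported directly from \cite{Lu00} and \cite{GeLeSc05}---and your sketch faithfully reconstructs the argument of those references: uniqueness from the density of the image of $x_{\ig}(\tg)$ in $N$ for a reduced word of $w_0$, and existence by dualizing Lusztig's algebra homomorphism $U(\nn)\to\MM$ into constructible functions on the nilpotent varieties, identifying iterated convolutions of the $\mathbf{1}_{S_i}$ with Euler characteristics of composition-series varieties, with the genuinely hard content (associativity of $\star$ and the Serre relations) correctly deferred to \cite{Lu00}. One small correction: for a reduced word of $w_0$ the map $\Cf^N\to N$ is dominant and birational but not an open immersion in general (already in type $A_2$, $(a,b,c)\mapsto x_1(a)x_2(b)x_1(c)$ fails to be injective over the locus $b=0$); since only the Zariski density of the image is used, your uniqueness argument is unaffected.
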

 In \cite{GeLeSc05}, they also prove that 
 \begin{itemize}
  \item $\varphi_{Y \oplus Z} = \varphi_Y \varphi_Z$ for any $Y, Z \in \mod \Pi$;
  \item if $Y$ and $Z$ are indecomposable such that $\dim \Ext^1_\Pi(Y, Z) = 1$ and $$0 \to Y \to U \to Z \to 0 \quad \text{and} \quad 0 \to Z \to U' \to Y \to 0$$ are two non-split short exact sequences, then $\varphi_Y \varphi_Z = \varphi_U + \varphi_{U'}$.
 \end{itemize}
 In other terms, $\varphi$ is a so-called \emph{cluster character}. 

 In \cite{GeLeSc08}, the authors prove that $\Sub Q_J$ categorifies via $\varphi$ and the canonical projection $\Cf[N] \twoheadrightarrow \Cf[N_K]$ a cluster algebra $\AA \subset \Cf[N_K]$. They prove in type $A_n$ and $D_4$ that $\AA = \Cf[N_K]$ and they conjecture it to be true in any case.

 Let us introduced generalized principal minors (see \cite{FoZe99}):
 \begin{definition} \label{genmin}
  For a vertex $i$ of $\Delta$, the corresponding \emph{principal generalized minor} is defined on $G$ as the unique function $\Delta_i$ satisfying
  $$\Delta_i(x^- x_0 x^+) = \Delta_i(x_0)$$
  for $x^- \in B^-$, $x_0 \in H$, $x^+ \in B$ and $\Delta_i|_H: H \to \Cf^*$ is the multiplicative character corresponding to the fundamental weight indexed by $i$.
 \end{definition}
 
 It is known that $\FF = B^-_K \backslash G$ is embedded in a product of projective spaces indexed by $J$ (in type $A_n$, a product of usual Grassmannians). Thus, we can define the \emph{multi-homogeneous coordinate ring} $\Cf[\FF]$, graded by $\N^J$. Each of the $\Delta_j$ is homogeneous of degree $(0, \dots, 0, 1, 0, \dots, 0)$ where $1$ is at position $j$ and $N_K$ is the open dense affine subset of $\FF$ defined by $N_K = \{x \in \FF \,|\, \forall j \in J, \Delta_j(x) \neq 0\}$ so there is a dehomogenization map $\Cf[\FF] \twoheadrightarrow \Cf[N_K]$ defined by mapping $\Delta_j$ to $1$. For any $f \in \Cf[N_K]$ there is a unique homogeneous $\tilde f \in \Cf[\FF]$ such that $\pi(\tilde f) = f$ and the multi-degree of $\tilde f$ is minimal for the order induced by fundamental weights \cite[Lemma 2.4]{GeLeSc08}. 

 \begin{example}
  We continue Example \ref{exflags}. It this case, $\Delta_1$ corresponds to the upper-left coefficient and $\Delta_3$ corresponds to the determinant of the upper-left ($3 \times 3$)-submatrix. Then $B_K^- \backslash G$ is a closed subset of $\Gr_1(\Cf^5) \times \Gr_3(\Cf^5)$, by mapping $M \in B^-_K$ to the subspaces generated by the first row on the one hand and the first three rows on the second hand. So, as usual, thanks to Pl\"ucker coordinates, we have $$\FF \subset \Gr_1(\Cf^5) \times \Gr_3(\Cf^5) \subset \P\left(\Cf^{\binom{5}{1}}\right) \times \P\left(\Cf^{\binom{5}{3}}\right).$$ Then, we have two affine subspaces $N_{\{1\}^{c}}$ of $\Gr_1(\Cf^5)$ and $N_{\{3\}^{c}}$ of $\Gr_3(\Cf^5)$ defined by the non-vanishing of the leftmost determinants, which are Pl\"ucker coordinates and correspond to $\Delta_1$ and $\Delta_3$ as functions over $G$. Moreover, $N_K = \left(N_{\{1\}^{c}} \times N_{\{3\}^{c}}\right) \cap \FF$.
 \end{example}

 In order to extend the cluster algebra $\AA \subset \Cf[N_K]$ to a cluster algebra $\tilde \AA \subset \Cf[\FF]$ by adding coefficients $\Delta_j$ corresponding to the multi-homogenization, Geiss-Leclerc-Schr\"oer prove the following theorem.
 \begin{theorem}[{\cite[10.1]{GeLeSc08}}] \label{mutGLS}
  If $Y, Z \in \Sub Q_J$, then $\tilde \varphi_{Y \oplus Z} = \tilde \varphi_Y \tilde \varphi_Z$. If $Y, Z \in \Sub Q_J$ satisfy $\dim \Ext^1_\Pi(Y, Z) = 1$, and
  $$0 \to Y \to U \to Z \to 0 \quad \text{and} \quad 0 \to Z \to U' \to Y \to 0$$
  are non-split short exact sequences then
  $$\tilde \varphi_Y \tilde \varphi_Z = \tilde \varphi_U \prod_{j \in J} \Delta_j^{\alpha_j} + \tilde \varphi_{U'} \prod_{j \in J} \Delta_j^{\beta_j}$$
  where
  \begin{align*} \alpha_j &= \max(0, \dim \Hom_\Pi(S_{j}, U') - \dim \Hom_\Pi(S_{j}, U)) \\ \text{and} \quad \beta_j &= \max(0, \dim \Hom_\Pi(S_{j}, U)- \dim \Hom_\Pi(S_{j}, U')).\end{align*}
 \end{theorem}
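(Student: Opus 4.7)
The plan is to lift the cluster-character exchange identity $\varphi_Y \varphi_Z = \varphi_U + \varphi_{U'}$, which is already known to hold in $\Cf[N_K]$ by \cite{GeLeSc05}, to the multi-homogeneous coordinate ring $\Cf[\FF]$ via the $\N^J$-grading and the dehomogenization map $\Cf[\FF] \twoheadrightarrow \Cf[N_K]$. The only thing that changes upon passing from $N_K$ to $\FF$ is that $\varphi_M$ acquires a canonical multi-homogeneous lift $\tilde \varphi_M$ of minimal multi-degree, and to transport the exchange identity one has to account for the mismatch between these multi-degrees by coefficient monomials in the generalized minors $\Delta_j$.

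First I would identify the multi-degree of $\tilde\varphi_M$ for $M \in \Sub Q_J$. The claim, to be established from the formula $\varphi_M(x_\ig(\tg)) = \sum_\kg \chi(\Phi_{M,\ig^\kg}) \tg^\kg/\kg!$ together with the Pl\"ucker-type embedding $\FF \hookrightarrow \prod_{j \in J}\P(V(\varpi_j))$ into the projectivisations of the fundamental representations, is
\[
\deg \tilde\varphi_M = \bigl(s_j(M)\bigr)_{j \in J}, \qquad s_j(M) := \dim_\Cf \Hom_\Pi(S_j, M),
\]
the point being that $\tilde\varphi_M$ arises as a matrix coefficient of a torus-eigenvector whose weight reads off the $j$-components of $\soc M$ for $j \in J$. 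Multiplicativity is then immediate: $\varphi_{Y \oplus Z} = \varphi_Y \varphi_Z$ already holds in $\Cf[N]$, and $s_j$ is additive on direct sums, so $\tilde\varphi_Y \tilde\varphi_Z$ and $\tilde\varphi_{Y \oplus Z}$ are homogeneous of the same multi-degree and coincide after dehomogenization, hence are equal.

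For the exchange relation, multi-homogenizing both sides of $\varphi_Y \varphi_Z = \varphi_U + \varphi_{U'}$ forces, by equating multi-degrees,
\[
\alpha_j = s_j(Y) + s_j(Z) - s_j(U), \qquad \beta_j = s_j(Y) + s_j(Z) - s_j(U').
\]
Hence the theorem reduces to the \emph{tropical socle identity}
\[
s_j(Y) + s_j(Z) \;=\; \max\bigl(s_j(U),\, s_j(U')\bigr) \qquad (j \in J), \tag{$\ast$}
\]
since $s_j(Y) + s_j(Z) - s_j(U) = [s_j(U') - s_j(U)]_+$ and $s_j(Y) + s_j(Z) - s_j(U') = [s_j(U) - s_j(U')]_+$ are both equivalent to $(\ast)$; these identities then yield the $\max$-formulas for $\alpha_j$, $\beta_j$ and in particular their non-negativity.

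The main obstacle is precisely $(\ast)$. Applying $\Hom_\Pi(S_j,-)$ to the two exchange sequences yields
\[
s_j(U) = s_j(Y) + s_j(Z) - \operatorname{rk} \partial_\eta, \qquad s_j(U') = s_j(Y) + s_j(Z) - \operatorname{rk} \partial_{\eta'},
\]
where $\partial_\eta : \Hom_\Pi(S_j, Z) \to \Ext^1_\Pi(S_j, Y)$ and $\partial_{\eta'} : \Hom_\Pi(S_j, Y) \to \Ext^1_\Pi(S_j, Z)$ are Yoneda multiplication by the extension classes $\eta \in \Ext^1_\Pi(Z,Y)$ and $\eta' \in \Ext^1_\Pi(Y,Z)$. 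Since both Ext groups are one-dimensional, each rank lies in $\{0,1\}$, and $(\ast)$ amounts to showing $\operatorname{rk}\partial_\eta \cdot \operatorname{rk}\partial_{\eta'} = 0$. This is where the stably $2$-Calabi--Yau structure on $\Sub Q_J$ is used essentially: because $S_j \in \Sub Q_J$ for $j \in J$, the 2-CY duality provides a functorial perfect pairing $\Ext^1_\Pi(A, B) \times \Ext^1_\Pi(B, A) \to \Cf$ on $\Sub Q_J$, under which $\eta$ and $\eta'$ are (after normalisation) mutual duals and the maps $\partial_\eta$ and $\partial_{\eta'}$ become transposed to each other through the identifications $\Ext^1_\Pi(S_j, Y) \cong D\Ext^1_\Pi(Y, S_j)$ and $\Ext^1_\Pi(S_j, Z) \cong D\Ext^1_\Pi(Z, S_j)$. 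A careful diagram chase using this duality --- which is the genuinely technical step --- shows that simultaneous non-vanishing of $\partial_\eta$ and $\partial_{\eta'}$ would produce a non-zero Yoneda composition in $\Ext^2_{\underline{\Sub}Q_J}(S_j, S_j)$ contradicting the fact that $\eta$ and $\eta'$ are the classes of non-split exchange sequences, yielding $(\ast)$ and completing the argument.
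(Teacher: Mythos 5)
The paper does not actually prove Theorem~\ref{mutGLS}: it quotes it from \cite[10.1]{GeLeSc08}. The only ingredient it does prove is Lemma~\ref{abc}, which it notes is ``stated without proof'' in \cite{GeLeSc08}. Your reduction of the theorem is the correct one and follows the GLS strategy: the minimal multi-degree of $\tilde\varphi_M$ is the socle vector $(s_j(M))_{j\in J}$, multiplicativity follows from additivity of $s_j$ and uniqueness of homogeneous lifts of a given degree, and the exchange formula reduces, by degree bookkeeping, to the identity $s_j(Y)+s_j(Z)=\max(s_j(U),s_j(U'))$, i.e.\ to the assertion that for each $j\in J$ at least one of the two connecting maps $\partial_\eta$, $\partial_{\eta'}$ vanishes. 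That assertion is precisely Lemma~\ref{abc}, and you are right that it is the genuinely technical point.

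The gap is that you do not prove it, and the mechanism you sketch does not work. First, $\partial_\eta$ and $\partial_{\eta'}$ are not transposes of one another under the $2$-Calabi--Yau pairing: the dual of $\partial_\eta\colon\Hom_\Pi(S_j,Z)\to\Ext^1_\Pi(S_j,Y)\cong D\Ext^1_\Pi(Y,S_j)$ is a map $\Ext^1_\Pi(Y,S_j)\to D\Hom_\Pi(S_j,Z)$, which is not $\partial_{\eta'}\colon\Hom_\Pi(S_j,Y)\to\Ext^1_\Pi(S_j,Z)$. Second, the intended contradiction fails: by the stable $2$-CY property $\Ext^2_{\underline{\Sub}Q_J}(S_j,S_j)\cong D\,\underline{\End}(S_j)$, which is in general nonzero, so exhibiting a nonzero class there contradicts nothing; nor does non-splitness of $\eta,\eta'$ force any Yoneda composite to vanish. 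The argument that actually works (the paper's proof of Lemma~\ref{abc}) is different: if $\partial_\eta(f)\neq0$ for some $f\colon S_j\hookrightarrow Z$, then $\Ext^1_\Pi(f,Y)$ is nonzero, hence injective on the one-dimensional space $\Ext^1_\Pi(Z,Y)$; $2$-CY duality converts this into surjectivity of $\Ext^1_\Pi(Y,f)\colon\Ext^1_\Pi(Y,S_j)\to\Ext^1_\Pi(Y,Z)$, so $\eta'$ is the pushout along $f$ of an extension $0\to S_j\to M\to Y\to0$; since $\Ext^1_\Pi(S_j,S_j)=0$ (no loops in Dynkin type), every map $S_j\to Y$ lifts to $M$ and hence factors through $U'$, which is exactly $\partial_{\eta'}=0$. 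You need to supply this (or an equivalent) argument for your step $(\ast)$; everything before it is sound.
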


 To construct $\tilde \AA$ using Theorem \ref{mutGLS}, Geiss-Leclerc-Schr\"oer constructed an explicit cluster tilting object in $\Sub Q_J$ that they call \emph{initial}. A cluster tilting object in $\Sub Q_J$ is called \emph{reachable} if it is obtained from the initial one by successive mutations. An indecomposable rigid object is called \emph{reachable} if it is a direct summand of a reachable cluster tilting object. Their result can be stated as follows.
 \begin{theorem}[{\cite[Theorem 10.2]{GeLeSc08}}] \label{categGLS}
  \begin{enumerate}[\rm (a)]
   \item There is a cluster algebra $\tilde \AA \subset \Cf[\FF]$ such that:
    \begin{itemize}
     \item coefficients of $\tilde \AA$ are $\tilde c$ for each coefficient $c$ of $\AA$ and $\Delta_j$ for each $j \in J$;
     \item clusters of $\tilde \AA$ are $\{\tilde x_1, \tilde x_2, \dots, \tilde x_\ell\} \sqcup \{\Delta_j \,|\, j \in J\}$ for each cluster $\{x_1, x_2, \dots, x_\ell\}$ of $\AA$.
    \end{itemize}  
   \item There is a bijection $X \mapsto \tilde \varphi_X$ between
    \begin{itemize}
     \item isomorphism classes of reachable indecomposable rigid objects of $\Sub Q_J$;
     \item cluster variables and coefficients of $\tilde \AA$ except $\Delta_j$ for $j \in J$.
    \end{itemize}
   \item There is a bijection $\bigoplus_{k=1}^\ell T_k \mapsto \{\tilde \varphi_{T_1}, \tilde \varphi_{T_2}, \dots, \tilde \varphi_{T_\ell}\} \sqcup \{\Delta_j \,|\, j \in J\} $ between
    \begin{itemize}
     \item isomorphism classes of reachable basic cluster tilting objects of $\Sub Q_J$;
     \item clusters of $\tilde \AA$.
    \end{itemize}
   Moreover, it commutes with mutation of cluster tilting objects and mutation of clusters.  
  \end{enumerate} 
 \end{theorem}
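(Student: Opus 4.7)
My plan is to deduce Theorem \ref{categGLS} from the already established categorification of $\AA\subset\Cf[N_K]$ by $(\Sub Q_J,\varphi)$ together with the refined mutation formula in Theorem \ref{mutGLS}, by carefully keeping track of multi-homogenizations. First I would fix the initial cluster tilting object $T=T_1\oplus\cdots\oplus T_\ell$ of $\Sub Q_J$ constructed explicitly by Geiss-Leclerc-Schr\"oer (a direct sum indexed by an adapted reduced word, with the indecomposable injectives $Q_j$ for $j\in J$ appearing as the frozen summands). Applying the multi-homogenization map described after Definition \ref{genmin}, I attach to every indecomposable rigid $X\in\Sub Q_J$ the element $\tilde\varphi_X\in\Cf[\FF]$, whose multi-degree is computed from the composition series data; in particular $\tilde\varphi_{Q_j}=\Delta_j$ for $j\in J$ up to scalars. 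I would then \emph{define} $\tilde\AA\subset\Cf[\FF]$ as the subalgebra with initial seed $\{\tilde\varphi_{T_1},\ldots,\tilde\varphi_{T_\ell}\}\sqcup\{\Delta_j\mid j\in J\}$, freezing the $\tilde\varphi_{T_k}$ corresponding to summands of $T$ that are injective in $\Sub Q_J$ together with all the $\Delta_j$, and with exchange matrix obtained from the Gabriel quiver of $\End_\Pi(T)$ by deleting arrows between frozen vertices.

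The heart of the argument is then to verify that mutation of $T$ at a non-frozen summand $T_k$ in the category $\Sub Q_J$ is compatible with cluster mutation at the corresponding vertex in $\tilde\AA$. For this I would use Theorem \ref{mutGLS}: the two non-split extensions between $T_k$ and its mutation $T_k^*$ have one-dimensional $\Ext^1$ (since $T$ is rigid and $\mu_k(T)$ is obtained by exchanging $T_k$ with $T_k^*$), so Theorem \ref{mutGLS} gives an equality
\[\tilde\varphi_{T_k}\tilde\varphi_{T_k^*}=\tilde\varphi_U\prod_{j\in J}\Delta_j^{\alpha_j}+\tilde\varphi_{U'}\prod_{j\in J}\Delta_j^{\beta_j}\]
with $\alpha_j,\beta_j$ computed from $\dim\Hom_\Pi(S_j,-)$. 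The required cluster-algebraic identity matches this if the monomials $\tilde\varphi_U$ and $\tilde\varphi_{U'}$ are expressed via their indecomposable decompositions and one checks that $\dim\Hom_\Pi(S_j,U)-\dim\Hom_\Pi(S_j,U')$ records precisely the net multiplicity of $\Delta_j$ required by multi-homogeneity; this is the key compatibility between the grading of $\Cf[\FF]$ by fundamental weights and the socle filtration on $\Sub Q_J$. I would derive this by comparing the two sides after dehomogenization (which recovers the known identity in $\Cf[N_K]$) and then matching multi-degrees coordinate-by-coordinate.

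Once this mutation compatibility is established, the bijections in (b) and (c) follow by a standard induction on the length of a mutation sequence starting from $T$: each mutation of a reachable cluster tilting object $T'$ produces a reachable cluster tilting object whose image under $\tilde\varphi$ is the mutated cluster, and injectivity of the assignment follows from the rigidity of reachable cluster tilting objects in the stably $2$-Calabi-Yau category $\Sub Q_J$ together with the fact that $\tilde\varphi$ separates isomorphism classes of rigid indecomposables (a consequence of the analogous property for $\varphi$ on $\Cf[N_K]$ and the uniqueness of the minimal multi-homogeneous lift). Statement (a) is then a definition-unpacking: the initial seed lives in $\Cf[\FF]$, and mutation compatibility ensures that all cluster variables of $\tilde\AA$ are obtained as $\tilde\varphi_X$ for reachable indecomposable rigid $X$, while the $\Delta_j$ appear as the frozen coefficients.

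The main obstacle will be the multi-degree bookkeeping in the exchange relation: one must show that the explicit coefficients $\alpha_j,\beta_j$ of Theorem \ref{mutGLS} are exactly what is forced by requiring the equality to hold in the $\N^J$-graded ring $\Cf[\FF]$, rather than merely in $\Cf[N_K]$. I would handle this by inducting on the distance from $T$ in the mutation graph, using that for $T$ itself the degrees of $\tilde\varphi_{T_k}$ can be read off from the socle (hence from $\dim\Hom_\Pi(S_j,T_k)$) and checking that mutation preserves the relation ``multi-degree $=$ socle data'' via the explicit short exact sequences defining $T_k^*$; this is where the stably $2$-Calabi-Yau property of $\Sub Q_J$ enters, guaranteeing symmetry of the two non-split extensions and hence of $\alpha_j$ versus $\beta_j$.
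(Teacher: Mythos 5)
Theorem \ref{categGLS} is not proved in this paper at all: it is quoted verbatim from \cite[Theorem 10.2]{GeLeSc08}, and the bracketed citation in the theorem header is the entirety of the paper's ``proof.'' So there is no internal argument to compare yours against; what you have written is a reconstruction of Geiss--Leclerc--Schr\"oer's own proof. As such a reconstruction, your outline is essentially the right one (lift the initial seed of $\AA$ by minimal multi-homogenization, adjoin the $\Delta_j$ as frozen variables, use the exchange formula of Theorem \ref{mutGLS} to match categorical mutation in $\Sub Q_J$ with cluster mutation in $\Cf[\FF]$, and induct along the mutation graph; injectivity comes from the corresponding statement for $\varphi$ on $\Cf[N_K]$ plus uniqueness of the minimal homogeneous lift).

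There is, however, one genuine error in your sketch, and it touches the point that motivates the entire paper: the claim that $\tilde\varphi_{Q_j}=\Delta_j$ up to scalars for $j\in J$. This is false. Under the dehomogenization $\Cf[\FF]\twoheadrightarrow\Cf[N_K]$ one has $\Delta_j\mapsto 1$, and the minimal homogeneous lift of $1$ is $1$ itself (of multi-degree $0$), not $\Delta_j$; hence $\Delta_j$ is not of the form $\tilde\varphi_X$ for any $X\in\Sub Q_J$. This is precisely why statement (b) excludes $\{\Delta_j\mid j\in J\}$ from the bijection, and why the present paper must enlarge $\Sub Q_J$ to $\CM_e A$, where the additional projective-injective summands $Ae_j$ provide categorical preimages of the $\Delta_j$ under $\psi$. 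In the GLS construction the $\Delta_j$ are adjoined to the seed as coefficients with no counterpart in $\Sub Q_J$ (the frozen objects of $\Sub Q_J$ account only for the lifts $\tilde c$ of the coefficients $c$ of $\AA$). With that correction, your argument is a faithful account of \cite[Section 10]{GeLeSc08}, but it is external to this paper.
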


\subsection{Categorification of the cluster algebra structure of $\Cf[\FF]$ using $\CM_e A$}
We keep the setting of the beginning of this section, and we fix $R := \Cf\llbracket t \rrbracket$. 
Our aim is to categorify $\Cf[\FF(\Delta, J)]$ by a category $\CM_e A$ where $A$ is an $R$-order and $e \in A$ is an idempotent. We denote by $g = g_J$ the idempotent of $\Pi$ corresponding to the set $J$. We also denote $I_J := \Hom_\Pi(\Pi/(g), \Pi)$ which is the biggest ideal of $\Pi$ satisfying $g I_J = 0$. We observe that
\begin{itemize}
 \item injective modules corresponding to $j \in J$ in $\mod \Pi$ and $\mod \Pi/I_J$ coincide;
 \item $\Pi/I_J \in \Sub Q_J \subset \mod \Pi/I_J \subset \mod \Pi$.
\end{itemize}

We define pairs $(A, e)$ permitting the categorification. 
\begin{definition} \label{dfmod}
A pair $(A, e)$ where $A$ is an $R$-order and $e \in A$ is an idempotent \emph{models} $(\Delta, J)$ if 
\begin{itemize}
 \item $B := A/(e) \cong \Pi(\Delta)/I_J$ as $\Cf$-algebras;
 \item (E1) $\Ext^1_A(\CM_e A, Ae) = 0$ and (E2) $\Ext^2_{\mod_e A}(\mod_e A, Ae) = 0$;
 \item $\Bcotop$ induces a bijection from $\ind Ae$ to $\ind(\soc Q_J)$.
\end{itemize}
\end{definition}

Using the last condition of Definition \ref{dfmod}, if $(A,e)$ models $(\Delta, J)$, we can decompose $e$ as sum of primitive orthogonal idempotents $e = \sum_{j \in J} e_j$ in such a way that for every $j \in J$, \begin{equation} \Bcotop A e_j \cong \soc Q_j. \label{corcotop} \end{equation}

In this context, we have the following equivalence of category:
\begin{lemma} \label{eq6}
 If $(A,e)$ models $(\Delta, J)$, $B \otimes_A -$ restrict to an exact bijective functor $F: \CM_e A \to \Sub Q_J$ which induces an equivalence of exact categories
 $(\CM_e A) / [Ae] \to \Sub Q_J$.
\end{lemma}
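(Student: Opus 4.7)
The plan is to reduce this immediately to Theorem \ref{mainB}, and then identify the resulting injective $B$-module with $Q_J$. Since $(A,e)$ models $(\Delta,J)$, conditions (E1) and (E2) hold, so Theorem \ref{mainB} (c) and (d) apply: the functor $B\otimes_A-$ restricts to an equivalence of exact categories
\[
(\CM_e A)/[Ae]\xrightarrow{\sim}\Sub U,
\qquad U:=\Hom_A(B,Ae\otimes_R(K/R)),
\]
and $U$ is an injective object of $\mod B$. Hence everything reduces to establishing the identity $\Sub U=\Sub Q_J$ inside $\mod B$.

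The key step is therefore to check that $U\cong Q_J$ as $B$-modules. By Lemma \ref{U is universal} (b), we have $\soc U\cong \Bcotop Ae$. Writing $e=\sum_{j\in J}e_j$ as in \eqref{corcotop}, this gives
\[
\soc U \cong \bigoplus_{j\in J}\Bcotop Ae_j \cong \bigoplus_{j\in J}\soc Q_j = \soc Q_J,
\]
where the middle isomorphism is exactly the bijection on indecomposables supplied by the third bullet of Definition \ref{dfmod}. Now $Q_J$ is injective in $\mod B=\mod\Pi/I_J$ (this is where the observation from the excerpt that injectives indexed by $J$ coincide in $\mod\Pi$ and in $\mod\Pi/I_J$ is used), and $U$ is injective in $\mod B$ by Theorem \ref{mainB} (d). Two injective modules in a finite dimensional module category with isomorphic socles are isomorphic, so $U\cong Q_J$, and hence $\Sub U=\Sub Q_J$.

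Finally, to upgrade from the equivalence on the quotient $(\CM_e A)/[Ae]\xrightarrow{\sim}\Sub Q_J$ to an exact bijective functor $F:\CM_e A\to \Sub Q_J$, observe that $F$ factors through the canonical projection $\pi:\CM_e A\to(\CM_e A)/[Ae]$. Since $Ae$ consists of projective-injective objects of $\CM_e A$ by (E1), Theorem \ref{eq1} (or equivalently the equivalence of exact categories statement in Theorem \ref{mainB} (d)) implies that $\pi$ is exact bijective. The composition of an exact bijective functor with an equivalence of exact categories is again exact bijective, giving the claim.

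The only genuinely non-trivial piece is the socle identification, which is the whole point of the third condition in Definition \ref{dfmod}: once this is in place, injectivity of $U$ and of $Q_J$ in $\mod B$ forces $U\cong Q_J$, and the rest is a formal consequence of the already established Theorem \ref{mainB}. There is no real obstacle beyond carefully citing the machinery built up in Sections \ref{s:orders}--\ref{ss:pforders}.
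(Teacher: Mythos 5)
Your proof is correct and follows essentially the same route as the paper: reduce to Theorem \ref{mainB} (c)--(e) and then identify $U$ with $Q_J$. The paper packages the identification $U\cong Q_J$ into a citation of Lemma \ref{Bcotopsimple} (d), whereas you unpack it directly via Lemma \ref{U is universal} (b) together with the injectivity of $U$ and $Q_J$ in $\mod B$ — but this is the same underlying socle argument, so there is nothing substantive to add.
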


\begin{proof}
 Thanks to Theorem \ref{mainB} (c) and (d), $F := B \otimes_A - : \CM_e A \to \Sub U$ induces an equivalence of exact categories $(\CM_e A)/[Ae] \to \Sub U$ for some injective $B$-module $U$, so $F$ is exact bijective. By Lemma \ref{Bcotopsimple} (d), we have $U \cong Q_J$, hence the statement holds.  
\end{proof}

We start by proving the following proposition by applying the method of change of orders given in Theorem \ref{thmchangidem2}.

\begin{proposition} \label{changeJ}
 Assume that $(A,e)$ models $(\Delta, J)$. Then, for any subset $J'$ of $J$, there exists an order $A'$, explicitly constructed from $A$, and an idempotent $e'$ of $A'$ such that $(A', e')$ models $(\Delta, J')$.
\end{proposition}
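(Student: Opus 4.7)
The strategy is to invoke Theorem \ref{thmchangidem2} applied to the triple $(A, e_{J'}, B')$, where I set $e_{J'} := \sum_{j \in J'} e_j$ using the decomposition of $e$ coming from \eqref{corcotop}, and $B' := \Pi/I_{J'}$. Since $J' \subseteq J$ gives $g_{J'} \leq g_J$ and hence $g_{J'} I_J = 0$, maximality of $I_{J'}$ yields $I_J \subseteq I_{J'}$, so $B'$ is a quotient of $B$ and in particular a factor algebra of $A/(e_{J'})$, as Theorem \ref{thmchangidem2} requires.

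I first verify the hypotheses (C1), (C2), (C3). (C1) is immediate. For (C3), the inclusions $\add A e_{J'} \subseteq \add Ae$ and $\mod B' \subseteq \mod B$ give $\CM_{e_{J'}}^{B'} A \subseteq \CM_e A$, so the hypothesis (E1) for $(A,e)$ suffices. For (C2), I use the Geiss-Leclerc-Schr\"oer fact $B' \in \Sub Q_{J'}$ together with the observation that $U^{A e_{J'}} := \Hom_A(B, A e_{J'} \otimes_R (K/R))$, being a direct summand of $U \cong Q_J$ with socle $\bigoplus_{j \in J'} S_j$ by Lemma \ref{socU}(b) and \eqref{corcotop}, is isomorphic to $Q_{J'}$ as a $B$-module; since $U^{A e_{J'}}$ is canonically embedded in $A e_{J'} \otimes_R (K/R)$, this gives the required embedding of $B'$ in a power of $A e_{J'} \otimes_R (K/R)$.

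Theorem \ref{thmchangidem2} then produces an order $A' = \End_A(A e_{J'} \oplus \widetilde{B'})$ and an idempotent $e'' \in A'$ satisfying the first two conditions for modeling $(\Delta, J')$: $A'/(e'') \cong B'$ by part (a), and (E1) together with (E2)\pup* by part (b), where (E2)\pup* implies (E2) via Lemma \ref{puprem}(a). The remaining and most delicate task is condition 3: showing $\Bcotop_{B'}$ induces a bijection $\ind A' e'' \to \ind \soc Q_{J'}$. The plan is to prove the stronger statement $U' \cong Q_{J'}$ as $B'$-modules, where $U' := \Hom_{A'}(B', A'e'' \otimes_R (K/R))$ is the injective $B'$-module of Theorem \ref{thmchangidem2}(c). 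Once this is established, Lemma \ref{Bcotopsimple}(d) applied to $(A',e'')$ yields a bijection $\OO' \to \ind \soc U' = \ind \soc Q_{J'}$, and since $|\ind A' e''| = |J'| = |\ind \soc Q_{J'}|$ (the first equality from the equivalence $W \otimes_{A'} - : \CM_{e''} A' \to \CM_{e_{J'}}^{B'} A$ of Theorem \ref{thmchangidem2}(d) mapping $A' e''$ to $A e_{J'} = \bigoplus_{j \in J'} A e_j$), we must have $\OO' = \ind A' e''$, which gives the sought bijection.

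The main obstacle is the identification $U' \cong Q_{J'}$. I will exploit the finite generation of $W$ as an $A$-module to identify the natural map $A' e'' \otimes_R K \to \Hom_A(W, A e_{J'} \otimes_R K)$ as an isomorphism of $A'$-modules, and then use $\Ext^1_A(W, A e_{J'}) = 0$ (a consequence of (C3)) to propagate this to $A' e'' \otimes_R (K/R) \cong \Hom_A(W, A e_{J'} \otimes_R (K/R))$. Applying $\Hom_{A'}(B', -)$ and using tensor-Hom adjunction together with $W \otimes_{A'} B' \cong B'$ from Lemma \ref{basic of M}(c), this produces $U' \cong \Hom_A(B', A e_{J'} \otimes_R (K/R))$, the maximal $B'$-submodule of $A e_{J'} \otimes_R (K/R)$. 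Both $U'$ and $U^{A e_{J'}} \cong Q_{J'}$ are contained in the maximal $B$-submodule of $Ae_{J'}\otimes_R(K/R)$, but $U^{A e_{J'}}$ is already a $B'$-module since $Q_{J'} \in \Sub Q_{J'} \subset \mod \Pi/I_{J'}$, forcing the two containments to become equalities and yielding $U' \cong Q_{J'}$ as desired.
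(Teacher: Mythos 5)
Your proposal is correct and follows the same overall skeleton as the paper's proof: both apply Theorem \ref{thmchangidem2} to the data $e'=\sum_{j\in J'}e_j$ and $B'=\Pi/I_{J'}$, and both read off $A'/(e')\cong B'$ and (E1), (E2) from parts (a) and (b) of that theorem. The differences are in the verifications of (C2) and of the third modelling condition. For (C2) the paper realizes $Q_{J'}$ as $FX$ for some $X\in\CM_eA$, identifies $TX\cong Ae'$ via Proposition \ref{descexseq} (a), and invokes Lemma \ref{LemmaFX}; your identification $U^{Ae'}\cong Q_{J'}$ as the injective summand of $U\cong Q_J$ with socle $\bigoplus_{j\in J'}\soc Q_j$ is an equally valid shortcut to the same embedding. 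The genuine divergence is in the last condition: the paper applies $H=\Hom_A(W,-)$ to the non-split sequences $0\to Ae_j\to\Bcorad(Ae_j)\to\soc Q_j\to 0$ and concludes $\Bpcotop(A'e_j)\cong\soc Q_j$ directly from Lemma \ref{Bcotopsimple} (a), whereas you compute the new injective $U'$ explicitly --- via flat base change along $R\to K$, the vanishing $\Ext^1_A(W,Ae')=0$ coming from (C3), tensor-hom adjunction and $W\otimes_{A'}B'\cong B'$ --- identify it with $Q_{J'}$ by the two-containment argument, and then conclude with Lemma \ref{Bcotopsimple} (d) plus a count of indecomposable summands. Your route is longer but establishes the isomorphism $U'\cong Q_{J'}$ up front (something the paper only recovers afterwards by running Lemma \ref{eq6} on the new pair), while the paper's argument is shorter and avoids the base-change computation entirely. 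Both arguments are complete.
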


\begin{proof}
 First of all, using indices of \eqref{corcotop}, let $e' = \sum_{j \in J'} e_j$. Denote $B := \Pi/I_J$ and $B' := \Pi/ I_{J'}$. Then $B'$ is a quotient of $A/(e')$. Let us check that $(A, e')$ and $B'$ satisfy the hypotheses of Theorem \ref{thmchangidem2}. First of all, (C1) is clear. By Lemma \ref{eq6}, $Q_{J'} \cong FX$ for some $X \in \CM_e A$. Moreover, according to Proposition \ref{descexseq} (a), $\Bcotop TX \cong \soc Q_{J'}$ so $TX \cong Ae'$. Therefore, thanks to Lemma \ref{LemmaFX}, we get
 $$B' \in \Sub Q_{J'} \subset \Sub (Ae' \otimes_R (K/R)),$$
 hence (C2) is satisfied.
 It is immediate that $\CM_{e'}^{B'} A \subset \CM_e A$ so, thanks to $(E1)$, we get (C3) $\Ext^1_A (\CM_{e'}^{B'} A, Ae') = 0$. 

 We apply Theorem \ref{thmchangidem2} to the pair $(A, e')$ to get an explicit order $A'$. Let us show that $(A', e')$ models $(\Delta, J')$. We have $B' \cong A'/(e')$ by Theorem \ref{thmchangidem2} (a). Moreover, $(A', e')$ satisfies (E1) and (E2)\pup* by Theorem \ref{thmchangidem2} (b), so it also satisfies (E2). It remains to check for $j \in J'$ that $\Bpcotop (A' e_j) \cong S_j$. Thanks to Proposition \ref{HT} and Lemma \ref{HTB}, applying $H$ to $0 \to A e_j \to \Bcorad (A e_j) \to S_j \to 0$ gives a short exact sequence
 $0 \rightarrow A' e_j \rightarrow H ( \Bcorad (A e_j) ) \rightarrow S_j \rightarrow 0$
 which does not split. Moreover, $H (\Bcorad (A e_j) ) \in \CM_{e'} A'$ so $S_j$ is a summand of $\Bpcotop(A' e_j)$. So, thanks to Lemma \ref{Bcotopsimple}, $\Bpcotop(A' e_j) \cong S_j$. 
\end{proof}

As a consequence, we obtain the following important result of this paper:
\begin{theorem} \label{modelAD}
 For any Dynkin diagram $\Delta$ and any set $J$ of vertices of $\Delta$ there exists a pair $(A,e)$ which models $(\Delta, J)$.
\end{theorem}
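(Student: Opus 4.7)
My plan is to reduce to the ``maximal'' case where $J$ consists of all vertices of $\Delta$, which can be handled directly by Corollary \ref{thselfinj}, and then deduce every smaller subset from it via Proposition \ref{changeJ}.

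For the base case I take $J = J_0$, where $J_0$ denotes the full vertex set of $\Delta$. Since $\Delta$ is Dynkin, $\Pi := \Pi(\Delta)$ is a finite dimensional selfinjective $\Cf$-algebra, so Corollary \ref{thselfinj} provides a Gorenstein order
\[
A_0 := \Pi \otimes_{\Cf} \begin{bmatrix} R & R \\ tR & R \end{bmatrix}, \qquad e_0 := 1 \otimes \begin{bmatrix} 1 & 0 \\ 0 & 0 \end{bmatrix},
\]
with $A_0/(e_0) = \Pi$. Since $g_{J_0} = 1$, we have $I_{J_0} = \Hom_\Pi(\Pi/(1), \Pi) = 0$ and $\Pi/I_{J_0} = \Pi$, which gives the first requirement in Definition \ref{dfmod}. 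Because $A_0 e_0 \cong \Hom_R((1-e_0) A_0, R)$, Lemma \ref{puprem} (b) gives (E1) and (E2)\pup*, and then (E2) follows from Lemma \ref{puprem} (a).

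To check the third (bijectivity) condition, I will identify both sides. Theorem \ref{mainA} (c), applied with $g = f = 1 - e_0$ (whose image in $\Pi$ is $1$), yields $\add Q_{J_0} = \add \Pi$, and hence $Q_{J_0} \cong \Pi$ after comparing dimensions; thus $\soc Q_{J_0} = \soc \Pi$ has exactly $|J_0|$ indecomposable summands. On the other side, $A_0 e_0$ splits as a direct sum of $|J_0|$ indecomposables, one per primitive idempotent of $\Pi$. Lemma \ref{Bcotopsimple} (d) then furnishes a bijection $\Bcotop \colon \OO \to \ind(\soc U)$ with $\OO \subseteq \ind A_0 e_0$, while Theorem \ref{mainB} (c) identifies $U$ with $Q_{J_0}$. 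Matching cardinalities forces $\OO = \ind A_0 e_0$, completing the base case. For a general $J \subseteq J_0$, I then apply Proposition \ref{changeJ} to $(A_0, e_0)$ with $J' = J$, producing the required pair $(A, e)$ modeling $(\Delta, J)$.

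The main obstacle will be the third condition of Definition \ref{dfmod} in the base case: Lemma \ref{Bcotopsimple} (d) only supplies the bijection on the a priori smaller subset $\OO \subseteq \ind A_0 e_0$, and upgrading $\OO$ to the full $\ind A_0 e_0$ depends crucially on the selfinjectivity of $\Pi$, through the identification $Q_{J_0} \cong \Pi$ and the resulting cardinality count. Once this is settled, the reduction step is mechanical.
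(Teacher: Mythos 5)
Your proof is correct and follows essentially the same route as the paper: establish the base case $J = \Delta_0$ via Corollary \ref{thselfinj} (the paper simply declares the three conditions of Definition \ref{dfmod} ``immediate'' there, whereas you verify them explicitly, including the cardinality argument identifying $\OO$ with $\ind A_0e_0$), and then pass to an arbitrary $J$ by Proposition \ref{changeJ}.
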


\begin{proof}
 As $\Pi$ is selfinjective, thanks to Corollary \ref{thselfinj}, there exist an order $A$ and an idempotent $e$ of $A$ such that $A/(e) \cong \Pi$ as $\Cf$-algebras and $D_1 (Ae) \cong (1-e) A$ as right $A$-modules. So it is immediate that $(A, e)$ models $(\Delta, \Delta_0)$ where $\Delta_0$ is the set of vertices of $\Delta$.
 Then, Proposition \ref{changeJ} allows us to conclude immediately.
\end{proof}

Notice that the pair $(A,e)$ in Theorem \ref{modelAD} is not unique. We will construct in \cite{DeIy-2} other possibilities than the one considered in this paper.

We now fix a pair $(\Delta, J)$ and a pair $(A, e)$ modelling it. We will prove that $\CM_e A$ categorifies the cluster algebra structure of $\tilde \AA$.
From now on, we consider $F: \CM_e A \rightarrow \Sub Q_J$ as in Lemma \ref{eq6}. Since the category $\Sub Q_J$ is stably $2$-Calabi-Yau, $\CM_e A$ is also stably $2$-Calabi-Yau. We now extend the character $\tilde \varphi$ to $\CM_e A$:

\begin{definition}
 For $Y \in \CM_e A$, we define $\psi_Y \in \tilde \AA$ as follows. If $Y$ does not have non-zero direct summands in $\add Ae$, then $\psi_Y := \tilde \varphi_{FY}$. For $j \in J$, $\psi_{A e_j} := \Delta_j$, and we extend the definition to $\CM_e A$ by the property $\psi_{Y \oplus Z} = \psi_Y \psi_Z$.
\end{definition}

The following main result of this subsection improves Theorem \ref{categGLS} of Geiss-Leclerc-Schr\"oer:

\begin{theorem} \label{categG}
  \begin{enumerate}[\rm (a)]
   \item $\psi$ induces a bijection between
    \begin{itemize}
     \item isomorphism classes of reachable indecomposable rigid objects of $\CM_e A$;
     \item cluster variables and coefficients of $\tilde \AA$.
    \end{itemize}
   \item $\psi$ induces a bijection between
    \begin{itemize}
     \item isomorphism classes of reachable basic cluster tilting objects of $\CM_e A$;
     \item clusters of $\tilde \AA$.
    \end{itemize}
   Moreover, it commutes with mutation of cluster tilting objects and mutation of clusters.  
  \end{enumerate} 
 \end{theorem}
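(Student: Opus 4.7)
My plan is to transfer the Geiss-Leclerc-Schr\"oer categorification Theorem \ref{categGLS} from $\Sub Q_J$ to $\CM_e A$ along the equivalence $F:(\CM_e A)/[Ae]\xrightarrow{\sim}\Sub Q_J$ of Lemma \ref{eq6}. Every $X\in\CM_e A$ splits canonically as $X_0\oplus X_1$ with $X_1\in\add Ae$ and $X_0$ having no summand in $\add Ae$, so that $FX=FX_0\in\Sub Q_J$, and the definition $\psi_X=\tilde\varphi_{FX_0}\prod_j\Delta_j^{m_j}$ (with $m_j$ the multiplicity of $Ae_j$ in $X_1$) bridges $\psi$ and $\tilde\varphi$.

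I would first verify that $\CM_e A$ is Frobenius and stably $2$-Calabi-Yau. Theorem \ref{mainB} (f) supplies enough projectives and injectives, and Remark \ref{remfrob} (c) applied to the exact bijective $F$ transfers the Frobenius property from $\Sub Q_J$. Since $F$ kills $\add Ae$ while sending the remaining projective-injectives of $\CM_e A$ onto those of $\Sub Q_J$ by Remark \ref{remfrob} (a), passing to stable categories gives a triangle equivalence $\underline{\CM}_e A\cong\underline{\Sub} Q_J$, so the $2$-Calabi-Yau property transfers.

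Next I would prove that $\psi$ is a cluster character; assertions (a) and (b) then reduce to Theorem \ref{categGLS} by tracking the summand $Ae$. Multiplicativity $\psi_{Y\oplus Z}=\psi_Y\psi_Z$ is built into the definition. For the exchange relation, given an exchange pair $(Y,Z)$ in $\CM_e A$ with sequences $0\to Y\to V\to Z\to 0$ and $0\to Z\to V'\to Y\to 0$, exact bijectivity of $F$ produces exchange sequences in $\Sub Q_J$ with $\dim\Ext^1_\Pi(FY,FZ)=1$, and Theorem \ref{mutGLS} yields
\[\tilde\varphi_{FY}\tilde\varphi_{FZ}=\tilde\varphi_{FV}\prod_{j\in J}\Delta_j^{\alpha_j}+\tilde\varphi_{FV'}\prod_{j\in J}\Delta_j^{\beta_j}.\]
Decomposing $V=V_0\oplus V_1$ and $V'=V'_0\oplus V'_1$ with $V_1,V'_1\in\add Ae$ of multiplicities $m_j$ and $m'_j$, Proposition \ref{descexseq} (b) gives $m_j=\dim\Hom_\Pi(S_j,FY)+\dim\Hom_\Pi(S_j,FZ)-\dim\Hom_\Pi(S_j,FV)$ and the analogous formula for $m'_j$, whence the arithmetic identity $m_j-m'_j=\alpha_j-\beta_j$. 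To conclude $\alpha_j=m_j$ and $\beta_j=m'_j$, I would show $\min(m_j,m'_j)=0$; categorically this amounts to verifying that the connecting maps $\Hom_\Pi(S_j,FZ)\to\Ext^1_\Pi(S_j,FY)$ and $\Hom_\Pi(S_j,FY)\to\Ext^1_\Pi(S_j,FZ)$ cannot both be nonzero, a statement controlled by the $2$-Calabi-Yau duality between the two exchange Ext classes. Granting this, $\psi_Y\psi_Z=\psi_V+\psi_{V'}$ follows.

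Finally, the bijections reduce to Theorem \ref{categGLS}. Every cluster-tilting object of the Frobenius category $\CM_e A$ contains $Ae$ as a summand since each $Ae_j$ is projective-injective, so it takes the form $T\oplus Ae$; the assignment $T\oplus Ae\mapsto FT$ is a bijection with cluster-tilting objects of $\Sub Q_J$ which, via Theorem \ref{categGLS} (c) and the correspondence $Ae\leftrightarrow\prod_{j\in J}\Delta_j$, matches with clusters of $\tilde\AA$. At the indecomposable level, the $\add Ae$-summands are precisely the $Ae_j$ with $\psi_{Ae_j}=\Delta_j$, accounting for the coefficients missing in Theorem \ref{categGLS} (b), while the remaining indecomposable rigids biject via $F$ with those in $\Sub Q_J$ and hence with the cluster variables and non-$\Delta_j$ coefficients of $\tilde\AA$. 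Mutation in $\CM_e A$ never disturbs the invariant summand $Ae$, so it is compatible via $F$ with mutation in $\Sub Q_J$, and Theorem \ref{categGLS} propagates the mutation compatibility. The principal obstacle is the $2$-Calabi-Yau duality step $\min(m_j,m'_j)=0$ in the verification of the exchange relation.
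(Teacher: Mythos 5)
Your proposal follows essentially the same route as the paper: transfer Theorem \ref{categGLS} along $F$, prove that $\psi$ is a cluster character via Proposition \ref{descexseq} (b) and Theorem \ref{mutGLS}, and match basic cluster tilting objects of the form $Ae\oplus T$ in $\CM_e A$ with those of $\Sub Q_J$. The one step you flag as the principal obstacle --- that the connecting maps $\Hom_\Pi(S_j,FZ)\to\Ext^1_\Pi(S_j,FY)$ and $\Hom_\Pi(S_j,FY)\to\Ext^1_\Pi(S_j,FZ)$ cannot both be nonzero --- is exactly the paper's Lemma \ref{abc}, and it is closed by the $2$-Calabi-Yau argument you anticipate: nonvanishing of the first map makes $\Ext^1_\Pi(f,FY)$ injective on the one-dimensional space $\Ext^1_\Pi(FZ,FY)$, which dualizes to surjectivity of $\Ext^1_\Pi(FY,f)$, and the resulting pushout together with $\Ext^1_\Pi(S_j,S_j)=0$ forces the second map to vanish.
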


 We start by proving that $\psi$ is a cluster character, extending Theorem \ref{mutGLS}:

\begin{proposition} \label{mutG} \begin{enumerate}[\rm (a)]
 \item If $Y, Z \in \CM_e A$, then $\psi_{Y \oplus Z} = \psi_Y \psi_Z$. 
 \item If $Y, Z \in \CM_e A$ are indecomposable and $\dim \Ext^1_A(Y,Z) = 1$ (or equivalently $\dim \Ext^1_A(Z,Y) = 1$), we have $\psi_Y \psi_Z = \psi_U + \psi_{U'}$ where 
 $$\xi_1: 0 \rightarrow Y \rightarrow U \rightarrow Z \rightarrow 0 \quad \text{and} \quad \xi_2: 0 \rightarrow Z \rightarrow U' \rightarrow Y \rightarrow 0$$
 are two non-split short exact sequences. 
 \end{enumerate}
\end{proposition}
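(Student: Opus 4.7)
Part (a) is immediate from the definition of $\psi$, which is multiplicative on direct sums by construction.

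For (b), my plan is to transport the computation to $\Sub Q_J$ via the exact bijective functor $F$ of Lemma \ref{eq6}, invoke the Geiss-Leclerc-Schr\"oer mutation formula (Theorem \ref{mutGLS}), and translate the resulting $\prod \Delta_j^{\alpha_j}$-factors back to the $\add Ae$-summands of $U$ and $U'$ in $\CM_e A$. Since $(A,e)$ models $(\Delta, J)$, condition (E1) gives that $Ae$ is injective in $\CM_e A$, so $\Ext^1_A(Y,Z)\neq 0$ forces $Y,Z\notin \add Ae$, whence $\psi_Y = \tilde\varphi_{FY}$ and $\psi_Z=\tilde\varphi_{FZ}$. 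I decompose $U = \tilde U \oplus U_0$ and $U' = \tilde U'\oplus U'_0$ with $U_0 = \bigoplus_j (Ae_j)^{k_j}$ and $U'_0 = \bigoplus_j (Ae_j)^{k'_j}$ the maximal summands in $\add Ae$. A first key observation is that $F(Ae_j) = 0$ for every $j \in J$: since $e_j = e\cdot e_j$, we have $Ae_j = Ae\cdot e_j \subseteq AeA\cdot e_j$, so $B\otimes_A Ae_j = 0$. Hence $FU = F\tilde U$ and $FU' = F\tilde U'$.

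Applying $F$ to $\xi_1,\xi_2$ yields two non-split exchange sequences in $\Sub Q_J$ with $\dim\Ext^1_\Pi(FY, FZ)=1$, and Theorem \ref{mutGLS} gives $\psi_Y\psi_Z = \tilde\varphi_{F\tilde U}\prod \Delta_j^{\alpha_j} + \tilde\varphi_{F\tilde U'}\prod \Delta_j^{\beta_j}$ with the standard $\max$-expressions for $\alpha_j,\beta_j$. Applying Proposition \ref{descexseq}(b) to both $\xi_1$ and $\xi_2$, together with the modeling identity $\Bcotop(Ae_j)\cong S_j$, I obtain the socle formulas $\dim \Hom_\Pi(S_j, F\tilde U)+k_j = \dim \Hom_\Pi(S_j,FY)+\dim \Hom_\Pi(S_j,FZ) = \dim \Hom_\Pi(S_j, F\tilde U')+k'_j$, whence $\alpha_j = \max(0,k_j-k'_j)$ and $\beta_j=\max(0,k'_j-k_j)$. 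Since $\psi_U = \tilde\varphi_{F\tilde U}\prod \Delta_j^{k_j}$ and similarly for $\psi_{U'}$, the desired identity $\psi_Y\psi_Z = \psi_U+\psi_{U'}$ reduces to the tropical vanishing $\min(k_j,k'_j)=0$ for every $j\in J$.

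The main obstacle is precisely this tropical vanishing, which I expect to prove using the stably 2-Calabi-Yau structure on $\underline{\CM}_e A\cong\underline{\Sub} Q_J$. The long exact sequences of $\Hom_\Pi(S_j,-)$ applied to $\xi_1$ and $\xi_2$ identify $k_j$ and $k'_j$ with the ranks of the connecting maps, which are Yoneda multiplications by the generators $\xi_1\in \Ext^1_\Pi(FZ,FY)$ and $\xi_2\in \Ext^1_\Pi(FY,FZ)$. Under Serre duality $\Ext^1_\Pi(FZ,FY)\cong D\Ext^1_\Pi(FY,FZ)$ these generators are mutually dual, so the two connecting maps become mutually dual after the appropriate 2-CY identifications, and this dualisation forces $\min(k_j,k'_j)=0$; alternatively, one can derive the same conclusion via the octahedral axiom applied to the two exchange triangles in the triangulated stable category, tracking the $S_j$-socle contributions. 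Once the vanishing is established one has $\alpha_j=k_j$ and $\beta_j=k'_j$, and the identity $\psi_Y\psi_Z=\psi_U+\psi_{U'}$ follows at once from the GLS formula combined with the socle computation above.
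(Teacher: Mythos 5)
Your part (a) and the entire reduction in part (b) — the decomposition of $U$ and $U'$ into their maximal $\add Ae$-summands, the identity $F(Ae_j)=0$, the use of Proposition \ref{descexseq} (b) to compute the multiplicities $k_j, k'_j$ via socles, and the observation that the GLS exponents satisfy $\alpha_j=\max(0,k_j-k'_j)$, $\beta_j=\max(0,k'_j-k_j)$, so that everything reduces to $\min(k_j,k'_j)=0$ — all match the paper's proof. The paper isolates this last dichotomy as a separate lemma (Lemma \ref{abc}): for each $j\in J$, at least one of the complexes $\Hom_\Pi(S_j,F\xi_1)$, $\Hom_\Pi(S_j,F\xi_2)$ is exact, equivalently at least one of the two connecting maps $\delta_1:\Hom_\Pi(S_j,FZ)\to\Ext^1_\Pi(S_j,FY)$ and $\delta_2:\Hom_\Pi(S_j,FY)\to\Ext^1_\Pi(S_j,FZ)$ vanishes.

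This is exactly where your argument has a genuine gap. The mechanism you propose — that the two connecting maps "become mutually dual after the appropriate 2-CY identifications," and that "this dualisation forces $\min(k_j,k'_j)=0$" — cannot work as stated: mutually dual linear maps have \emph{equal} rank, so such an identification would yield $k_j=k'_j$, which is the opposite of what you need (you need one of them to be $0$, and in general they are not equal). The actual proof uses the one-dimensionality of $\Ext^1_\Pi(FZ,FY)$ and the vanishing $\Ext^1_\Pi(S_j,S_j)=0$ in an essential way: if $\delta_1\neq 0$, there is $f:S_j\hookrightarrow FZ$ with $\Ext^1_\Pi(f,FY)\neq 0$, hence injective by one-dimensionality; the $2$-Calabi-Yau duality then makes $\Ext^1_\Pi(FY,f):\Ext^1_\Pi(FY,S_j)\to\Ext^1_\Pi(FY,FZ)$ surjective, so $\xi_2=f_*\eta$ for some $\eta\in\Ext^1_\Pi(FY,S_j)$, and then for every $g:S_j\to FY$ one has $g^*\xi_2=f_*(g^*\eta)=0$ because $g^*\eta\in\Ext^1_\Pi(S_j,S_j)=0$; hence $\delta_2=0$. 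Your alternative suggestion via the octahedral axiom is also not viable as sketched, since $\Hom_\Pi(S_j,-)$ does not descend to the stable triangulated category (it does not kill projectives), so "tracking socle contributions" through exchange triangles is not a well-defined operation there. You need to supply the module-level dichotomy argument above (or invoke \cite[Proposition 12.4]{GeLeSc11}) to close the proof.
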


We need the following lemma, stated without proof in \cite{GeLeSc08}, which can also be seen as a corollary of the much more general \cite[Proposition 12.4]{GeLeSc11}. For the sake of convenience, we give a direct proof.

\begin{lemma} \label{abc}
 For any $j \in J$, at least one of the following complexes is exact:
  \begin{align*} \Hom_\Pi(S_j, F\xi_1):  \quad & 0 \to \Hom_\Pi(S_j, FY) \to \Hom_\Pi(S_j, FU) \to \Hom_\Pi(S_j, FZ) \to 0, \\ \Hom_\Pi(S_j, F\xi_2): \quad  & 0 \to \Hom_\Pi(S_j, FZ) \to \Hom_\Pi(S_j, FU') \to \Hom_\Pi(S_j, FY) \to 0.\end{align*}
\end{lemma}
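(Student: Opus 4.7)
The plan is to reduce exactness of each of the two complexes to the vanishing of a Yoneda connecting map, and then to combine the $1$-dimensionality of the relevant $\Ext^1$-spaces with the $2$-Calabi--Yau property of $\Sub Q_J$ to rule out the case where both connecting maps are simultaneously non-zero.

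First I would verify that $F = B\otimes_A -$ is actually exact on $\CM_e A$, not merely exact bijective on the quotient $(\CM_e A)/[Ae]$. This follows from the snake lemma applied to the functorial short exact sequence of functors $0\to T\to\id\to F\to 0$ coming from the torsion pair $(\add Ae,\mod B)$ on $\mod_e A$, using that $T = Ae\otimes_{eAe}e(-)$ is exact on $\mod_e A$ (since $e(-)$ lands in $\proj(eAe)$). Hence $F\xi_1$ and $F\xi_2$ are short exact sequences in $\Sub Q_J$, and applying the left exact functor $\Hom_\Pi(S_j,-)$ reduces the claim to showing that, for each $j\in J$, at least one of the connecting maps
\[
\delta_1\colon \Hom_\Pi(S_j,FZ)\to \Ext^1_\Pi(S_j,FY),\qquad \delta_2\colon \Hom_\Pi(S_j,FY)\to \Ext^1_\Pi(S_j,FZ),
\]
given by Yoneda composition with $[F\xi_1]$ and $[F\xi_2]$ respectively, vanishes.

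Next I would exploit $1$-dimensionality. Since $Ae$ is projective-injective in $\CM_e A$ by $(E1)$ and the model hypothesis, non-splittness of $\xi_1,\xi_2$ forces $Y,Z$ to have no non-zero direct summand in $\add Ae$. Lemma \ref{eq6} therefore gives $\dim_k\Ext^1_\Pi(FY,FZ)=\dim_k\Ext^1_A(Y,Z)=1$, and the $2$-Calabi--Yau duality $\Ext^1_\Pi(FZ,FY)\cong D\Ext^1_\Pi(FY,FZ)$ in $\Sub Q_J$ shows $\dim_k\Ext^1_\Pi(FZ,FY)=1$ too; so $[F\xi_1]$ and $[F\xi_2]$ are non-zero generators of their respective one-dimensional $\Ext^1$-spaces.

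The main step, and the anticipated obstacle, is to derive a contradiction assuming that both $\delta_1\ne 0$ and $\delta_2\ne 0$ for some fixed $j$. Using the $2$-Calabi--Yau duality $\Ext^1_\Pi(S_j,M)\cong D\Ext^1_\Pi(M,S_j)$ and the Yoneda pairing into $\Ext^2_\Pi(S_j,S_j)\cong k$, the condition $\delta_i = 0$ is equivalent to the vanishing of the dual higher connecting map $c_i$ obtained by applying $\Hom_\Pi(-,S_j)$ to $F\xi_i$: explicitly $c_1\colon \Ext^1_\Pi(FY,S_j)\to \Ext^2_\Pi(FZ,S_j)$ is Yoneda product with $[F\xi_1]$ on the right, and analogously for $c_2$. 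Since $[F\xi_2]$ generates $\Ext^1_\Pi(FY,FZ)$ and $[F\xi_1]$ generates $\Ext^1_\Pi(FZ,FY)$, the $1$-dimensionality forces any non-zero element in the image of $c_1$ or $c_2$ to be essentially a scalar multiple of a Yoneda product involving both $[F\xi_1]$ and $[F\xi_2]$; non-degeneracy of the $2$-CY pairing then pins down the scalars and makes the simultaneous non-vanishing incompatible, yielding the required contradiction. The delicate point is the bookkeeping between $\Hom$ and $\underline{\Hom}$ in the Calabi--Yau trace formulas, and making sure that maps factoring through projective-injective summands of $\Sub Q_J$ (that is, summands of $Q_J$) are correctly treated throughout, so as to extract the precise cancellation of scalars that closes the argument.
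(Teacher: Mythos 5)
Your reduction to the vanishing of one of the two connecting maps $\delta_1,\delta_2$, the verification that $F$ is exact on $\CM_e A$, and the use of exact bijectivity plus $2$-Calabi--Yau duality to get $\dim\Ext^1_\Pi(FY,FZ)=\dim\Ext^1_\Pi(FZ,FY)=1$ all match the paper and are fine. The gap is in your ``main step''. The mechanism you sketch --- dualizing $\delta_i$ to maps $c_1\colon\Ext^1_\Pi(FY,S_j)\to\Ext^2_\Pi(FZ,S_j)$ and $c_2\colon\Ext^1_\Pi(FZ,S_j)\to\Ext^2_\Pi(FY,S_j)$ and invoking non-degeneracy of a pairing into $\Ext^2_\Pi(S_j,S_j)\cong k$ --- does not close as stated. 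The spaces $\Ext^1_\Pi(FY,S_j)$ and $\Ext^1_\Pi(FZ,S_j)$ are not one-dimensional in general, so there is no reason why an element in the image of $c_1$ should be ``a scalar multiple of a Yoneda product involving both $[F\xi_1]$ and $[F\xi_2]$''; the one-dimensionality you have is only of $\Ext^1_\Pi(FY,FZ)$ and $\Ext^1_\Pi(FZ,FY)$, which does not control $\Ext^1_\Pi(\pm,S_j)$. Moreover, the statement is genuinely false if $S_j$ is replaced by an arbitrary test object (take $W=FY\oplus FZ$: both connecting maps are non-zero, since $\id_{FZ}\mapsto[F\xi_1]\ne 0$ and $\id_{FY}\mapsto[F\xi_2]\ne 0$), so any correct proof must use a special property of $S_j$, and your sketch records none beyond $\Ext^2_\Pi(S_j,S_j)\cong k$.

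The input your argument is missing is $\Ext^1_\Pi(S_j,S_j)=0$. The paper's proof runs: if $\delta_1\ne 0$, pick $f\colon S_j\to FZ$ with $f\cdot[F\xi_1]\ne 0$; since $\Ext^1_\Pi(FZ,FY)$ is one-dimensional, $\Ext^1_\Pi(f,FY)$ is injective, hence by the functorial $2$-CY duality $\Ext^1_\Pi(FY,f)\colon\Ext^1_\Pi(FY,S_j)\to\Ext^1_\Pi(FY,FZ)$ is surjective, so $F\xi_2$ is the pushout along $f$ of an extension $0\to S_j\to M\to FY\to 0$. Then $\Ext^1_\Pi(S_j,S_j)=0$ forces every $g\colon S_j\to FY$ to lift to $M$, hence to $FU'$, so $\delta_2=0$. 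If you want to keep your contrapositive framing, you should replace the appeal to a pairing on $\Ext^2$ by this pushout argument (or some equivalent use of the vanishing of self-extensions of $S_j$); as written, the contradiction you assert is not established.
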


\begin{proof}
 Applying $F$ to $\xi_1$ and $\xi_2$, we get short exact sequences $F \xi_1$ and $F \xi_2$.
 Applying $\Hom_\Pi(S_j, -)$ to $F \xi_1$ and $F \xi_2$, it is enough to show that at least one of the induced morphisms
 $$\Hom_\Pi(S_j, FZ) \to \Ext^1_\Pi(S_j, FY) \quad \text{and} \quad \Hom_\Pi(S_j, FY) \to \Ext^1_\Pi(S_j, FZ)$$
 vanishes. Without loss of generality, suppose that there exists $f: S_j \hookrightarrow FZ$ such that the induced extension in $\Ext^1_\Pi(S_j, FY)$ is non-zero. We deduce that
 $$\Ext^1_\Pi(f, FY): \Ext^1_\Pi(FZ, FY) \to \Ext^1_\Pi(S_j, FY)$$
 is non-zero, so injective as $\dim \Ext^1_\Pi(FZ, FY) = 1$. As $\Pi$ is stably $2$-Calabi-Yau, we get that
 $$\Ext^1_\Pi(FY, f): \Ext^1_\Pi(FY, S_j) \to \Ext^1_\Pi(FY, FZ)$$
 is surjective, so there is a pushout diagram
 $$\xymatrix{
  0 \ar[r] & S_j \ar[d]^f \ar[r] & M \ar[r] \ar[d] & FY \ar[r] \ar@{=}[d] & 0 \\
  0 \ar[r] & FZ \ar[r] & FU' \ar[r] & FY \ar[r] & 0
 }$$
 the second row of which is the image by $F$ of the short exact sequence of Proposition \ref{mutG} (b). So, as $\Ext^1_\Pi(S_j, S_j) = 0$, any $g: S_j \to FY$ factors through $M$, hence through $FU'$. Therefore, the map $\Hom_\Pi(S_j, FY) \to \Ext^1_\Pi(S_j, FZ)$ vanishes.
\end{proof}

\begin{proof}[Proof of proposition \ref{mutG}]
 (a) It is an obvious consequence of the property for $\tilde \varphi$ and our definition of $\psi$.

 (b) Consider decompositions $U \cong U_0 \oplus U_1$ and $U' \cong U'_0 \oplus U'_1$ where $U_1$ and $U'_1$ are maximal direct summands contained in $\add Ae$. Thanks to Proposition \ref{descexseq} (b), we have
 $$U_1 = \bigoplus_{j \in J} (A e_j)^{a_j+b_j-c_j} \quad \text{and} \quad U'_1 = \bigoplus_{j \in J} (A e_j)^{a_j+b_j-c'_j}$$
 where, for $j \in J$,
 \begin{itemize}
  \item $a_j = \dim \Hom_{\Pi/I_J}(S_j, FY) = \dim \Hom_\Pi(S_j, FY) $;
  \item $b_j = \dim \Hom_{\Pi/I_J}(S_j, FZ) = \dim \Hom_\Pi(S_j, FZ)$;
  \item $c_i = \dim \Hom_{\Pi/I_J}(S_j, FU) = \dim \Hom_\Pi(S_j, FU)$;
  \item $c'_i = \dim \Hom_{\Pi/I_J}(S_j, FU') = \dim \Hom_\Pi(S_j, FU')$.
 \end{itemize}

  By Lemma \ref{abc}, using $\alpha_j$'s and $\beta_j$'s of Theorem \ref{mutGLS}, we have $a_j+b_j - c_j = \max(0, c'_j - c_j) = \alpha_j$ and $a_j+b_j-c'_j = \beta_j$. Thus, Theorem \ref{mutGLS} implies
 \begin{align*}
  \psi_Y \psi_Z &= \tilde \varphi_{FY} \tilde \varphi_{FZ} = \tilde \varphi_{FU} \prod_{j \in J} \Delta_j^{\alpha_j} + \tilde \varphi_{FU'} \prod_{j \in J} \Delta_j^{\beta_j} = \psi_{U_0} \psi_{U_1} + \psi_{U'_0} \psi_{U'_1} = \psi_U + \psi_{U'} \qedhere.
 \end{align*}
\end{proof}

Now, we can deduce the proof of Theorem \ref{categG}:

\begin{proof}[Proof of Theorem \ref{categG}]
 Using Theorem \ref{categGLS}, it is enough to observe that $F: \CM_e A \to \Sub U$ induces a bijection between isomorphism classes of basic cluster tilting objects. This is immediate as $F$ induces a triangle equivalence $\underline{\CM}_e A \cong \underline{\Sub} U$. More precisely, basic cluster tilting objects of $\CM_e A$ are of the form $Ae \oplus T$ where $T$ has no direct summand in $\add Ae$, and the indecomposable direct summands of $T$ correspond bijectively to the indecomposable direct summands of $FT$.
\end{proof}

\bibliographystyle{alphanum}
\bibliography{../../biblio/biblio}

\end{document}